\newtheorem{definition}{Definition}[section]
\newtheorem{proposition}[definition]{Proposition}
\newtheorem{lemma}[definition]{Lemma}
\newtheorem{theorem}[definition]{Theorem}
\newtheorem{corollary}[definition]{Corollary}
\theoremstyle{definition}
\newtheorem{remark}[definition]{Remark}
\newtheorem{notation}[definition]{Notation}
\newtheorem{example}[definition]{Example}
\newcommand{\I}{{\mathcal I}}
\newcommand{\U}{{\mathcal U}}
\newcommand{\SourceT}{{\mathbf S}}
\newcommand{\TargetT}{{\mathbf T}}
\newcommand{\SourceTI}{\SourceT_{\I}}
\newcommand{\Ipp}{\I^{\mathrm{pp}}}
\newcommand{\Icp}{\I^{\mathrm{cp}}}
\newcommand{\Ucp}{\U^{\mathrm{cp}}}
\newcommand{\Imb}{\I^{\mathrm{mb}}}
\newcommand{\Umb}{\U^{\mathrm{mb}}}
\newcommand{\If}{\I^{\mathrm{f}}}
\newcommand{\Uf}{\U^{\mathrm{f}}}
\newcommand{\Icb}{\I^{\mathrm{cb}}}
\newcommand{\Ucb}{\U^{\mathrm{cb}}}
\newcommand{\uInter}[3]{\langle#1\rangle^{#2}_{#3}}
\newcommand{\iInter}[3]{|#1|^{#2}_{#3}}
\newcommand{\rel}[1]{#1^\I}
\newcommand{\witness}[2]{\I_{#2}(#1)}
\newcommand{\uwitness}[4]{\langle \I_{#2}(#1) \rangle^{#3}_{#4}}
\newcommand{\iwitness}[4]{|\I_{#2}(#1)|^{#3}_{#4}}
\newcommand{\Bbounded}{\triangleleft}
\newcommand{\BB}{\mathbb{B}}
\newcommand{\NN}{\mathbb{N}}
\newcommand{\RR}{\mathbb{R}}
\newcommand{\Sw}{\mathcal{S}^\omega}
\newcommand{\Sws}{\mathcal{S}^\omega_*}
\newcommand{\Bw}{\mathcal{B}^\omega}
\newcommand{\Lang}[1]{{\mathcal L}_{#1}}
\newcommand{\LangI}[1]{{\mathcal L}_{#1}^{\mathcal{I}}}
\newcommand{\HAomega}{\mathbf{HA}^\omega}
\newcommand{\NHAomega}{\operatorname{\mathbf{N-HA}}^\omega}
\newcommand{\GodelT}{\mathsf{T}}
\newcommand{\Rec}{{\mathsf R}}
\newcommand{\Suc}{{\mathsf S}}
\newcommand{\Zero}{{\mathsf 0}}
\newcommand{\foralltilde}{\tilde{\forall}}
\newcommand{\existstilde}{\tilde{\exists}}
\newcommand{\pvec}[1]{\boldsymbol{#1}}
\newcommand{\proves}{\vdash}
\newcommand{\eqleft}[1]{\begin{itemize} \item[] $#1$ \end{itemize}}
\newcommand{\Bmaj}{\leq^B}
\newcommand{\iBmaj}{\trianglelefteq^B}
\newcommand{\WSorts}{{\cal W}}
\begin{document} 

\title{The Uniform Functional Interpretation with Informative Types}

\author{F. Ferreira and P. Oliva}

\maketitle

\thispagestyle{empty}

\begin{abstract} 

We discuss a new approach to functional interpretations based on uniform quantification and relativization. The uniform quantification in the background permits a more penetrating analysis of principles related to collection and contra-collection. Relativization comes from a computationally informative notion of being an element of a given type. The approach is flexible. When the information takes the shape of bounds, we can recapture a form of the combination of G\"odel's functional {\em dialectica} interpretation with majorizability. When the information is ``canonical" in function types, we obtain new functional interpretations and new models of G\"odel's theory $\GodelT$.
\end{abstract}

\tableofcontents

\section{Introduction}

Functional interpretations of arithmetic were introduced by Kurt G\"odel in his seminal {\em dialectica} paper \cite{Goedel(58)}. Since then, the original interpretation has been refined, modified and studied in different aspects and for various purposes. It was also extended to analysis \cite{Spector(62)}, admissible set theory \cite{AvigadTowsner(09), Ferreira(14), Ferreira(17)} and to logic \cite{FerreiraFerreira(17),OlivaFerreira(11),GerhardyKohlenbach(05),Oliva(06)}. A good reference, up to the end of the millennium, is the paper \cite{AvigadFeferman(98)}. In the last three decades, G\"odel's interpretation in combination with majorizability has been successfully used as an applied tool for extracting computational information from ordinary mathematical proofs (specially in the domain of iterative arguments in nonlinear functional analysis). We are referring here to Ulrich Kohlenbach's program of proof mining, conveniently explained in the book \cite{Kohlenbach(08)}. More recently, the book \cite{Diller(20)} appeared, authored by one of the early students of G\"odel's interpretation and covering material from a traditional perspective. It is not the aim of these brief lines to review the literature on functional interpretations, far too long and intricate, but rather to convey to the reader the idea that functional interpretations remain, to this day, an active area of study, with both theoretical and applied components.

The present paper rests on two main ideas. The first is that we regard functional interpretations as based on a background interpretation, dubbed the uniform interpretation, whose main feature is that quantifications are not computationally relevant. The second idea is that computational relevance is acquired via so-called type-informative predicates. Being an element of a type is contingent upon falling under a given type-informative predicate, whose interpretation is computationally relevant. Let us briefly discuss these two aspects in turn.

Uniform quantifications already appear in the works of Jean-Louis Krivine \cite{Krivine(94)} and Ulrich Berger \cite{Berger(04)}, but within a realizability framework. Let us write $\uInter{A}{\pvec a}{}$ for saying that witness $\pvec a$ realizes $A$. The quantifier clauses for uniform quantification are:
\eqleft{
\begin{array}{lcl}
	\uInter{\forall x^\sigma A(x)}{\pvec a}{} & :\equiv & \forall x^\sigma \uInter{A(x)}{\pvec a}{} \\[1mm]
	\uInter{\exists x^\sigma A(x)}{\pvec a}{} & :\equiv & \exists x^\sigma \uInter{A(x)}{\pvec a}{}.
\end{array}	
}
If one judges computational relevance by the realizing witnesses, uniform quantifications are not computationally relevant. In the framework of functional interpretations, with witnesses and counter-witnesses (written as a superscript and a subscript, respectively), the situation is subtler. The right clauses for uniform quantification are
\eqleft{
\begin{array}{lcl}
	\uInter{\forall x^\sigma A(x)}{\pvec a}{\pvec b} & :\equiv & \forall x^\sigma \uInter{A(x)}{\pvec a}{\pvec b} \\[1mm]
	\uInter{\exists x^\sigma A(x)}{\pvec a}{\pvec B} & :\equiv & \exists x^\sigma \forall \pvec b \in \pvec B \, \uInter{A(x)}{\pvec a}{\pvec b},
\end{array}	
}
where $\pvec B$ above is a non-empty finite set. In Section \ref{uniform-interpretation-section}, we discuss these two clauses and prove the uniform soundness theorem. 

The clauses of a functional interpretation for the various connectives and quantifiers are intimately related with the principles that the very interpretation can realize. Let us see what can be said about the clauses for the uniform interpretation of the quantifiers. If one reads the functional interpretation of a formula $A$ as recasting its meaning computationally as $\exists \pvec a \forall \pvec b \, \uInter{A}{\pvec a}{\pvec b}$, then the clauses above convey the following informal reading:
\eqleft{
\begin{array}{lcl}
	\forall x^\sigma \exists \pvec a \forall \pvec b \, \uInter{A(x)}{\pvec a}{\pvec b} & \mathrm{is} & \exists \pvec a \forall \pvec b \, \forall x^\sigma  \uInter{A(x)}{\pvec a}{\pvec b} \\[1mm]
	 \exists x^\sigma \exists \pvec a \forall \pvec b \, \uInter{A(x)}{\pvec a}{\pvec b} & \mathrm{is} & \exists \pvec a \forall \pvec B \, \exists x^\sigma \forall \pvec b \in \pvec B \uInter{A(x)}{\pvec a}{\pvec b}.
\end{array}	
}
We find very interesting and illuminating the switch of the universal quantifier $\forall x^\sigma$ from the outside to the inside of the formula, as one reads the top clause from left to right, and the (partial) switch of the existential quantifier $\exists x^\sigma$ from the inside to the outside as one reads the second clause from right to left. We see in the raw, so to speak, principles of \emph{collection} and \emph{contra-collection} (respectively). The terminology for these principles was introduced in \cite{FerreiraOliva(05)}. Collection is related to the FAN principle of intuitionistic mathematics and to Kohlenbach's uniform boundedness principle \cite{Kohlenbach(06)}. Contra-collection is related to weak K\"onig's lemma \cite{FerreiraOliva(05)}.  

We claim that the quantification clauses of the uniform interpretation are the root explanation of why functional interpretations are, in the right circumstances, able to interpret non-constructive principles like weak K\"onig's lemma or even principles that are not set-theoretically true. The uniform interpretation is, however, not our end interpretation but rather a stepping stone to computationally meaningful interpretations: the so-called $\I$-interpretations. When we want to analyze the $\I$-interpretations of (versions of) principles of collection or contra-collection, we can ``open the box" and bring the background uniform interpretation to the fore, and look at the root source of the uniformity in these principles. This can be seen in the analysis of uniform or semi-uniform restricted quantifiers in Section \ref{uniform-boundedness-section}, specially Subsection \ref{semi-uniform-section}. 

If the uniform interpretation of a formula is devoid of computational relevance (i.e., if it has no witnesses, nor counter-witnesses), it is plain that the same is true of its quantifications. Lest we fall into triviality, a possibility for a computationally relevant account based on the uniform interpretation is to anchor to computationally relevant interpretations of the primitive relation symbols. This is easy to illustrate. Consider the ground type $\RR$ of the reals. If $<_\RR$ is a primitive (binary) relation symbol of the language of our theory of real numbers, we may define $\langle x <_\RR y\rangle^n$ as $x + \frac{1}{n+1} \leq y$. We may also define $\langle x =_\RR y\rangle_k$ as $|x-y| \leq \frac{1}{k+1}$. In other words, formulas derive their computational content from their atomic subformulas.

The second idea of the paper is, roughly, saying that being an element of a certain type is like falling under a given primitive unary relation symbol, and that the interpretations of these (so-called, type-informative) predicate symbols may be computationally relevant. For the purposes of this introduction, we denote by $x:\sigma$ the unary predicate saying that $x$ is of type $\sigma$ (in the body of the text, we use a different notation $\witness{x}{\sigma}$ for the type-informative predicates). For instance, we can have $\uInter{x:\NN}{n}{}$ as being $x \leq n$. This is the so-called bounding interpretation at the ground type $\NN$ (Section \ref{bounding-section}). Alternatively, we could have  $\uInter{x:\NN}{n}{}$ as being $x = n$, the precise interpretation at the ground type $\NN$ (Section \ref{precise-section}). 

The interpretation of the type-informative predicates at the ground types does {\em not} determine the interpretation of the type-informative predicates at the function types. We study several possibilities in Section \ref{precise-section} and in Section \ref{bounding-section}. There is, however, always a canonical way of interpreting the type-informative predicates at the function types, given the interpretations of the type-informative predicates at the ground types. We define and study these canonical extensions in general in Section \ref{canonical-section} and, later on, we present two concrete canonical interpretations of arithmetic, one in Subsection \ref{canonical-precise-section}, the other in Subsection \ref{canonical-bounding-section}. These two canonical extensions define new functional interpretations of Heyting arithmetic. Let us briefly highlight some interesting features of the canonical interpretations. In contrast to G\"odel's {\em dialectica} interpretation, the principles of pointwise continuity and extensionality for type 2 functionals are realized by our first canonical interpretation (see Proposition \ref{prop-cont-2} and its corollary). The second canonical interpretation is able to realize new collection and contra-collection principles (see Subsection \ref{analytic-applications-section}). Both canonical interpretations lead to natural definitions of models of G\"odel's theory $\mathsf T$ (end of Subsection \ref{canonical-precise-section} and end of Subsection \ref{canonical-bounding-section}).

Some modern functional interpretations enjoy a monotonicity property, in the sense that if a witness validates a formula (with a given counter-witness), then a ``bigger" witness also does. A primeval version of this property appeared with the monotone functional interpretation of Kohlenbach in \cite{Kohlenbach(96)}. The property itself appeared, for the first time, in \cite{FerreiraOliva(05)}. Many more examples have been considered meanwhile, not only in the context of arithmetic (as, for instance, the interpretations of non-standard arithmetic in \cite{BergBriseidSafarik(12)} or of subsystems of second-order arithmetic in \cite{Ferreira(20PM)}), but also in admissible set theory (as in \cite{AvigadTowsner(09)}, in the form of inductive definitions, or in \cite{Ferreira(14), Ferreira(17)}) and in classical logic in \cite{FerreiraFerreira(17)}. The latter paper introduced the terminology of ``cumulative interpretation" for a functional interpretation that enjoys a monotonicity property as described above. 

The framework of this paper offers an explanatory account of cumulative vis-\`a-vis non-cumulative functional interpretations. The crux of the matter -- as it happens -- lies with the interpretation of disjunction. We take disjunction not as a primitive logical symbol but rather as defined with the aid of an existential quantification over the Boolean type $\BB$. Section \ref{disjunction-section} considers two possibilities for the type-informative predicate of $\BB$. One possibility takes $\uInter{x:\BB}{b}{}$ as $x=b$, where $b$ is a Boolean. This choice gives the {\em dialectica} clause of disjunction and the interpretation is precise, not cumulative. Else, we can have $\uInter{x:\BB}{}{}$, without witnesses or counter-witnesses, as simply being the true statement that $x$ is a Boolean. This alternative requires a so-called notion of joining of bounds and enjoys a monotonicity property. It gives rise to cumulative interpretations. It also interprets an interesting form of semi-intuitionism. This is worked out and explained in Subsection \ref{uniform-booleans-sec}.

\section{The uniform interpretation}
\label{uniform-interpretation-section}

Our interpretations always involve two theories framed in the language of finite types: a \emph{source theory} $\SourceT$, the theory being interpreted, and a \emph{target theory} $\TargetT$, the theory where the interpretation takes place and where the conclusion of the soundness theorem is verified. The source theory is always intuitionistic in this paper, but the target theory need not be. We will be quite informal, and sometimes vague, regarding the target theories because, for the studies of this paper, one need not be too precise about them. That being said, one should be precise about the terms that occur in the so-called witnessing types of the target theory.

In this section, we present the uniform interpretation of (intuitionistic) theories of finite types. We make three design choices concerning the source language and its theory of finite types.

\begin{enumerate}
    \item There are many (potentially infinitely many) ground types, as opposed to the more common approach, according to which there is only one ground type. We use capital letters $X, Y, Z, \ldots$ for the ground types of the source theory and assume that, for each ground type $X$, there is a constant $e_X$ of type $X$ (see Remark \ref{terms-source} for the necessity of this assumption).
    The finite types are built as usual: the ground types are finite types and, if $\sigma$ and $\tau$ are finite types, then $\sigma \to \tau$ is a finite type.
    
    \item There is always a logical equality sign at a given type $\sigma$, denoted by $\equiv_\sigma$. The source theory has the usual axioms of (logical) equality:
    \[ 
        \forall x^\sigma (x \equiv_\sigma x); \quad\quad \forall x^\sigma, y^\sigma (A \wedge \, x \equiv_\sigma y \, \to \, A');
    \]
    where $A$ is a quantifier-free formula (of the source language) and $A'$ is obtained from $A$ by replacing one or more variables $x$ by $y$. As it is well known, the above axioms entail that $\equiv_\sigma$ is an equivalence relation. 

    The equality signs are convenient for formulating the usual axioms for the combinators:
    \eqleft{
    \begin{array}{lcl}
        \Pi_{\rho, \sigma} x^\rho y^\sigma 
            & \equiv_{\sigma} & x \\[1mm]
        \Sigma_{\rho, \sigma, \gamma} x^{\gamma \to \sigma \to \rho} y^{\gamma \to \sigma} z^{\gamma} 
            & \equiv_{\rho} & x z (y z)
    \end{array}
    }
    so that $\lambda$-terms can be defined. 
    
    Logical equality has a fixed interpretation, as we shall see. It is often useful to work with other kinds of equality (e.g. extensional equality), having different interpretations. Unlike logical equality, these other equalities will be treated like any other (binary) relation symbol. 
    
    \item The last design choice is the most peculiar. Until Section \ref{disjunction-section}, we do not admit disjunction in our language. There are very good reasons for the absence of the connective of disjunction in the source language at this stage of the discussion. Disjunction will be added (and discussed) in Section \ref{disjunction-section}.
\end{enumerate}

Otherwise, the language is as usual. The intuitionistic logic of the source theory $\SourceT$ is formulated in a Gentzen style sequent calculus (with cut) in which, of course, the rules of disjunction are absent.

\subsection{Witnessing types and their elements}\label{witnessing-section}

The terms and types of the target language $\Lang{\TargetT}$ are assumed to contain the terms and types of the source language $\Lang{\SourceT}$. $\Lang{\TargetT}$ is also assumed to have a distinct set of ground types $\WSorts = \{W, W', W_1, \ldots\}$ referred as the \emph{ground witnessing types}. These types are either new or may coincide with ground types of the source language. Built upon the ground witnessing types, we have the usual function types and also star types, i.e., types whose constituents are non-empty finite subsets of a given type. It will be apparent why we need star types in the target language (do notice that, for simplicity, we do not allow star types in the source language).

\begin{definition}[Witnessing types] Given a collection of ground witnessing types $\WSorts$, we define the {\em witnessing types} as the finite star types over $\WSorts$, denoted by $\WSorts^\omega_*$:
\begin{itemize}
    \item ground witnessing types are in $\WSorts^\omega_*$,
    \item function types: if $\rho, \tau \in \WSorts^\omega_*$ then $\rho \to \tau \in \WSorts^\omega_*$,
    \item non-empty finite sets: if $\rho \in \WSorts^\omega_*$ then $\rho^* \in \WSorts^\omega_*$.
\end{itemize}    
The elements of a witnessing type are called {\em witnessing elements}. A term of a witnessing type is called a {\em witnessing term}.
\end{definition}

The type language based on $\WSorts$ contains the usual combinators $\Pi$ and $\Sigma$ (therefore enjoying a lambda calculus). It also has star constants necessary to deal with the star types. They are $\mathfrak{s}_{\sigma}$ of type $\sigma\to \sigma^*$, $\cup_{\sigma}$ of type $\sigma^*\to (\sigma^*\to \sigma^*)$ and $\bigcup_{\sigma,\tau}$ of type $\sigma^*\to (\sigma\to \tau^*)\to \tau^*$. We write $\{q\}$ instead of $\mathfrak{s}q$, $t\cup q$ instead of $\cup tq$ and $\bigcup_{w\in t} qw$ instead of $\bigcup tq$. The target language $\Lang{\TargetT}$ also includes binary relation symbols $\in_\sigma$ infixing between left terms of type $\sigma$ and of right terms of type $\sigma^*$. For ease of reading, we usually omit the type subscripts of the membership relations. There are axioms for the star constants corresponding to an extended reduction process (on top of the combinatorial reductions):
\begin{itemize}
    \item $\bigcup_{y \in \{x\}} f y = f x$
    \item $\bigcup_{z \in x \cup y} f = (\bigcup_{z \in x} f z) \cup (\bigcup_{z \in y} f z)$.
\end{itemize}
We work informally with equality in the target theory $\TargetT$, as ordinary mathematicians do. It is, nevertheless, worth noting that the extended reduction calculus still enjoys the properties of strong normalization and confluence. We refer the reader to the paper \cite{FerreiraFerreira(17)} for this and connected issues.

Summing up, the language $\Lang{\TargetT}$ of the target theory $\TargetT$ has a distinguished stock of terms within a lambda calculus framework (the witnessing terms). These witnessing terms live in $\WSorts^\omega_*$ and enjoy basic set-theoretic properties in the target theory. For instance, $\forall x (x \in \{a\} \leftrightarrow x=a)$. They play a crucial role in the functional interpretations and in proving their soundness. The logic of the target theory can be either intuitionistic or classical (and disjunction may be present). In proving the soundness of functional interpretations (for instance, the soundness of weakening), we must ensure that there is a closed term of each witnessing type. Therefore, we postulate the existence of a constant $e_W$ in each ground witnessing type $W$. From these constants, we can recursively define a closed term for each type in $\WSorts^\omega_*$ as follows: $e_{\rho \to \tau}$ is $\lambda x^\rho. e_\tau$ and $e_{\rho^*}$ as $\{ e_\rho \}$.

\subsection{The uniform interpretation and its soundness}
\label{sec-uniform-interpretation}

The uniform interpretation of a source theory $\SourceT$ into a target theory $\TargetT$ is parametrized by the following data. To each $r$-ary relation symbol $R(x_1, \ldots, x_r)$ of the source language $\Lang{\SourceT}$, we associate:

\begin{itemize} 
    \item Two tuples $\pvec \tau_R^+$ and $\pvec \tau_R^-$ of witnessing types in $\WSorts^\omega_*$ (these tuples may be empty).
    \item A relation $\uInter{R(x_1, \ldots, x_r)}{\pvec c}{\pvec d}$, expressed in the target language $\Lang{\TargetT}$, between elements $x_1, \ldots, x_r$ of appropriate types of $\Lang{\SourceT}$, a tuple of elements $\pvec c$ of types $\pvec \tau_R^+$ and a tuple of elements $\pvec d$ of types $\pvec \tau_R^-$. This is called the {\em information relation} associated with $R(x_1, \ldots, x_r)$.
    \item The information relation associated with a logical equality symbol $\equiv_\sigma$ of $\Lang{\SourceT}$ is fixed as follows: $\pvec \tau_{\equiv_\sigma}^+$ and $\pvec \tau_{\equiv_\sigma}^-$ are the empty tuples and $\uInter{x\equiv_\sigma y}{}{}$ is $x=y$. 
\end{itemize}

\begin{definition}[Base interpretation of $\Lang{\SourceT}$ into $\Lang{\TargetT}$] A \emph{base interpretation of $\Lang{\SourceT}$ into $\Lang{\TargetT}$} is a family $$(\pvec \tau_R^+, \pvec \tau_R^-, \uInter{R(x_1, \ldots, x_r)}{\pvec c}{\pvec d})_{R}$$ as above, which associates to each relation symbol $R$ of $\Lang{\SourceT}$ an information relation in $\Lang{\TargetT}$.
\end{definition}

Given a base interpretation, we can extend it to all formulas of $\Lang{\SourceT}$. This is the content of the next definition.

\begin{definition}[$\U$-interpretation of $\Lang{\SourceT}$ into $\Lang{\TargetT}$] \label{def-uniform-interpretation} Let be given a base interpretation of $\Lang{\SourceT}$ into $\Lang{\TargetT}$. For each formula $A$ of $\Lang{\SourceT}$, we define its $\U${\em -interpretation} $\uInter{A}{\pvec a}{\pvec b}$ into $\Lang{\TargetT}$. The definition is by induction on the logical structure of $A$. \\[1mm]
For atomic formulas $R(t_1, \ldots, t_n)$, its $\U$-interpretation is defined as the given information relation $\uInter{R(t_1, \ldots, t_n)}{\pvec c}{\pvec d}$. For $\bot$ we define
\eqleft{
\begin{array}{lcl}
	\uInter{\bot}{}{} 
		& :\equiv & \bot. 
\end{array}
}
Assuming that $A$ and $B$ have $\U$-interpretations $\uInter{A}{\pvec a}{\pvec b}$ and $\uInter{B}{\pvec c}{\pvec d}$, respectively, we define:
 
\eqleft{
\begin{array}{lcl}
	\uInter{A \wedge B}{\pvec a, \pvec c}{\pvec b, \pvec d} 
		& :\equiv & \uInter{A}{\pvec a}{\pvec b} \wedge \uInter{B}{\pvec c}{\pvec d} \\[1mm]
	\uInter{A \to B}{\pvec f, \pvec g}{\pvec a, \pvec d} 
		& :\equiv & \forall \pvec b \in \pvec g \pvec a \pvec d \uInter{A}{\pvec a}{\pvec b} \to 
				\uInter{B}{\pvec f \pvec a}{\pvec d} \\[1mm]
	\uInter{\forall x^\sigma A(x)}{\pvec a}{\pvec b} & :\equiv & \forall x^\sigma \uInter{A(x)}{\pvec a}{\pvec b} \\[1mm]
	\uInter{\exists x^\sigma A(x)}{\pvec a}{\pvec B} & :\equiv & \exists x^\sigma \forall \pvec b \in \pvec B \, \uInter{A(x)}{\pvec a}{\pvec b}.
\end{array}	
}
\end{definition}

Note that in the base interpretation we assume that $\pvec c$ and $\pvec d$ in $\uInter{R(x_1, \ldots, x_r)}{\pvec c}{\pvec d}$ have witnessing types. That implies that, for arbitrary formulas $A$, the tuples $\pvec a$ and $\pvec b$ in $\uInter{A}{\pvec a}{\pvec b}$ also have witnessing types.

\begin{notation}
    If in the $\U$-interpretation of  $A$, the formula $\uInter{A}{\pvec a}{\pvec b}$ is such that $\pvec a$ is the empty tuple, we say that \emph{$A$ has no positive $\U$-witnesses}, while if $\pvec b$ is empty, we say that \emph{$A$ has no negative $\U$-witnesses}. When $A$ has no positive nor negative $\U$-witnesses, we say that $A$ is \emph{$\U$-witness-free}. For instance, $\bot$ is $\U$-witness-free.
\end{notation}

\begin{remark}\label{equals}
    If we highlight a variable $x$ in the formula $A$, the formula $\uInter{A}{\pvec a}{\pvec b}$ induces a relation between $x$, $\pvec a$ and $\pvec b$. Take now a term $t$ of the same type as the variable $x$, free for $x$ in $A$ (for simplicity of notation, we do not display the variables of $t$). It is clear that $\uInter{A[t/x]}{\pvec a}{\pvec b} = \uInter{A}{\pvec a}{\pvec b}[t/x]$. This fact is embedded in the notation, but we wanted to bring it out explicitly here. \end{remark}

\begin{remark}\label{fixing-base} One should think of the $\U$-interpretation as a \emph{family} of interpretations parametrized by a base interpretation of $\Lang{\SourceT}$ into $\Lang{\TargetT}$. Each choice of the base interpretation might give rise to a different $\U$-interpretation. \end{remark}

As can be seen from the clause of implication, the $\U$-interpretation relies upon the Diller-Nahn interpretation \cite{DillerNahm(74)}. In fact, finite (and non-empty) collections of counter-witnesses are hypothesized in the antecedent of the interpreting formula. We find that using the Diller-Nahm clause for implication is the most natural way of proceeding, given our general framework. If one would insist on having only one tuple of witnesses, as happens with G\"odel's {\em dialectica} interpretation, then one would need to be able to make definitions by cases according to whether the interpretation of a formula holds or not. This would require the availability of terms (in the target language) for the characteristic functions of the formulas $\uInter{A}{\pvec a}{\pvec b}$. We prefer the finiteness approach. The requirement of finiteness is not an artifact. It is an essential feature of logic that comes from the rule of contraction. In the framework of classical logic, it explains the finitely many disjunctive clauses of Herbrand's theorem (see \cite{GerhardyKohlenbach(05)} and \cite{FerreiraFerreira(17)}).

The uniform character of the $\U$-interpretations comes from the treatment of the quantifiers. In a sense, the interpretation of the quantifiers is not computationally relevant. This is apparent in the clause of the universal quantifier. In the case of the existential quantifier, finite sets of counter-witnesses for $\exists x^\sigma A(x)$ appear, but they only appear if $A(x)$ already has negative $\U$-witnesses to start with. If there are no positive nor negative $\U$-witnesses for $A(x)$, there are also none for $\exists x^\sigma A(x)$ and $\forall x^\sigma A(x)$. We may say that in the uniform interpretation of the quantifiers, no new information is created; rather, the original information is carried along and, in the case of the existential quantifier, its negative information is upgraded to finite sets so that a proclaimed existence is able to work against all the tuples coming from these finite sets of counter-witnesses. 

Could the clause for existential quantification be the simpler 
\eqleft{
    \begin{array}{lcl}
    \uInter{\exists x^\sigma A(x)}{\pvec a}{\pvec b} & :\equiv & \exists x^\sigma \uInter{A(x)}{\pvec a}{\pvec b}\,?
	\end{array}
}	
The answer is no, at least not if we keep the clauses of the other connectives as they are. The problem lies with the rule $\exists$-L (existential introduction in the left). We will come back to this issue in the proof of the Uniform Soundness Theorem (see Theorem \ref{thm-uniform-soundness}).

\begin{definition}[Formulas $\U$-interpretable in $\TargetT$] Let be given a base interpretation of $\Lang{\SourceT}$ into $\Lang{\TargetT}$. A formula $A(\pvec x)$ of $\Lang{\SourceT}$ is said to be $\U$-\emph{interpretable in $\TargetT$} if there are closed witnessing terms $\pvec t$ (i.e. closed terms of type $\WSorts^\omega_*$) such that $\TargetT$ proves $\forall \pvec b\, \uInter{A(\pvec x)}{\pvec t}{\pvec b}$. 
\end{definition}

Note that $A(\pvec x)$ may have free variables $\pvec x$. Nevertheless, $\pvec t$ is a tuple of {\em closed} witnessing terms. This definition makes sense because the interpretation of quantifiers is ``uniform''. This is confirmed by the Uniform Soundness Theorem below.

\begin{definition}[$\TargetT$-sound base interpretation of $\SourceT$] \label{def-sound-base} A base interpretation of $\Lang{\SourceT}$ into $\Lang{\TargetT}$ is said to be a $\TargetT$-\emph{sound interpretation of $\SourceT$} if the non-logical axioms of $\SourceT$ are $\U$-interpretable in $\TargetT$.
\end{definition}

When studying functional interpretations (of intuitionistic theories), it is convenient to have in mind the formula $\exists \pvec a \forall \pvec b \, \uInter{A}{\pvec a}{\pvec b}$. The conclusion of a pertinent soundness theorem gives, under appropriate conditions, a tuple of witnessing terms for the existential quantifier.

\begin{theorem}[Uniform Soundness Theorem] \label{thm-uniform-soundness} Let be given a $\TargetT$-sound base interpretation of $\SourceT$. 
If 
\[
    \Gamma(\pvec x) \proves_{\SourceT} A(\pvec x)
\]
then there are tuples of closed witnessing terms $\pvec s, \pvec t$ such that the target theory $\TargetT$ proves the following: for all $\pvec a$ and $\pvec d$ and for all $\pvec x^\sigma$,
$$\mbox{if \,}\forall \pvec b \in \pvec s \pvec a \pvec d \, \uInter{\Gamma(\pvec x)}{\pvec a}{\pvec b} \mbox{,\, then } \uInter{A(\pvec x)}{\pvec t \pvec a}{\pvec d}.$$
In particular, any theorem of $\SourceT$ is $\U$-interpretable in $\TargetT$. \\[1mm]
\end{theorem}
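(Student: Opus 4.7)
The plan is a standard functional-interpretation soundness proof by induction on the length of the derivation of $\Gamma(\pvec x) \proves_{\SourceT} A(\pvec x)$ in the Gentzen-style sequent calculus. At each stage the induction hypothesis supplies closed witnessing terms for the sub-derivations, and the inductive step builds new closed terms out of them using the combinators $\Pi, \Sigma$, the star-type constants $\{\cdot\}, \cup, \bigcup$, and the constants $e_W$ postulated for each ground witnessing type $W$. All witnessing terms are closed by construction because every assembly rule above applies only these basic constants to terms already supplied by the induction hypothesis.

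The base cases are straightforward. A logical axiom $A \proves A$ is handled by the identity on positive witnesses and by setting the counter-witness term to $\pvec s \pvec a \pvec d := \{\pvec d\}$; the equality axioms go through because $\uInter{x \equiv_\sigma y}{}{}$ is fixed as the meta-theoretic equality $x = y$; and the non-logical axioms of $\SourceT$ are $\U$-interpretable in $\TargetT$ by Definition \ref{def-sound-base}. For the inductive step, left-weakening introduces a fresh antecedent whose positive witnesses are furnished by the canonical closed terms $e_W$ (lifted through function and star types by $e_{\rho \to \tau} := \lambda x.\, e_\tau$ and $e_{\rho^*} := \{e_\rho\}$); contraction fuses two copies of a hypothesis by unioning the associated finite counter-witness sets via $\cup$; exchange is trivial. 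Conjunction is pairing/projection. The implication rules follow the Diller--Nahm clause, with $\bigcup$ used (most essentially in $\to$-L and in cut) to collect, for each requested counter-witness $\pvec d$, all the counter-witnesses of the hypothesis that occur inside the finite set produced by the implicational witness. The $\forall$ rules and $\exists$-R are immediate from the uniform quantifier clauses; for $\forall$-L one uses the substitution property of Remark \ref{equals} to instantiate the bound variable inside the sub-derivation terms.

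The delicate case, which I expect to be the main obstacle and is the one flagged in the text, is $\exists$-L: from a derivation of $\Gamma, A(x) \proves B$ with $x$ not free in $\Gamma, B$, one must produce witnessing terms for $\Gamma, \exists x A(x) \proves B$. The induction hypothesis yields closed terms $\pvec s_\Gamma, \pvec s_A, \pvec t$, all free of $x$ since $\Gamma$ and $B$ are, such that whenever $\uInter{\Gamma}{\pvec a}{\pvec b'}$ holds for all $\pvec b' \in \pvec s_\Gamma \pvec a \pvec c \pvec d$ and $\uInter{A(x)}{\pvec c}{\pvec b}$ holds for all $\pvec b \in \pvec s_A \pvec a \pvec c \pvec d$, one has $\uInter{B}{\pvec t \pvec a \pvec c}{\pvec d}$. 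In the conclusion the new hypothesis on $\exists x A(x)$ unfolds to $\exists x\, \forall \pvec b \in \pvec B\, \uInter{A(x)}{\pvec c}{\pvec b}$, and the crucial move is to take the counter-witness set $\pvec B := \pvec s_A \pvec a \pvec c \pvec d$: this extracts a single $x_0$ that validates the predicate simultaneously for every $\pvec b$ demanded by the induction hypothesis, and then feeding $x_0$ in delivers $\uInter{B}{\pvec t \pvec a \pvec c}{\pvec d}$ as required. This is exactly where the finite-set clause for $\exists$ is indispensable: with the naive clause $\exists x\, \uInter{A(x)}{\pvec c}{\pvec b}$, the $x$ could depend on $\pvec b$, but the contraction implicit in the Diller--Nahm clause of $\to$ typically forces a whole finite set of $\pvec b$'s to be handled by one and the same $x_0$, leaving no uniform choice available. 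Specialising $\Gamma$ to the empty context then yields the last sentence of the theorem.
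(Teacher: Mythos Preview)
Your proposal is correct and follows essentially the same approach as the paper: induction on the derivation, with the star constants $\{\cdot\}$, $\cup$, $\bigcup$ doing the work in contraction, cut and $\exists$-R, and the $\exists$-L case resolved by packaging the induction-hypothesis counter-witness set as the singleton $\{\pvec s_A \pvec a \pvec c \pvec d\}$ at the doubled star type. The only place you are slightly too quick is $\exists$-R, which is not quite ``immediate'' but also needs a $\bigcup$ to aggregate the $\Gamma$-counter-witnesses over the incoming finite set $\pvec F$; otherwise your account matches the paper's proof.
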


\begin{notation}
    In the above, if $\Gamma$ is $B_1, \ldots, B_n$, $\pvec s$ is $\pvec s_1, \ldots, \pvec s_n$, $\pvec a$ is $\pvec a_1, \ldots, \pvec a_n$, and $\pvec b$ is $\pvec b_1, \ldots, \pvec b_n$, then $\forall \pvec b \in \pvec s \pvec a \pvec d \, \uInter{\Gamma(\pvec x)}{\pvec a}{\pvec b}$ abbreviates $\bigwedge_{i=1}^n \forall \pvec b_i \in \pvec s_i \pvec a \pvec d \, \uInter{B_i}{\pvec a_i}{\pvec b_i}$.
\end{notation}

\begin{proof} As we have said, formal proofs take place in a Gentzen style sequent calculus. The argument for the theorem above proceeds by induction on the number of steps of the formal derivation of the sequent $\Gamma(\pvec x) \proves A(\pvec x)$ in $\SourceT$. \\[1mm]
We first discuss the equality axioms, the axioms for combinators and the non-logical axioms. The latter are interpreted by the hypothesis that the base interpretation is sound (cf.\ Definition \ref{def-sound-base}). The first equality axiom (reflexivity) and the axioms for combinators are self-interpretable, since they are $\U$-witness-free. The interpretation of the other equality axioms, namely those of the form 
\[ \forall x^\sigma, y^\sigma (A \wedge \, x \equiv_\sigma y \, \to \, A'), \] 
with appropriate (quantifier-free) formulas $A$, $A'$ and variables $x$ and $y$, asks for closed terms $\pvec q$ and $\pvec t$ such that the target theory proves the following: for all $\pvec u$ and $\pvec v'$,
$$\forall x^\sigma \forall y^\sigma \left( \forall \pvec v \in \! \pvec q \pvec u \pvec v' (\uInter{A}{\pvec u}{\pvec v} \wedge x\equiv_\sigma y) \to\uInter{A'}{\pvec t \pvec u}{\pvec v'} \right).$$ 
Let $\pvec q \pvec u \pvec v' = \pvec\{\pvec v'\pvec\}$ and $\pvec t \pvec u = \pvec u$. These terms do the job because the above follows from the axioms of equality in the target theory.
The propositional rules (rules for the logical connectives) pose no problem because they follow directly from the proof of the soundness of the Diller-Nahm interpretation. However, for the benefit of the reader, we study three of these rules. \\[1mm]
We study the structural rule of contraction because its analysis necessitates the clause for implication that takes advantage of finite collections of counter-witnesses. The rule of contraction in the sequent calculus for intuitionistic logic is
\[
\begin{prooftree}
    \Gamma, A, A \proves B
    \justifies
    \Gamma, A \proves B
\end{prooftree}
\]
Suppose that we have closed terms $\pvec r, \pvec t, \pvec t'$ and $\pvec q$ witnessing the premise of the rule. That is, we assume that $\TargetT$ proves: for all $\pvec a, \pvec c, \pvec c', \pvec v$, if 
$$\forall \pvec b \in \pvec r \pvec a \pvec c \pvec c' \pvec v \, \uInter{\Gamma}{\pvec a}{\pvec b}, \forall \pvec d \in \pvec t \pvec a \pvec c \pvec c' \pvec v \, \uInter{A}{\pvec c}{\pvec d}\mbox{ \,and\, } \forall \pvec d \in \pvec t' \pvec a \pvec c \pvec c' \pvec v \, \uInter{A}{\pvec c'}{\pvec d}$$
then $\uInter{B}{\pvec q \pvec a \pvec c \pvec c'}{\pvec v}$. We need to exhibit closed terms $\pvec r', \pvec s$ and $\pvec p$ such that, for all $\pvec a,\pvec c, \pvec v$,
$$\forall \pvec b \in \pvec r' \pvec a \pvec c \pvec v \, \uInter{\Gamma}{\pvec a}{\pvec b} \wedge \forall \pvec d \in \pvec s \pvec a \pvec c \pvec v \, \uInter{A}{\pvec c}{\pvec d} \,\, \to \, \uInter{B}{\pvec p \pvec a \pvec c}{\pvec v}.$$
Let $\pvec r', \pvec s$ and $\pvec p$ be terms so that we have the equalities $\pvec r'\pvec a \pvec c \pvec v = \pvec r \pvec a \pvec c \pvec c \pvec v$, $\pvec p \pvec a \pvec c = \pvec q \pvec a \pvec c \pvec c$ and $\pvec s \pvec a \pvec c \pvec v = \pvec t \pvec a \pvec c \pvec c \pvec v \cup \pvec t' \pvec a \pvec c \pvec c \pvec v$. This choice clearly works. It is, of course, the possibility of accumulating counter-witnesses (by taking a union) in the definition of the term $\pvec s$ that necessitates the Diller-Nahm interpretation (vis-\`a-vis the {\em dialectica} interpretation). \\[1mm]
We also study the structural rule of weakening. This is the rule
\[
\begin{prooftree}
    \Gamma \proves C
    \justifies
    \Gamma, A \proves C
\end{prooftree}
\]
Suppose that we have closed terms $\pvec r$ and $\pvec q$ witnessing the premise of the rule. That is, we assume that $\TargetT$ proves that for all $\pvec a$ and $\pvec v$,
$$\forall \pvec b \in \pvec r \pvec a \pvec v \, \uInter{\Gamma}{\pvec a}{\pvec b} \,\to\, \uInter{C}{\pvec q \pvec a}{\pvec v}.$$
We must obtain closed terms $\pvec t$, $\pvec s$ and $\pvec p$ such that, for all $\pvec a$, $\pvec c$ and $\pvec v$,
$$\forall \pvec b \in \pvec t \pvec a \pvec c \pvec v \, \uInter{\Gamma}{\pvec a}{\pvec b} \wedge \forall \pvec d \in \pvec s \pvec a \pvec c \pvec v \, \uInter{A}{\pvec c}{\pvec d}\,\to\, \uInter{C}{\pvec p \pvec a \pvec c}{\pvec v}.$$
Let $\pvec t$, $\pvec s$ and $\pvec p$ be such that $\pvec t \pvec a \pvec c \pvec v = \pvec r \pvec a \pvec v$, $\pvec s \pvec a \pvec c \pvec v = \{\pvec e\}$ and $\pvec p \pvec a \pvec c = \pvec q \pvec a$. Here $\pvec e$ is a tuple of closed terms of appropriate types and $\{ \pvec e \}$ is the corresponding tuple of singletons (see the end of Subsection \ref{witnessing-section}). This choice works. \\[1mm]
The third propositional rule that we study is the cut rule.
\[
\begin{prooftree}
    \Gamma \proves B \quad \quad \Delta, B \proves C
    \justifies
    \Gamma, \Delta \proves C
\end{prooftree}
\]
So, suppose that there are closed terms $\pvec t$ and $\pvec q$ which witness the first premise of the cut rule. So, $\TargetT$ proves that for all $\pvec a, \pvec d$, $$\forall {\pvec b} \in \pvec t \pvec a \pvec d \, \uInter{\Gamma}{\pvec a}{\pvec b} \, \to \, \uInter{B}{\pvec q \pvec a}{\pvec d}.$$ 
Suppose, further, that there are closed terms $\pvec t', \pvec s$ and $\pvec r$ which witness the second premise of the cut rule, i.e., that for all $\pvec e, \pvec c, \pvec v$, $$\forall {\pvec f} \in {\pvec t'}{\pvec e}{\pvec c}{\pvec v} \, \uInter{\Delta}{\pvec e}{\pvec f} \wedge \forall {\pvec d} \in {\pvec s}{\pvec e}{\pvec c}{\pvec v} \uInter{B}{\pvec c}{\pvec d}\,\, \to \, \uInter{C}{\pvec r \pvec e \pvec c}{\pvec v}.$$
We must exhibit closed terms ${\pvec l}, {\pvec m}$ and ${\pvec p}$ such that, for all $\pvec a, \pvec e, \pvec v$, $$\forall {\pvec b} \in {\pvec l}{\pvec a}{\pvec e}{\pvec v} \, \uInter{\Gamma}{\pvec a}{\pvec b} \wedge \forall {\pvec f} \in {\pvec m}{\pvec a}{\pvec e}{\pvec v} \, \uInter{\Delta}{\pvec e}{\pvec f} \,\, \to \, \uInter{C}{\pvec p\pvec a \pvec e}{\pvec v}.$$
It is easy to check that $\pvec m \pvec a \pvec e \pvec v = \pvec t' \pvec e (\pvec q \pvec a) \pvec v$, $\pvec p \pvec a \pvec e = \pvec r \pvec e (\pvec q \pvec a)$ and $$\pvec l\pvec a \pvec e \pvec v = \bigcup_{\pvec d \in \pvec s \pvec e (\pvec q \pvec a)\pvec v} \pvec t \pvec a \pvec d$$ do the job. \\[1mm]
We finish by studying each of the four quantifier rules because here there is some novelty. \\[1mm]
Consider the rule $\exists$-L: 
\[
\begin{prooftree}
    \Gamma, A(z^\sigma) \proves B
    \justifies
    \Gamma, \exists z^\sigma A(z^\sigma) \proves B
\end{prooftree}
\]
where $\sigma$ is a finite type and the variable $z^\sigma$ is neither free in $\Gamma$ nor in $B$. Suppose that we have closed terms $\pvec u, \pvec s$ and $\pvec t$ witnessing the premise of the rule. That means that, for all $\pvec c, \pvec a$, $\pvec d$ and $z^\sigma$, 
$$\forall \pvec e \in \pvec u \pvec c \pvec a \pvec d \, \uInter{\Gamma}{\pvec c}{\pvec e}\wedge \forall \pvec b \in \pvec s \pvec c \pvec a \pvec d \uInter{A(z)}{\pvec a}{\pvec b} \to \uInter{B}{\pvec t \pvec c \pvec a}{\pvec d}.$$ 
In order to realise the conclusion of the rule, we must define terms $\pvec v, \pvec r$ and $\pvec q$, such that, for all $\pvec c$, $\pvec a$ and $\pvec d$, 
$$\forall \pvec e \in \pvec v \pvec c \pvec a \pvec d \, \uInter{\Gamma}{\pvec c}{\pvec e} \wedge \forall \pvec B \in \pvec r \pvec c \pvec a \pvec d \, \exists z^\sigma \forall \pvec b \in \pvec B \uInter{A(z)}{\pvec a}{\pvec b} \to \uInter{B}{\pvec q \pvec c \pvec a}{\pvec d}.$$ 
We can just take $\pvec q = \pvec t$, $\pvec v = \pvec u$ and $\pvec r \pvec c \pvec a \pvec d = \pvec \{\pvec s \pvec c \pvec a \pvec d\pvec \}$. \\[1mm]
Before continuing with the study of the other three quantifier rules, we want to note that the simpler clause for the existential quantifier discussed in the last subsection, namely putting $\uInter{\exists x^\sigma A(x)}{\pvec a}{\pvec b}$ as $\exists x^\sigma \uInter{A(x)}{\pvec a}{\pvec b}$, would fail at this point. With the simpler clause, one would need terms $\pvec v, \pvec r$ and $\pvec q$, such that, for all $\pvec c$, $\pvec a$ and $\pvec d$, $$\forall \pvec e \in \pvec v \pvec c \pvec a \pvec d \, \uInter{\Gamma}{\pvec c}{\pvec e} \wedge \forall \pvec b \in \pvec r \pvec c \pvec a \pvec d \, \exists z^\sigma \uInter{A(z)}{\pvec a}{\pvec b} \to \uInter{B}{\pvec q \pvec c \pvec a}{\pvec d}.$$ The problem lies with the fact that the antecedent of the premise of the rule requires the stronger $\exists z^\sigma \forall \pvec b \in \pvec s \pvec c \pvec a \pvec d \uInter{A(z)}{\pvec a}{\pvec b}$ whereas, in the conclusion of the rule, the antecedent has instead the weaker condition $\forall \pvec b \in \pvec r \pvec c \pvec a \pvec d \,\exists z^\sigma \uInter{A(z)}{\pvec a}{\pvec b}$, and in general we do not have 
$$\forall \pvec b \in \pvec r \pvec c \pvec a \pvec d \,\exists z^\sigma \uInter{A(z)}{\pvec a}{\pvec b} \vdash \exists z^\sigma \forall \pvec b \in \pvec r \pvec c \pvec a \pvec d \uInter{A(z)}{\pvec a}{\pvec b}.$$ 
Let us analyze the rule $\forall$-R:
\[
\begin{prooftree}
    \Gamma \proves A(z^\sigma)
    \justifies
    \Gamma  \proves \forall z^\sigma A(z^\sigma)
\end{prooftree}
\]
where the variable $z^\sigma$ is not free in $\Gamma$. Suppose that we have closed terms $\pvec q$ and $\pvec t$ witnessing the premise of the rule. Hence, for all $\pvec a$, $\pvec d$ and $z^\sigma$,
$$\forall \pvec b \in \pvec q \pvec a \pvec d \, \uInter{\Gamma}{\pvec a}{\pvec b} \proves \uInter{A(z^\sigma)}{\pvec t \pvec a}{\pvec d}.$$
We get 
$$\forall \pvec b \in \pvec q \pvec a \pvec d \uInter{\Gamma}{\pvec a}{\pvec b} \proves \forall z^\sigma \uInter{A(z^\sigma)}{\pvec t \pvec a}{\pvec d}$$
and, obviously, this is the {\em desideratum} for the conclusion. \\[1mm]
We now look at the rule $\exists$-R:
\[
\begin{prooftree}
    \Gamma \proves A(r^\sigma)
    \justifies
    \Gamma \proves \exists y^\sigma \!A(y^\sigma)
\end{prooftree}
\]
Assume that we have terms $\pvec s$ and $\pvec t$ witnessing the premise of the rule, i.e., for all $\pvec a$ and $\pvec f$,
$$\forall \pvec b \in \pvec s \pvec a \pvec f \, \uInter{\Gamma}{\pvec a}{\pvec b}\,\, \to \, \uInter{A(r)}{\pvec t \pvec a}{\pvec f}.$$
Hence
$$
    \forall \pvec f \in \pvec F \forall \pvec b \in \pvec s \pvec a \pvec f \, \uInter{\Gamma}{\pvec a}{\pvec b}\,\, \to \, \forall \pvec f \in \pvec F \uInter{A(r)}{\pvec t \pvec a}{\pvec f}
$$
and, {\em a fortiori}, 
$$\forall \pvec b \in \bigcup_{\pvec f \in \pvec F} \pvec s \pvec a \pvec f \, \uInter{\Gamma}{\pvec a}{\pvec b}\,\, \to \, \exists y^\sigma \forall \pvec f \in \pvec F \uInter{A(y)}{\pvec t \pvec a}{\pvec f},$$
for all $\pvec a$ and $\pvec F$. This is what is needed. \\[1mm]
Finally, we discuss the rule $\forall$-L:
\[
\begin{prooftree}
    \Gamma, A(r^\sigma) \proves B
    \justifies
    \Gamma, \forall y^\sigma A(y^\sigma) \proves B
\end{prooftree}
\]
Assume that we have terms which witness the premise, i.e, that there are terms $\pvec q, \pvec r$ and $\pvec t$ such that, for all $\pvec a, \pvec e$ and $\pvec v$, we have 
$$\forall \pvec b \in \pvec q \pvec a\pvec e\pvec v \uInter{\Gamma}{\pvec a}{\pvec b} \wedge \forall \pvec f \in \pvec r \pvec a\pvec e\pvec v \uInter{A(r)}{\pvec e}{\pvec f} \,\,\to\, \,\uInter{B}{\pvec t \pvec a\pvec e}{\pvec v}.$$
Hence:
$$\forall \pvec b \in \pvec q \pvec a\pvec e\pvec v \uInter{\Gamma}{\pvec a}{\pvec b} \wedge \forall \pvec f \in \pvec r \pvec a\pvec e\pvec v \forall y^\sigma \uInter{A(y^\sigma)}{\pvec e}{\pvec f} \,\,\to\, \,\uInter{B}{\pvec t \pvec a\pvec e}{\pvec v}.$$
This is the {\em desideratum} for the conclusion. \end{proof}

\section{Relativization and type-informative interpretations}
\label{relativization-section}

Our treatment of an ``informative'' interpretation of the quantifiers is based on the background uniform interpretation of the quantifiers (previous section) together with distinguished information predicates $\witness{x^\sigma}{\sigma}$ associated with each finite type $\sigma$. It is the presence of these (unary) information predicates that accounts for a ``type-informative'' interpretation of the quantifiers. 

\begin{definition}[Type-informative predicates and the language $\LangI{\SourceT}$]\label{def-type-informative-predicates} Let $\LangI{\SourceT}$ consist of the language $\Lang{\SourceT}$ extended with unary predicates $\witness{x^\sigma}{\sigma}$, one for each finite type $\sigma$ of $\Lang{\SourceT}$. We call these distinguished unary predicates $\witness{x^\sigma}{\sigma}$, {\em type-informative predicates}.
\end{definition}

We will consider different base interpretations of $\LangI{\SourceT}$.

\begin{example}[Precise interpretation] 
\label{precise-def} Assume that $\sigma$ is a type of $\Lang{\SourceT}$ which is also a witnessing type of $\Lang{\TargetT}$. Define the {\em precise interpretation} of $\witness{x^\sigma}{\sigma}$ as: 
\begin{itemize}
    \item $\pvec \tau_\sigma^+$ is $\sigma$,
    \item $\pvec \tau_\sigma^-$ is the empty tuple, and
    \item $\uwitness{x}{\sigma}{a}{}$ is $x=a$.
\end{itemize}
\end{example}

\begin{example}[Finite-set interpretation] 
\label{finite-set-def} Assume that $\sigma$ is a type of $\Lang{\SourceT}$ such that $\sigma^*$ is a witnessing type of $\Lang{\TargetT}$. Define the {\em finite-set interpretation} of $\witness{x^\sigma}{\sigma}$ as: 
\begin{itemize}
    \item $\pvec \tau_\sigma^+$ is $\sigma^*$,
    \item $\pvec \tau_\sigma^-$ is the empty tuple, and
    \item $\uwitness{x}{\sigma}{a}{}$ is $x \in a$.
\end{itemize}
\end{example}

\begin{example}[Uniform interpretation]\label{uniform-def} The {\em uniform interpretation} of $\witness{x^\sigma}{\sigma}$ is defined as: 
\begin{itemize}
    \item $\pvec \tau_\sigma^+$ and $\pvec \tau_\sigma^-$ are empty tuples, and
    \item $\uwitness{x}{\sigma}{}{}$ is always true (for $x$ of type $\sigma$).
\end{itemize}
\end{example}

\begin{notation} \label{type-cases}
    If in the base interpretation $\uwitness{x}{\sigma}{\pvec a}{\pvec b}$ of the type-informative predicate $\witness{x^\sigma}{\sigma}$ the tuple $\pvec b$ is empty, we say that the type $\sigma$ {\em carries no negative witnesses}. Both the precise interpretations and the finite-set interpretations above illustrate this situation. If $\pvec a$ is the empty tuple, we say that $\sigma$ {\em carries no positive witnesses}. When both $\pvec a$ and $\pvec b$ are empty, we say that the type $\sigma$ is {\em witness-free}. Uniform interpretations, as above, are witness-free.
\end{notation}

\begin{definition}[Type-informative base interpretation of $\LangI{\SourceT}$ into $\TargetT$]\label{def-type-informative-base} A base interpretation of $\LangI{\SourceT}$ into $\Lang{\TargetT}$ is called a \emph{type-informative base interpretation of $\LangI{\SourceT}$ into $\TargetT$} if each sentence of the following two families is $\U$-interpretable in $\TargetT$:
\begin{itemize}
    \item[(i)] The sentences $\witness{c^\sigma}{\sigma}$, where $c^\sigma$ is a constant of $\Lang{\SourceT}$.
    \item[(ii)] The sentences $\forall x^\sigma \forall f^{\sigma \to \tau} (\witness{x}{\sigma} \wedge \witness{f}{\sigma \to \tau} \to \witness{fx}{\tau})$, where $\sigma, \tau$ are finite types of $\Lang{\SourceT}$.
\end{itemize}
\end{definition}

\begin{remark}\label{terms-source}
    It is clear that the sentences of the above two conditions imply $\witness{t}{\sigma}$, for every closed term $t$ of type $\sigma$. Since our framework assumes that, for each base type $X$, there is a constant $e_X$ of type $X$ then, from these constants and lambda-abstraction (combinators), it is well known that we can recursively define a closed term for each type of the source theory. As a consequence, the type-informative predicates are non-empty. This observation is important because, in the next subsection, we define a relativization of formulas to the type-informative predicates and, for the relativization to operate properly, these predicates must be non-empty.
\end{remark}

\begin{remark} \label{criterium} Note that, by unwinding the definition of $\U$-interpretability, the sentence 
$$\forall x^\sigma \forall f^{\sigma \to \tau} (\witness{x}{\sigma} \wedge \witness{f}{\sigma \to \tau} \to \witness{fx}{\tau})$$ 
of $\LangI{\SourceT}$ is $\U$-interpretable in $\TargetT$ if, and only if, there are closed terms $\pvec q_{\sigma,\tau}$, $\pvec r_{\sigma,\tau}$  and $\pvec t_{\sigma,\tau}$ such that the following holds in the $\TargetT$: for all $\pvec a, \pvec c,\pvec v$ of appropriate types and for all $x^\sigma$ and $f^{\sigma \to \tau}$ 
\[
    \forall \pvec b \in \pvec q_{\sigma,\tau} \pvec a \pvec c \pvec v\, 
    \uwitness{x}{\sigma}{\pvec a}{\pvec b}
    \wedge
    \forall \pvec d \in \pvec r_{\sigma,\tau} \pvec a \pvec c \pvec v\, 
    \uwitness{f}{\sigma \to \tau}{\pvec c}{\pvec d} 
    \to
    \uwitness{fx}{\tau}{\pvec t_{\sigma,\tau} \pvec a \pvec c}{\pvec v}.
\] 
\end{remark}

\begin{example}[Precise interpretation is type-informative] \label{all-precise-def} The precise interpretation (see Example \ref{precise-def}) for all the finite types of $\Lang{\SourceT}$ is a type-informative base interpretation of $\LangI{\SourceT}$ into $\TargetT$. To see this, we need to check that the sentences $(i)$ and $(ii)$ of Definition \ref{def-type-informative-base} are $\U$-interpretable in $\TargetT$. The sentences $(i)$ are $\U$-interpretable because one always has $\uwitness{c}{\sigma}{c}{}$, for constants $c$ of type $\sigma$ of $\Lang{\SourceT}$. By using the criterion of the previous remark, it is clear that the sentences $(ii)$ are also $\U$-interpretable in $\TargetT$: just put $\pvec q_{\sigma,\tau}$ and $\pvec r_{\sigma,\tau}$ the empty tuple and $\pvec t_{\sigma,\tau}$ given by $\pvec t_{\sigma,\tau} \pvec a \pvec c = \pvec c \pvec a$.
\end{example}

\begin{example} The uniform interpretation (see Example \ref{uniform-def}) for all the finite types of $\Lang{\SourceT}$ is obviously a type-informative base interpretation of $\LangI{\SourceT}$ into $\TargetT$ (there is nothing to show). 
\end{example}

\subsection{The main soundness theorem}
\label{main-soundness-section}

The concept of relativization, as in the definition below, is well known. It plays a fundamental role in what follows.

\begin{definition}[Relativization to type-informative predicates] \label{def-relativization} To each formula $A$ of $\Lang{\SourceT}$, we associate a formula $\rel{A}$ of $\LangI{\SourceT}$ as follows:
\eqleft{
\begin{array}{lcl}
	\rel{(A \wedge B)}
		& :\equiv & \rel{A} \wedge \rel{B} \\[1mm]
	\rel{(A \to B)} 
		& :\equiv & \rel{A} \to \rel{B}\\[1mm]
	\rel{(\forall x^\sigma A(x))} & :\equiv & \forall x^\sigma (\witness{x}{\sigma} \to \rel{A(x)}) \\[1mm]
	\rel{(\exists x^\sigma A(x))}& :\equiv & \exists x^\sigma (\witness{x}{\sigma} \wedge \rel{A(x)})
\end{array}	
}
where in the base case, when $A$ is an atomic predicate or $\bot$, $\rel{A}$ is $A$ itself. 
\end{definition}

The formula $\rel{A}$ is obtained by relativizing the quantifications of $\Lang{\SourceT}$ to corresponding type-informative predicates.

\begin{definition}[$\I$-interpretation of $\Lang{\SourceT}$ into $\Lang{\TargetT}$] \label{I-interpretation-def} Let be given a base interpretation of $\LangI{\SourceT}$ into $\Lang{\TargetT}$. For each formula $A$ of $\Lang{\SourceT}$, we define its $\I${\em -interpretation} $\iInter{A}{\pvec a}{\pvec b}$ into $\Lang{\TargetT}$ as follows:
\[ \iInter{A}{\pvec a}{\pvec b} :\equiv \uInter{\rel{A}}{\pvec a}{\pvec b} \] 
where $\uInter{\cdot}{}{}$ is the $\U$-interpretation of $\LangI{\SourceT}$ into $\Lang{\TargetT}$.
We call $\iInter{A}{\pvec a}{\pvec b}$ the $\I${\em -interpretation} (or {\em type-informative interpretation}) of $A$. 
\end{definition}

\newcommand{\bounded}[4]{#2 \lhd^{#1}_{#4} #3}

\begin{notation}\label{positive-negative-I-witnesses}
    If in the $\I$-interpretation of $A$, the formula $\iInter{A}{\pvec a}{\pvec b}$ is such that $\pvec a$ is the empty tuple, we say that \emph{$A$ has no positive $\I$-witnesses}, while if $\pvec b$ is empty, we say that \emph{$A$ has no negative $\I$-witnesses}. When $A$ has no positive nor negative $\I$-witnesses, we say that $A$ is \emph{$\I$-witness-free}. 
\end{notation}

When $A$ is quantifier-free, $\iInter{A}{\pvec a}{\pvec b}$ is $\uInter{A}{\pvec a}{\pvec b}$. In particular, for the type-informative predicates, we have that $\iInter{\witness{x}{\sigma}}{\pvec a}{\pvec b}$ is $\uInter{\witness{x}{\sigma}}{\pvec a}{\pvec b}$.

\begin{proposition}[Inductive presentation of $\iInter{A}{\pvec a}{\pvec b}$] \label{I-interpretation-prop}     Assume that $A$ and $B$ have $\I$-interpretations $\iInter{A}{\pvec a}{\pvec b}$ and $\iInter{B}{\pvec c}{\pvec d}$, then: 
\eqleft{
\begin{array}{lcl}
	\iInter{A \wedge B}{\pvec a, \pvec c}{\pvec b, \pvec d} 
		& \mathrm{\,is\,} & \iInter{A}{\pvec a}{\pvec b} \wedge \iInter{B}{\pvec c}{\pvec d} \\[1mm]
	\iInter{A \to B}{\pvec f, \pvec g}{\pvec a, \pvec d} 
		& \mathrm{\,is\,} & \forall \pvec b \in \pvec g \pvec a \pvec d \iInter{A}{\pvec a}{\pvec b} \,\to 
				\iInter{B}{\pvec f \pvec a}{\pvec d} \\[1mm]
	\iInter{\forall x^\sigma A(x)}{\pvec f, \pvec g}{\pvec u, \pvec b} & \mathrm{\,is\,} & \forall x^\sigma (\forall \pvec v \in \pvec g \pvec u \pvec b \, \iInter{\witness{x}{\sigma}}{\pvec u}{\pvec v} \, \to \iInter{A(x)}{\pvec f \pvec u}{\pvec b}) \\[1mm]
	\iInter{\exists x^\sigma A(x)}{\pvec u, \pvec a}{\pvec V, \pvec B} & \mathrm{\,is\,} & \exists x^\sigma (\forall \pvec v \in \pvec V \, \iInter{\witness{x}{\sigma}}{\pvec u}{\pvec v} \wedge \forall \pvec b \in \pvec B \, \iInter{A(x)}{\pvec a}{\pvec b}).
\end{array}	
}
\end{proposition}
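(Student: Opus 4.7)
The plan is to unwind the definition $\iInter{A}{\pvec a}{\pvec b} \equiv \uInter{\rel{A}}{\pvec a}{\pvec b}$ by case analysis on the outermost connective of $A$, applying in each case the appropriate $\U$-interpretation clause from Definition \ref{def-uniform-interpretation} to the relativization clause from Definition \ref{def-relativization}, and then folding the result back using the inductive hypothesis that $\iInter{C}{\pvec c}{\pvec e}$ and $\uInter{\rel{C}}{\pvec c}{\pvec e}$ agree for each immediate subformula $C$.

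For conjunction, $\rel{(A \wedge B)}$ is $\rel{A} \wedge \rel{B}$ and the $\wedge$-clause of the $\U$-interpretation yields $\uInter{\rel{A}}{\pvec a}{\pvec b} \wedge \uInter{\rel{B}}{\pvec c}{\pvec d}$, which rewrites as $\iInter{A}{\pvec a}{\pvec b} \wedge \iInter{B}{\pvec c}{\pvec d}$. For implication, the same pattern works since $\rel{(A \to B)}$ is $\rel{A} \to \rel{B}$ and the $\to$-clause supplies precisely the finite-set antecedent displayed in the proposition. For the universal quantifier, $\rel{(\forall x^\sigma A(x))}$ is $\forall x^\sigma (\witness{x}{\sigma} \to \rel{A(x)})$; applying first the $\forall$-clause (which is witness-preserving) and then the $\to$-clause to the body reproduces the stated nested form with the bounded universal over $\pvec v \in \pvec g \pvec u \pvec b$ testing the type-informative hypothesis.

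The only case that requires a small bookkeeping step is the existential one. Here $\rel{(\exists x^\sigma A(x))}$ is $\exists x^\sigma (\witness{x}{\sigma} \wedge \rel{A(x)})$, whose $\U$-interpretation, after applying the $\exists$-clause and then the $\wedge$-clause inside, is
\[
    \exists x^\sigma \, \forall (\pvec v, \pvec b) \in \pvec C \, \bigl( \uInter{\witness{x}{\sigma}}{\pvec u}{\pvec v} \wedge \uInter{\rel{A(x)}}{\pvec a}{\pvec b} \bigr),
\]
where $\pvec C$ is a single finite non-empty set of paired counter-witnesses. The proposition, by contrast, presents the interpretation with two separate finite sets $\pvec V$ and $\pvec B$. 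These two presentations are interconvertible: from a set $\pvec C$ of pairs one obtains $\pvec V$ and $\pvec B$ as its two projections, and conversely from $\pvec V$ and $\pvec B$ one takes $\pvec C = \pvec V \times \pvec B$; in either direction the $\forall$ over pairs splits into the conjunction of two separate bounded $\forall$'s because the two conjuncts mention disjoint counter-witness variables. This equivalence (a routine fact about finite sets in the target theory, definable using the star constants $\cup$ and $\bigcup$) is the only non-automatic step, and it is what licenses writing the displayed form with $\pvec V, \pvec B$ rather than a single set $\pvec C$.
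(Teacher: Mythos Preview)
Your verification is correct and is exactly what the paper intends: its own proof is the single sentence ``A simple verification shows that the clauses of the $\I$-interpretation are as above.'' One small simplification for the existential case: under the paper's convention, boldface $\pvec B$ in the $\exists$-clause of Definition~\ref{def-uniform-interpretation} is already a \emph{tuple} of finite sets (one per component of $\pvec b$, with $\pvec b \in \pvec B$ read componentwise), not a single set of paired tuples. Hence the negative witnesses of $\exists x^\sigma(\witness{x}{\sigma}\wedge\rel{A(x)})$ are literally the concatenated tuple $\pvec V,\pvec B$, and the only remaining step is the distributivity
\[
\forall \pvec v\in\pvec V\,\forall \pvec b\in\pvec B\,(P(\pvec v)\wedge Q(\pvec b))\;\leftrightarrow\;\forall \pvec v\in\pvec V\,P(\pvec v)\wedge\forall \pvec b\in\pvec B\,Q(\pvec b),
\]
which holds because star types are non-empty; no projection/product construction is needed.
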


\begin{proof} A simple verification shows that clauses of $\I$-interpretation are as above. \end{proof}

\begin{remark}\label{fixing-base-2} As with the $\U$-interpretation (cf.\ Remark \ref{fixing-base}), the $\I$-interpretation should also be seen as a \emph{family of interpretations} parametrized by a base interpretation of $\LangI{\SourceT}$ into $\Lang{\TargetT}$. Given such a base interpretation, we can speak of the $\I$-interpretation of formulas of $\Lang{\SourceT}$ and the $\U$-interpretation of formulas of $\LangI{\SourceT}$. 
\end{remark}

\begin{definition}[Formulas $\I$-interpretable in $\TargetT$] Let be given a base interpretation of $\LangI{\SourceT}$ into $\Lang{\TargetT}$. A formula $A(\pvec x^{\pvec \sigma})$ of $\Lang{\SourceT}$ is said to be $\I$-\emph{interpretable} in $\TargetT$ if the sentence
$$\forall \pvec x^{\pvec \sigma} (\witness{\pvec x}{\pvec \sigma} \to \rel{(A(\pvec x))})$$
of $\LangI{\SourceT}$ is $\U$-interpretable in $\TargetT$.
\end{definition}

If we unwind the definitions, the $\I$-interpretability of $A(\pvec x)$ corresponds to the existence of sequences of closed terms $\pvec q$ and $\pvec t$ of $\WSorts^\omega_*$ such that $\TargetT$ proves the following sentence: for all $\pvec b$ and $\pvec c$ and for all $\pvec x^{\pvec \sigma}$, 
$$\mbox{if \,}\forall \pvec d \in \pvec q \pvec b \pvec c \, \iInter{\witness{\pvec x}{\pvec \sigma}}{\pvec c}{\pvec d} \mbox{,\, then } \iInter{A(\pvec x)}{\pvec t \pvec c}{\pvec b}.$$
Note that we are assuming that the free variables of $A$ are among $\pvec x$.

\begin{lemma}\label{universal}
    Let $A(\pvec x^{\pvec \sigma})$ be a formula (with its free variables as shown) which has no positive $\I$-witnesses. Assume that the target theory $\TargetT$ proves
$$\forall \pvec b \,\forall \pvec x^{\pvec \sigma} \,\iInter{A(\pvec x)}{}{\pvec b}.$$ 
Under these assumptions, the statement $\forall \pvec x^{\pvec \sigma} A(\pvec x)$ is $\I$-interpretable.
\end{lemma}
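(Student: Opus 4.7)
The plan is to unwind the definition of $\I$-interpretability for $\forall \pvec x^{\pvec \sigma} A(\pvec x)$ and show that, thanks to the hypothesis on $A$, all required positive witnesses can be chosen as closed default terms.

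By Definition \ref{I-interpretation-def}, showing that $\forall \pvec x^{\pvec \sigma} A(\pvec x)$ is $\I$-interpretable amounts to producing a tuple of closed witnessing terms $\pvec s$ such that $\TargetT$ proves $\forall \pvec b\, \uInter{\rel{(\forall \pvec x A(\pvec x))}}{\pvec s}{\pvec b}$, where
$$\rel{(\forall \pvec x^{\pvec \sigma} A(\pvec x))} \;\equiv\; \forall x_1^{\sigma_1}\bigl(\witness{x_1}{\sigma_1} \to \forall x_2^{\sigma_2}\bigl(\witness{x_2}{\sigma_2} \to \cdots \to \rel{A(\pvec x)}\bigr)\cdots\bigr).$$
The first step is to iterate the clauses of Definition \ref{def-uniform-interpretation} for $\forall$ and $\to$, peeling off one $x_i$ at a time. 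Each peeling contributes a pair of positive witnesses $(\pvec f_i, \pvec g_i)$ for its implication: $\pvec g_i$ produces a finite set of counter-witnesses for $\witness{x_i}{\sigma_i}$, while $\pvec f_i$ carries the (possibly higher-order) positive data needed to instantiate the inner quantifier structure.

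The second step is to invoke the hypothesis. Since $A(\pvec x)$ has no positive $\I$-witnesses, the innermost consequent $\uInter{\rel{A(\pvec x)}}{}{\pvec b}$ has an empty positive-witness tuple. Consequently, once everything has been unwound, the formula to be verified takes the schematic shape
$$\forall \pvec x \,\forall \pvec u_1 \cdots \forall \pvec u_n \,\forall \pvec b \;\Bigl(\bigwedge_{i=1}^n \forall \pvec v_i \in \pvec g_i \, (\ldots)\; \uInter{\witness{x_i}{\sigma_i}}{\pvec u_i}{\pvec v_i} \;\to\; \uInter{\rel{A(\pvec x)}}{}{\pvec b}\Bigr),$$
whose consequent is, by assumption, already provable in $\TargetT$ uniformly in $\pvec x$ and $\pvec b$.

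Since the consequent of this big implication holds unconditionally, I can fill each $\pvec f_i$ and $\pvec g_i$ with any closed term of the correct type, for instance the canonical default $e_\tau$ built recursively from the constants $e_W$ at the ground witnessing types via $e_{\rho \to \tau} := \lambda x^\rho.\,e_\tau$ and $e_{\rho^*} := \{e_\rho\}$ (see the end of Subsection \ref{witnessing-section}). The only real issue is the bookkeeping of the nested types of the $\pvec f_i, \pvec g_i$, but no genuine obstacle arises once one observes that the positive-witness structure of $\rel{(\forall \pvec x A(\pvec x))}$ is determined entirely by the type-informative predicates $\witness{x_i}{\sigma_i}$ and not by $A$ itself.
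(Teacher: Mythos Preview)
Your proposal is correct and follows essentially the same idea as the paper's proof: unwind the $\I$-interpretation of $\forall \pvec x^{\pvec \sigma} A(\pvec x)$, observe that the consequent $\iInter{A(\pvec x)}{}{\pvec b}$ holds unconditionally by hypothesis, and hence fill all required positive witnesses with arbitrary closed terms of the right types. The paper's version is simply more compressed: it uses the already-unwound form of $\I$-interpretability (stated just before the lemma) to write the requirement directly as $\forall \pvec b, \pvec c \,\forall \pvec x^{\pvec\sigma}(\forall \pvec d \in \pvec t \pvec b \pvec c\, \iwitness{\pvec x}{\pvec\sigma}{\pvec c}{\pvec d} \to \iInter{A(\pvec x)}{}{\pvec b})$ and observes that any $\pvec t$ will do, whereas you carry out the iterated peeling of quantifiers explicitly.
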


\begin{proof}
    The $\I$-interpretation of $\forall \pvec x^{\pvec \sigma} A(\pvec x)$ asks for closed terms $\pvec t$ such that, the target theory $\TargetT$ proves
$$\forall \pvec b, \pvec c \, \forall \pvec x^{\pvec \sigma}(\forall \pvec d \in \pvec t \pvec b \pvec c \iwitness{\pvec x}{\pvec \sigma}{\pvec c}{\pvec d} \to \iInter{A(\pvec x)}{}{\pvec b}).$$
Any tuple of closed terms $\pvec t$ of the appropriate types will do.
\end{proof}

\begin{remark}
    We now describe a variation of the above lemma. Let $\pvec \sigma$ be a tuple of types which carry no negative witnesses, and let $A(\pvec x^{\pvec \sigma})$ be a formula which has no positive $\I$-witnesses. Assume that the target $\TargetT$ proves 
$$\forall \pvec c, \pvec b \,\forall \pvec x^{\pvec \sigma} \,(\iwitness{\pvec x}{\pvec \sigma}{\pvec c}{} \to \iInter{A(\pvec x)}{}{\pvec b}).$$ 
In this situation, the statement $\forall \pvec x^{\pvec \sigma} A(\pvec x)$ is $\I$-interpretable. In fact, the $\I$-interpretation of this formula is just what is assumed. An illustration of this is, of course, the consideration of universal principles in G\"odel's {\em dialectica} interpretation of arithmetic.
\end{remark}

\begin{definition}[$\TargetT$-sound type-informative base interpretation of $\SourceT$] \label{def-I-sound-base} A base interpretation of $\LangI{\SourceT}$ into $\Lang{\TargetT}$ is called a $\TargetT$-sound type-informative base interpretation of $\SourceT$ if
\begin{itemize}
    \item it is a type-informative base interpretation of $\LangI{\SourceT}$ into $\TargetT$ (cf.\ Definition \ref{def-type-informative-base}), and
    \item each non-logical axiom $A$ of $\SourceT$ is $\I$-interpretable in $\TargetT$.
\end{itemize}
\end{definition}

For convenience, the second item above includes the axioms for equality and the axioms for combinators. However, by the previous lemma, the $\I$-interpretability of the axioms for combinators is automatic. The $\I$-interpretability of the axioms for equality is also automatic: their $\I$-interpretability follows from the observation that the $\I$-interpretation of an axiom of equality is (the relativization of) an axiom of equality.

In the theorem below, we assume that the free variables of $\Gamma$ and $A$ are among $\pvec x$.

\begin{theorem}[Main Soundness Theorem] \label{main-soundness} Let be given a $\TargetT$-sound type-informative base interpretation of $\SourceT$. If 
\[
    \Gamma(\pvec x^{\pvec \sigma}) 
    \proves_{\SourceT} 
    A(\pvec x^{\pvec \sigma})
\]
then there are tuples of closed witnessing terms $\pvec q, \pvec s$ and $\pvec t$ such that the target theory $\TargetT$ proves the following: for all $\pvec a$, $\pvec c$ and $\pvec d$ and for all $\pvec x^{\pvec \sigma}$, 
$$\mbox{if \,}\forall \pvec e \in \pvec q \pvec a \pvec c \pvec d \, \iInter{\witness{\pvec x}{\pvec \sigma}}{\pvec c}{\pvec e} \mbox{\, and \,} \forall \pvec b \in \pvec s \pvec a \pvec c \pvec d \iInter{\Gamma(\pvec x)}{\pvec a}{\pvec b} \mbox{,\, then } \iInter{A(\pvec x)}{\pvec t \pvec a \pvec c}{\pvec d}.$$
\end{theorem}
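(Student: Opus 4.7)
The plan is to reduce the theorem to the Uniform Soundness Theorem (Theorem~\ref{thm-uniform-soundness}) by interposing a relativization lemma. First, I would introduce an auxiliary theory $\SourceTI$ in the language $\LangI{\SourceT}$ whose non-logical axioms are: (a) the witness axioms $\witness{c}{\sigma}$ for each constant $c^\sigma$ of $\Lang{\SourceT}$; (b) the application-closure axioms $\forall x^\sigma \forall f^{\sigma\to\tau}(\witness{x}{\sigma} \wedge \witness{f}{\sigma\to\tau} \to \witness{fx}{\tau})$; and (c) the relativized versions $\rel{B}$ of every non-logical axiom $B$ of $\SourceT$. Under our hypothesis of a $\TargetT$-sound type-informative base interpretation, each of these axioms is $\U$-interpretable in $\TargetT$: families (a) and (b) by Definition~\ref{def-type-informative-base}, and family (c) because $\I$-interpretability of $B$ is by definition the $\U$-interpretability of $\rel{B}$. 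Since the logical axioms of $\SourceTI$ (equality and combinators) are $\U$-interpretable just as in the Uniform Soundness Theorem, the given base interpretation of $\LangI{\SourceT}$ constitutes a $\TargetT$-sound base interpretation of $\SourceTI$.

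Next, I would establish the following relativization lemma: if $\Gamma(\pvec x^{\pvec \sigma}) \proves_{\SourceT} A(\pvec x^{\pvec \sigma})$, then $\rel{\Gamma(\pvec x)},\, \witness{\pvec x}{\pvec \sigma} \proves_{\SourceTI} \rel{A(\pvec x)}$. The proof is by induction on the Gentzen derivation. Structural rules and the propositional rules commute with relativization in a routine fashion because $\rel{(\cdot)}$ fixes $\wedge$ and $\to$. For $\forall$-R and $\exists$-L, the eigenvariable freshness condition lets one assume $\witness{z^\sigma}{\sigma}$ as an antecedent and then discharge it into the relativized quantifier. For $\forall$-L and $\exists$-R, substitution of a source-language term $r$ of type $\sigma$ creates the obligation $\witness{r}{\sigma}$; this is discharged in $\SourceTI$ by structural recursion on $r$, using axioms (a) for constants (including the combinators), axioms (b) for application, and the assumption $\witness{\pvec x}{\pvec \sigma}$ for the free variables of $r$. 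Non-logical axioms of $\SourceT$ become their relativizations, which live in $\SourceTI$ by family (c); logical equality axioms are quantifier-free and relativization leaves them essentially untouched.

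With the relativization lemma in hand, one applies the Uniform Soundness Theorem to the $\TargetT$-sound base interpretation of $\SourceTI$ and to the sequent $\rel{\Gamma(\pvec x)},\, \witness{\pvec x}{\pvec \sigma} \proves_{\SourceTI} \rel{A(\pvec x)}$. This produces closed witnessing terms $\pvec s, \pvec q, \pvec t$ such that $\TargetT$ proves, for all $\pvec a, \pvec c, \pvec d$ and all $\pvec x^{\pvec \sigma}$,
\[
\forall \pvec b \in \pvec s \pvec a \pvec c \pvec d \, \uInter{\rel{\Gamma(\pvec x)}}{\pvec a}{\pvec b} \wedge \forall \pvec e \in \pvec q \pvec a \pvec c \pvec d \, \uInter{\witness{\pvec x}{\pvec \sigma}}{\pvec c}{\pvec e} \to \uInter{\rel{A(\pvec x)}}{\pvec t \pvec a \pvec c}{\pvec d}.
\]
Unfolding $\iInter{\cdot}{}{} \equiv \uInter{\rel{\cdot}}{}{}$, and noting that for the atomic predicate $\witness{\pvec x}{\pvec \sigma}$ the $\I$- and $\U$-interpretations coincide with the given base interpretation, yields exactly the conclusion of the theorem. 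I expect the main obstacle to be the proof of the relativization lemma at the rules $\forall$-L and $\exists$-R: one must verify uniformly, by induction on the structure of the substituted term and using axioms (a), (b), and the parametric hypothesis $\witness{\pvec x}{\pvec \sigma}$, that every term of $\Lang{\SourceT}$ inherits the appropriate type-informative predicate in $\SourceTI$. Once that verification is in place, the reduction to Theorem~\ref{thm-uniform-soundness} is mechanical.
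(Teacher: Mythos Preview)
Your proposal is correct and follows essentially the same route as the paper: introduce the auxiliary theory $\SourceTI$ with axioms (i), (ii), and the relativized non-logical axioms of $\SourceT$, invoke the relativization lemma to pass from $\Gamma(\pvec x) \proves_{\SourceT} A(\pvec x)$ to $\witness{\pvec x}{\pvec \sigma}, \rel{\Gamma(\pvec x)} \proves_{\SourceTI} \rel{A(\pvec x)}$, and then apply the Uniform Soundness Theorem. The paper simply cites the relativization lemma as a ``well-known fact'' without the inductive detail you supply, but otherwise the arguments coincide.
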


\begin{proof} We consider the auxiliary theory $\SourceTI$, formulated in the language $\LangI{\SourceT}$, whose axioms are the sentences $(i)$ and $(ii)$ of Definition \ref{def-type-informative-base} and $(iii)$ the relativized sentences $\rel{A}$, for each non-logical axiom $A$ of $\SourceT$. Under these circumstances, it is a well-known fact that if $\Gamma(\pvec x^{\pvec \sigma}) \proves_{\SourceT} A(\pvec x^{\pvec \sigma})$, then 
$$\witness{\pvec x}{\pvec \sigma}, \rel{\Gamma(\pvec x)} \proves \rel{A(\pvec x)},$$ 
where the proof takes place in $\SourceTI$. The axioms $(i)$, $(ii)$ and $(iii)$ of $\SourceTI$ are $\U$-interpretable by our assumption that we are given a $\TargetT$-sound type-informative base interpretation of $\SourceT$. Therefore, by the Uniform Soundness Theorem (see Theorem \ref{thm-uniform-soundness}), there are tuples of closed witnessing terms $\pvec q, \pvec s$ and $\pvec t$ such that $\TargetT$ proves the following: for all $\pvec a$, $\pvec c$ and $\pvec d$ and for all $\pvec x^{\pvec \sigma}$, 
$$\mbox{if \,}\forall \pvec e \in \pvec q \pvec a \pvec c \pvec d \, \uInter{\witness{\pvec x}{\pvec \sigma}}{\pvec c}{\pvec e} \mbox{\, and \,} \forall \pvec b \in \pvec s \pvec a \pvec c \pvec d \uInter{\rel{\Gamma(\pvec x)}}{\pvec a}{\pvec b} \mbox{,\, then } \uInter{\rel{A(\pvec x)}}{\pvec t \pvec a \pvec c}{\pvec d}.$$ This gives what we want. \end{proof}

\begin{figure}[ht]
    \centering
    \includegraphics[width=0.6\linewidth]{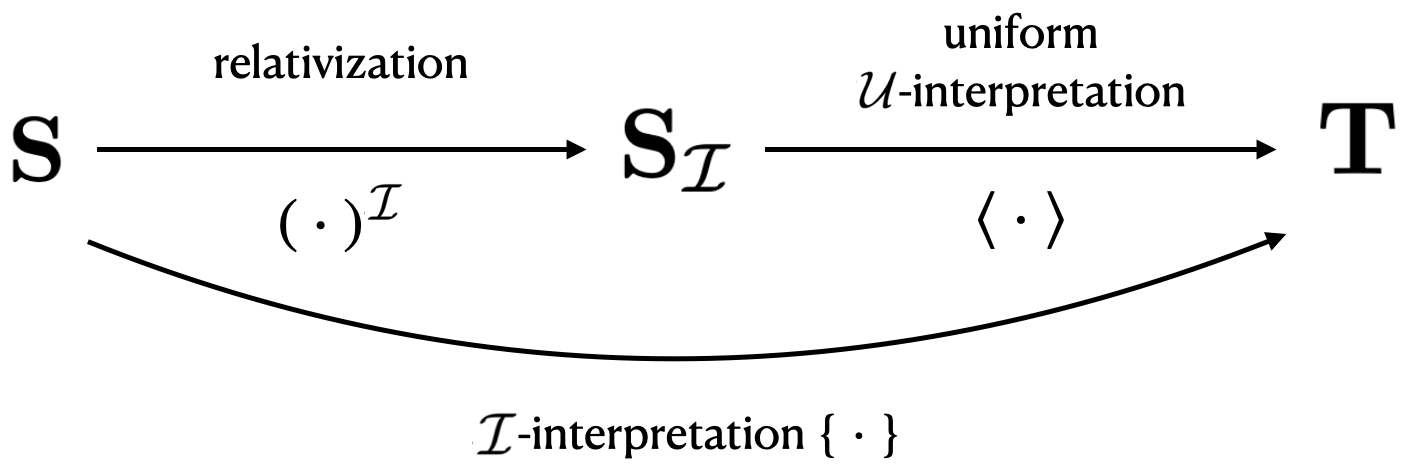}
    \caption{$\I$-interpretation as composition of $\U$-interpretation and relativization}
    \label{fig:diagram}
\end{figure}

Figure \ref{fig:diagram} illustrates the role of the auxiliary theory $\SourceTI$ in the Main Soundness Theorem.

\subsection{Some refinements}
\label{refinements}

In this subsection, we discuss some interesting particular cases of the general setting above, which tend to occur in practice when dealing with type-informative interpretations.

\medbreak
\noindent
(1)\,\, In many interesting situations, being of a certain type carries no negative witnesses (see \cite{DinisOliva(21)}). In such cases, the $\I$-interpretation of the quantifiers is much simpler:
\eqleft{
\begin{array}{lcl}
	\iInter{\forall x^\sigma A(x)}{\pvec f}{\pvec u, \pvec b} & \mathrm{\,is\,} & \forall x^\sigma (\iInter{\witness{x}{\sigma}}{\pvec u}{} \, \to \iInter{A(x)}{\pvec f \pvec u}{\pvec b}) \\[1mm]
	\iInter{\exists x^\sigma A(x)}{\pvec u, \pvec a}{\pvec B} & \mathrm{\,is\,} & \exists x^\sigma (\iInter{\witness{x}{\sigma}}{\pvec u}{} \wedge \forall \pvec b \in \pvec B \, \iInter{A(x)}{\pvec a}{\pvec b}).
\end{array}	
}
Therefore, in the case when the types of the variables $\pvec x$ of the Main Soundness Theorem \ref{main-soundness} carry no negative witnesses, we get the simpler (but important) conclusion that there are closed terms $\pvec t$ such that the target theory proves the following (assume $\Gamma(\pvec x)$ is the empty tuple, for the sake of simplicity): for all $\pvec c$ and $\pvec d$ and for all $\pvec x^{\pvec \sigma}$,
$$\mbox{if \,}\iInter{\witness{\pvec x}{\pvec \sigma}}{\pvec c}{} \mbox{,\, then } \iInter{A(\pvec x)}{\pvec t \pvec c}{\pvec d}.$$

\medbreak
\noindent
(2)\,\, Let $\pvec \sigma$, $\pvec \tau$ and $\pvec \rho$ be tuples of types that carry no negative witnesses. Suppose that $A(\pvec x^{\pvec \sigma}, \pvec w^{\pvec \rho})$ and $B(\pvec z^{\pvec \tau}, \pvec w^{\pvec \rho})$ are formulas which have no positive $\I$-witnesses. Finally, assume that the target theory $\TargetT$ proves the following: for $\pvec a$ and $\pvec w^{\pvec \rho}$ such that $\iwitness{\pvec w}{\pvec \rho}{\pvec a}{}$, $$\forall \pvec c, \pvec b \,\forall \pvec x^\sigma \,(\iwitness{\pvec x}{\pvec \sigma}{\pvec c}{} \to \iInter{A(\pvec x, \pvec w)}{}{\pvec b})\, \to\, \forall \pvec e, \pvec d \,\forall \pvec z^\tau \,(\iwitness{\pvec z}{\pvec \tau}{\pvec e}{} \to \iInter{B(\pvec z, \pvec w)}{}{\pvec d}).$$
Consider the rule 
\[
\begin{prooftree}
    \Gamma(\pvec w^{\pvec \rho}) \proves \forall \pvec x^{\pvec \sigma} A(\pvec x, \pvec w)
    \justifies
    \Gamma(\pvec w^{\pvec \rho}) \proves \forall \pvec z^{\pvec \tau} B(\pvec z, \pvec w)
\end{prooftree}
\]
where $\Gamma(\pvec w^{\pvec \rho})$ is a tuple of formulas that have no negative $\I$-witnesses. We claim that this rule is admissible for the $\I$-interpretation, i.e., that if the premise of the rule is $\I$-interpretable, then so is the conclusion. The $\I$-interpretation of the premise of the rule amounts to having the following in the target theory: 
$$\forall \pvec a, \pvec u, \pvec c, \pvec b\, \forall \pvec w^{\pvec \rho} \, (\iwitness{\pvec w}{\pvec \rho}{\pvec a}{} \wedge \iInter{\Gamma(\pvec w)}{\pvec u}{} \, \to \, \forall \pvec x^\sigma (\iwitness{\pvec x}{\pvec \sigma}{\pvec c}{} \to \iInter{A(\pvec x, \pvec w)}{}{\pvec b}).$$
By the assumption, we can obviously conclude $$\forall \pvec a, \pvec u, \pvec e, \pvec d\, \forall \pvec w^{\pvec \rho} \, (\iwitness{\pvec w}{\pvec \rho}{\pvec a}{} \wedge \iInter{\Gamma(\pvec w)}{\pvec u}{} \, \to \, \forall \pvec z^\tau (\iwitness{\pvec z}{\pvec \tau}{\pvec e}{} \to \iInter{B(\pvec z, \pvec w)}{}{\pvec d}).$$
This is the interpretation of the conclusion of the rule. The quantifier-free rule of extensionality due to G\"odel \cite{Goedel(21)} and Spector \cite{Spector(62)} is an example of an admissible rule as described above. 

As mentioned in the introduction, in a theory of arithmetic and analysis, with ground types for the natural numbers as well as for the real numbers, one may consider a binary relation symbol $=_\RR$ whose base interpretation is thus: for $x$ and $y$ of the type of the reals, $\uInter{x=_\RR y}{}{k}$ is  $| x-y| \leq \frac{1}{k+1}$ ($k$ a natural number, of course). Let us suppose that $\uwitness{x}{\RR}{n}{}$ is $|x| \leq n$ (where $n$ is of type $\NN$). The implication $$\forall x^\RR \forall y^\RR (x \equiv_\RR y \to x =_\RR y)$$ is $\I$-interpretable. This is an application of Lemma \ref{universal}. However, the converse implication $$\forall x^\RR \forall y^\RR (x =_\RR y \to x \equiv_\RR y)$$ need not be $\I$-interpretable. A weaker replacement can, nevertheless, be useful. The following rule is admissible
\[
\begin{prooftree}
    \Gamma(\pvec w,x,y) \proves x =_\RR y
    \justifies
    \Gamma(\pvec w,x,y) \proves x \equiv_\RR y
\end{prooftree}
\]
where the types of $\pvec w$ carry no negative witnesses and $\Gamma(\pvec w,x,y)$ is a tuple of formulas whose $\I$-interpretations have no negative witnesses. This example is sound as long as the target theory is standard in the sense that two reals are identical if the absolute value of their difference is less than $\frac{1}{k+1}$, for every natural number $k$. 

An equivalent and perhaps more familiar way of stating the above rule is
\[
\begin{prooftree}
    \Gamma(\pvec w) \proves t[\pvec w] =_\RR q[\pvec w]
    \justifies
    \Gamma(\pvec w) \proves t[\pvec w] \equiv_\RR q[\pvec w]
\end{prooftree}
\]
where $t[\pvec w]$ and $q[\pvec w]$ are terms of type $\RR$ with free-variables among $\pvec w$. The latter version is a consequence of the former by applying it to the premise $$\Gamma(\pvec w) \wedge x \equiv_\RR t[\pvec w] \wedge y \equiv_\RR q[\pvec w] \proves x =_\RR y.$$

\subsection{A technical note on existential quantification}

There are situations where a simpler interpretation of the existential quantifier works, namely:
\eqleft{
\begin{array}{lcl}
\iInter{\exists x^\sigma A(x)}{\pvec c, \pvec a}{\pvec d, \pvec b} & :\equiv & \exists x^\sigma(\iwitness{x}{\sigma}{\pvec c}{\pvec d}
        \wedge \iInter{A(x)}{\pvec a}{\pvec b}).
    \end{array}
}
As already mentioned, this simpler interpretation does not work in general because of the rule $\exists$-L. That notwithstanding, for certain choices of the base interpretation of $\LangI{\SourceT}$ and the target theory $\TargetT$, the clause for the existential quantifier can take the above simpler form. 
Next, we give a sufficient condition for the use of this simpler form.

\begin{proposition}\label{simpler} Suppose that $\sigma$ is a finite type whose base interpretation of $\witness{x}{\sigma}$ enjoys, for an arbitrary formula $A(z, \pvec x)$, the following property (in the target theory $\TargetT)$: For all $\pvec a$ and $\pvec c$, and finite sets $\pvec R, \pvec S$, if
$$\forall \pvec e \in \pvec R \, \forall \pvec b \in \pvec S \,\exists z^\sigma( \iwitness{z}{\sigma}{\pvec c}{\pvec e} \wedge \iInter{A(z, \pvec x)}{\pvec a}{\pvec b}),$$
then
$$\exists z^\sigma(\forall \pvec e \in \pvec R \,\iwitness{z}{\sigma}{\pvec c}{\pvec e} \wedge \forall \pvec b \in \pvec S \,\iInter{A(z, \pvec x)}{\pvec a}{\pvec b}).
$$ 
Then the Main Soundness Theorem \ref{main-soundness} holds with the simpler clause for the existential quantifier of type $\sigma$. 
\end{proposition}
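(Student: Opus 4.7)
The strategy is to retrace the proof of the Main Soundness Theorem (which reduces to the Uniform Soundness Theorem applied to the auxiliary theory $\SourceTI$) and check which cases genuinely change when the $\I$-interpretation of $\exists x^\sigma A(x)$ is replaced by the simpler clause. Since the simpler clause affects only the existential quantifier at the single type $\sigma$, almost the entire induction carries over verbatim. Concretely, I would handle base cases and the clauses for $\wedge$, $\to$, $\forall$, and existentials at types other than $\sigma$ without any change, because their $\I$-interpretations are literally the same as before. The structural rules (weakening, cut, and especially contraction) are also unaffected: the Uniform Soundness proof uses unions of finite sets of counter-witnesses for every formula in the antecedent, and this machinery is indifferent to whether a counter-witness of a specific formula is a tuple of singletons or a tuple of finite sets.

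The key case, and the only one that really changes, is the rule $\exists$-L with principal formula $\exists z^\sigma A(z)$. In the original proof, starting from the $\I$-interpretation of the premise $\Gamma, A(z^\sigma) \proves B$, we produced terms $\pvec u, \pvec s, \pvec t$ so that $\forall \pvec e \in \pvec u \pvec c \pvec a \pvec d \, \iInter{\Gamma}{\pvec c}{\pvec e} \wedge \forall \pvec b \in \pvec s \pvec c \pvec a \pvec d \, \iInter{A(z)}{\pvec a}{\pvec b} \to \iInter{B}{\pvec t \pvec c \pvec a}{\pvec d}$ and then set $\pvec r \pvec c \pvec a \pvec d = \{\pvec s \pvec c \pvec a \pvec d\}$ so that the standard clause gave $\exists z^\sigma \forall \pvec b \in \pvec s \iInter{A(z)}{\pvec a}{\pvec b}$. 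Under the simpler clause we must instead extract a single $z^\sigma$ from a hypothesis of the shape $\forall \pvec f \in \pvec R \, \forall \pvec b \in \pvec S \, \exists z^\sigma(\iwitness{z}{\sigma}{\pvec c'}{\pvec f} \wedge \iInter{A(z)}{\pvec a}{\pvec b})$, where $\pvec R, \pvec S$ come from the counter-witness part of the interpretation of $\exists z^\sigma A(z)$ in the conclusion. This is precisely the situation addressed by the hypothesis of the proposition: it yields $\exists z^\sigma(\forall \pvec f \in \pvec R \iwitness{z}{\sigma}{\pvec c'}{\pvec f} \wedge \forall \pvec b \in \pvec S \iInter{A(z)}{\pvec a}{\pvec b})$, and the induction step goes through by defining the same realizing terms as before.

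The dual rule $\exists$-R with the simpler clause is immediate: to realize $\exists z^\sigma A(z)$ at a single counter-witness $(\pvec f, \pvec b)$ given a term $r^\sigma$, one takes the realizers for $\witness{r}{\sigma}$ supplied by conditions $(i)$ and $(ii)$ of Definition~\ref{def-type-informative-base} (and Remark~\ref{terms-source}) together with the realizers of $A(r)$ coming from the inductive hypothesis, which is strictly easier than the original case because no finite-set gymnastics is required. The axiom cases are unaffected since axioms do not introduce existentials at type $\sigma$ in their interpretation beyond what the inductive steps already handle.

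The main obstacle, and the one the hypothesis of the proposition is designed to overcome, is the $\exists$-L case: without the conversion from $\forall \pvec e \in \pvec R \, \forall \pvec b \in \pvec S \, \exists z\, (\ldots)$ to $\exists z\, (\forall \pvec e \in \pvec R \ldots \wedge \forall \pvec b \in \pvec S \ldots)$, the simpler clause would genuinely fail, as was already observed after the statement of Theorem~\ref{thm-uniform-soundness}. Everything else is routine bookkeeping of witnessing terms identical to that of the original proof.
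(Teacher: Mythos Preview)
Your proposal is correct and follows essentially the same approach as the paper: both identify $\exists$-L as the only rule where the simpler clause creates an obstruction, and both use the proposition's hypothesis to pass from the weaker antecedent $\forall \pvec e \in \pvec R\,\forall \pvec b \in \pvec S\,\exists z(\ldots)$ to the stronger $\exists z(\forall \pvec e \in \pvec R\ldots \wedge \forall \pvec b \in \pvec S\ldots)$ needed to invoke the induction hypothesis. One small imprecision: in your description of the induction hypothesis for the premise $\Gamma, A(z^\sigma)\proves B$ you omit the clause $\forall \pvec e \in \pvec v\ldots\,\iwitness{z}{\sigma}{\pvec c}{\pvec e}$ coming from the free variable $z$ (the Main Soundness Theorem tracks type-informative predicates for all free variables), yet you correctly reinstate it when describing the conclusion; the paper's proof keeps this predicate explicit throughout, which is why the finite sets $\pvec R$ and $\pvec S$ in the hypothesis of the proposition arise directly from the induction hypothesis rather than only from the conclusion side.
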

\begin{proof} As commented, the simpler existential clause only finds an obstruction in the proof of the soundness theorem in the rule $\exists$-L. So, let us analyze in detail the situation of the $\exists$-L rule:
\[
\begin{prooftree}
    \Gamma(\pvec x^{\pvec \tau}), A(z^\sigma, \pvec x^{\pvec \tau}) \proves B(\pvec x^{\pvec \tau})
    \justifies
    \Gamma(\pvec x^{\pvec \tau}), \exists z^\sigma A(z^\sigma, \pvec x^{\pvec \tau}) \proves B(\pvec x^{\pvec \tau})
\end{prooftree}
\]
where $\sigma$ and $\pvec \tau$ are finite types and the variable $z$ is neither free in $\Gamma(\pvec x^{\pvec \tau})$ nor in $B(\pvec x^{\pvec \tau})$. \\[1mm]
For simplicity, assume that $\Gamma(\pvec x^{\pvec \tau})$ is empty. By induction hypothesis we have closed terms $\pvec s, \pvec u, \pvec v$ and $\pvec t$ such that, for all $\pvec h$, $\pvec c$, $\pvec a$, $\pvec d$\, and for all $\pvec x^{\pvec \tau}$ and $z^\sigma$, if 
$$\forall \pvec f \in \pvec u \pvec h \pvec c \pvec a \pvec d\, \iwitness{\pvec x}{\pvec \tau}{\pvec h}{\pvec f},\,\forall \pvec e \in \pvec v \pvec h \pvec c \pvec a \pvec d \, \iwitness{z}{\sigma}{\pvec c}{\pvec e} \mbox{\, and \,}\forall \pvec b \in \pvec s \pvec h \pvec c \pvec a \pvec d \iInter{A(z, \pvec x)}{\pvec a}{\pvec b}$$
then $\iInter{B(\pvec x)}{\pvec t \pvec h \pvec c \pvec a}{\pvec d}$. 
Therefore, for all $\pvec h$, $\pvec c$, $\pvec a$, $\pvec d$\, and for all $\pvec x^{\pvec \tau}$, if
$$\forall \pvec f \in \pvec u \pvec h \pvec c \pvec a \pvec d\, \iwitness{\pvec x}{\pvec \tau}{\pvec h}{\pvec f} \wedge
\exists z^\sigma (\forall \pvec e \in \pvec v \pvec h \pvec c \pvec a \pvec d \, \iwitness{z}{\sigma}{\pvec c}{\pvec e} \wedge
\forall \pvec b \in \pvec s \pvec h \pvec c \pvec a \pvec d \iInter{A(z, \pvec x)}{\pvec a}{\pvec b})$$
we have $\iInter{B(\pvec x)}{\pvec t \pvec h \pvec c \pvec a}{\pvec d}$. 
From our assumption we can then conclude that %
$$\forall \pvec f \in \pvec u \pvec h \pvec c \pvec a \pvec d\, \iwitness{\pvec x}{\pvec \tau}{\pvec h}{\pvec f},
\forall \pvec e \in \pvec v \pvec h \pvec c \pvec a \pvec d \, \forall \pvec b \in \pvec s \pvec h \pvec c \pvec a \pvec d \,\exists z^\sigma( \iwitness{z}{\sigma}{\pvec c}{\pvec e} \wedge \iInter{A(z, \pvec x)}{\pvec a}{\pvec b})$$
implies $\iInter{B(\pvec x)}{\pvec t \pvec h \pvec c \pvec a}{\pvec d}$. This is the realization for the conclusion of the rule $\exists$-L when the simpler clause for the existential quantifier is used for the finite type $\sigma$. 
\end{proof}

\smallbreak
\begin{example}\label{precise-quantifiers} There is a paradigmatic case for the application of the proposition above. It is when we have the precise interpretation as in Example \ref{precise-def} because we can obviously infer 
$$\exists z^\sigma( z = c \wedge \forall \pvec b \in \pvec S \,\iInter{A(z, \pvec x)}{\pvec a}{\pvec b})$$ 
from
$$\forall \pvec b \in \pvec S \,\exists z^\sigma( z=c \wedge \iInter{A(z, \pvec x)}{\pvec a}{\pvec b}).$$
Therefore, in this case, we may use the simpler interpretation of the existential quantifier so that $\iInter{\exists x^{\sigma} A(x)}{c, \pvec a}{\pvec b}$ is $\exists x \, (\iwitness{x}{\sigma}{c}{} \wedge \, \iInter{A(x)}{\pvec a}{\pvec b})$, i.e., $\exists x \, (x=c \wedge \, \iInter{A(x)}{\pvec a}{\pvec b})$. This simplifies to $\iInter{A(c)}{\pvec a}{\pvec b}$. For the universal quantifier, we have that $\iInter{\forall x^{\sigma} A(x)}{\pvec f}{c, \pvec b}$ is $\forall x\, (\iwitness{x}{\sigma}{c}{} \to \iInter{A(x)}{\pvec f c}{\pvec b})$. This clearly simplifies to $\iInter{A(c)}{\pvec f c}{\pvec b}$. In summary, in the precise interpretation, we have:
\eqleft{
\begin{array}{lcl}
	\iInter{\exists x^{\sigma} A(x)}{c, \pvec a}{\pvec b} & \equiv & \iInter{A(c)}{\pvec a}{\pvec b} \\[1mm]
	\iInter{\forall x^{\sigma} A(x)}{\pvec f}{c, \pvec b} & \equiv & \iInter{A(c)}{\pvec f c}{\pvec b}.
\end{array}	
}
These are the familiar clauses for the quantifiers used by G\"odel in his {\em dialectica} interpretation.
\end{example}

\section{Disjunction and the type of the Booleans}\label{disjunction-section}

When the Boolean type $\BB$ is available in the source theory $\SourceT$, we can define disjunction in the usual manner: 
\begin{equation} \label{def-disjunction}
    A\vee B :\equiv \exists i^\BB ((i\equiv_\BB 0 \to A) \wedge (i\equiv_\BB 1 \to B))
\end{equation}
using $0$ for True, and $1$ for False. Formally, we consider a source theory $\SourceT_\BB$ which has the Boolean type as a ground type, having two constants $0$ and $1$ and the axioms
\begin{itemize}
    \item[(${\rm D}_\BB$)] $\neg (0 \equiv_\BB 1)$
    \item[(${\rm E}_\BB$)] $D(0) \wedge D(1) \to \forall i^\BB D(i)$, \,where $D(i)$ is any formula of the language.
\end{itemize}
The theory $\SourceT_\BB$ proves disjunction introduction and disjunction elimination. Disjunction introduction, in the form $A \to A\vee B$ and $B \to A\vee B$, follows readily from the axiom $\neg (0 \equiv_\BB 1)$. Let us now argue for disjunction elimination in the form: if $A \to C$ and $B \to C$ then $A \vee B \to C$. Assume $A \to C$ and $B \to C$. Let $D(i)$ be the formula  $$(i\equiv_\BB 0 \to A) \wedge (i\equiv_\BB 1 \to B) \to C.$$ Clearly, one has $D(0)$ and $D(1)$. By $({\rm E}_\BB)$, we get $A \vee B \to C$.

It is useful to have in the source language $\Lang{\SourceT_\BB}$ the if-then-else (or conditional) constants. These are the constants $\mathrm{Cond}_\sigma$ of type\, $\BB \to \sigma \to \sigma \to \sigma$ satisfying the axioms 
\begin{equation} \label{def-cond}
    \mathrm{Cond}_\sigma(0,x,y) \equiv_\sigma x 
\quad \quad \mbox{and} \quad \quad 
\mathrm{Cond}_\sigma(1,x,y) \equiv_\sigma y.
\end{equation}
An $\I$-interpretation of the theory $\SourceT_\BB$ must be able to interpret both (${\rm D}_\BB$) and (${\rm E}_\BB$) as well as the axioms (\ref{def-cond}) for the conditional constants. By Lemma \ref{universal}, there is no issue concerning (${\rm D}_\BB$) and the axioms for the conditional constants because they do not require witnesses. On the other hand, the $\I$-interpretation of $({\rm E}_\BB)$ depends upon the base interpretation of the predicates $\witness{i}{\BB}$. We investigate two possible base interpretations. One interprets $\witness{i}{\BB}$ precisely, as in Example \ref{precise-def}. Hence, $\BB$ is a witnessing type, $\pvec \tau_\BB^+$ is $\BB$, $\pvec \tau_\BB^-$ is the empty tuple and $\uwitness{i}{\BB}{j}{}$ is defined to be $i = j$. This is called the \emph{precise interpretation of disjunction}. The other option is to interpret the Booleans uniformly, as in Example \ref{uniform-def}: we let $\pvec \tau_\BB^+$ and $\pvec \tau_\BB^-$ be the empty tuples and define the type-informative predicate $\uwitness{i}{\BB}{}{}$ as always true (for Boolean elements $i$). This is called the \emph{uniform interpretation of disjunction}. Do notice that both interpretations validate $\I_\BB(0)$ and $\I_\BB(1)$. The interpretability of the sentences $\I_{\BB \to \sigma \to \sigma \to \sigma}(\mathrm{Cond}_\sigma)$ has to be checked in each given concrete situation.

It is worth remarking the following immediate consequence of the definition of disjunction:

\begin{proposition}[Excluded middle for Booleans] \label{boolean_either_or} $\SourceT_\BB$ proves $\forall i^\BB (i \equiv_\BB 0 \vee i \equiv_\BB 1)$. \end{proposition}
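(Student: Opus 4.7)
The plan is to apply the axiom scheme $({\rm E}_\BB)$ with $D(i) :\equiv (i \equiv_\BB 0 \vee i \equiv_\BB 1)$ itself. By $({\rm E}_\BB)$, it suffices to establish the two instances $D(0)$ and $D(1)$, after which the universal statement $\forall i^\BB D(i)$ is immediate. So the work reduces to verifying these two instances in $\SourceT_\BB$.

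For $D(0)$, reflexivity of $\equiv_\BB$ yields $0 \equiv_\BB 0$. By the already-argued disjunction introduction rule $A \to A \vee B$ (valid in $\SourceT_\BB$ because it was derived from the axioms together with $\neg(0 \equiv_\BB 1)$ just before the statement), we conclude $0 \equiv_\BB 0 \vee 0 \equiv_\BB 1$. Symmetrically, for $D(1)$, reflexivity gives $1 \equiv_\BB 1$, and the other disjunction introduction rule $B \to A \vee B$ yields $1 \equiv_\BB 0 \vee 1 \equiv_\BB 1$.

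Having $D(0)$ and $D(1)$, the axiom $({\rm E}_\BB)$ delivers $\forall i^\BB (i \equiv_\BB 0 \vee i \equiv_\BB 1)$, completing the proof. There is no real obstacle here: the only subtlety worth flagging is that the instance of $({\rm E}_\BB)$ in use has a disjunctive $D$, which unfolds via the definition \eqref{def-disjunction} to an existential formula; but this is precisely the generality in which $({\rm E}_\BB)$ is stated (``$D(i)$ is any formula of the language''), so the application is legitimate.
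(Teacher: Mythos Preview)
Your proof is correct, but it takes a longer route than the paper intends. The paper presents this proposition as an ``immediate consequence of the definition of disjunction,'' and indeed it is: unfolding the definition \eqref{def-disjunction}, the statement $i \equiv_\BB 0 \vee i \equiv_\BB 1$ becomes
\[
\exists j^\BB\bigl((j \equiv_\BB 0 \to i \equiv_\BB 0) \wedge (j \equiv_\BB 1 \to i \equiv_\BB 1)\bigr),
\]
which is witnessed by $j := i$, since both implications are then trivially true by reflexivity. No appeal to $({\rm E}_\BB)$ or $({\rm D}_\BB)$ is needed.

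Your argument instead establishes $D(0)$ and $D(1)$ via disjunction introduction (which relies on $({\rm D}_\BB)$) and then invokes $({\rm E}_\BB)$. This is perfectly valid, but it obscures the point the paper is making: the proposition is a \emph{structural} triviality of the chosen encoding of disjunction, not a consequence of the Boolean axioms. The paper's route is shorter and makes clear that the result holds in pure logic over the definition, whereas your route shows it as an instance of the general machinery for reasoning about Booleans.
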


In the next two subsections, we discuss each interpretation in turn.

\subsection{Precise interpretation and if-then-else}\label{precise-if-else}

In this short subsection, we work with $\I$-interpretations based on the precise interpretation of the Booleans. It is quite easy to see that, in this case, the soundness of the $\I$-interpretation of $({\rm E}_\BB)$ requires terms $\pvec s$, $\pvec q$ and $\pvec t$ such that the following holds in the target theory $\TargetT$: for all Booleans $j$ and for all appropriate witnessing tuples $\pvec a_0$, $\pvec a_1$ and $\pvec b$, 
\[
\forall \pvec b' \in \pvec s j \pvec a_0 \pvec a_1 \pvec b \, 
    \iInter{D(0)}{\pvec a_0}{\pvec b'} 
\wedge 
\forall \pvec b' \in \pvec q j \pvec a_0 \pvec a_1 \pvec b \,
    \iInter{D(1)}{\pvec a_1}{\pvec b'} 
\to \iInter{D(j)}{\pvec t j \pvec a_0 \pvec a_1}{\pvec b}.
\]
We can put $\pvec s j \pvec a_0 \pvec a_1 \pvec b = \pvec q j \pvec a_0 \pvec a_1 \pvec b = \pvec \{ \pvec b \pvec \}$ and 
\[ 
\pvec t j \pvec a_0 \pvec a_1 = 
\left\{ 
\begin{array}{cl} 
    \pvec a_0 &\mbox{\;\;if \,$j = 0$} \\[1mm] 
    \pvec a_1 & \mbox{\;\;if \,$j = 1$.} 
\end{array} 
\right.
\]
Of course, these terms $\pvec t$ can de defined with the help of the conditional constants $\mathrm{Cond}$. In sum, for the precise interpretation of disjunction, we need to have the Boolean type $\BB$ in the target language (as a witnessing type) with the conditional constants.

It is worthwhile to discuss the precise interpretation of disjunction in some detail. Following the definition (\ref{def-disjunction}) of disjunction $A \vee B$ in terms of quantification over Booleans, we have that $\iInter{A \vee B}{j, \pvec a, \pvec c}{\pvec F, \pvec G}$\, is 
$$\exists i^\BB \forall \pvec b \in \pvec F\, \forall \pvec d \in \pvec G \, (\iwitness{i}{\BB}{j}{} \wedge (i = 0 \to  \, \iInter{A}{\pvec a}{\pvec b}) \wedge (i = 1 \to  \, \iInter{B}{\pvec c}{\pvec d})).$$ 
By the definition of the precise base interpretation, we get 
$$(j=0 \to \forall \pvec b \in \pvec F \iInter{A}{\pvec a}{\pvec b}) \wedge (j=1 \to \forall \pvec d \in \pvec G \iInter{B}{\pvec c}{\pvec d}).$$
Note, however, that we are under the conditions of the application of Proposition \ref{simpler} (see, also, Example \ref{precise-quantifiers}). We can work with the simpler clause for the existential quantifier and obtain that
\eqleft{
\begin{array}{lcl}
	\iInter{A \vee B}{j, \pvec a, \pvec c}{\pvec b, \pvec d} 
		& \mbox{\,\,is\,\,} & (j = 0 \to \iInter{A}{\pvec a}{\pvec b}) \wedge (j = 1 \to \iInter{B}{\pvec c}{\pvec d}).
\end{array}
}
This is the familiar interpretation of disjunction in G\"odel's {\em dialectica} interpretation or in its Diller-Nahm variant.

\subsection{Uniform interpretation and monotonicity}
\label{uniform-booleans-sec}

We now consider $\I$-interpretations based on the uniform interpretation of the Booleans. The $\I$-interpretation of $({\rm E}_\BB)$ requires the existence of terms $\pvec s$, $\pvec q$ and $\pvec t$ such that the following holds in the target theory $\TargetT$: for all appropriate witnessing tuples $\pvec a_0$, $\pvec a_1$ and $\pvec b$, 
\[
\forall \pvec b' \in \pvec s \pvec a_0 \pvec a_1 \pvec b \, 
    \iInter{D(0)}{\pvec a_0}{\pvec b'} 
\wedge 
\forall \pvec b' \in \pvec q \pvec a_0 \pvec a_1 \pvec b \,
    \iInter{D(1)}{\pvec a_1}{\pvec b'} 
\to 
\forall i^\BB
\iInter{D(i)}{\pvec t \pvec a_0 \pvec a_1}{\pvec b}.
\]
Take $\pvec s \pvec a_0 \pvec a_1 \pvec b = \pvec q \pvec a_0 \pvec a_1 \pvec b = \pvec \{ \pvec b \pvec \}$. The problem lies with producing the terms $\pvec t$. In the above implication, the consequent is equivalent to the conjunction of $\iInter{D(0)}{\pvec t \pvec a_0 \pvec a_1}{\pvec b}$ and $\iInter{D(1)}{\pvec t \pvec a_0 \pvec a_1}{\pvec b}$. Therefore, it is sufficient to have terms $\pvec t$ such that both implications $\iInter{D(0)}{\pvec a_0}{\pvec b} \to \iInter{D(0)}{\pvec t \pvec a_0 \pvec a_1}{\pvec b}$ and $\iInter{D(1)}{\pvec a_1}{\pvec b} \to \iInter{D(1)}{\pvec t \pvec a_0 \pvec a_1}{\pvec b}$ hold.

For the uniform interpretation of disjunction, we do not need the Boolean type as a witnessing type in the target language. Instead, we need the following:

\begin{definition}[Joining of bounds] An atomic relation symbol $R(\pvec x)$ of $\LangI{\SourceT}$ is said to have the \emph{joining of bounds property in} $\TargetT$ if in the target theory $\TargetT$ we have a tuple of maps ${\pvec \vee}_R \colon \pvec \tau^+_{R} \times \pvec \tau^+_{R} \to \pvec \tau^+_{R}$ such that $\TargetT$ proves $\pvec a {\pvec \vee}_R \pvec b = \pvec b {\pvec \vee}_R \pvec a$ and the implication
    \eqleft{\mbox{if \,}\uInter{R(\pvec x)}{\pvec a}{\pvec c} {\,\,\,then\,\,\, } \uInter{R(\pvec x)}{\pvec a {\pvec \vee}_R \pvec b}{\pvec c}.}
A target theory $\TargetT$ is said to have \emph{joining of bounds for} $\LangI{\SourceT}$, if all atomic relation symbols of the $\LangI{\SourceT}$ have the joining of bounds property in $\TargetT$.
\end{definition} 

With this notion, we can introduce a notion of joining of bounds for all formulas of $\LangI{\SourceT}$.  

\begin{definition}\label{joining-bounds} Assume $\TargetT$ has the \emph{joining of bounds for} $\LangI{\SourceT}$. We can extend the joining of bounds to all formulas of $\LangI{\SourceT}$ as follows. For an atomic formula $R(t_1, \ldots,t_n)$, where $R$ is an $n$-ary relation symbol and $t_1$, \ldots, $t_n$ are terms of appropriate types, we define ${\pvec \vee}_{R(t_1,\ldots, t_n)}$ to be ${\pvec \vee}_R$. Let be given formulas $A$ and $B$ with $\I$-interpretations $\iInter{A}{\pvec a}{\pvec b}$ and $\iInter{B}{\pvec c}{\pvec d}$. We discuss the four cases arising from the build-up of formulas, using the inductive presentation of the $\I$-interpretation given by Proposition \ref{I-interpretation-prop}.

\begin{enumerate}

\item[\mbox{\rm{(i)}}] {\em Conjunction.} We have that $\iInter{A \wedge B}{\pvec a, \pvec c}{\pvec b, \pvec d} $ is $\iInter{A}{\pvec a}{\pvec b} \wedge \iInter{B}{\pvec c}{\pvec d}$ and $\pvec \tau^+_{A\wedge B} = \pvec \tau^+_A, \pvec \tau^+_B$. We define:
$${\pvec \vee}_{A\wedge B}: \pvec \tau^+_{A \wedge B} \times \pvec \tau^+_{A \wedge B} \to \pvec \tau^+_{A \wedge B}$$
$$\pvec a, \pvec b; \pvec a', \pvec b' \,\,\,\leadsto\,\,\, \pvec a {\pvec \vee}_A \pvec a' , \pvec b {\pvec \vee}_B \pvec b'.$$
\item[\mbox{\rm{(ii)}}] {\em Implication.} We have that $\iInter{A \to B}{\pvec f, \pvec g}{\pvec a, \pvec d}$ is $$\forall \pvec b \in \pvec g \pvec a \pvec d \,\iInter{A}{\pvec a}{\pvec b} \,\to \iInter{B}{\pvec f \pvec a}{\pvec d}.$$ The tuple $\pvec \tau^+_{A\to B}$ is the concatenation of the tuple of the types of $\pvec f$ with the tuple of the types of $\pvec g$. We define:
$${\pvec \vee}_{A\to B}: \pvec \tau^+_{A \to B} \times \pvec \tau^+_{A \to B} \to \pvec \tau^+_{A \to B}$$
$$\pvec f, \pvec g; \pvec f', \pvec g' \,\,\,\leadsto\,\,\, \lambda \pvec a. (\pvec f\pvec a {\pvec \vee}_B \pvec f' \pvec a) , \lambda \pvec a,\pvec d. (\pvec g \pvec a \pvec d \cup \pvec g' \pvec a \pvec d).$$
\item[\mbox{\rm{(iii)}}] {\em Universal quantification}. We have that $\iInter{\forall x^\sigma A(x)}{\pvec f, \pvec g}{\pvec u, \pvec b}$ is $$\forall x^\sigma (\forall \pvec v \in \pvec g \pvec u \pvec b \, \iInter{\witness{x}{\sigma}}{\pvec u}{\pvec v} \, \to \iInter{A(x)}{\pvec f \pvec u}{\pvec b}).$$ The tuple $\pvec \tau^+_{\forall x A(x)}$ is the concatenation of the tuple of the types of $\pvec f$ with the tuple of the types of $\pvec g$. We define: 
$${\pvec \vee}_{\forall x A(x)}: \pvec \tau^+_{\forall x A(x)} \times \pvec \tau^+_{\forall x A(x)} \to \pvec \tau^+_{\forall x A(x)}$$
$$\pvec f, \pvec g; \pvec f', \pvec g' \,\,\,\leadsto\,\,\, \lambda \pvec u. (\pvec f\pvec u {\pvec \vee}_A \pvec f' \pvec u) , \lambda \pvec u,\pvec b. (\pvec g \pvec u \pvec b \cup \pvec g' \pvec u \pvec b).$$
\item[\mbox{\rm{(iv)}}] {\em Existential quantification}. We have that $\iInter{\exists x^\sigma A(x)}{\pvec u, \pvec a}{\pvec V, \pvec B}$ is $$\exists x^\sigma (\forall \pvec v \in \pvec V \, \iInter{\witness{x}{\sigma}}{\pvec u}{\pvec v} \wedge \forall \pvec b \in \pvec B \, \iInter{A(x)}{\pvec a}{\pvec b})$$ and  $\pvec \tau^+_{\exists x A(x)} = \pvec \tau^+_\sigma, \pvec \tau^+_A$. We define:
$${\pvec \vee}_{\exists x A(x)}: \pvec \tau^+_{\exists x A(x)} \times \pvec \tau^+_{\exists x A(x)} \to \pvec \tau^+_{\exists x A(x)}$$
$$\pvec u, \pvec a; \pvec u', \pvec a' \,\,\,\leadsto\,\,\, \pvec u {\pvec \vee}_{\I_\sigma} \pvec u', \pvec a {\pvec \vee}_A \pvec a'.$$
\end{enumerate}
\end{definition}

The above definition generalizes and refines a notion in \cite{Ferreira(20)} applied to so-called end-star types. As in that case, we have the crucial monotonicity property:

\begin{lemma}[Monotonicity]\label{monotonicity-lemma} For any formula $A$ of $\Lang{\SourceT}$ and $\pvec a, \pvec b$ tuples having the types of the positive $\I$-witnesses of $A$, we have that the target theory $\TargetT$ proves $\pvec a {\pvec \vee}_A \pvec b = \pvec b {\pvec \vee}_A \pvec a$ and, for every tuple $\pvec c$ having the types of the negative $\I$-witnesses of $A$, we have the implication
\eqleft{\mbox{if \,}\iInter{A}{\pvec a}{\pvec c} {\,\,\,then\,\,\, } \iInter{A}{\pvec a {\pvec \vee}_A \pvec b}{\pvec c}.}
\end{lemma}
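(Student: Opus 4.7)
The plan is to prove both commutativity and monotonicity simultaneously by induction on the structure of the formula $A$, tracking exactly the four cases of Definition \ref{joining-bounds}.

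For the base case, when $A$ is an atomic formula $R(t_1,\ldots,t_n)$, we have ${\pvec \vee}_A = {\pvec \vee}_R$. Both commutativity and the monotonicity implication are then literally the hypotheses of the joining of bounds property for $R$ in $\TargetT$, so there is nothing to do.

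For the conjunction case, where $\iInter{A \wedge B}{\pvec a,\pvec c}{\pvec b,\pvec d}$ is $\iInter{A}{\pvec a}{\pvec b} \wedge \iInter{B}{\pvec c}{\pvec d}$ and ${\pvec \vee}_{A \wedge B}$ acts componentwise, commutativity is immediate from the inductive hypotheses applied to $A$ and $B$, and monotonicity follows by applying the monotonicity part of the IH on each conjunct separately. The existential case is essentially identical: ${\pvec \vee}_{\exists x A(x)}$ also acts componentwise, using ${\pvec \vee}_{\I_\sigma}$ on the $\pvec u$ component and ${\pvec \vee}_A$ on the $\pvec a$ component, so commutativity passes through, and monotonicity follows by noticing that enlarging $\pvec u$ via ${\pvec \vee}_{\I_\sigma}$ only enlarges the set of indices over which the $\forall \pvec v \in \pvec V\, \iInter{\witness{x}{\sigma}}{\pvec u}{\pvec v}$ conjunct is witnessed, and enlarging $\pvec a$ is handled by the IH applied to $A(x)$.

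The two cases that carry a little more content are implication and universal quantification, but they are structurally the same. For implication, ${\pvec \vee}_{A \to B}$ sends $(\pvec f,\pvec g),(\pvec f',\pvec g')$ to $(\lambda \pvec a.\pvec f\pvec a {\pvec \vee}_B \pvec f'\pvec a,\ \lambda \pvec a,\pvec d.\pvec g\pvec a\pvec d \cup \pvec g'\pvec a\pvec d)$. Commutativity is inherited from the IH for ${\pvec \vee}_B$ and from the commutativity of set-union $\cup$ on finite sets (both provable in the target theory). For monotonicity, suppose $\iInter{A \to B}{\pvec f,\pvec g}{\pvec a,\pvec d}$ holds, i.e.\ $\forall \pvec b \in \pvec g\pvec a\pvec d\, \iInter{A}{\pvec a}{\pvec b} \to \iInter{B}{\pvec f\pvec a}{\pvec d}$. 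Assume $\forall \pvec b \in \pvec g\pvec a\pvec d \cup \pvec g'\pvec a\pvec d\, \iInter{A}{\pvec a}{\pvec b}$; since this set contains $\pvec g\pvec a\pvec d$, a fortiori $\forall \pvec b \in \pvec g\pvec a\pvec d\, \iInter{A}{\pvec a}{\pvec b}$, so $\iInter{B}{\pvec f\pvec a}{\pvec d}$, and by the IH for $B$ this lifts to $\iInter{B}{\pvec f\pvec a {\pvec \vee}_B \pvec f'\pvec a}{\pvec d}$, as required. The universal case is treated identically with $\pvec u$ in the role of $\pvec a$ and $\witness{x}{\sigma}$ in the role of $A$.

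The main obstacle, such as it is, lies in the implication/universal step: one has to observe that inside a formula $A \to B$ the \emph{positive} $\I$-witness $\pvec g$ produces the \emph{negative} data $\pvec b$ of the antecedent, so that joining $\pvec g$ and $\pvec g'$ via $\cup$ gives a \emph{stronger} antecedent assumption rather than a weaker one — in other words, the covariance/contravariance of witnesses works in our favor precisely because we defined ${\pvec \vee}$ on these function-valued components via union of finite sets. Once this is noted, the induction is routine, and the properties $\pvec a {\pvec \vee}_{\I_\sigma} \pvec b = \pvec b {\pvec \vee}_{\I_\sigma} \pvec a$ needed in the existential case are themselves instances of the IH applied to the atomic type-informative predicate $\witness{x}{\sigma}$.
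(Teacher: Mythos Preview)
Your proof is correct and follows essentially the same approach as the paper: induction on the complexity of $A$, with the atomic case given by hypothesis, conjunction and existential cases straightforward, and the real content in the implication/universal cases (the paper spells out the universal case and calls implication similar; you do the reverse). One small point the paper makes explicit and you leave implicit: commutativity in the implication and universal cases requires equating lambda-terms with pointwise-equal bodies, so a \emph{modicum} of extensionality in the target theory is being used there.
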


\begin{proof} The proof is by induction on the complexity of $A$. The commutativity of ${\pvec \vee}_A$ is immediate provided that we accept a {\em modicum} of extensionality in the target theory. For the second claim, the atomic case is given. The cases of conjunction and existential quantification are clear. We discuss the case of the universal quantifier (the case of implication is similar). Let us use the notation of (iii) above. Suppose that $\iInter{\forall x^\sigma A(x)}{\pvec f, \pvec g}{\pvec u, \pvec b}$, i.e., $\forall x^\sigma (\forall \pvec v \in \pvec g \pvec u \pvec b \, \iInter{\witness{x}{\sigma}}{\pvec u}{\pvec v} \, \to \iInter{A(x)}{\pvec f \pvec u}{\pvec b})$. Let $\pvec f'$ and $\pvec g'$ be tuples with the types of $\pvec f$ and $\pvec g$, respectively. We must show $$\iInter{\forall x^\sigma A(x)}{\pvec f, \pvec g \,{\pvec \vee}_{\forall x A(x)}  \pvec f', \pvec g'}{\pvec u, \pvec b}.$$ This is the same as showing that $$\forall x^\sigma (\forall \pvec v \in \pvec g \pvec u \pvec b \cup \pvec g' \pvec u \pvec b \,\, \iInter{\witness{x}{\sigma}}{\pvec u}{\pvec v} \,\, \to \iInter{A(x)}{\pvec f \pvec u {\pvec \vee}_A  \pvec f' \pvec u}{\pvec b}).$$ Take $x^\sigma$ and assume $\forall \pvec v \in \pvec g \pvec u \pvec b \cup \pvec g' \pvec u \pvec b \, \iInter{\witness{x}{\sigma}}{\pvec u}{\pvec v}$. {\em A fortiori}, $\forall \pvec v \in \pvec g \pvec u \pvec b \, \iInter{\witness{x}{\sigma}}{\pvec u}{\pvec v}$. By the supposition, we infer $\iInter{A(x)}{\pvec f \pvec u}{\pvec b}$. By induction hypothesis, $\iInter{A(x)}{\pvec f \pvec u {\pvec \vee}_{A} \pvec f' \pvec u}{\pvec b}$. We are done. \end{proof}

\medbreak
With the joining of bounds and the monotonicity property in place, we can interpret $({\rm E}_\BB)$ by putting $\pvec t \pvec a_0 \pvec a_1$ as $\pvec a_0 {\pvec \vee_D} \pvec a_1$. 

It is worth working out explicitly the uniform interpretation of $A\vee B$. It turns out that 
\begin{equation}\label{disjunction-clause}
\iInter{A \vee B}{\pvec a, \pvec c}{\pvec F, \pvec G} \mbox{\hspace{5mm}is\hspace{5mm}} \forall \pvec b \in \pvec F \, \iInter{A}{\pvec a}{\pvec b} \vee \forall \pvec d \in \pvec G \, \iInter{B}{\pvec c}{\pvec d}.
\end{equation}
The earliest version of an interpretation of disjunction like the one above occurred in \cite{FerreiraOliva(05)}, but with majorizability instead of finiteness. The above uniform interpretation of disjunction with finiteness is already found in, for instance, \cite{Ferreira(20)}. As discussed in these papers, it gives rise to a form of semi-intuitionistic logic.

Can the uniform interpretation of disjunction be simplified? We investigate the following clause:
\begin{equation}\label{simplified-disjunction-clause}
\iInter{A \vee B}{\pvec a, \pvec c}{\pvec b, \pvec d} \mbox{\hspace{5mm}is\hspace{5mm}} \iInter{A}{\pvec a}{\pvec b} \vee \iInter{B}{\pvec c}{\pvec d}.
\end{equation}
This simplified clause preserves interpretability with respect to the rules of disjunction introduction. However, an issue emerges with respect to the rule of disjunction elimination. Let us discuss it.

Suppose that we have interpretations of $A \to C$ and $B \to C$. This means that we have terms $\pvec t$, $\pvec q$, $\pvec s$ and $\pvec r$ such that the target theory proves that, for all $\pvec a$, $\pvec d$ and $\pvec v$,
\eqleft{\forall \pvec b \in \pvec q \pvec a \pvec v \iInter{A}{\pvec a}{\pvec b} \,\to \iInter{C}{\pvec t \pvec a}{\pvec v}\hspace{10mm}\mathrm{and}\hspace{10mm} \forall \pvec e \in \pvec r \pvec d \pvec v \iInter{B}{\pvec d}{\pvec e} \,\to \iInter{C}{\pvec s \pvec d}{\pvec v}.}
We must be able to interpret $A\vee B \to C$. With the new clause for disjunction, this means that we need to exhibit terms $\pvec p$, $\pvec f$ and $\pvec g$ such that the target theory proves that, for all $\pvec a$, $\pvec d$ and $\pvec v$, $$\forall \pvec b \in \pvec f \pvec a \pvec d \pvec v \forall \pvec e \in \pvec g \pvec a \pvec d \pvec v (\iInter{A}{\pvec a}{\pvec b} \vee \iInter{B}{\pvec d}{\pvec e}) \, \to \iInter{C}{\pvec p \pvec a \pvec d}{\pvec v}.$$ Naturally, we take terms such that $\pvec p \pvec a \pvec d = \pvec t \pvec a \vee_C \pvec s \pvec d$, $\pvec f \pvec a \pvec d \pvec v = \pvec q \pvec a \pvec v$ and $\pvec g \pvec a \pvec d \pvec v = \pvec r \pvec d \pvec v$. Clearly, we achieve our goal if 
\begin{equation*}
    \forall \pvec b \in \pvec q \pvec a \pvec v \forall \pvec e \in \pvec r \pvec d \pvec v (\iInter{A}{\pvec a}{\pvec b} \vee \iInter{B}{\pvec d}{\pvec e}) \,\to\, \forall \pvec b \in \pvec q \pvec a \pvec v \iInter{A}{\pvec a}{\pvec b} \vee \forall \pvec e \in \pvec r \pvec d \pvec v \iInter{B}{\pvec d}{\pvec e}. 
\end{equation*}
This implication is classically true. In fact, it is even intuitionistically true (with a limited use of arithmetical reasoning), because the above quantifications range over finite sets. Formally, this truth is argued in a target theory by induction on the number of elements of the finite sets. This is what paper \cite{BergBriseidSafarik(12)} does.

In conclusion, we can work with the simpler clause for disjunction (\ref{simplified-disjunction-clause}) as long as the target theory is able to prove implications of the form
\begin{equation*}
    \forall \pvec b \in \pvec R \forall \pvec e \in \pvec S (\iInter{A}{\pvec a}{\pvec b} \vee \iInter{B}{\pvec d}{\pvec e}) \,\to\, \forall \pvec b \in \pvec R \iInter{A}{\pvec a}{\pvec b} \vee \forall \pvec e \in \pvec S \iInter{B}{\pvec d}{\pvec e} 
\end{equation*}
where $\pvec a$, $\pvec d$, $\pvec R$ and $\pvec S$ are of appropriate types. Compare this with the discussion on the simplification of the clause for the existential quantifier in Subsection \ref{sec-uniform-interpretation} and in the proof of the Soundness Theorem \ref{thm-uniform-soundness}.

\section{The canonical interpretation of the function types}
\label{canonical-section}

For the remainder of the paper, we will assume that the source theory $\SourceT$ contains the Booleans $\BB$, and hence disjunction, as discussed in the previous section. 

Given a base interpretation of $\witness{x}{X}$, for ground types $X$, there may be different choices for interpreting $\witness{h}{\rho \to \sigma}$, for function types $\rho \to \sigma$. In this section, we discuss a canonical way of extending base interpretations of ground types to all finite types. 

\begin{definition}[Canonical interpretation of $\I_{\rho \to \sigma}$] \label{canonical-def} Let a base interpretation of $\witness{x}{X}$ be given for each ground type $X$ of $\Lang{\SourceT}$. The {\em canonical base interpretation of} $\LangI{\SourceT}$ for the type-informative predicates is defined recursively as follows: For ground types the interpretation is as in the given base interpretation; for function types $\rho \to \sigma$ the interpretation of $\witness{x}{\rho \to \sigma}$ is 
\begin{equation} \label{def-canonical-inter}
    \uwitness{h}{\rho \to \sigma}{\pvec f, \pvec g}{\pvec a, \pvec d} \; :\equiv \; \forall x^\rho (\forall \pvec b \in \pvec g \pvec a \pvec d \uwitness{x}{\rho}{\pvec a}{\pvec b} \to \uwitness{hx}{\sigma}{\pvec f \pvec a}{\pvec d})
\end{equation}
where $\pvec f, \pvec g$ and $\pvec a, \pvec d$ have suitable types.
\end{definition}

This is the same as saying that $\uwitness{h}{\sigma \to \tau}{\pvec f, \pvec g}{\pvec a, \pvec d}$ is $\uInter{\forall x^\sigma(\I_\sigma(x) \to \I_\tau(hx))}{\pvec f, \pvec g}{\pvec a, \pvec d}.$ Therefore, we can make the basic observation that the sentence 
\begin{equation} \label{eq-ii-equiv}
 \forall h^{\sigma \to \tau}(\I_{\sigma \to \tau}(h) \leftrightarrow \forall x^\sigma(\I_\sigma(x) \to \I_\tau(hx))) 
\end{equation}
is $\U$-interpretable.

\begin{proposition} \label{prop-canonical-type-informative} Let be given a base interpretation for all the ground types. Then, its canonical extension satisfies (ii) of Definition \ref{def-type-informative-base} and also satisfies (i) of that definition for the combinators and the conditional functionals.
\end{proposition}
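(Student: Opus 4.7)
The plan is to verify (ii) directly from Remark \ref{criterium}, which becomes essentially tautological once the canonical clause $(\ref{def-canonical-inter})$ is unpacked, and to verify (i) for each of $\Pi$, $\Sigma$, and $\mathrm{Cond}_\sigma$ by iteratively unfolding the canonical clause and exploiting their defining equations.

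For (ii) I would decompose the positive witness for $\I_{\sigma \to \tau}(f)$ as $\pvec c = (\pvec h, \pvec k)$. The canonical clause reads: $\uwitness{f}{\sigma \to \tau}{\pvec h, \pvec k}{\pvec a', \pvec v'}$ is $\forall x'\,(\forall \pvec b \in \pvec k \pvec a' \pvec v'\; \uwitness{x'}{\sigma}{\pvec a'}{\pvec b} \to \uwitness{fx'}{\tau}{\pvec h \pvec a'}{\pvec v'})$. Inspecting Remark \ref{criterium}, this is precisely what one wants after specialising $x' := x$, $\pvec a' := \pvec a$, $\pvec v' := \pvec v$. Accordingly, take $\pvec r_{\sigma,\tau}\pvec a \pvec c \pvec v := \{(\pvec a, \pvec v)\}$, $\pvec q_{\sigma,\tau}\pvec a \pvec c \pvec v := \pvec k \pvec a \pvec v$, and $\pvec t_{\sigma,\tau}\pvec a \pvec c := \pvec h \pvec a$ (the latter two being the evident projections applied to $\pvec a$, $\pvec v$). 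The required implication then follows immediately by instantiating the universal quantifier in the canonical clause.

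For (i) I would proceed case by case. For $\Pi_{\rho,\sigma}$, write the canonical positive witness at type $\rho \to \sigma \to \rho$ as a pair $(\pvec F, \pvec G)$ with $\pvec F \pvec a = (\pvec F_1 \pvec a, \pvec F_2 \pvec a)$, and the negative witness as $(\pvec a, \pvec c, \pvec v)$. After unfolding $(\ref{def-canonical-inter})$ twice and using $\Pi x y \equiv_\rho x$ (interpreted as strict equality in the target), what has to be produced is just the witness $\pvec a$ for $\I_\rho(x)$ that has been fed in. Setting $\pvec F_1 \pvec a \pvec c := \pvec a$, taking $\pvec F_2$ to be any closed term of appropriate type (built from the constants $\pvec e$), and setting $\pvec G \pvec a (\pvec c, \pvec v) := \{\pvec v\}$ does the job. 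For $\Sigma_{\rho,\sigma,\gamma}$, the analogous triple unfolding together with the axiom $\Sigma xyz \equiv_\rho xz(yz)$ reduces the task to building a canonical witness for $\I_\rho(xz(yz))$ out of canonical witnesses for $\I(x)$, $\I(y)$, $\I(z)$; this is nothing but two successive applications of the witnesses for application already produced in (ii).

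The hard part will be $\mathrm{Cond}_\sigma$, since its verification is sensitive to the particular base interpretation of $\I_\BB$ carried by the canonical extension. In the precise interpretation (Example \ref{precise-def}), the positive witness is a specific Boolean $j$, and I would define the witness for $\mathrm{Cond}_\sigma(i,x,y)$ to be $\mathrm{Cond}_\sigma(j, \pvec a_0, \pvec a_1)$ at the relevant witnessing type (as in Subsection \ref{precise-if-else}, this presupposes conditional constants on witnessing types in the target). In the uniform interpretation (Example \ref{uniform-def}) no Boolean witness is available, so I would rely on the joining of bounds $\pvec \vee_{\I_\sigma}$ and the monotonicity of Lemma \ref{monotonicity-lemma} to combine $\pvec a_0$ and $\pvec a_1$ into a single witness $\pvec a_0 \pvec \vee_{\I_\sigma} \pvec a_1$; by Proposition \ref{boolean_either_or}, $i \equiv_\BB 0$ or $i \equiv_\BB 1$, and monotonicity upgrades whichever witness is actually correct to the joined one. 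The remaining steps are routine type-juggling.
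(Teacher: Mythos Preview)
Your approach is correct but differs substantially from the paper's. The paper exploits a single key observation: since the canonical clause makes $\uwitness{h}{\sigma\to\tau}{\pvec f,\pvec g}{\pvec a,\pvec d}$ literally equal to $\uInter{\forall x(\I_\sigma(x)\to\I_\tau(hx))}{\pvec f,\pvec g}{\pvec a,\pvec d}$, the biconditional $\I_{\sigma\to\tau}(h)\leftrightarrow\forall x(\I_\sigma(x)\to\I_\tau(hx))$ is trivially $\U$-interpretable. Everything else is then reduced to \emph{provability in the source theory} and transferred to $\U$-interpretability via the Uniform Soundness Theorem. For instance, $\I(\Sigma)$ unfolds (via three uses of the biconditional) to a sentence that is logically derivable from the biconditional and (ii), hence $\U$-interpretable. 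For $\mathrm{Cond}_\sigma$, the paper simply observes that $\forall i\,\forall x,y\,(\I_\BB(i)\wedge\I_\sigma(x)\wedge\I_\sigma(y)\to\I_\sigma(\mathrm{Cond}_\sigma(i,x,y)))$ is provable in $\SourceT_\BB$ because it holds for $i=0$ and $i=1$ (using $(\mathrm{E}_\BB)$), and three applications of the biconditional then yield $\I(\mathrm{Cond}_\sigma)$.

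Your direct term constructions work too, but notice what the paper's route buys: there is no case split on how $\I_\BB$ is interpreted. The dichotomy you perform (precise versus uniform) is exactly the dichotomy already handled in Section~\ref{disjunction-section} when showing $(\mathrm{E}_\BB)$ is $\U$-interpretable; the paper avoids repeating it by simply invoking provability plus Uniform Soundness. A minor point: in your uniform-$\BB$ case you cite Lemma~\ref{monotonicity-lemma}, but that lemma is stated for formulas of $\Lang{\SourceT}$; what you actually need is the joining of bounds property for the type-informative predicates $\I_\sigma$ themselves (the content of Proposition~\ref{join-bound-function-prop}, whose proof uses Lemma~\ref{monotonicity-lemma} applied to the $\LangI{\SourceT}$-formula $\forall x(\I_\sigma(x)\to\I_\tau(hx))$).
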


\begin{proof} The sentences $(ii)$ of Definition \ref{def-type-informative-base} are a logical consequence of (\ref{eq-ii-equiv}), and hence are $\U$-interpretable, by the Uniform Soundness Theorem. Let us check that condition $(i)$ of Definition \ref{def-type-informative-base} holds for the combinators. Consider the combinator $\Sigma_{\sigma, \tau, \rho}$. Its type is $(\sigma \to \tau \to \rho)\to (\sigma \to \tau) \to \sigma \to \rho$. If we use thrice the basic observation above, we conclude that $\witness{\Sigma_{\sigma, \tau, \rho}}{}$ is $\U$-interpretable if, and only if, $$\forall x^{\sigma \to \tau \to \rho} \forall y^{\sigma \to \tau} \forall z^\sigma (\I_{\sigma\to\tau\to\rho}(x) \wedge \I_{\sigma \to \tau}(y) \wedge \I_\sigma (z) \to \I_\rho(xz(yz)))$$ is. The latter sentence is indeed $\U$-interpretable: just use (again) thrice the basic observation above. Finally, let us see that condition $(i)$ of Definition \ref{def-type-informative-base} holds for the conditional constants. Consider the $\mathrm{Cond}_\sigma$ functional. The following is provable in the source theory $$\forall i^\BB \forall x^\sigma, y^\sigma (\witness{i}{\BB} \wedge \witness{x}{\sigma} \wedge \witness{y}{\sigma} \to \witness{\mathrm{Cond}_\sigma (i,x,y)}{\sigma}),$$ because it holds for $i =0$ and $i=1$. Note that, by three applications of (\ref{eq-ii-equiv}), we get $\I(\mathrm{Cond}_\sigma)$. Since (\ref{eq-ii-equiv}) is $U$-interpretable, then so is $\I(\mathrm{Cond}_\sigma)$.
\end{proof}

\begin{remark} By the proposition above, in order for the canonical interpretation to be type-informative, it is only necessary to check, in each concrete situation, that condition $(i)$ of Definition \ref{def-type-informative-base} holds for the non-logical constants of the language of $\SourceT$ (constants other than the combinators and the conditional constants).
\end{remark}

For the rest of this section, we consider base interpretations for the ground types that interpret the base type $\BB$ of the Booleans uniformly. We investigate how a notion of joining of bounds for the ground types can be extended to all finite types, interpreted canonically. 

\begin{definition}[Joining of bound for $\witness{h}{\rho \to \tau}$] \label{join-bound-function-def} Suppose that we have operations 
$${\pvec \vee}_{\I_X} \colon \pvec \tau^+_{X} \times \pvec \tau^+_{X} \to \pvec \tau^+_{X}$$ 
for all ground types $X$. We extend this operation to the canonical type-informative predicates of function type as follows:
$$\pvec f, \pvec g {\pvec \vee}_{\I_{\sigma \to \tau}} \pvec f',\pvec g' \mbox{\,\,\,is defined as\,\,\,} \lambda \pvec a. (\pvec f \pvec a {\pvec \vee}_{\I_\tau} \pvec f' \pvec a), \lambda \pvec a,\pvec d. (\pvec g \pvec a \pvec d \cup \pvec g' \pvec a \pvec d).$$    
\end{definition}

The following proposition is fundamental for dealing with canonical interpretations that interpret disjunction uniformly:

\begin{proposition} \label{join-bound-function-prop} If the type-informative predicates of ground types have the joining of bounds property, then the canonical type-informative predicates of function types (as given by the previous definition) also have the joining of bounds property.    
\end{proposition}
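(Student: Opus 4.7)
The plan is to prove both parts of the joining-of-bounds property (commutativity of ${\pvec \vee}_{\I_{\sigma \to \tau}}$ and the monotonicity implication) simultaneously by induction on the type. The base case, for ground types, is supplied directly by the hypothesis. The inductive step at a function type $\sigma \to \tau$ uses the induction hypothesis at the codomain type $\tau$; the domain type $\sigma$ intervenes only through finite sets of its negative witnesses, which are joined by ordinary union.

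For commutativity at $\sigma \to \tau$, unfolding the definition gives
\[
    (\pvec f, \pvec g)\, {\pvec \vee}_{\I_{\sigma \to \tau}}\, (\pvec f',\pvec g') \;=\; \bigl(\lambda \pvec a.\, \pvec f \pvec a\, {\pvec \vee}_{\I_\tau}\, \pvec f' \pvec a,\; \lambda \pvec a,\pvec d.\, \pvec g \pvec a \pvec d \cup \pvec g' \pvec a \pvec d\bigr).
\]
By the induction hypothesis, $\pvec f \pvec a\, {\pvec \vee}_{\I_\tau}\, \pvec f' \pvec a = \pvec f' \pvec a\, {\pvec \vee}_{\I_\tau}\, \pvec f \pvec a$ holds pointwise, and union of finite sets is commutative, so the two pairs are equal (appealing to the same \emph{modicum} of extensionality used in Lemma \ref{monotonicity-lemma}).

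For the monotonicity implication, suppose $\uwitness{h}{\sigma \to \tau}{\pvec f, \pvec g}{\pvec a, \pvec d}$, i.e.,
\[
    \forall x^\sigma \bigl(\forall \pvec b \in \pvec g \pvec a \pvec d\, \uwitness{x}{\sigma}{\pvec a}{\pvec b} \to \uwitness{hx}{\tau}{\pvec f \pvec a}{\pvec d}\bigr).
\]
We must establish $\uwitness{h}{\sigma \to \tau}{(\pvec f,\pvec g)\,{\pvec \vee}_{\I_{\sigma\to\tau}}\,(\pvec f',\pvec g')}{\pvec a, \pvec d}$, that is,
\[
    \forall x^\sigma \bigl(\forall \pvec b \in \pvec g \pvec a \pvec d \cup \pvec g' \pvec a \pvec d\, \uwitness{x}{\sigma}{\pvec a}{\pvec b} \to \uwitness{hx}{\tau}{\pvec f \pvec a\, {\pvec \vee}_{\I_\tau}\, \pvec f' \pvec a}{\pvec d}\bigr).
\]
Fix $x^\sigma$ and assume the antecedent for the bigger set. \emph{A fortiori}, $\forall \pvec b \in \pvec g \pvec a \pvec d\, \uwitness{x}{\sigma}{\pvec a}{\pvec b}$ holds, so by the supposition we obtain $\uwitness{hx}{\tau}{\pvec f \pvec a}{\pvec d}$. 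Applying the induction hypothesis (the joining-of-bounds property for $\I_\tau$) then yields $\uwitness{hx}{\tau}{\pvec f \pvec a\, {\pvec \vee}_{\I_\tau}\, \pvec f' \pvec a}{\pvec d}$, as required.

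No step is a genuine obstacle: the proof is essentially a two-line induction on the codomain type. The only subtlety is verifying that the operation ${\pvec \vee}_{\I_{\sigma\to\tau}}$ of Definition \ref{join-bound-function-def} is well-typed as a map $\pvec \tau^+_{\I_{\sigma\to\tau}} \times \pvec \tau^+_{\I_{\sigma\to\tau}} \to \pvec \tau^+_{\I_{\sigma\to\tau}}$, which is immediate from the fact that the codomain operation ${\pvec \vee}_{\I_\tau}$ respects the types of positive $\I_\tau$-witnesses (by induction) and that union respects the star types.
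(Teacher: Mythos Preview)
Your proof is correct. The paper, however, dispatches the result in one line by observing that $\uwitness{h}{\sigma\to\tau}{\cdot}{\cdot}$ is by definition $\uInter{\forall x^\sigma(\I_\sigma(x)\to\I_\tau(hx))}{\cdot}{\cdot}$ and that the operation ${\pvec\vee}_{\I_{\sigma\to\tau}}$ of Definition~\ref{join-bound-function-def} is precisely ${\pvec\vee}_{\forall x^\sigma(\I_\sigma(x)\to\I_\tau(hx))}$ of Definition~\ref{joining-bounds} (cases~(ii) and~(iii) composed); Lemma~\ref{monotonicity-lemma} then gives the joining-of-bounds property directly. Your inductive step is exactly the universal-quantifier/implication case of that lemma's proof, specialized to this formula, so the two arguments are the same at the level of computation---the paper just recognizes that the work was already done and avoids repeating it.
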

\begin{proof} This is a direct consequence of Lemma \ref{monotonicity-lemma}, by observing that ${\pvec \vee}_{\I_{\sigma \to \tau}{(h)}}$ is in fact ${\pvec \vee}_{\forall x^\sigma(\I_\sigma(x) \to \I_\tau(hx))}$. 
\end{proof}

Within the realm of arithmetic, canonical interpretations give rise to new models of G\"odel's theory $\GodelT$. This will be briefly discussed in Subsection \ref{canonical-precise-section} and in Subsection \ref{canonical-bounding-section}. We will also discuss, in the context of arithmetic, definitions of type-informative predicates for function types other than the canonical one.

\section{Interlude on arithmetic}\label{interlude}

A central example for the application of the framework of the previous sections concerns the theory of Heyting arithmetic in all finite types $\HAomega$ (and its extensions). The theory $\HAomega$ is usually defined with a unique ground type $\NN$, the type of the natural numbers (we call \emph{arithmetical types} the finite types built upon the ground type $\NN$).  However, given our discussion of disjunction in Section \ref{disjunction-section}, $\HAomega$ is presented in this paper with two ground types: $\NN$ and the type $\BB$ of the Booleans. This is a minor departure from the usual presentations of $\HAomega$. The literature discusses several variants of $\HAomega$, according to the manner which they treat equality (the reader can consult \cite{AvigadFeferman(98)} for a brief discussion on these matters). There is a purely technical reason why the issue of equality is raised: G\"odel's {\em dialectica} interpretation needs characteristic functions for the atomic formulas of the language in order to be able to interpret the rule of contraction for conjunction (therefore, primitive relations of equality need characteristic functions). However, as discussed in Section \ref{uniform-interpretation-section}, we depart from G\"odel's interpretation and use the Diller-Nahm variant instead. So, in this respect, the issue of equality vanishes. Using the framework of this paper, we present the theory $\HAomega$ with a logical equality sign at each finite type (this is like the neutral theory $\NHAomega$ of \cite{Troelstra(73)}). We also include in $\HAomega$ the conditional constants $\mathrm{Cond}_\sigma$ and their axioms.

The non-logical vocabulary of $\HAomega$ consists of the arithmetical constant $\Zero$ of type $\NN$, the constant $\Suc$ of type $\NN \to \NN$, as well as recursor constants $\Rec_\sigma$, of type $$\NN \to \sigma \to (\NN \to \sigma \to \sigma) \to \sigma$$ associated with each finite type $\sigma$ of the language of $\HAomega$. Do notice that the language of arithmetic $\Lang{\HAomega}$ has no relation symbols but the logical equality symbols $\equiv_\sigma$. 

The arithmetical constant zero $\Zero$ and the successor constant $\Suc$ are regulated by the two usual axioms that characterize an infinite Dedekind set, saying that $\Suc$ is injective and that $\Zero$ is not in the range of $\Suc$. The axioms for the recursors are as usual: 
\eqleft{
\begin{array}{lcl}
    \Rec_\sigma \Zero w f 
        & \equiv_\sigma & w \\[1mm]
    \Rec_\sigma (\Suc n) w f 
        & \equiv_\sigma & f n (\Rec_\sigma n w f)
\end{array}
}
giving rise to the definitions of the primitive recursive functionals of finite type in the sense of G\"odel. Technically speaking, we should have constants for simultaneous recursion, as explained (for instance) in \cite{Kohlenbach(08)}. For the sake of simplicity, we use the above and maintain this register throughout the paper. The scheme of induction for all the formulas of the language completes the axiomatization of $\HAomega$. 

We should also say a few words regarding the target theory $\TargetT$ where the interpretations of the theory $\HAomega$ takes place. We focus on describing its terms (among which are the witnessing terms, with an associated term reduction calculus). For the concrete examples of the next sections, the language of the target theory is the language of $\HAomega$, endowed with the star types and the vocabulary of the star combinatory calculus (including recursors pertaining to the star types) together with a witnessing constant $\mathrm{m}$ of type $\NN^* \to \NN$ whose intended meaning is that $\mathrm{m}F$, or $\mathrm{m}(F)$, is the maximum of the non-empty finite set $F$, where $F \subseteq \NN$. Regarding the target theories proper (i.e., their axioms), as commented in Section \ref{uniform-interpretation-section}, we do not need to be too precise about them.

That notwithstanding, a natural choice for effecting the verification of the interpretations of the source theory $\HAomega$ is to use the full set-theoretic structure $\Sws$, namely the type-structure defined as follows: $\mathcal{S}_\NN = \NN$, $\mathcal{S}_\BB = \{0,1\}$, 
\eqleft{\mathcal{S}_{\rho \to \sigma} = \{f: f\mbox{ is a function from }S_\rho\mbox{ to }S_\sigma \} \mbox{ and}} 
\eqleft{\mathcal{S}_{\sigma^*} = \{F \subseteq \mathcal{S}_\sigma: F\mbox{ is non-empty and finite} \}.} 
This amounts to taking the target theory $\TargetT$ as the set of truths in $\Sws$ (formulated the language of the target theory). The restriction of $\Sws$ to the types built without the star-construct is denoted by $\Sw$. As it is well known, the arithmetical part of $\Sw$ is a model of G\"odel's theory $\GodelT$.

Suppose that we have a sound type-informative base interpretation of $\HAomega$ (sound with respect to $\Sws$). For each arithmetical type $\sigma$ of the language of $\HAomega$, let
\eqleft{
\begin{array}{lcl}
    \mathcal{T_\sigma} 
        & := & \{ x \in \mathcal{S}_\sigma : \Sws \models \exists \pvec a \forall \pvec b \, \uInter{\witness{x}{\sigma}}{\pvec a}{\pvec b} \}. 
\end{array}
}
\begin{lemma}
    Given arithmetical types $\rho$ and $\sigma$ of the language of $\HAomega$, if $f \in \mathcal{T}_{\rho \to \sigma}$ and $x \in \mathcal{T}_\rho$, then $f(x) \in \mathcal{T}_\sigma$.
\end{lemma}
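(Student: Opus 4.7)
The plan is to peel back the definition of $\mathcal{T}_\sigma$ and apply clause (ii) of the type-informative base interpretation to the pair $(\rho,\sigma)$, which is available by hypothesis.

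First, I would invoke Definition \ref{def-type-informative-base}(ii), which by assumption gives that
$$\forall x^\rho \forall f^{\rho \to \sigma} (\witness{x}{\rho} \wedge \witness{f}{\rho \to \sigma} \to \witness{fx}{\sigma})$$
is $\U$-interpretable in $\TargetT$. By Remark \ref{criterium}, this furnishes closed witnessing terms $\pvec q_{\rho,\sigma}$, $\pvec r_{\rho,\sigma}$ and $\pvec t_{\rho,\sigma}$ such that $\Sws$ validates
$$\forall \pvec b \in \pvec q_{\rho,\sigma} \pvec a \pvec c \pvec v\, \uwitness{x}{\rho}{\pvec a}{\pvec b} \wedge \forall \pvec d \in \pvec r_{\rho,\sigma} \pvec a \pvec c \pvec v\, \uwitness{f}{\rho \to \sigma}{\pvec c}{\pvec d} \to \uwitness{fx}{\sigma}{\pvec t_{\rho,\sigma} \pvec a \pvec c}{\pvec v},$$
for all $x^\rho$, $f^{\rho \to \sigma}$ and all tuples $\pvec a, \pvec c, \pvec v$ of the appropriate witnessing types.

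Next, from $x \in \mathcal{T}_\rho$ I would extract a tuple $\pvec a$ in $\Sws$ such that $\forall \pvec b\, \uwitness{x}{\rho}{\pvec a}{\pvec b}$ holds; similarly, from $f \in \mathcal{T}_{\rho \to \sigma}$ extract a tuple $\pvec c$ such that $\forall \pvec d\, \uwitness{f}{\rho \to \sigma}{\pvec c}{\pvec d}$. Fix an arbitrary $\pvec v$ of the appropriate type. Since the universal statements in $\pvec a$ and $\pvec c$ hold for all $\pvec b$ and $\pvec d$ whatsoever, a fortiori they hold for $\pvec b \in \pvec q_{\rho,\sigma} \pvec a \pvec c \pvec v$ and $\pvec d \in \pvec r_{\rho,\sigma} \pvec a \pvec c \pvec v$. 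The displayed implication therefore yields $\uwitness{fx}{\sigma}{\pvec t_{\rho,\sigma} \pvec a \pvec c}{\pvec v}$.

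Finally, since $\pvec v$ was arbitrary and the witness $\pvec t_{\rho,\sigma} \pvec a \pvec c$ does not depend on $\pvec v$, we conclude that $\Sws \models \exists \pvec a' \forall \pvec v\, \uwitness{f(x)}{\sigma}{\pvec a'}{\pvec v}$, witnessed by $\pvec a' := \pvec t_{\rho,\sigma} \pvec a \pvec c$. Hence $f(x) \in \mathcal{T}_\sigma$, as required. I do not anticipate any genuine obstacle here: the only subtlety is to notice that the positive witness produced by the type-informative condition is independent of the negative witness $\pvec v$ (cf. the shape of $\pvec t_{\sigma,\tau} \pvec a \pvec c$ in Remark \ref{criterium}), which is precisely what allows the existential-over-universal witness required by the definition of $\mathcal{T}_\sigma$ to be supplied uniformly.
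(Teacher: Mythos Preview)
Your proposal is correct and follows exactly the approach of the paper, which simply states that the result is a consequence of clause $(ii)$ of Definition \ref{def-type-informative-base} together with Remark \ref{criterium}. You have merely unfolded those two references into an explicit argument, including the observation that the positive witness $\pvec t_{\rho,\sigma}\pvec a\pvec c$ does not depend on $\pvec v$.
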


\begin{proof}
    This is a consequence of clause $(ii)$ of Definition \ref{def-type-informative-base}. See also Remark \ref{criterium}.
\end{proof}

Therefore, the type-structure $\mathcal{T}^\omega$ given by the family $(\mathcal{T})_\sigma$ is an applicative structure for the typed language of G\"odel's $\GodelT$. Moreover:

\begin{lemma}
    The combinators, zero, the successor function and the recursors of $\Sw$ are in $\mathcal{T}^\omega$.
\end{lemma}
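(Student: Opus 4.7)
The plan is to deduce this directly by unwinding the hypothesis. By Definition \ref{def-I-sound-base}, a sound type-informative base interpretation is, in particular, a type-informative base interpretation (Definition \ref{def-type-informative-base}), so clause (i) gives that, for every constant $c^\sigma$ of $\Lang{\HAomega}$, the sentence $\witness{c^\sigma}{\sigma}$ is $\U$-interpretable in $\TargetT$. Unwinding this, there are closed witnessing terms $\pvec t$ (depending on $c$) such that
$$\TargetT \proves \forall \pvec b\, \uInter{\witness{c^\sigma}{\sigma}}{\pvec t}{\pvec b}.$$

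First I would point out that the distinguished constants $\Pi_{\rho,\sigma}$, $\Sigma_{\rho,\sigma,\gamma}$, $\mathrm{Cond}_\sigma$, $\Zero$, $\Suc$ and $\Rec_\sigma$ are all constants of $\Lang{\HAomega}$, so this clause applies to each of them. Next, since we are taking $\TargetT$ to be the set of truths of $\Sws$, the provable statement above holds in $\Sws$, and the closed witnessing terms $\pvec t$ denote genuine elements of $\Sws$. Thus
$$\Sws \models \exists \pvec a\, \forall \pvec b\, \uInter{\witness{c^\sigma}{\sigma}}{\pvec a}{\pvec b},$$
which by the definition of $\mathcal{T}_\sigma$ is precisely the statement that (the $\Sw$-interpretation of) $c$ lies in $\mathcal{T}_\sigma$.

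Applying this observation to each of the combinators, to $\Zero$, to $\Suc$ and to each $\Rec_\sigma$ in turn gives the conclusion. There is no real obstacle: the lemma is essentially an unpacking of the soundness hypothesis, together with the trivial observation that each of these constants belongs simultaneously to the source language and to $\Sw \subseteq \Sws$, so that the witnessing elements supplied by clause (i) of Definition \ref{def-type-informative-base} certify membership in the corresponding $\mathcal{T}_\sigma$.
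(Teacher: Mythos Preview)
Your proposal is correct and follows the same approach as the paper, which simply states that the result is a consequence of clause $(i)$ of Definition \ref{def-type-informative-base}. You have merely unpacked this one-line reference in more detail.
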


\begin{proof}
    This is a consequence of clause $(i)$ of Definition \ref{def-type-informative-base}. 
\end{proof}

Notice that the above lemmas imply that $\mathcal{T}_\NN$ is $\NN$. In sum, we have the following important fact:

\begin{theorem}\label{models_of_T} 
$\mathcal{T}^\omega$ is a model of G\"odel's theory $\GodelT$.
\end{theorem}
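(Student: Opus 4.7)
The plan is to observe that the two preceding lemmas already furnish essentially all the structural content of $\mathcal{T}^\omega$, so the theorem will reduce to noting that $\mathcal{T}^\omega$ is a sub-applicative-structure of $\Sw$ that contains every constant of $\GodelT$, after which the (quantifier-free, equational) axioms of $\GodelT$ transfer automatically from $\Sw$.

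First, I would assemble the applicative type structure. By the first preceding lemma, $\mathcal{T}^\omega$ is closed under application inherited from $\Sw$ across arithmetical types; by the second, it contains the combinators $\Pi_{\rho,\sigma}$ and $\Sigma_{\rho,\sigma,\gamma}$, the arithmetical constants $\Zero$ and $\Suc$, and every recursor $\Rec_\sigma$. Combined with the already-noted identity $\mathcal{T}_\NN = \NN$ (and the analogous $\mathcal{T}_\BB = \{0,1\}$, obtained because the constants $0, 1$ of type $\BB$ are $\U$-interpretable by clause (i) of Definition \ref{def-type-informative-base}), this gives a typed applicative structure over exactly the signature of $\GodelT$, with application, equality, and all named constants inherited as restrictions of those in $\Sw$.

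Second, I would verify the non-logical axioms. The axioms of $\GodelT$ are the combinator equations $\Pi x y \equiv x$ and $\Sigma x y z \equiv xz(yz)$, the recursor schemes $\Rec_\sigma \Zero w f \equiv_\sigma w$ and $\Rec_\sigma(\Suc n) w f \equiv_\sigma f n (\Rec_\sigma n w f)$, and the Peano-style axioms for $\Suc$ and $\Zero$. Each is a universally quantified equation (or disequation) between terms built from constants of $\GodelT$. Since they all hold in $\Sw$, and any substitution by elements of $\mathcal{T}_\sigma \subseteq \mathcal{S}_\sigma$ is also a legitimate substitution in $\Sw$, every instance remains valid in $\mathcal{T}^\omega$; closure under application ensures that both sides of each equation denote elements of $\mathcal{T}^\omega$, so the equation is satisfied as an identity of $\mathcal{T}^\omega$ rather than merely of $\Sw$.

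I do not expect a real obstacle: the substantive content has already been discharged by the two lemmas invoking clauses (i) and (ii) of Definition \ref{def-type-informative-base}. The one point worth making explicit in the write-up is that equality in $\mathcal{T}^\omega$ is literal set-theoretic equality inherited from $\Sws$, so the equational axioms of $\GodelT$ are validated strictly and not merely up to some coarser congruence; with this observation, the result is the familiar fact that a sub-applicative-structure of a model of $\GodelT$ containing all constants is itself a model.
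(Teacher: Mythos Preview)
Your proposal is correct and matches the paper's approach: the paper gives no explicit proof at all, presenting the theorem simply as a summary (``In sum, we have the following important fact'') of the two preceding lemmas, and your write-up merely makes explicit the routine verification that a sub-applicative-structure of $\Sw$ containing all the constants of $\GodelT$ inherits the universal equational axioms. One small remark: the paper restricts $\mathcal{T}^\omega$ to the \emph{arithmetical} types (those built from $\NN$ alone), so your aside about $\mathcal{T}_\BB$ is unnecessary, though harmless.
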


When we apply the above theorem to concrete sound type-informative base interpretation of $\HAomega$, we obtain models of G\"odel's theory $\GodelT$. In the sequel, we will give examples of that. Note that the models $\mathcal{T}^\omega$ constructed  as above may not be extensional models. We believe, however, that it is also possible to consider (at least in certain situations) both extensional and hereditary versions of these models, but we will not discuss this issue in this paper.

\subsection{Star calculus and majorizability}\label{star-majorizability}

In 1973, William Howard introduced in \cite{Howard(73)} a notion of majorizability between functionals of the same arithmetical type. This majorizability relation was later modified by Marc Bezem \cite{Bezem(85)}, yielding the  so-called Bezem's \emph{strong majorizability relation}:
\eqleft{
\begin{array}{lcl}
		x \Bmaj_{\NN} y & :\equiv & x \leq y \\[1mm]
		f \Bmaj_{\sigma \to \tau} g & :\equiv & \forall x^\sigma, y^\sigma (x \Bmaj_\sigma y \to fx \Bmaj_\tau gy \wedge gx \Bmaj_\tau gy).
	\end{array}	
}
In contrast to Howard's notion, the strong majorizability notion is transitive and enjoys the following property: if $x\Bmaj_\sigma y$, then $y \Bmaj_\sigma y$. As Bezem reports in \cite{Bezem(85)}, the above definition of strong majorizability can be effected over any (arithmetical) applicative type structure. 

We need to extend the notion of strong majorizability to arithmetical star types, i.e., to the finite star types built upon the grounding type $\NN$. This is done with the following extra clause:
\eqleft{
\begin{array}{lcl}
        a \Bmaj_{\sigma^*} b & :\equiv & \forall u \in a \,\exists v \in b \, (u \Bmaj_\sigma v) \wedge \forall v \in b \,(v \Bmaj_\sigma v).
	\end{array}	
}

It is easy to see that transitivity and the other property mentioned above still hold in wider setting of the arithmetical star types.

\begin{definition}\label{star-monotone}
    Given an element $a$ of arithmetical star type $\sigma$, we say that $a$ is {\em monotone} if $a \Bmaj_\sigma a$.
\end{definition}

Note that an element $a$ of star type $\sigma^*$ is monotone if, and only if, each element of $a$ is monotone. Clearly, the monotone arithmetical star type functionals form an applicative structure. 

\begin{proposition}\label{star-monotone-prop}
    The three star operations $\mathfrak{s}_{\sigma}$, $\cup_{\sigma}$ and $\bigcup_{\sigma,\tau}$ are monotone.
\end{proposition}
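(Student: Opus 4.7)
The plan is straightforward unwinding of the definition of $\Bmaj$ at function and star types, using the key closure properties of strong majorizability: transitivity, and the fact that $x \Bmaj_\sigma y$ implies $y \Bmaj_\sigma y$.

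First, I would handle $\mathfrak{s}_\sigma \colon \sigma \to \sigma^*$. By definition of $\Bmaj_{\sigma \to \sigma^*}$, this requires showing that whenever $x \Bmaj_\sigma y$, both $\{x\} \Bmaj_{\sigma^*} \{y\}$ and $\{y\} \Bmaj_{\sigma^*} \{y\}$. Unfolding $\Bmaj_{\sigma^*}$: for the first clause, the unique element $x$ of $\{x\}$ has witness $y \in \{y\}$ with $x \Bmaj_\sigma y$ (given), and the unique element $y$ of $\{y\}$ is monotone (since $x \Bmaj_\sigma y$ implies $y \Bmaj_\sigma y$). The second reduces to $y \Bmaj_\sigma y$, which we just noted.

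Next, for $\cup_\sigma \colon \sigma^* \to \sigma^* \to \sigma^*$, fix $a \Bmaj_{\sigma^*} a'$ and $b \Bmaj_{\sigma^*} b'$; I must check $a \cup b \Bmaj_{\sigma^*} a' \cup b'$ and $a' \cup b' \Bmaj_{\sigma^*} a' \cup b'$, plus the ``$a'$-diagonal'' clauses required by the definition of $\Bmaj$ at the outer function type. For $a \cup b \Bmaj_{\sigma^*} a' \cup b'$, any $u \in a \cup b$ lies in $a$ or $b$, so the given majorization hypothesis supplies a matching $v \in a' \cup b'$ with $u \Bmaj_\sigma v$; and every element of $a' \cup b'$ is monotone, because the hypotheses say so for $a'$ and for $b'$ respectively. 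The purely diagonal statement $a' \cup b' \Bmaj_{\sigma^*} a' \cup b'$ is the same verification using the identity $u \Bmaj_\sigma u$ for monotone $u$.

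For $\bigcup_{\sigma,\tau} \colon \sigma^* \to (\sigma \to \tau^*) \to \tau^*$, fix $a \Bmaj_{\sigma^*} a'$ and $f \Bmaj_{\sigma \to \tau^*} f'$, and check $\bigcup_{x \in a} f(x) \Bmaj_{\tau^*} \bigcup_{x \in a'} f'(x)$ together with its ``$a', f'$-diagonal'' counterpart. Given $u \in \bigcup_{x \in a} f(x)$ with $u \in f(x)$ for some $x \in a$, the hypothesis $a \Bmaj_{\sigma^*} a'$ produces $x' \in a'$ with $x \Bmaj_\sigma x'$, whence $f(x) \Bmaj_{\tau^*} f'(x')$ by $f \Bmaj f'$; unfolding once more, $u$ is majorized by some $v \in f'(x') \subseteq \bigcup_{x \in a'} f'(x)$. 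Monotonicity of every $v \in \bigcup_{x \in a'} f'(x)$ follows by locating $v \in f'(x')$ with $x' \in a'$, noting $x' \Bmaj_\sigma x'$, and applying $f \Bmaj f'$ to obtain $f'(x') \Bmaj_{\tau^*} f'(x')$, which is precisely the statement that all its members are monotone.

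No step is a serious obstacle; the whole argument is a bookkeeping exercise. The only thing to be careful about is to match the two conjuncts of the $\Bmaj$-clauses at every level: one gets ``$f \Bmaj g$'' both as a comparison statement and as a diagonal monotonicity statement, and the strong-majorizability definition at arithmetical star types was designed precisely so that these bookkeeping steps go through.
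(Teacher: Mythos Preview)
Your proposal is correct and follows essentially the same approach as the paper: unwind the definitions of $\Bmaj$ at function and star types, using transitivity and the property that $x \Bmaj_\sigma y$ implies $y \Bmaj_\sigma y$. The only (harmless) difference is that you explicitly track the ``diagonal'' clauses at each function-type level, whereas the paper is terser---noting for instance that when $f=g$ the two conjuncts in the definition of $\Bmaj_{\sigma\to\tau}$ coincide, so for $\mathfrak{s}_\sigma$ one only needs $\{x\}\Bmaj_{\sigma^*}\{y\}$.
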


\begin{proof} The star operation $\mathfrak{s}_{\sigma}$ is of type $\sigma\to \sigma^*$ and, set-theoretically, it is $\mathfrak{s}_{\sigma}x = \{x\}$. We must show that if $x \Bmaj_\sigma a$, then $\{x\} \Bmaj_\sigma \{a\}$. This is evident because $x \Bmaj_\sigma a$ entails $a \Bmaj_\sigma a$. \\[1mm]
The union operation $\cup_{\sigma}$ is of type $\sigma^*\to (\sigma^*\to \sigma^*)$ and, set-theoretically, it is $\cup_{\sigma}xy = x \cup_\sigma y$. We must show that if $x \Bmaj_{\sigma^*} a$ and $y \Bmaj_{\sigma^*} b$, then $x \cup y \Bmaj_{\sigma^*} a \cup b$ and $a \cup b \Bmaj_{\sigma^*} a \cup b$. This is clear by definition. \\[1mm]
It remains to see that the indexed unions $\bigcup_{\sigma,\tau}$ are monotone. Such an indexed union has type $\sigma^*\to (\sigma\to \tau^*)\to \tau^*$ and, set-theoretically, it is $\bigcup_{\sigma,\tau} xf = \bigcup_{w\in x} fw$. \\[1mm]
Let us suppose that $x \Bmaj_{\sigma^*} a$ and $f \Bmaj_{\sigma \to \tau^*} g$. \\[1mm]
Suppose that $u \in \bigcup_{w\in x} fw$. We must show that there is $v \in \bigcup_{z\in a} gz$ such that $u\Bmaj_\tau v$. To see this, take $w_0 \in x$ with $u \in fw_0$. Since $x \Bmaj_{\sigma^*} a$, there is $z_0 \in a$ with $w_0 \Bmaj_\sigma z_0$. Therefore, $fw_0 \Bmaj_{\tau^*} gz_0$. Given that $u \in fw_0$, there is $v \in gz_0$ with $u \Bmaj_\tau v$. Clearly, $v \in \bigcup_{z\in a} gz$ and we are done. \\[1mm]
Finally, we must see that all the elements of $\bigcup_{z\in a} gz$ are monotone. Take $v$ and $z$ with $v \in gz$ and $z \in a$. Since $a$ is monotone, so are its elements. Therefore $z$ is monotone. We get that $gz$ is monotone, because $g$ is also monotone. Again, the elements of $gz$ are monotone. Hence, $v$ is monotone.
\end{proof}

\section{Precise interpretations of arithmetic}
\label{precise-section}

In this section, our $\I$-interpretations are defined precisely at the ground type $\NN$, as in Example \ref{precise-def}. So, $\uwitness{x}{\NN}{n}{}$ is $x = n$. We also interpret the ground Boolean type precisely (a moment of thought shows that it cannot be interpreted in a uniform manner because the ground type $\NN$ does not have a notion of joining of bounds when interpreted precisely). The precise interpretation at the ground type $\NN$ does not determine the type-informative interpretations of the function types. One choice is to extend the precise interpretation (at the ground type) to the function types {\em precisely}, as discussed in Example \ref{all-precise-def}. 
\begin{definition}[$\Ipp$-interpretation] The $\Ipp$-interpretation is the base interpretation of arithmetic that defines the type-informative predicates precisely at every finite type. 
\end{definition}

This is nothing but the Diller-Nahm variant of the {\em dialectica} interpretation (see the comments of Example \ref{precise-quantifiers} and the second part of Subsection \ref{precise-if-else}). It is a well-known interpretation and we will not discuss it further. 

\subsection{Canonical upon the precise interpretation \texorpdfstring{$\Icp$}{Icp}}
\label{canonical-precise-section}

An alternative choice is to have the canonical interpretation at the function types, as discussed in Section \ref{canonical-section}. This choice is a novelty and we discuss it now.
\begin{definition}[$\Icp$-interpretation] The $\Icp$-interpretation is the base interpretation of arithmetic that defines the type-informative predicates as follows: it is precise at the ground types $\NN$ and $\BB$ and it is extended canonically at the function types.
\end{definition}

\begin{notation}
    The $\U$-interpretation fixed by the definition above is called the $\Ucp$-interpretation (see Remark \ref{fixing-base-2}).
\end{notation}

We want to show that the base interpretation of the $\Icp$-interpretation is sound  and type-informative. First, we must show that it is type-informative, i.e., that we have clauses $(i)$ and $(ii)$ of Definition \ref{def-type-informative-base}. By Proposition \ref{prop-canonical-type-informative}, clause $(ii)$ is automatic  and, for clause $(i)$, we need only check the arithmetical constants. So, we must check that the sentences $\witness{\Zero}{\NN}$, $\witness{\Suc}{\NN \to \NN}$ and $\witness{\Rec_\sigma}{\NN \to \sigma \to (\NN \to \sigma \to \sigma) \to \sigma}$ are $\Ucp$-interpretable in the target theory. 

It is illustrative to exhibit the $\Ucp$-interpretations of the type-informative predicates at the low types $\NN$, $\NN \to \NN$ and $(\NN \to \NN) \to \NN$ (these are known as type 0, type 1 and type 2, respectively). We have that
\eqleft{
\begin{array}{lcl}
    \uwitness{x}{\NN}{n}{} 
        & \mathrm{is} & x=n \\[1mm]
   \uwitness{f}{\NN \to \NN}{g}{k} 
        & \mathrm{is} & fk = gk \\[1mm]
   \uwitness{\Phi}{(\NN \to \NN) \to \NN}{\Psi, \mu}{h} 
        & \mathrm{is} & \forall f (\forall k \in \mu h (fk = hk) \to \Phi f = \Psi h)
\end{array}
}
where $n :\NN$, $g,h : \NN \to \NN$, $\mu: (\NN\to \NN) \to \NN^*$ and $\Psi: (\NN\to \NN) \to \NN$. It is clear that one has $\uwitness{\Zero}{\NN}{0}{}$ and $\forall k \uwitness{\Suc}{\NN \to \NN}{S}{k}$, where $S$ is the successor function. The following lemma deals with the recursors.

\begin{proposition} \label{rec_precise} The sentence $\witness{\Rec_\sigma}{\NN \to \sigma \to (\NN \to \sigma \to \sigma) \to \sigma}$ is $\Ucp$-interpretable, for each type $\sigma$. 
\end{proposition}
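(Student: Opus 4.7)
The approach is to unfold the canonical definition of $\I_{\NN \to \sigma \to (\NN \to \sigma \to \sigma) \to \sigma}(\Rec_\sigma)$, thereby reducing the claim (by the basic observation following Definition~\ref{canonical-def}, applied three times) to showing that
$$\forall n^\NN \forall x^\sigma \forall f^{\NN \to \sigma \to \sigma}\,\bigl(\I_\NN(n) \wedge \I_\sigma(x) \wedge \I_{\NN \to \sigma \to \sigma}(f) \to \I_\sigma(\Rec_\sigma n x f)\bigr)$$
is $\Ucp$-interpretable. Since $\I_\NN$ is interpreted precisely, a witness for $n$ is just $n$ itself, and by the criterion of Remark~\ref{criterium} the task is reduced to producing closed witnessing terms $\pvec\Psi$, $\pvec B^*$, $K$, $\pvec A$, $\pvec B^\dagger$ such that, given $n$, a positive witness $\pvec a$ for $x$, and a positive witness $(\pvec\phi,\pvec B)$ for $f$, the term $\pvec\Psi n \pvec a \pvec\phi \pvec B$ is a positive witness for $\Rec_\sigma n x f$ against an arbitrary negative witness $\pvec b$ of $\sigma$.

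The key idea is to construct, by primitive recursion in the target theory, a forward sequence of approximate positive witnesses and a backward sequence of negative-witness collections. Concretely, I would define $\pvec a_0 := \pvec a$ and $\pvec a_{i+1} := \pvec\phi\, i\, \pvec a_i$, so that $\pvec a_i$ is meant to witness $\Rec_\sigma i x f$. Going backwards from the required negative witness $\pvec b$, I would set $\pvec B_n := \{\pvec b\}$ and $\pvec B_i := \bigcup_{\pvec b' \in \pvec B_{i+1}} \pvec B\, i\, \pvec a_i\, \pvec b'$, using the negative-witness component of $f$'s interpretation to pull back the needed negative witnesses. Both sequences are definable by the recursors of the target theory (since $\pvec a_i$ depends only on $i$ and the parameters, and $\pvec B_i$ is computed top-down after $\pvec a_0,\dots,\pvec a_n$ are fixed). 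Then I set $\pvec\Psi n \pvec a \pvec\phi \pvec B := \pvec a_n$, take $\pvec B^*$ to be $\pvec B_0$, and let $K$, $\pvec A$, $\pvec B^\dagger$ collect, respectively, $\{0,\dots,n-1\}$, the values $\pvec a_0,\dots,\pvec a_{n-1}$, and the union $\bigcup_{j\le n}\pvec B_j$, so that the hypothesis on $f$ is invoked at each relevant index.

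The verification then proceeds by an induction (in the target theory) on $k$ with $0 \le k \le n$, showing that
$$\forall \pvec b' \in \pvec B_k \;\; \uwitness{\Rec_\sigma k x f}{\sigma}{\pvec a_k}{\pvec b'}.$$
The base case $k=0$ uses directly that $\forall \pvec b' \in \pvec B_0\, \uwitness{x}{\sigma}{\pvec a}{\pvec b'}$. The inductive step applies the assumption on $f$ at index $k$ with positive witness $\pvec a_k$ (which, by induction hypothesis, witnesses $\Rec_\sigma k x f$ against every element of $\pvec B k \pvec a_k \pvec b'' \subseteq \pvec B_k$), and an arbitrary $\pvec b'' \in \pvec B_{k+1}$, to conclude $\uwitness{f k (\Rec_\sigma k x f)}{\sigma}{\pvec a_{k+1}}{\pvec b''}$, which is precisely $\uwitness{\Rec_\sigma(k+1)xf}{\sigma}{\pvec a_{k+1}}{\pvec b''}$ by the recursor axiom. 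Taking $k=n$ yields the desired conclusion.

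The only real obstacle is bookkeeping: the positive witnesses $\pvec a_i$ propagate forwards, while the negative-witness sets $\pvec B_i$ must be computed backwards starting from $\pvec b$, and one must make sure that the finite sets $K$, $\pvec A$, $\pvec B^\dagger$ pack enough material for the canonical interpretation of $\I_{\NN \to \sigma \to \sigma}(f)$ to be invoked uniformly at all indices $k < n$. Once the sequences are defined by (star-)primitive recursion and the inductive invariant is stated as above, the verification itself is entirely routine.
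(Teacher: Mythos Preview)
Your proposal is correct and follows essentially the same approach as the paper's proof: the same forward sequence $\pvec a_i$, the same backward sequence $\pvec B_i$, the same choice of $\pvec\Psi$, $\pvec B^*$, $\pvec B^\dagger$, and the same induction on $k$. One small technical point: your choices $K = \{0,\dots,n-1\}$ and $\pvec A = \{\pvec a_0,\dots,\pvec a_{n-1}\}$ are empty when $n=0$, but star types are required to be non-empty; the paper takes $\{0,\dots,n\}$ and $\{\pvec a_0,\dots,\pvec a_n\}$ instead, which is harmlessly redundant and avoids this edge case.
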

\begin{proof} By the definition of the canonical interpretation of function types (Def. \ref{canonical-def}), the recursor $\Rec_\sigma$ enjoys the property of the proposition if, and only if, the sentence 
$$\forall n^\NN \forall x^\sigma \forall f^{\NN \to \sigma \to \sigma} (\witness{n}{\NN} \wedge \witness{x}{\sigma} \wedge \witness{f}{\NN \to \sigma \to \sigma} \to \witness{\Rec n x f}{\sigma})$$ is $\Ucp$-interpretable. Our aim is, therefore, to show that the above sentence is $\Ucp$-interpretable. By definition,
\[
\uwitness{f}{\NN \to \sigma \to \sigma}{\pvec \phi, \pvec B}{k, \pvec a, \pvec b} \mbox{\,\, is \,\,}\forall y^\sigma (\forall \pvec b' \in \pvec B k \pvec a \pvec b \uwitness{y}{\sigma}{\pvec a}{\pvec b'} \to \uwitness{f k y}{\sigma}{\pvec \phi k \pvec a}{\pvec b}). 
\]
With this notation, it is not difficult to see that we must obtain closed terms $\pvec \Psi$, $\pvec B^*$, $K$, $\pvec A$ and $\pvec B^\dagger$ such that the target theory proves the following: for all appropriate $n$, $\pvec a$, $\pvec \phi$, $\pvec B$, $\pvec b$, and for all $x^\sigma$ and $f^{\NN \to \sigma \to \sigma}$, if 
\eqleft{\forall \pvec b' \in \pvec B^* n \pvec a \pvec \phi \pvec B \pvec b \uwitness{x}{\sigma}{\pvec a}{\pvec b'} \mbox{\,\,\,\,\,\,and}} 
\eqleft{\forall k \in K n \pvec a \pvec \phi \pvec B \pvec b \; \forall \pvec a \in \pvec A n \pvec a \pvec \phi \pvec B \pvec b \; \forall \pvec b'' \in \pvec B^\dagger n \pvec a \pvec \phi \pvec B \pvec b \, \uwitness{f}{\NN \to \sigma \to \sigma}{\pvec \phi, \pvec B}{k, \pvec a, \pvec b''}}
then $\uwitness{\Rec_\sigma n x f}{\sigma}{\pvec \Psi n \pvec a \pvec \phi \pvec B}{\pvec b}$. \\
Given $n$, $\pvec a$, $\pvec \phi$, $\pvec B$, $\pvec b$, we define, using recursors, two sequences $\pvec a_i$ and $\pvec B_i$ (with $0\leq i < n$) inductively as follows:
\[
\begin{array}{rclcrcl}
\pvec a_0 & = & \pvec a & \hspace{8mm} & \pvec B_n & = & \{\pvec b\} \\[1mm]
\pvec a_{i+1} & = & \pvec \phi i\pvec a_i & \hspace{8mm} & \pvec B_i & = & \bigcup_{\pvec b'\in \pvec B_{i+1}} \pvec B i \pvec a_i \pvec b'. 
\end{array}
\]
Now, we take $\pvec \Psi$, $\pvec B^*$, $K$, $\pvec A$ and $\pvec B^\dagger$ such that
\[
\begin{array}{rcl}
     \pvec \Psi n \pvec a \pvec \phi \pvec B & = & \pvec a_n \\[1mm]
     \pvec B^* n \pvec a \pvec \phi \pvec B \pvec b & = & \pvec B_0 \\[1mm]
     K n \pvec a \pvec \phi \pvec B \pvec b & = & \{ 0, 1, \ldots, n \} \\[1mm]
     \pvec A n \pvec a \pvec \phi \pvec B \pvec b & = & \{ \pvec a_0, \pvec a_1, \ldots, \pvec a_n \} \\[1mm]
     \pvec B^\dagger n \pvec a \pvec \phi \pvec B \pvec b & = & \bigcup_{j\leq n} \pvec B_j.
\end{array}
\]
Let us see that these terms do the job. Given $n, \pvec a, \pvec \phi, \pvec B$ and $\pvec b$, fix $x$ and $f$ and assume
\begin{equation*}
    \forall \pvec b' \in \pvec B_0 \uwitness{x}{\sigma}{\pvec a}{\pvec b'}
\end{equation*}
and, for each $k < n$ and for all $\pvec b'' \in \bigcup_{j\leq n} \pvec B_j$,
\begin{equation} \label{eq-rec-hash}
     \forall y^\sigma (\forall \pvec b' \in \pvec B k \pvec a_k \pvec b'' \uwitness{y}{\sigma}{\pvec a_k}{\pvec b'} \to \uwitness{f k y}{\sigma}{\pvec a_{k+1}}{\pvec b''}). 
\end{equation}
We claim that for all $k$ such that $0 \leq k \leq n$, we have $\forall \pvec b' \in \pvec B_k \uwitness{\Rec_\sigma k x f}{\sigma}{\pvec a_k}{\pvec b'}$. In particular, when $k=n$, we get $\uwitness{\Rec_\sigma n x f}{\sigma}{\pvec a_n}{\pvec b}$. This is what we want. We argue by induction on $0 \leq k \leq n$. The base case is given. Let $0\leq k < n$. Take $\pvec b'' \in \pvec B_{k+1}$. We must show $\uwitness{\Rec_\sigma (k+1) x f}{\sigma}{\pvec a_{k+1}}{\pvec b''}$, that is, $\uwitness{fk(\Rec_\sigma k x f)}{\sigma}{\pvec a_{k+1}}{\pvec b''}$. By considering in (\ref{eq-rec-hash}) above $y$ to be $\Rec_\sigma k x f$, this follows if we have $\forall \pvec b' \in \pvec B k \pvec a_k \pvec b'' \uwitness{\Rec_\sigma k x f}{\sigma}{\pvec a_k}{\pvec b'}$. Given that $\pvec B k \pvec a_k \pvec b'' \subseteq \pvec B_k$, we are done by induction hypothesis. \end{proof}

The next proposition guarantees that the Main Soundness Theorem (cf.\ Theorem \ref{main-soundness}) applies to $\HAomega$ for the $\Icp$-interpretation.

\begin{theorem} 
    The base interpretation of the $\Icp$-interpretation is sound and type-informative.
\end{theorem}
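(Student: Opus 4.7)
The plan is to verify the two requirements of Definition \ref{def-I-sound-base} in turn. For type-informativeness, Proposition \ref{prop-canonical-type-informative} already yields clause $(ii)$ automatically, together with clause $(i)$ for the combinators and the conditional functionals. So what remains for type-informativeness is to check clause $(i)$ for the remaining non-logical constants of $\HAomega$, namely $\Zero$, $\Suc$ and the recursors $\Rec_\sigma$. The first two are immediate: one verifies $\uwitness{\Zero}{\NN}{0}{}$ directly, and checks $\witness{\Suc}{\NN \to \NN}$ by exhibiting the successor function as the canonical witness (using the definition $\uwitness{f}{\NN \to \NN}{g}{k}$ is $fk=gk$). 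The recursors $\Rec_\sigma$ are precisely the content of Proposition \ref{rec_precise} and need no further work.

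For soundness, I would go through the non-logical axioms of $\HAomega$ one family at a time. The equality axioms and the axioms for combinators are handled, as already noted in the paper, by the general remarks following Definition \ref{def-I-sound-base}. The axioms for the conditional constants (equations of the form $\mathrm{Cond}_\sigma(0,x,y)\equiv_\sigma x$ and $\mathrm{Cond}_\sigma(1,x,y)\equiv_\sigma y$), the recursor equations, and the Dedekind axioms $\forall n^\NN(\neg(\Suc n \equiv_\NN \Zero))$ and $\forall m,n^\NN(\Suc m \equiv_\NN \Suc n \to m\equiv_\NN n)$ are all universal statements whose matrices involve only logical equality and propositional structure over such equalities, so their $\I$-interpretations have no positive witnesses and the required closed witnessing terms exist by Lemma \ref{universal} (or its remark for precise-like interpretations of the ground types, which carry no negative witnesses). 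The Boolean axiom $({\rm D}_\BB)$ is witness-free and self-interpreting, while $({\rm E}_\BB)$ is handled by the explicit definition-by-cases argument given in Subsection \ref{precise-if-else}, where the precise interpretation of $\witness{i}{\BB}$ as $i=j$ allows us to define $\pvec t j \pvec a_0 \pvec a_1$ by a $\mathrm{Cond}_\sigma$ on $j$.

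The main obstacle, and the only step that requires real work, is the induction scheme
\[
    A(\Zero) \wedge \forall n^\NN(A(n) \to A(\Suc n)) \to \forall n^\NN A(n),
\]
for arbitrary $A$ of $\Lang{\HAomega}$. Here the $\I$-interpretation asks, given the witnesses $\pvec a$ for $A(\Zero)$ and $\pvec \phi, \pvec B$ for the inductive step (viewed as witnesses for an $\I_{\NN\to\sigma\to\sigma}$-style universal), to produce a witness $\pvec \Psi n$ for $A(n)$, together with suitable finite sets of counter-witnesses fed back into the hypothesis. The strategy is exactly the one used in the proof of Proposition \ref{rec_precise}: given a target counter-witness $\pvec b$ for $A(n)$, define iteratively the sequences $\pvec a_i$ (positive witnesses for $A(i)$) and $\pvec B_i$ (finite sets of negative witnesses propagated downwards from $\pvec b$), and take $\pvec \Psi n\pvec a\pvec \phi\pvec B = \pvec a_n$, with the accompanying sets obtained as unions $\bigcup_{j\le n} \pvec B_j$. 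The verification that these witnesses satisfy the clause of the $\Icp$-interpretation is then a bounded induction on $k\le n$, carried out in the target theory using the recursors $\Rec$ (and their star-type analogues needed to form the cumulative unions).

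Once these pieces are assembled, soundness and type-informativeness both hold, and the $\Icp$-interpretation falls under the Main Soundness Theorem \ref{main-soundness}. In particular, combining the theorem with Theorem \ref{models_of_T}, one obtains from this construction a new model of G\"odel's $\GodelT$, namely the family $\mathcal{T}^\omega$ of functionals $x$ satisfying $\exists \pvec a \forall \pvec b\, \uInter{\witness{x}{\sigma}}{\pvec a}{\pvec b}$ in $\Sws$ under the $\Ucp$-interpretation.
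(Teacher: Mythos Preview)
Your proposal is correct and essentially matches the paper's own argument: the paper's proof is a two-line remark that type-informativeness was already secured (via Proposition \ref{prop-canonical-type-informative} and Proposition \ref{rec_precise}) and that for soundness only the induction axioms require comment, which are handled by ``the usual argument using the recursors'' after replacing the axiom scheme by the rule of induction.

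The one minor difference worth noting is that you interpret the induction \emph{axiom} directly, building the sequences $\pvec a_i$ and $\pvec B_i$ in the style of Proposition \ref{rec_precise}, whereas the paper switches to the \emph{rule} of induction before invoking the recursors. The rule approach is slightly cleaner because you avoid unpacking the $\I$-interpretation of the outer implication and conjunction in the axiom; your approach is more explicit and self-contained. Both work. Your closing paragraph about obtaining a model of $\GodelT$ is a consequence rather than part of the proof proper, but it correctly anticipates what the paper does next.
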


\begin{proof}
    We have to see that all the non-logical axioms of $\HAomega$ are $\Icp$-interpretable. Only the induction axioms need a comment. They can be interpreted with the usual argument using the recursors. This is done by considering the rule of induction instead of the axioms. The argument for the rule is standard.
\end{proof}

In contrast with G\"odel's {\em dialectica} interpretation, we have the following:

\begin{proposition} \label{prop-cont-2}
    The pointwise continuity of type 2 functionals 
    \[
    \forall \Phi^{(\NN \to \NN) \to \NN} \forall f^{\NN \to \NN} \exists N \forall g^{\NN \to \NN} (\forall n \leq N (f n =_{\NN} g n) \to \Phi f =_{\NN} \Phi g)
    \]
    is $\Icp$-interpretable.
\end{proposition}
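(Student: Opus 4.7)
The heart of the argument is that the canonical interpretation of $(\NN\to\NN)\to\NN$ already bakes a pointwise modulus of continuity into the positive witness of $\I_{(\NN\to\NN)\to\NN}(\Phi)$: by Definition~\ref{canonical-def}, $\uwitness{\Phi}{(\NN\to\NN)\to\NN}{\Psi,\mu}{h}$ unfolds to $\forall f(\forall k \in \mu h\,(fk = hk) \to \Phi f = \Psi h)$, so $\mu h$ is literally a finite modulus for $\Phi$ at the point $h$. The whole proof simply exploits this.

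First I would unwind the $\Icp$-interpretation of the continuity statement. Since the matrix (equalities plus the bounded $\forall n \leq N$) is $\I$-witness-free, the outer implication contributes only a single positive witness $G:\NN^*$, the finite set of test indices; the existential $\exists N^\NN$ adds a positive witness $N_0:\NN$ via the simpler precise clause of Proposition~\ref{simpler}; and the three outer canonical universals $\forall g$, $\forall f$, $\forall \Phi$ introduce negative witnesses $g_0$, $f_0$, $(\Psi_0,\mu_0)$, turn $(N_0,G)$ into functionals of these, and demand finite counter-witness sets $K_\Phi, K_f, K_g$ for the three $\I$-predicates.

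All the witnessing terms are dictated by $\mu_0$. Take $G := \mu_0 f_0$, $N_0 := \mathrm{m}(\mu_0 f_0)$ (using the target constant $\mathrm{m}:\NN^*\to\NN$ returning the maximum of a finite subset of $\NN$), $K_f := \mu_0 f_0$, $K_\Phi := \{f_0\}$, and $K_g$ any fixed singleton. The counter-witness hypothesis for $\I(\Phi)$ at $h = f_0$ then delivers the key implication $\forall f'(\forall k \in \mu_0 f_0\,(f'k = f_0 k) \to \Phi f' = \Psi_0 f_0)$. Combined with the $K_f$-hypothesis $fk = f_0 k$ on $\mu_0 f_0$ and the antecedent of the matrix (which, because $\mathrm{m}(\mu_0 f_0)$ bounds every $k \in \mu_0 f_0$, yields $fk = gk$ on $\mu_0 f_0$, and hence $gk = f_0 k$ on $\mu_0 f_0$), two applications of the key implication with $f' := f$ and $f' := g$ give $\Phi f = \Psi_0 f_0 = \Phi g$.

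The main difficulty is bookkeeping rather than any mathematical depth: one must carefully track the nested witnesses contributed by the three canonical universals and verify that the proposed terms are definable in the target language. The striking contrast with G\"odel's \emph{dialectica} interpretation is that there the positive witness of $\I_{(\NN\to\NN)\to\NN}(\Phi)$ is just $\Phi$ itself with no modulus attached; in the canonical interpretation the modulus is present by construction, which is precisely why pointwise continuity becomes $\Icp$-interpretable here while it fails to be $\Ipp$-interpretable.
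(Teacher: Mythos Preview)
Your proposal is correct and follows essentially the same route as the paper: unwind the $\Icp$-interpretation, read off the modulus $\mu$ already present in the canonical witness for $\I_{(\NN\to\NN)\to\NN}(\Phi)$, set $K_\Phi=\{f_0\}$, $K_f=\mu_0 f_0$, $N_0=\mathrm{m}(\mu_0 f_0)$, and conclude $\Phi f=\Psi_0 f_0=\Phi g$ by two applications of the key implication. Your write-up is in fact slightly more explicit than the paper's, since you keep track of the positive witness $G:\NN^*$ coming from the Diller--Nahm clause for the inner implication (taking $G=\mu_0 f_0$), whereas the paper's displayed verification absorbs this into the full antecedent $\forall n\le \mathrm{m}(\mu\alpha)(fn=gn)$; and you correctly observe that $K_g$ is irrelevant, which the paper also notes (``$\beta$ and assumption $(iii)$ are not needed'').
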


\begin{proof} Recall that 
\[
    \uwitness{\Phi}{(\NN \to \NN) \to \NN}{\Psi, \mu}{h} \mbox{\;\;is\;\;} \forall f' (\forall n \in \mu h (f' n = h n) \to \Phi f' = \Psi h)    
\]
and 
\[
    \uwitness{f}{\NN \to \NN}{\alpha}{n} \mbox{\;\;is\;\;} f n = \alpha n \mbox{\;\;\;and} \quad \uwitness{g}{\NN \to \NN}{\beta}{n} \mbox{\;\;is\;\;} g n = \beta n.
\]
We must find terms $t$, $s$, $r$ and $q$ such that, for all $\Psi$, $\mu$, $\alpha$ and $\beta$ and for all $\Phi$ and $f$, if 
$$\forall h \in t\Psi \mu\alpha \beta \,\forall f' (\forall n \in \mu h(f'n = hn) \to \Phi f' = \Psi h)$$ 
and 
$$\forall n \in s\Psi\mu\alpha\beta (fn = gn)$$ then $$\forall g\,(\forall n \in (r\Psi \mu\alpha)\beta \, (gn=\beta n) \wedge \forall n \leq q\Psi \mu \alpha (fn=gn) \to \Phi f = \Psi g).$$
Take $t\Psi\mu\alpha\beta = \{\alpha \}$, $s\Psi \mu \alpha \beta = \mu \alpha$, $r\Psi \mu \alpha = \lambda \beta. \mu \alpha$ and $q\Psi \mu \alpha = \mathrm{m}(\mu\alpha)$, where we recall that $\mathrm{m}$ is the maximum functional of Section \ref{interlude}. Accordingly, we must show that, for all $g$, if
\begin{itemize}
    \item[$(i)$] $\forall f' (\forall n \in \mu \alpha \, (f' n = \alpha n) \to \Phi f' = \Psi \alpha)$,
    \item[$(ii)$] $\forall n \in \mu \alpha \,(f n = \alpha n)$,
    \item[$(iii)$] $\forall n \in \mu \alpha \,(g n = \beta n)$, and
    \item[$(iv)$] $\forall n \leq \mathrm{m}(\mu \alpha) (f n = g n)$,
\end{itemize}
then $\Phi f = \Phi g$. \\[1mm]
By $(i)$ with $f' = f$ and using $(ii)$, we have $\Phi f = \Psi \alpha$. By $(iv)$, we have that $\forall n \in \mu\alpha \,(fn=gn)$. Together with $(ii)$, we get $\forall n \in \mu\alpha \,(gn = \alpha n)$. Therefore, by $(i)$ with $f' = g$, we get $\Phi g =\Psi \alpha$.  Hence, $\Phi f = \Psi \alpha = \Phi g$. Note that $\beta$ and assumption $(iii)$ are not needed. \end{proof}

Since the continuity of type-2 functionals implies their extensionality, it follows that:

\begin{corollary} \label{prop-ext-2}
    The extensionality principle for type 2 functionals 
    \[
    \forall \Phi^{(\NN \to \NN) \to \NN} \forall f, g^{\NN \to \NN} (\forall n^\NN (f n = g n) \to \Phi f = \Phi g)
    \]
    is $\Icp$-interpretable.
\end{corollary}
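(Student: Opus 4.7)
The plan is to deduce this corollary from Proposition \ref{prop-cont-2} by invoking the Main Soundness Theorem. The key observation is that pointwise continuity of type $2$ functionals implies their extensionality by a short intuitionistic argument inside the source theory $\HAomega$: assuming continuity and $\forall n^\NN (fn = gn)$, apply continuity to $\Phi$ and $f$ to obtain an $N$ such that $\forall g'^{\NN\to\NN} (\forall n \leq N (fn = g'n) \to \Phi f = \Phi g')$; the assumption $\forall n (fn = gn)$ immediately yields $\forall n \leq N (fn = gn)$, hence $\Phi f = \Phi g$. Therefore, writing $\text{Cont}$ for the continuity statement and $\text{Ext}$ for the extensionality statement, we have $\text{Cont} \proves_{\HAomega} \text{Ext}$.

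Next, I would apply the Main Soundness Theorem \ref{main-soundness} to this derivation. That theorem produces closed witnessing terms $\pvec s, \pvec t$ such that $\TargetT$ proves: for all appropriate $\pvec a$ and $\pvec d$, if $\forall \pvec b \in \pvec s \pvec a \pvec d \, \iInter{\text{Cont}}{\pvec a}{\pvec b}$ then $\iInter{\text{Ext}}{\pvec t \pvec a}{\pvec d}$. By Proposition \ref{prop-cont-2}, there exist closed witnessing terms $\pvec c$ such that $\forall \pvec b \, \iInter{\text{Cont}}{\pvec c}{\pvec b}$ is provable in $\TargetT$. Substituting $\pvec a := \pvec c$, the antecedent $\forall \pvec b \in \pvec s \pvec c \pvec d \, \iInter{\text{Cont}}{\pvec c}{\pvec b}$ holds trivially, yielding $\iInter{\text{Ext}}{\pvec t \pvec c}{\pvec d}$ for all $\pvec d$. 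Hence the closed tuple $\pvec t \pvec c$ witnesses the $\Icp$-interpretability of $\text{Ext}$.

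There is no real obstacle here, since both ingredients are in place: the intuitionistic derivation of extensionality from continuity, and the $\Icp$-interpretability of continuity established in Proposition \ref{prop-cont-2}. The only thing worth flagging is that the argument does use the universally quantified assumption $\forall n(fn=gn)$ non-trivially (to conclude $\forall n \leq N(fn=gn)$), but this bounded consequence is a trivial logical step, so the derivation is genuinely intuitionistic and therefore captured by the Main Soundness Theorem. Of course, one could alternatively give a direct verification by exhibiting witnesses, essentially re-running the argument of Proposition \ref{prop-cont-2} with $g$ in place of the arbitrary $g'$ and dropping the need for the bound $N$, but the reduction above is cleaner and shorter.
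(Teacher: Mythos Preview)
Your proposal is correct and matches the paper's approach exactly: the paper simply remarks that continuity of type-2 functionals implies their extensionality, and states the corollary as an immediate consequence of Proposition~\ref{prop-cont-2}. Your write-up spells out the mechanism (the intuitionistic derivation $\text{Cont} \vdash_{\HAomega} \text{Ext}$ followed by an application of the Main Soundness Theorem to transfer $\Icp$-interpretability) in more detail than the paper does, but the underlying idea is identical.
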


As discussed in Section \ref{interlude}, we can consider the following structure:

\begin{definition}[The type-structure $\mathcal{C^=}$] The type-theoretic structure $\mathcal{C}^=$ is defined as $(\mathcal{C^=_\sigma})_\sigma$, with
\eqleft{
\begin{array}{lcl}
    \mathcal{C^=_\sigma} 
        & := & \{ x \in \mathcal{S}_\sigma : \Sws \models \exists \pvec a \forall \pvec b \, \uInter{\witness{x}{\sigma}}{\pvec a}{\pvec b} \} 
\end{array}
}
for each finite type $\sigma$, where $\langle \cdot \rangle$ is the $\Ucp$-interpretation. 
\end{definition}

The following is a consequence of Theorem \ref{models_of_T}:

\begin{proposition} $\mathcal{C}^=$ is a model of G\"odel's theory $\GodelT$.
\end{proposition}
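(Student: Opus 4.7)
The plan is to read this proposition as a direct specialization of Theorem \ref{models_of_T}. First I would observe that the definition of $\mathcal{C}^=$ is exactly the template $\mathcal{T}^\omega$ from Section \ref{interlude}, applied with the base interpretation that fixes the $\Icp$-interpretation; that is, the bracket $\uInter{\cdot}{}{}$ appearing in the definition of $\mathcal{C^=_\sigma}$ is just the $\Ucp$-interpretation. So to conclude, it suffices to check that the hypothesis of Theorem \ref{models_of_T}, namely that we are given a $\Sws$-sound type-informative base interpretation of $\HAomega$, holds for this particular choice.

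But that hypothesis is precisely the content of the theorem established immediately above: the base interpretation underlying the $\Icp$-interpretation is sound and type-informative. In turn, that theorem rests on Proposition \ref{prop-canonical-type-informative} (which handles clause $(ii)$ of Definition \ref{def-type-informative-base} and clause $(i)$ for the combinators and conditional constants), on the trivial verification of $\uwitness{\Zero}{\NN}{0}{}$ and $\forall k\, \uwitness{\Suc}{\NN\to\NN}{S}{k}$, on Proposition \ref{rec_precise} for the recursors, and on the standard reduction of the induction axioms to the rule of induction via the recursors.

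Once the hypothesis is in hand, the proof of Theorem \ref{models_of_T} applies verbatim. Its first lemma yields closure of $\mathcal{C}^=$ under application at any pair of arithmetical types, using clause $(ii)$ of Definition \ref{def-type-informative-base} (see Remark \ref{criterium}); its second lemma places the combinators, $\Zero$, $\Suc$, and every $\Rec_\sigma$ inside $\mathcal{C}^=$, using clause $(i)$. These two facts already force $\mathcal{C}^=_\NN = \NN$ and exhibit $\mathcal{C}^=$ as an applicative type-structure realizing every defining equation of $\GodelT$, so $\mathcal{C}^=$ is a model of $\GodelT$.

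I do not expect a real obstacle: all of the substantive verification, in particular the nontrivial $\Ucp$-interpretability of $\witness{\Rec_\sigma}{}$, has already been carried out in Proposition \ref{rec_precise}, and this proposition amounts to the bookkeeping remark that $\Icp$ is the concrete instance to which the general machinery of Section \ref{interlude} applies.
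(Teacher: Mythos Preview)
Your proposal is correct and matches the paper's approach: the paper simply states that the proposition is a consequence of Theorem \ref{models_of_T}, relying on the preceding theorem that the $\Icp$ base interpretation is sound and type-informative. You have spelled out the chain of dependencies in more detail than the paper does, but the argument is the same.
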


What are the elements of $\mathcal{C}^=_{\NN \to \NN}$? If we look at the explicit computation of the $\Ucp$-interpretation of the predicate $\I_{\NN \to \NN}(f)$, it is immediate that $\forall k \, \uwitness{f}{\NN \to \NN}{f}{k}$. So, $\mathcal{C}^=_{\NN \to \NN}$ is $\mathcal{S}_{\NN \to \NN}$, i.e., it consists of all the set-theoretic functions from $\NN$ to $\NN$. 

What are the elements of $\mathcal{C}^=_{(\NN \to \NN) \to \NN}$? This is a more interesting question. According to the $\Ucp$-interpretation of the predicate $\I_{(\NN \to \NN) \to \NN}(\Phi)$, a functional $\Phi$ of $\mathcal{S}_{(\NN \to \NN) \to \NN}$ is in $\mathcal{C}^=_{(\NN \to \NN) \to \NN}$ if, and only if, $$\exists \mu, \Psi \forall h \forall f (\forall k \in \mu h (fk = hk) \to \Phi f = \Psi h).$$ This is equivalent to saying that $$\exists \mu \forall h \forall f (\forall k \in \mu h (fk = hk) \to \Phi f = \Phi h).$$ (To see the implication from top to bottom, just consider the case in which $f$ is taken to be $h$.) Hence, a functional $\Phi : (\NN \to \NN) \to \NN$ is in $\mathcal{C}^=_{(\NN \to \NN) \to \NN}$ if, and only if, it is continuous with a modulus of pointwise continuity (given by $\mu$).

We do not know much more about the structure $\mathcal{C}^=$, nor how does it relate to known models of G\"odel's theory $\GodelT$.

\section{Bounding interpretations of arithmetic}
\label{bounding-section}

In the previous section, we considered two different interpretations of arithmetic based on the precise interpretation at the ground type $\NN$ (see also Example \ref{precise-def}). In the case of arithmetic, we can also consider an alternative interpretation of the ground type $\NN$ using the ordering of the natural numbers:

\begin{example}[Bounding interpretation] \label{bounding-def} The {\em bounding interpretation} of $\witness{n}{\NN}$ is defined as: 
\begin{itemize}
    \item $\pvec \tau_\NN^+$ is $\NN$,
    \item $\pvec \tau_\NN^-$ is the empty tuple, and
    \item $\uwitness{n}{\NN}{m}{}$ is $n \leq m$.
\end{itemize}
\end{example}

The Boolean type is interpreted uniformly (cf.\ Section \ref{disjunction-section} and Example \ref{uniform-def}). In fact, the precise interpretation of the Booleans does not work alongside the bounding interpretation at the ground type $\NN$. For instance, $\HAomega$ proves $\forall x^\NN (x \equiv_\NN \Zero \vee x \not\equiv_\NN \Zero)$, but the bounding interpretation of this sentence, with the precise interpretation at the Boolean type, is $$\exists f^{\NN \to \BB} \forall n^\NN\, \forall x \leq n ((fn = 0 \to x = 0) \wedge (fn = 1 \to x \neq 0)).$$ A soundness theorem would give a closed term $t$ of type $\NN \to \BB$ such that the target theory proves $\forall n^\NN \forall x \leq n \,((tn = 0 \to x = 0) \wedge (tn = 1 \to x \neq 0)).$ Put $n=1$ and call $b$ the concrete Boolean value $t1$. We get $$\forall x \leq 1 ((b=0 \to x=0) \wedge (b=1 \to x\neq 0)).$$ This is impossible. Technically, the problem lies with the recursor $\Rec_\BB$, needed to interpret the simple induction that shows $\forall x^\NN (x \equiv_\NN \Zero \vee x \not\equiv_\NN \Zero)$. 

Summing up: in this section, $\I$-interpretations are bounding at the ground type $\NN$ and uniform at the Boolean type $\BB$. We, therefore, need to have a notion of joining of bounds. We let $n \vee_{\I_\NN} m$ be $\max(n,m)$. There is nothing to do regarding joining of bounds with respect to $\I_\BB$ or the atomic equality symbols $\equiv_\sigma$. On the other hand, the joining of bounds at the type-informative predicates of function types must be defined according to the concrete situation at hand. In the next subsections, we discuss three options. The third is the canonical interpretation. For the latter option, the notion of joining of bounds for the type-informative predicates of function types is the one described in Section \ref{canonical-section}.

It is easy to calculate the bounding interpretation of first-order quantifications:
\eqleft{
\begin{array}{lcl}
     \iInter{\forall x^\NN A(x)}{\pvec f}{n, \pvec b} 
        & \mbox{\,\,\,is\,\,\,} & \forall x \leq n \, \iInter{A(x)}{\pvec f n}{\pvec b} \\[1mm]
    \iInter{\exists x^\NN A(x)}{n, \pvec a}{\pvec B} 
        & \mbox{\,\,\,is\,\,\,} & \exists x \leq n \, \forall \pvec b \in \pvec B \, \iInter{A(x)}{\pvec a}{\pvec b}.
\end{array}
}
We are using bounded quantifiers in $\Lang{\TargetT}$ as abbreviations in the usual manner. 

\subsection{Majorizability upon the bounding interpretation \texorpdfstring{$\Imb$}{Imb}}\label{maj_upon_bounding-section}

The first option we consider is to extend the bounding interpretation from ground types to function types using Bezem's strong majorizability notion (see Subsection \ref{star-majorizability}). We have also to account for the Boolean type $\BB$. We first define a transformation that associates to each finite type built upon the ground types $\NN$ and $\BB$ either an arithmetical type or no type at all (an empty tuple of types). 

\begin{definition}[$\sigma \mapsto \sigma^\dagger$] \label{def-dagger} For each finite type $\sigma$ based on the ground types $\NN$ and $\BB$, let us define either an arithmetical type $\sigma^\dagger$ or the empty tuple. We let $\NN^\dagger$ be $\NN$, let $\BB^\dagger$ be the empty tuple, and we let $(\sigma \to \tau)^\dagger$ be $\sigma^\dagger \to \tau^\dagger$. In case $\sigma^\dagger$ is the empty tuple, we understand that $(\sigma \to \tau)^\dagger$ is $\tau^\dagger$. In case $\tau^\dagger$ is empty, we understand that $(\sigma \to \tau)^\dagger$ is empty. 
\end{definition}

\begin{notation}\label{bounded-type}
    When $\sigma^\dagger$ is empty, we say that the type $\sigma$ is \emph{uniform} or {\em bounded} and we say that its elements are \emph{bounded}. Note that $(\sigma^{\dagger})^\dagger$ is $\sigma^\dagger$ provided that we understand the dagger of the empty tuple as the empty tuple.
\end{notation}

We give three examples. If $\rho$ is $(\NN \to \NN) \to \BB$, then $\rho^\dagger$ is empty. If $\rho$ is the type $\NN \to (\BB \to \NN)$, then $\rho^\dagger$ is $\NN \to \NN$. If $\rho$ is $(\NN \to \BB) \to \NN$, then $\rho^\dagger$ is $\NN$. 

We extend Bezem's majorizability notion to all unbounded finite types $\rho$ of $\HAomega$ (i.e., to types $\rho$ such that $\rho^\dagger$ is not empty). The extended relation $x \Bmaj_\rho a$ is a relation between $x$ of type $\rho$ and $a$ of arithmetical type $\rho^\dagger$. For $x,a : \NN$, $x \Bmaj_\NN a$ is $x \leq a$. If $\sigma$ and $\tau$ are unbounded, $f: \sigma \to \tau$ and $g: \sigma^\dagger \to \tau^\dagger$, define $f \Bmaj_{\sigma \to \tau} g$ as
$$\forall x^\sigma, a^{\sigma^\dagger} (x \Bmaj_\sigma a \to fx \Bmaj_\tau ga) \wedge \forall z^{\sigma^\dagger}, a^{\sigma^\dagger} (z \Bmaj_{\sigma^\dagger} a \to gz \Bmaj_{\tau^\dagger} ga).$$
If $\sigma$ is bounded, $\tau$ is unbounded, $f: \sigma \to \tau$ and $c: \tau^\dagger$, we define $f \Bmaj_{\sigma \to \tau} c$ as $$\forall x^\sigma (fx \Bmaj_\tau c) \wedge c \Bmaj_{\tau^\dagger} c.$$

Note that if $\rho$ is an arithmetical type, then $\rho^\dagger$ is $\rho$ and the above definition coincides with Bezem's definition. Let us see how this definition fares with the three examples given above. Suppose that $f: \NN \to (\BB \to \NN)$ and $g: \NN \to \NN$. Then, the definition of $f \Bmaj_{\NN \to (\BB \to \NN)} g$ gives 
$$\forall a^\NN \forall x\leq a (\forall i^\BB (f(x,i) \leq g(a)) \wedge g(x) \leq g(a)).$$ 
If $\phi: (\NN \to \BB) \to \NN$ and $n:\NN$, then $\phi \Bmaj_{(\NN \to \BB) \to \NN} n$ means $\forall h^{\NN \to \BB} (\phi(h) \leq n)$. Finally, if $\sigma$ is the type $(\NN \to \NN) \to \BB$ then $\sigma$ is bounded.

The next two lemmas adapt well-known properties of Bezem's strong majorizability notion to the above wider setting.

\begin{lemma}\label{self-maj} Let $\sigma$ be an unbounded type and let be given $x$ of type $\sigma$ and $a$ of type $\sigma^\dagger$. If $x \Bmaj_\sigma a$, then $a \Bmaj_{\sigma^\dagger} a$. \end{lemma}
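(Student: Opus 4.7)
The plan is to establish the lemma by induction on the type $\sigma$, unfolding the extended majorizability definition in the obvious way at each case. The core observation is that the self-majorization conjunct is already baked into the definition of $\Bmaj_{\rho \to \tau}$ (just as in Bezem's reformulation of Howard's notion), so the induction hypothesis is in fact not really invoked; it is the organization into cases that does the work.

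First I would dispose of the base case $\sigma \equiv \NN$. Here $\sigma^\dagger \equiv \NN$, and $x \Bmaj_\NN a$ unfolds to $x \leq a$, from which $a \leq a$, i.e.\ $a \Bmaj_\NN a$, is immediate.

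For the inductive step $\sigma \equiv \rho \to \tau$, I would first remark that, since $\sigma$ is unbounded, Definition \ref{def-dagger} forces $\tau$ to be unbounded too (otherwise $(\rho \to \tau)^\dagger$ would already be the empty tuple). Then I would split on whether $\rho$ is bounded. If $\rho$ is unbounded, then $\sigma^\dagger \equiv \rho^\dagger \to \tau^\dagger$ and $a : \rho^\dagger \to \tau^\dagger$. The hypothesis $x \Bmaj_{\rho \to \tau} a$ expands, by the extended definition, to the conjunction of two clauses, whose second clause reads $\forall z^{\rho^\dagger}, b^{\rho^\dagger}(z \Bmaj_{\rho^\dagger} b \to az \Bmaj_{\tau^\dagger} ab)$. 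Since $\rho^\dagger$ and $\tau^\dagger$ are themselves arithmetical types (so that $(\rho^\dagger)^\dagger \equiv \rho^\dagger$ and $(\tau^\dagger)^\dagger \equiv \tau^\dagger$), the extended definition applied with majorizer equal to majorant collapses its two conjuncts into this single universally quantified statement, which is therefore precisely $a \Bmaj_{\sigma^\dagger} a$. If instead $\rho$ is bounded, then $\sigma^\dagger \equiv \tau^\dagger$ and $a : \tau^\dagger$. By the extended definition $x \Bmaj_{\rho \to \tau} a$ is the conjunction $\forall z^\rho (xz \Bmaj_\tau a) \wedge a \Bmaj_{\tau^\dagger} a$, and the second conjunct is the desired conclusion.

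I do not expect any genuine obstacle here; the only point one has to be careful with is the bookkeeping around Definition \ref{def-dagger} (confirming that unboundedness of $\rho \to \tau$ transfers to $\tau$) and the routine verification that the second conjunct in the unbounded subcase really is the Bezem relation on arithmetical types, so that nothing more has to be proved in that subcase.
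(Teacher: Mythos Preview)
Your proof is correct and follows the same idea as the paper's: the conclusion is precisely the second conjunct built into the extended majorizability definition at arrow types (with the base case $\NN$ being trivial). The paper compresses all of this into a single sentence, whereas you unpack the case split on whether $\rho$ is bounded and verify explicitly that the two conjuncts of $a \Bmaj_{\rho^\dagger \to \tau^\dagger} a$ coincide when $\rho^\dagger,\tau^\dagger$ are arithmetical; this is just a more detailed rendering of the same observation.
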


\begin{proof} This is clear given the second conjunct of the definition of strong majorizability for the arrow type.
\end{proof}

\begin{lemma}\label{transitivity} Let $\sigma$ be an unbounded type, and let be given $x$ of type $\sigma$ and $a$ and $b$ of type $\sigma^\dagger$. If $x\Bmaj_\sigma a$ and $a \Bmaj_{\sigma^\dagger} b$, then $x \Bmaj_\sigma b$. \end{lemma}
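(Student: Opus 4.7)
The plan is a straightforward induction on the unbounded type $\sigma$, though care is needed because the inductive clause for the arrow type branches on whether the argument type is bounded.

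For the base case $\sigma \equiv \NN$, we have $\sigma^\dagger \equiv \NN$ as well, so the hypotheses read $x \leq a$ and $a \leq b$, and transitivity of $\leq$ on $\NN$ gives $x \leq b$, i.e., $x \Bmaj_\NN b$.

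For the inductive step, take $\sigma \equiv \rho \to \tau$, unbounded. Since $\sigma$ is unbounded, $\tau$ is unbounded (if $\tau^\dagger$ were empty, so would $(\rho \to \tau)^\dagger$ be). I would split according to whether $\rho$ is unbounded or bounded. In the unbounded sub-case, $\sigma^\dagger \equiv \rho^\dagger \to \tau^\dagger$, so we have $x \colon \rho \to \tau$ and $a,b \colon \rho^\dagger \to \tau^\dagger$. To show $x \Bmaj_{\rho\to\tau} b$, we need the two conjuncts of the definition. The second conjunct $\forall z^{\rho^\dagger}, c^{\rho^\dagger}(z \Bmaj_{\rho^\dagger} c \to bz \Bmaj_{\tau^\dagger} bc)$ comes for free from $a \Bmaj_{\rho^\dagger \to \tau^\dagger} b$ (it is literally the second conjunct of that hypothesis). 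For the first conjunct, fix $y^\rho$ and $c^{\rho^\dagger}$ with $y \Bmaj_\rho c$. From $x \Bmaj_{\rho\to\tau} a$ we get $xy \Bmaj_\tau ac$. By Lemma~\ref{self-maj} applied to $y \Bmaj_\rho c$, we get $c \Bmaj_{\rho^\dagger} c$, and then the first conjunct of $a \Bmaj_{\rho^\dagger\to\tau^\dagger} b$ yields $ac \Bmaj_{\tau^\dagger} bc$. The induction hypothesis at type $\tau$ then gives $xy \Bmaj_\tau bc$, as required.

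In the bounded sub-case $\rho$ is bounded, so $\sigma^\dagger \equiv \tau^\dagger$ and hence $a,b \colon \tau^\dagger$. The hypothesis $x \Bmaj_{\rho \to \tau} a$ unfolds to $\forall y^\rho (xy \Bmaj_\tau a)$ together with $a \Bmaj_{\tau^\dagger} a$, and $a \Bmaj_{\tau^\dagger} b$ is given. To conclude $x \Bmaj_{\rho \to \tau} b$, the self-majorization conjunct $b \Bmaj_{\tau^\dagger} b$ follows from $a \Bmaj_{\tau^\dagger} b$ via Lemma~\ref{self-maj}, and for any $y^\rho$, combining $xy \Bmaj_\tau a$ with $a \Bmaj_{\tau^\dagger} b$ by the induction hypothesis at $\tau$ yields $xy \Bmaj_\tau b$.

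The only mildly delicate point is keeping the bookkeeping straight across the bounded/unbounded split in the arrow case and making sure that the invocations of the induction hypothesis are at a strictly smaller (and still unbounded) type $\tau$; but since we already know $\tau$ is unbounded and smaller than $\sigma$, this is not a real obstacle. The lemma on self-majorization is used essentially, to turn a witness $y \Bmaj_\rho c$ into the self-majorization $c \Bmaj_{\rho^\dagger} c$ that the first conjunct of $a \Bmaj_{\rho^\dagger \to \tau^\dagger} b$ requires as input.
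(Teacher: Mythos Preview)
Your proof is correct and follows essentially the same route as the paper: induction on the unbounded type, splitting the arrow case according to whether the argument type is bounded, and using Lemma~\ref{self-maj} to feed the first conjunct of $a \Bmaj_{\rho^\dagger\to\tau^\dagger} b$ and then invoking the induction hypothesis at $\tau$. The only cosmetic difference is that for the second conjunct of $x \Bmaj_{\rho\to\tau} b$ you read it off directly from the hypothesis $a \Bmaj_{\rho^\dagger\to\tau^\dagger} b$, whereas the paper phrases this as an application of Lemma~\ref{self-maj}; these amount to the same observation, and your treatment of the bounded sub-case is simply more explicit than the paper's ``also easy''.
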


\begin{proof} By induction on the complexity of the (unbounded) type $\sigma$. The base case of the ground type $\NN$ is obvious. \\[1mm]
Let be given unbounded types $\rho$ and $\tau$, and take $f$ of type $\rho \to \tau$ and $g$ and $h$ of type $\rho^\dagger \to \tau^\dagger$. Suppose that $f \Bmaj_{\rho \to \tau} g$ and $g \Bmaj_{\rho^\dagger \to \tau^\dagger} h$. We must show that $f \Bmaj_{\rho \to \tau} h$. Take $x$ of type $\rho$, $a$ of type $\rho^\dagger$ and assume that $x \Bmaj_\rho a$. We get $fx \Bmaj_\tau ga$. By the previous lemma, $a \Bmaj_{\rho^\dagger} a$ and, therefore, we also have $ga \Bmaj_{\tau^\dagger} ha$. By induction hypothesis, we obtain $fx \Bmaj_\tau ha$. We have argued the first conjunct of what we want. The second conjunct follows immediately from the previous lemma. \\[1mm]
It remains to check the case when $\rho$ is bounded, but this is also easy.
\end{proof}

The following lemma helps in simplifying some calculations.

\begin{lemma}\label{on-maj} Let $\sigma$, $\tau$ and $\rho$ be unbounded types, let $f$ be of type $\sigma \to (\tau \to \rho)$ and let $g$ be of type $\sigma^\dagger \to (\tau^\dagger \to \rho^\dagger)$. Suppose that both
\eqleft{
\begin{array}{l}
    \forall x^\sigma, a^{\sigma^\dagger} \forall y^\tau, b^{\tau^\dagger}(x \Bmaj_\sigma a \wedge y \Bmaj_\tau b \to fxy \Bmaj_\rho gab) \mbox{\,\,and}\\[1mm]
    \forall z^{\sigma^\dagger}, a^{\sigma^\dagger} \forall w^{\tau^\dagger}, b^{\tau^\dagger} (z \Bmaj_{\sigma^\dagger} a \wedge w \Bmaj_{\tau^\dagger} b \to gzw \Bmaj_{\rho^\dagger} gab).
\end{array}
}
Then, $f \Bmaj_{\sigma \to (\tau \to \rho)} g$. \end{lemma}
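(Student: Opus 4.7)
The plan is to unfold the definition of $f \Bmaj_{\sigma \to (\tau \to \rho)} g$ and verify each of its two conjuncts directly, using the two stated hypotheses together with the self-majorization fact of Lemma \ref{self-maj}. Since $\sigma$, $\tau$ and $\rho$ are all unbounded, so are $\sigma \to (\tau \to \rho)$ and its $\dagger$-image, so the ``unbounded-unbounded'' clause of the definition of strong majorizability applies at every level.

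Unfolding, what we have to prove is: (i) for every $x^\sigma$ and $a^{\sigma^\dagger}$ with $x \Bmaj_\sigma a$, that $fx \Bmaj_{\tau \to \rho} ga$; and (ii) for every $z^{\sigma^\dagger}$ and $a^{\sigma^\dagger}$ with $z \Bmaj_{\sigma^\dagger} a$, that $gz \Bmaj_{\tau^\dagger \to \rho^\dagger} ga$. Each of these is in turn a conjunction of a ``cross-type'' clause and a self-majorization clause on the $\dagger$-side. Conjunct (i) further unfolds to
\[
\forall y^\tau, b^{\tau^\dagger} (y \Bmaj_\tau b \to fxy \Bmaj_\rho gab) \;\wedge\; \forall w^{\tau^\dagger}, b^{\tau^\dagger} (w \Bmaj_{\tau^\dagger} b \to gaw \Bmaj_{\rho^\dagger} gab),
\]
and conjunct (ii) to
\[
\forall w^{\tau^\dagger}, b^{\tau^\dagger}(w \Bmaj_{\tau^\dagger} b \to gzw \Bmaj_{\rho^\dagger} gab) \;\wedge\; \forall u^{\tau^\dagger}, b^{\tau^\dagger}(u \Bmaj_{\tau^\dagger} b \to gau \Bmaj_{\rho^\dagger} gab).
\]

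The first conjunct of (i) is the first hypothesis, and the first conjunct of (ii) is the second hypothesis. For the remaining two clauses, observe that by Lemma \ref{self-maj}, the assumption $x \Bmaj_\sigma a$ (in case (i)) and the assumption $z \Bmaj_{\sigma^\dagger} a$ (in case (ii)) both yield $a \Bmaj_{\sigma^\dagger} a$. Thus applying the second hypothesis with ``$z$'' instantiated to $a$ gives exactly $\forall w^{\tau^\dagger}, b^{\tau^\dagger}(w \Bmaj_{\tau^\dagger} b \to gaw \Bmaj_{\rho^\dagger} gab)$, which covers both of the remaining clauses.

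There is no real obstacle here; the lemma is essentially just the observation that the self-majorization clauses on the $\dagger$-side can be derived from the second hypothesis by specializing its first argument to $a$, using Lemma \ref{self-maj} to obtain $a \Bmaj_{\sigma^\dagger} a$. The mild care needed is only bookkeeping: keeping track of which variables are of source type and which are of $\dagger$-type, and noticing that the second hypothesis in the statement already packages together the data needed for both the cross-clause in (ii) and the self-clauses in (i) and (ii).
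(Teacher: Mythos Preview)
Your proof is correct and follows essentially the same route as the paper: you unfold $f \Bmaj_{\sigma \to (\tau \to \rho)} g$ into the two outer conjuncts (i) and (ii), further unfold each of those into two sub-conjuncts, and verify all four using the two hypotheses together with Lemma~\ref{self-maj} to obtain $a \Bmaj_{\sigma^\dagger} a$. The paper's argument is identical in structure and in its use of Lemma~\ref{self-maj}.
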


\begin{proof} Let us suppose the hypothesis of the lemma. We must show that $f \Bmaj_{\sigma \to (\tau \to \rho)} g$. Therefore, we must show both of the following conjuncts: $$\forall x^\sigma, a^{\sigma^\dagger} (x \Bmaj_\sigma a \to fx \Bmaj_{\tau \to \rho} ga) \mbox{\,\,\,\,and\,\,\,\,}\forall z^{\sigma^\dagger}, a^{\sigma^\dagger} (z \Bmaj_{\sigma^\dagger} a \to gz \Bmaj_{\tau^\dagger \to \rho^\dagger} ga).$$ 
For the first of the above conjuncts, take $x \Bmaj_\sigma a$ in order to see that $$\forall y^\tau, b^{\tau^\dagger}(y \Bmaj_\tau b \to fxy \Bmaj_\rho gab) \mbox{\,\,\,\,and\,\,\,\,}\forall w^{\tau^\dagger}, b^{\tau^\dagger} ( w \Bmaj_{\tau^\dagger} b \to gaw \Bmaj_{\rho^\dagger} gab).$$ The first one of these follows from the first hypothesis of the lemma. The second one follows from the second hypothesis, noting that $a \Bmaj_{\sigma^\dagger} a$. \\[1mm]
For the second conjunct, take $z \Bmaj_{\sigma^\dagger} a$. We must show that $$\forall w^{\tau^\dagger}, b^{\tau^\dagger} (w \Bmaj_{\tau^\dagger} b \to gzw \Bmaj_{\rho^\dagger} gab)\mbox{\,\,\,\,and \,\,\,\,}\forall w^{\tau^\dagger}, b^{\tau^\dagger} (w \Bmaj_{\tau^\dagger} b \to gaw \Bmaj_{\rho^\dagger} gab).$$ Both of the above follow from the second hypothesis of the lemma, since $a \Bmaj_{\sigma^\dagger} a$.
\end{proof}

It is now time to define the majorizability interpretation upon the bounding interpretation, dubbed the $\Imb$-interpretation of arithmetic.

\begin{definition}[$\Imb$-interpretation]\label{Imb-interpretation} The $\Imb$-interpretation is the base interpretation of arithmetic that defines the type-informative predicates as follows: 
\begin{itemize}
    \item If $\sigma$ is an unbounded type, then $\pvec \tau_\sigma^+$ is $\sigma^\dagger$, $\pvec \tau_\sigma^-$ is the empty tuple and $\uwitness{x}{\sigma}{a}{}$ is $x \Bmaj_\sigma a$.
    \item If $\sigma$ is a bounded type, then $\witness{x}{\sigma}$ is interpreted uniformly, as in Example \ref{uniform-def}.
\end{itemize}
The witnessing types of the $\Imb$-interpretation are the arithmetical star types and their elements are, by definition, the monotone elements.
\end{definition}

\begin{remark}
    Monotone elements were defined in Subsection \ref{star-majorizability}. Do notice that, by Lemma \ref{self-maj}, if $x \Bmaj_\sigma a$, then $a$ is monotone.
\end{remark}

\begin{notation}
    The $\U$-interpretation fixed by the definition above is called the $\Umb$-interpretation.
\end{notation}

Let us show that the $\Imb$-interpretation is a type-informative base interpretation of arithmetic. We must check conditions $(i)$ and $(ii)$ of Definition \ref{def-type-informative-base}. First, we note that the combinators and the star constants are monotone (the former is clear and, for the latter, see Proposition \ref{star-monotone-prop}). Let us now consider condition $(ii)$. We use Remark \ref{criterium}: we let $q_{\sigma,\tau}$ and $r_{\sigma,\tau}$ be the empty tuple and let $t_{\sigma,\tau}$ be given as follows. If both $\sigma$ and $\tau$ are unbounded, then $t_{\sigma,\tau}$ has type $\sigma^\dagger \to (\sigma^\dagger \to \tau^\dagger) \to \tau^\dagger$ and $tac = ca$. If $\sigma$ is bounded and $\tau$ is unbounded, then $t_{\sigma,\tau}$ has type $\tau^\dagger \to \tau^\dagger$ and $tc = c$. If $\tau$ is bounded, then $t_{\sigma,\tau}$ is empty. A discussion by cases easily shows that the condition of Remark \ref{criterium} is met. Note that $t_{\sigma,\tau}$ is, technically, $\lambda a,c.ca$ if $\sigma$ is unbounded, and it is $\lambda c.c$ if $\sigma$ is bounded. In both cases, $t_{\sigma,\tau}$ is monotone since it is defined (by lambda abstraction) using the combinators and application. With respect to condition $(i)$, we still need to check the constants other than the combinators and the star constants. The substantial cases to discuss are the conditional constants and the recursors. For both of these cases, we need to use the maximum functional between a pair of elements of the same arithmetical type. This functional is well-known from the literature: $\max_\NN (n,m)$ is the maximum of the numbers $n$ and $m$; if $f$ and $g$ are arithmetical functionals of function type $\sigma \to \tau$, then 

\begin{equation}\label{max-functional}
    \max_{\sigma \to \tau}(f,g) :\equiv \lambda x^\sigma.\max_\tau (fx,gx).
\end{equation}  

\noindent
We assume that our target theory has enough extensionality so that it can show the equality $\max_{\sigma \to \tau}(f,g) = \max_{\sigma \to \tau}(g,f)$.

\begin{lemma} \label{max} Let $\sigma$ be an arithmetical type. Suppose that $a, a'$ and $b, b'$ are elements of type $\sigma$. If $x \Bmaj_\sigma a$ and $y \Bmaj_\sigma b$ then $x \Bmaj_\sigma \max_{\sigma} (a,b)$,  $y \Bmaj_\sigma \max_{\sigma} (a,b)$ and $\max_\sigma (x,y) \Bmaj_\sigma \max_\sigma(a,b)$. \end{lemma}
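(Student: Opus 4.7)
The plan is to prove the three claims simultaneously by induction on the arithmetical type $\sigma$, since each claim at type $\rho \to \tau$ will need to invoke all three at the smaller type $\tau$. Throughout I will use that by the definition in equation (\ref{max-functional}), $\max_{\rho \to \tau}(a,b)$ applied to an argument $u$ equals $\max_\tau(au, bu)$, so pointwise reasoning at $\tau$ is available.

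For the base case $\sigma = \NN$, strong majorizability is just $\leq$ on the naturals, and all three statements reduce to the elementary facts that $u \leq \max(u,v)$, $v \leq \max(u,v)$, and that $\max$ is monotone in both arguments.

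For the inductive step with $\sigma = \rho \to \tau$ (both arithmetical), unfold the definition of $\Bmaj_{\rho \to \tau}$. To establish $x \Bmaj_{\rho \to \tau} \max_\sigma(a,b)$, fix $u,v : \rho$ with $u \Bmaj_\rho v$ and verify the two conjuncts. First, from $x \Bmaj_\sigma a$ we get $xu \Bmaj_\tau av$, and from $y \Bmaj_\sigma b$ we get $yu \Bmaj_\tau bv$; the induction hypothesis for claim (1) at $\tau$ then yields $xu \Bmaj_\tau \max_\tau(av, bv) = \max_\sigma(a,b)(v)$. Second, from $au \Bmaj_\tau av$ and $bu \Bmaj_\tau bv$, the induction hypothesis for claim (3) at $\tau$ gives $\max_\tau(au, bu) \Bmaj_\tau \max_\tau(av, bv)$, i.e., $\max_\sigma(a,b)(u) \Bmaj_\tau \max_\sigma(a,b)(v)$. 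The argument for $y \Bmaj_\sigma \max_\sigma(a,b)$ is symmetric, using claim (2) at $\tau$ in place of claim (1).

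For the third claim, $\max_\sigma(x,y) \Bmaj_\sigma \max_\sigma(a,b)$, again fix $u \Bmaj_\rho v$. Using $xu \Bmaj_\tau av$ and $yu \Bmaj_\tau bv$ as above, claim (3) at $\tau$ gives $\max_\tau(xu, yu) \Bmaj_\tau \max_\tau(av, bv)$, which rewrites to $\max_\sigma(x,y)(u) \Bmaj_\tau \max_\sigma(a,b)(v)$. The self-majorization conjunct $\max_\sigma(a,b)(u) \Bmaj_\tau \max_\sigma(a,b)(v)$ is exactly the one just proved in the previous paragraph. The main obstacle, such as it is, is keeping straight the mutual dependence among the three claims at the inductive step; once one writes down which IH feeds which conjunct, the verification is mechanical. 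A minor hidden assumption is the extensionality needed to identify $\max_{\rho \to \tau}(a,b)(u)$ with $\max_\tau(au, bu)$, which was already flagged in the paragraph preceding the lemma.
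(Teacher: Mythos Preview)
Your proof is correct and follows exactly the approach the paper indicates: induction on the complexity of the arithmetical type $\sigma$, with the paper simply remarking that the argument is easy and pointing to Kohlenbach's book for details. Your careful tracking of which inductive hypothesis feeds each conjunct (in particular, noting that the self-majorization conjunct for $\max_\sigma(a,b)$ requires claim (3) at type $\tau$ applied to $au \Bmaj_\tau av$ and $bu \Bmaj_\tau bv$) supplies precisely the details the paper omits.
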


The above lemma has an easy proof by induction on the complexity of the type $\sigma$ (see, also, subsection 3.6 of \cite{Kohlenbach(08)}). As a simple consequence, $\max_\sigma$ is monotone and if $a$ and $b$ are monotone of type $\sigma$, so is $\max_\sigma (a,b)$ and, moreover, $a \Bmaj_\sigma \max_\sigma(a,b)$ and $b \Bmaj_\sigma \max_\sigma(a,b)$.

\begin{proposition} The sentence $\I_{\BB \to \sigma \to \sigma \to \sigma}(\mathrm{Cond_\sigma})$ is $\Umb$-interpretable, for each type $\sigma$.
\end{proposition}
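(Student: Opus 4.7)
My plan is to split on whether the type $\sigma$ is bounded, exhibit $\max_{\sigma^\dagger}$ as the witness in the non-trivial case, and then verify strong majorizability using the tools developed just before the statement, particularly Lemma \ref{max} and Lemma \ref{on-maj}.

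If $\sigma$ is bounded, then so is $\BB \to \sigma \to \sigma \to \sigma$, and by the second clause of Definition \ref{Imb-interpretation}, the predicate $\I_{\BB \to \sigma \to \sigma \to \sigma}(\mathrm{Cond}_\sigma)$ is $\Umb$-witness-free and uniformly true. There is nothing to verify here. If $\sigma$ is unbounded, then so is $\BB \to \sigma \to \sigma \to \sigma$, and $(\BB \to \sigma \to \sigma \to \sigma)^\dagger = \sigma^\dagger \to \sigma^\dagger \to \sigma^\dagger$, which is an arithmetical type. We must exhibit a closed witnessing term $C$ of this type for which the target theory proves $\mathrm{Cond}_\sigma \Bmaj_{\BB \to \sigma \to \sigma \to \sigma} C$.

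The natural choice is $C := \max_{\sigma^\dagger}$, which (by equation (\ref{max-functional})) is a closed term definable from the combinators and $\max_\NN$. Unwinding the definition of strong majorizability for the arrow type $\BB \to (\sigma \to \sigma \to \sigma)$, with $\BB$ bounded, we must check two things: first, that $\forall i^\BB\,(\mathrm{Cond}_\sigma(i) \Bmaj_{\sigma \to \sigma \to \sigma} \max_{\sigma^\dagger})$; and second, that $\max_{\sigma^\dagger} \Bmaj_{\sigma^\dagger \to \sigma^\dagger \to \sigma^\dagger} \max_{\sigma^\dagger}$, i.e.\ that $\max_{\sigma^\dagger}$ is monotone. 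The second is precisely the consequence of Lemma \ref{max} noted after its statement. For the first, I would use Lemma \ref{on-maj} to reduce to showing, for each Boolean $i$ and all $x \Bmaj_\sigma a$, $y \Bmaj_\sigma b$, that $\mathrm{Cond}_\sigma(i,x,y) \Bmaj_\sigma \max_{\sigma^\dagger}(a,b)$; the companion bound on $\max_{\sigma^\dagger}(z,w)$ vs $\max_{\sigma^\dagger}(a,b)$ is again immediate from Lemma \ref{max}. A case split via Proposition \ref{boolean_either_or} on $i \equiv_\BB 0$ versus $i \equiv_\BB 1$ reduces $\mathrm{Cond}_\sigma(i,x,y)$ to either $x$ or $y$, and in either case Lemma \ref{max} gives $x \Bmaj_\sigma \max_{\sigma^\dagger}(a,b)$ and $y \Bmaj_\sigma \max_{\sigma^\dagger}(a,b)$.

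Since no actual construction depends on the Boolean $i$ (we supply the same majorant $\max_{\sigma^\dagger}$ uniformly), the proof genuinely exploits the fact that $\BB$ is bounded under $\Imb$; were we instead to interpret $\BB$ precisely, we would need to select between witnesses according to $i$ and would require conditionals on the target side. The main technical point, and the only place that requires some care, is the double use of Lemma \ref{max} inside the hypothesis of Lemma \ref{on-maj}; everything else is a routine unwinding of definitions.
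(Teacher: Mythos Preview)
Your proposal is correct and follows essentially the same approach as the paper: split on whether $\sigma$ is bounded, and in the unbounded case exhibit $\max_{\sigma^\dagger}$ as the majorant of $\mathrm{Cond}_\sigma$, verifying this via Lemma~\ref{max} and Lemma~\ref{on-maj} together with a case split on the Boolean. The paper additionally cites Lemmas~\ref{self-maj} and~\ref{transitivity} in passing, but your argument already implicitly uses what is needed (monotonicity of $\max_{\sigma^\dagger}$, noted after Lemma~\ref{max}); one minor quibble is that Proposition~\ref{boolean_either_or} is stated for the source theory, whereas the case split here is carried out in the target theory, but this is harmless since the target theory certainly decides Booleans.
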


\begin{proof} If $\sigma$ is unbounded, $(\BB \to \sigma \to \sigma \to \sigma)^\dagger$ is $\sigma^\dagger \to \sigma^\dagger \to \sigma^\dagger$. Clearly, $$\forall i^\BB \forall x^\sigma, y^\sigma \forall a^{\sigma^\dagger}, b^{\sigma^\dagger} (x \Bmaj_\sigma a \wedge y \Bmaj_\sigma b \to \mathrm{Cond}_\sigma (i,x,y) \Bmaj_{\sigma} \mbox{$\max_{\sigma^\dagger}$} (a,b)).$$ With the aid of Lemma \ref{self-maj}, Lemma \ref{transitivity}, Lemma \ref{on-maj}, Lemma \ref{max} and the above, it can easily be concluded that $$\mathrm{Cond}_\sigma \Bmaj_{\BB \to \sigma \to \sigma \to \sigma} \lambda a,b. \max_{\sigma^\dagger}(a,b).$$ 
There is nothing to show when $\sigma$ is bounded. \end{proof}

\begin{proposition}\label{Rec-Umb} The sentence $\witness{\Rec_\sigma}{\NN \to \sigma \to (\NN \to \sigma \to \sigma) \to \sigma}$ is $\Umb$-interpretable, for each type $\sigma$. 
\end{proposition}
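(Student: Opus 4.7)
The plan is to split into two cases according to whether the type $\sigma$ is bounded or unbounded (in the sense of Notation~\ref{bounded-type}). If $\sigma$ is bounded, then by Definition~\ref{def-dagger} the type $(\NN \to \sigma \to (\NN \to \sigma \to \sigma) \to \sigma)^\dagger$ is empty, so the type $\NN \to \sigma \to (\NN \to \sigma \to \sigma) \to \sigma$ is itself bounded and $\I_{\NN \to \sigma \to (\NN \to \sigma \to \sigma) \to \sigma}(\Rec_\sigma)$ is interpreted uniformly; there is nothing to exhibit.

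The substantial case is when $\sigma$ is unbounded. In that case the dagger of the recursor's type is $\NN \to \sigma^\dagger \to (\NN \to \sigma^\dagger \to \sigma^\dagger) \to \sigma^\dagger$, and I need to produce a closed witnessing term $\Rec^*_\sigma$ of this arithmetical type such that $\Rec_\sigma \Bmaj_{\NN \to \sigma \to (\NN \to \sigma \to \sigma) \to \sigma} \Rec^*_\sigma$. Following the standard Howard--Bezem trick, I would define $\Rec^*_\sigma$ by primitive recursion in the target (using the target recursor $\Rec_{\sigma^\dagger}$ and the $\max_{\sigma^\dagger}$ functional of equation~(\ref{max-functional})):
\[
\Rec^*_\sigma\, 0 \, a \, f \;=\; a,\qquad
\Rec^*_\sigma\,(n+1)\, a \, f \;=\; \max\nolimits_{\sigma^\dagger}\bigl(\Rec^*_\sigma\, n\, a\, f,\; f\, n\, (\Rec^*_\sigma\, n\, a\, f)\bigr).
\]
The $\max$ is what guarantees $\Bmaj_{\sigma^\dagger}$-monotonicity of $\Rec^*_\sigma\, n\, a\, f$ in the numerical argument $n$, which will be needed below.

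To verify $\Rec_\sigma \Bmaj \Rec^*_\sigma$, I would use Lemma~\ref{on-maj} to reduce the verification to two things: (a) the ``mixed'' clause, and (b) the self-majorization of $\Rec^*_\sigma$. For (b), an easy induction on $n$ together with Lemma~\ref{max} shows that $\Rec^*_\sigma$ is monotone, i.e.\ if $n \leq n'$, $a \Bmaj_{\sigma^\dagger} a'$ and $f \Bmaj_{\NN \to \sigma^\dagger \to \sigma^\dagger} f'$, then $\Rec^*_\sigma\, n\, a\, f \Bmaj_{\sigma^\dagger} \Rec^*_\sigma\, n'\, a'\, f'$; the step case combines the induction hypothesis with the fact that $\max_{\sigma^\dagger}(u,v)$ majorizes both $u$ and $v$ when $u,v$ are monotone. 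For (a), I would argue by induction on $m$ that whenever $m \leq n$, $x \Bmaj_\sigma a$ and $\phi \Bmaj_{\NN \to \sigma \to \sigma} f$, one has $\Rec_\sigma\, m\, x\, \phi \Bmaj_\sigma \Rec^*_\sigma\, n\, a\, f$. The base case is $\Rec_\sigma\, 0\, x\, \phi = x \Bmaj_\sigma a = \Rec^*_\sigma\, 0\, a\, f$, then lifted to arbitrary $n$ using the monotonicity from (b). The step case uses the induction hypothesis, the majorization of $\phi$ by $f$, and Lemma~\ref{transitivity} together with the fact that $f\,m\,(\Rec^*_\sigma\, m\, a\, f) \Bmaj_{\sigma^\dagger} \Rec^*_\sigma\, (m+1)\, a\, f$ by construction.

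The main obstacle is bookkeeping in the self-majorization part (b), because the definition of $\Bmaj_{\sigma \to \tau}$ has two conjuncts and the naive simultaneous induction risks circularity; Lemma~\ref{on-maj} is precisely what disentangles this, letting me prove the ``diagonal'' clause and the ``monotonicity'' clause side by side by induction on the complexity of $\sigma^\dagger$. Once (a) and (b) are established, the conclusion $\Rec_\sigma \Bmaj \Rec^*_\sigma$ follows, so $\uwitness{\Rec_\sigma}{\NN \to \sigma \to (\NN \to \sigma \to \sigma) \to \sigma}{\Rec^*_\sigma}{}$ holds in the target theory, as required.
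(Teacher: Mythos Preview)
Your proposal is correct and follows essentially the same Howard--Bezem strategy as the paper's proof. The only minor difference is in how the majorant is built: the paper first checks $\Rec_\sigma n \Bmaj \Rec_{\sigma^\dagger} n$ and then defines $R'(n+1) = \max(R' n, \Rec_{\sigma^\dagger}(n+1))$ with the $\max$ taken at the higher type $\sigma^\dagger \to (\NN \to \sigma^\dagger \to \sigma^\dagger) \to \sigma^\dagger$, whereas you fold the $\max$ at type $\sigma^\dagger$ directly into the recursion step; both are standard variants and lead to the same conclusion.
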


\begin{proof} The proof of this fact mimics Howard's and Bezem's proofs \cite{Howard(73), Bezem(85)} that shows that recursors are majorizable. We sketch the argument. Let $\sigma$ be a type of the language of $\HAomega$. If $\sigma$ is bounded, then $\Rec_\sigma$ is bounded and there is nothing to prove. We can assume that $\sigma$ is unbounded. In this case 
$$(\NN \to \sigma \to (\NN \to \sigma \to \sigma) \to \sigma)^\dagger \mbox{\,\,\,is\,\,\,} \NN \to \sigma^\dagger \to (\NN \to \sigma^\dagger \to \sigma^\dagger) \to \sigma^\dagger.$$ 
It is easy to check by induction on $n$ that 
$$\Rec_\sigma n \Bmaj_{\sigma \to (\NN \to \sigma \to \sigma) \to \sigma} \Rec_{\sigma^\dagger} n.$$ 
Define, by recursion, the functional $R'$ of type $\NN \to \sigma^\dagger \to (\NN \to \sigma^\dagger \to \sigma^\dagger) \to \sigma^\dagger$:
\eqleft{
\begin{array}{lcl}
    R' \Zero 
        & \equiv_{\sigma^\dagger \to (\NN \to \sigma^\dagger \to \sigma^\dagger) \to \sigma^\dagger} & \Rec_{\sigma^\dagger} \Zero \\[1mm]
    R' (\Suc n)
        & \equiv_{\sigma^\dagger \to (\NN \to \sigma^\dagger \to \sigma^\dagger) \to \sigma^\dagger} & \max_{\sigma^\dagger \to (\NN \to \sigma^\dagger \to \sigma^\dagger) \to \sigma^\dagger} (R' n, \Rec_{\sigma^\dagger} (\Suc n)).
\end{array}
}
By the lemmata of this subsection, we can see easily, by induction on $n$, that 
$$\forall m \leq n \, (\Rec_\sigma m \Bmaj_{\sigma^\dagger \to (\NN \to \sigma^\dagger \to \sigma^\dagger) \to \sigma^\dagger} R' n).$$
With the aid of this, we can conclude that $\Rec_\sigma \Bmaj_{\NN \to \sigma \to (\NN \to \sigma \to \sigma) \to \sigma} R'$, as wanted.
\end{proof}

In bounding interpretations of $\HAomega$, we need a notion of joining of bounds. It is at this juncture that we need to have witnessing elements as monotone functionals. Given an unbounded type $\sigma$ of $\HAomega$ and given $a,b : \sigma^\dagger$ we define $a \vee_{\I_\sigma} b$ to be $\max_{\sigma^\dagger}(a,b)$. This is a joining of bounds by Lemma \ref{max} (and the discussions around it) and Lemma \ref{transitivity}. 

\begin{lemma} For any formula $A$ of the language of arithmetic and $\pvec a, \pvec b$ tuples of monotone elements having the tuple type of the positive $\Imb$-witnesses of $A$, we have that $\pvec a \vee_A \pvec b$ is a tuple of monotone elements.
\end{lemma}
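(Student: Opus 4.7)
The plan is to proceed by induction on the structure of $A$, following the inductive definition of $\vee_A$ in Definition \ref{joining-bounds}. A direct induction on the statement as written will not go through, however, because the implication and quantifier clauses produce $\lambda$-abstractions of the form $\lambda \pvec a.\, \pvec f \pvec a \vee_B \pvec f' \pvec a$, and checking that such an abstraction is self-majorizing from the mere monotonicity of $\pvec f, \pvec f'$ requires a stronger inductive hypothesis. I would therefore prove, by the same induction, the stronger symmetric statement: if $\pvec a_1 \Bmaj \pvec a_2$ and $\pvec b_1 \Bmaj \pvec b_2$ are tuples having the types of the positive $\Imb$-witnesses of $A$, then $\pvec a_1 \vee_A \pvec b_1 \Bmaj \pvec a_2 \vee_A \pvec b_2$. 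The lemma follows by specializing to $\pvec a_1 = \pvec a_2 = \pvec a$ and $\pvec b_1 = \pvec b_2 = \pvec b$.

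For the base case, atomic equality symbols are witness-free, so there is nothing to check. For the type-informative predicate $\I_\sigma(x)$ at an unbounded $\sigma$, the positive witness lives in $\sigma^\dagger$ and $\vee_{\I_\sigma}$ is $\max_{\sigma^\dagger}$; the desired stronger statement is the last clause of Lemma \ref{max}. Conjunction is immediate from componentwise application of the inductive hypothesis. For implication (and, identically in form, for the universal quantifier), $(\pvec f, \pvec g) \vee_{A \to B} (\pvec f', \pvec g')$ is $(\lambda \pvec a.\, \pvec f \pvec a \vee_B \pvec f' \pvec a,\ \lambda \pvec a, \pvec d.\, \pvec g \pvec a \pvec d \cup \pvec g' \pvec a \pvec d)$. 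To majorize the first $\lambda$-abstraction, one takes $\pvec a_1 \Bmaj \pvec a_2$ and uses the majorization hypotheses on $\pvec f_1, \pvec f_2$ and $\pvec f'_1, \pvec f'_2$ to get $\pvec f_1 \pvec a_1 \Bmaj \pvec f_2 \pvec a_2$ and $\pvec f'_1 \pvec a_1 \Bmaj \pvec f'_2 \pvec a_2$; the inductive hypothesis on $B$ then yields $\pvec f_1 \pvec a_1 \vee_B \pvec f'_1 \pvec a_1 \Bmaj \pvec f_2 \pvec a_2 \vee_B \pvec f'_2 \pvec a_2$, as required. For the second $\lambda$-abstraction, which targets star types, the step reduces to the fact that $S \cup S' \Bmaj_{\sigma^*} T \cup T'$ whenever $S \Bmaj_{\sigma^*} T$ and $S' \Bmaj_{\sigma^*} T'$, which is immediate from the definition of strong majorizability on star types in Subsection \ref{star-majorizability}. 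The existential quantifier case combines the atomic case on $\I_\sigma$ with the inductive hypothesis on $A(x)$.

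The main obstacle, and the reason for introducing the stronger symmetric form, is exactly the interplay between the $\lambda$-abstraction in the implication clause and the outer join $\vee_B$: monotonicity of the resulting function forces one to reason about majorization between two distinct joins $\pvec f_1 \pvec a_1 \vee_B \pvec f'_1 \pvec a_1$ and $\pvec f_2 \pvec a_2 \vee_B \pvec f'_2 \pvec a_2$, not merely self-majorization of a single join. Once the stronger statement is set up, each clause of Definition \ref{joining-bounds} matches, in a routine way, a known property of $\Bmaj$: Lemma \ref{max} for the atomic case, pairing for conjunction and existential, pointwise majorization for $\lambda$-abstractions, and monotonicity of $\cup$ on star types.
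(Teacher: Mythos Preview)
Your approach is correct, and the strengthening to the symmetric majorization statement is exactly what makes the induction go through. One minor point: for Bezem's strong majorizability at arrow types you also need the second conjunct ($g x \Bmaj g y$ whenever $x \Bmaj y$), but this follows by the same reasoning once you note that $\pvec f_2, \pvec f'_2$ are themselves monotone (from $\pvec f_1 \Bmaj \pvec f_2$).

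The paper's proof takes a shorter route. Rather than carrying out the structural induction on $A$ explicitly, it observes that the closed term $\pvec\vee_A$ is built, at every stage of Definition~\ref{joining-bounds}, from the combinators, the star constants $\mathfrak{s}$, $\cup$, $\bigcup$, and application. Since all of these are monotone (the star constants by Proposition~\ref{star-monotone-prop}), and monotone elements are closed under application, the term $\pvec\vee_A$ is itself monotone; applying it to monotone inputs then yields monotone outputs. This is of course the same mathematical content as your symmetric statement---``$\pvec\vee_A$ is monotone as a functional'' is precisely your strengthened induction hypothesis---but the paper packages the induction into the general fact that $\lambda$-terms over monotone constants are monotone, rather than re-proving it clause by clause. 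Your explicit treatment has the advantage of being self-contained and making visible exactly which property of $\max$, $\cup$, etc.\ is used at each step.
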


\begin{proof}
    The proof is by induction on the complexity of $A$. The base case concerns the type-informative predicates, where $\uwitness{x}{\sigma}{a}{}$ is $x \Bmaj_\sigma a$. As noticed, if $a$ and $b$ are monotone, so is $\max_\sigma (a,b)$ (i.e., $a \vee_{\I_\sigma} b$). For complex $A$, one looks at the clauses of Definitions \ref{joining-bounds}. Since the joinings are defined using combinators, star operations and applications (see Subsection \ref{star-majorizability}), we are done. 
\end{proof}

\begin{theorem}\label{soundness of Imb}
    The $\Imb$-interpretation is a sound type-informative base interpretation of $\HAomega$.
\end{theorem}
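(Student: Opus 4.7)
The plan is to verify both conditions of Definition~\ref{def-I-sound-base} for the $\Imb$-interpretation. Type-informativity (conditions (i) and (ii) of Definition~\ref{def-type-informative-base}) has already been settled in the discussion immediately preceding the theorem: condition (ii) is handled via Remark~\ref{criterium} with closed monotone witnessing terms built from combinators; condition (i) is handled for the combinators and star constants by Proposition~\ref{star-monotone-prop}, for the conditional constants by the unlabelled proposition just before Proposition~\ref{Rec-Umb}, and for the recursors by Proposition~\ref{Rec-Umb} itself. For the remaining arithmetical constants one simply observes $\Zero \Bmaj_\NN 0$ and $\Suc \Bmaj_{\NN \to \NN} (\lambda n. n+1)$. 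So only the $\I$-interpretability of the non-logical axioms of $\HAomega$ remains.

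The equality axioms and the combinator axioms are automatic, as noted following Definition~\ref{def-I-sound-base} and by Lemma~\ref{universal}. The two Dedekind axioms for $\Zero$ and $\Suc$, the defining equations for the recursors, and the equations for the conditional constants are all universal closures of quantifier-free formulas that are $\I$-witness-free; since under the bounding interpretation the ground type $\NN$ carries no negative witnesses, Refinement~(1) of Subsection~\ref{refinements} reduces their $\I$-interpretability to their plain truth in the target theory, which is immediate.

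The main obstacle, and the only substantive case, is the scheme of induction. The standard remedy is to replace the induction axioms by the induction rule and realize the rule via $\Rec_\sigma$. Given realizers $\pvec t_0$ for $A(\Zero)$ and $\pvec t_s$ for the step $\forall n^\NN (A(n) \to A(\Suc n))$, a naive primitive recursion yields, for each $n$, a tuple of realizers for $A(n)$; but under the bounding interpretation the conclusion $\forall n^\NN A(n)$ is interpreted as $\forall n \leq m\, \iInter{A(n)}{\pvec a}{\pvec b}$, so we need a single witness $\pvec a$ working uniformly for every $n \leq m$. The remedy is to define a witness by recursion on $m$ that joins together, via the joining of bounds $\vee_A$, the stagewise realizers obtained for $n \leq m$; the monotonicity lemma (Lemma~\ref{monotonicity-lemma}), combined with the fact that the $\max$-based joinings of Lemma~\ref{max} preserve monotonicity of the bounds, guarantees that this joined realizer validates the $\Imb$-interpretation of $\forall n^\NN A(n)$. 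With this the verification of all non-logical axioms is complete, and the Main Soundness Theorem~\ref{main-soundness} then applies.
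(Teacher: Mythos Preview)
Your proposal is correct and follows essentially the same route as the paper: type-informativity is taken as already established, the non-induction axioms are disposed of as universal closures of $\I$-witness-free formulas, and the induction rule is realized by a recursively defined witness that accumulates the stagewise realizers via $\vee_A$, with Lemma~\ref{monotonicity-lemma} ensuring the joined realizer works for all $x \leq n$. One small remark: the tool you want for the witness-free universal axioms is Lemma~\ref{universal} (and the remark following it) rather than Refinement~(1) of Subsection~\ref{refinements}, which concerns the shape of the \emph{conclusion} of the Main Soundness Theorem; the substance of your argument is unaffected.
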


\begin{proof} We have seen that the $\Imb$-interpretation is type-informative. It remains to show that the non-logical axioms of $\HAomega$ are $\Imb$-interpretable. The only concern are the induction axioms. Their interpretation, using the recursors, is a bit different from the usual argument. So, we work it out in some detail. \\[1mm]
As we know, induction axioms can be replaced by the rule of induction. Hence, given a formula $A(x^\NN)$, we must see that if $A(\Zero)$ and $\forall x^\NN (A(x) \to A(\Suc x))$ are $\I$-interpretable, then $\forall x^\NN A(x)$ is also $\I$-interpretable (we can safely ignore the parameters). So, by assumption, there are monotone terms $\pvec t$, $\pvec s$ and $\pvec r$ such that the target theory proves $\forall \pvec b \iInter{A(\Zero)}{\pvec t}{\pvec b}$ and $$\forall n^\NN \forall \pvec a \forall \pvec b \forall x \leq n ( \forall \pvec b' \in \pvec s n \pvec a \pvec b \iInter{A(x)}{\pvec a}{\pvec b'} \to \iInter{A(\Suc x)}{\pvec r n \pvec a}{\pvec b})$$ and so $$\forall n^\NN \forall \pvec a \forall x \leq n ( \forall \pvec b \iInter{A(x)}{\pvec a}{\pvec b} \to \forall \pvec b \iInter{A(\Suc x)}{\pvec r n \pvec a}{\pvec b}).$$
Define, by recursion, functionals $\pvec R$ such that:
\eqleft{
\begin{array}{lcl}
    \pvec R \Zero \pvec t \pvec r
        & \equiv_{\pvec \tau^+_A} & \pvec t \\[1mm]
    \pvec R (\Suc n) \pvec t \pvec r
        & \equiv_{\pvec \tau^+_A} & \pvec R n \pvec t \pvec r \vee_A \pvec r n (\pvec R n \pvec t \pvec r).
\end{array}
}
Remember that $\pvec \tau^+_A$ is the tuple type of the positive witnesses of the formula $A$, as described in Definition \ref{joining-bounds}. Using the monotonicity property of Lemma \ref{monotonicity-lemma}, it is straightforward to show by induction on $n$ that $$\forall n \forall x \leq n \forall \pvec b \, \iInter{A(x)}{\pvec R n \pvec t \pvec r}{\pvec b}.$$ This gives what we want. Note that the previous lemma together with an easy induction shows that $\pvec R$ is a tuple of monotone terms. \end{proof}

The $\Imb$-interpretation of quantifications is rather simple. If $\sigma$ is unbounded,
\eqleft{
\begin{array}{lcl}
     \iInter{\forall x^\sigma A(x)}{\pvec f}{c, \pvec b} 
        & \mbox{\,\,\,is\,\,\,} & \forall x \Bmaj_\sigma \! c \, \iInter{A(x)}{\pvec f c}{\pvec b} \\[1mm]
    \iInter{\exists x^\sigma A(x)}{c, \pvec a}{\pvec B} 
        & \mbox{\,\,\,is\,\,\,} & \exists x \Bmaj_\sigma \!c \, \forall \pvec b \in \pvec B \, \iInter{A(x)}{\pvec a}{\pvec b}
\end{array}
}
where the meaning of the bounded quantifiers is clear. Note the similarity with the interpretation of first-order quantifications. If $\sigma$ is bounded, we get the clauses of uniform quantification:
\eqleft{
\begin{array}{lcl}
       	\iInter{\forall x^\sigma  A(x)}{\pvec a}{\pvec b} & \mbox{\,\,\,is\,\,\,} & \forall x^\sigma \iInter{A(x)}{\pvec a}{\pvec b} \\[1mm]
	\iInter{\exists x^\sigma A(x)}{\pvec a}{\pvec B} & \mbox{\,\,\,is\,\,\,} & \exists x^\sigma \forall \pvec b \in \pvec B \, \iInter{A(x)}{\pvec a}{\pvec b}.
\end{array}	
}

The $\Imb$-interpretation of this subsection has some affinities with the bounded functional interpretation of \cite{FerreiraOliva(05)}. The reader familiar with the bounded functional interpretation knows that it emphasizes very much intensional notions of majorizability and bounded quantifications thereof. We will briefly comment on this in Subsection \ref{bfi-section}.

We finish this subsection with the type-theoretic structure $\Bw$, defined as $(\mathcal{B}_\sigma)_\sigma$ with
\eqleft{
\begin{array}{lcl}
    \mathcal{B_\sigma} 
        & := & \{x \in \mathcal{S}_\sigma : \exists a \in \mathcal{S}_\sigma \, (x \Bmaj_\sigma a) \} 
\end{array}
}
for each finite type $\sigma$. As discussed in Section \ref{interlude}, by Theorem \ref{models_of_T}, it is a model of G\"odel's theory $\GodelT$. It is the model of the strongly majorizable functionals in the sense of Bezem (not to be confused with the hereditarily strongly majorizable functionals in the sense of Bezem, as introduced in \cite{Bezem(85)}).

\subsection{Finiteness interpretation \texorpdfstring{$\If$}{If}}\label{finiteness-interpretation}

Let us consider now a second alternative for a ``bounding interpretation'' of the type-informative predicates making use of finite sets:  

\begin{definition}[$\If$-interpretation] The $\If$-interpretation is the base interpretation of arithmetic that defines the type-informative predicates as follows: given any type $\sigma$, $\pvec \sigma^+$ is $\sigma^*$, $\pvec \sigma^-$ is the empty tuple and $\uwitness{x}{\sigma}{F}{}$ is $x \in F$ (see Example \ref{finite-set-def}).
\end{definition}

This is a minor departure from a bounding interpretation: at the ground type $\NN$, the (positive) witness for being an element of the ground type $\NN$ is not a bound for that element, but rather a finite set of numbers that includes the element. At the ground type $\BB$, we have essentially the uniform interpretation since $\exists F^{\BB^*} \forall i^\BB (i\in F)$. Therefore, the sentence $\forall i^\BB (\exists F^{\BB^*} (i\in F) \leftrightarrow i\equiv_\BB i)$ is $\Uf$-interpretable.

\begin{proposition} 
The $\If$-interpretation is a type-informative base interpretation of arithmetic.
\end{proposition}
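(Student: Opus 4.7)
The plan is to verify directly the two clauses of Definition~\ref{def-type-informative-base}, exactly as was done in Example~\ref{all-precise-def} for the precise interpretation. Since in the $\If$-interpretation every type $\sigma$ has $\pvec \tau_\sigma^- $ empty (only positive witnesses, of type $\sigma^*$, of the form ``$x\in F$''), both checks reduce to exhibiting closed witnessing terms without having to discharge any ``negative'' assumption.

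For clause (i), let $c$ be a constant of type $\sigma$ of $\Lang{\SourceT}$. We need a closed term $F$ of type $\sigma^*$ with $\uwitness{c}{\sigma}{F}{}$, i.e.\ $c \in F$. Since $\sigma^*$ is a witnessing type and the star constant $\mathfrak{s}_\sigma$ is available, we simply take $F := \{c\}$, and the axiom $\forall x (x \in \{c\} \leftrightarrow x = c)$ from Subsection~\ref{witnessing-section} gives $c \in \{c\}$.

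For clause (ii), by the criterion of Remark~\ref{criterium} it suffices, for each pair of finite types $\sigma, \tau$, to produce closed terms $\pvec q_{\sigma,\tau}$, $\pvec r_{\sigma,\tau}$ and $t_{\sigma,\tau}$ so that $\TargetT$ proves
\[
\forall \pvec b \in \pvec q_{\sigma,\tau} F G\, \uwitness{x}{\sigma}{F}{\pvec b}
\wedge
\forall \pvec d \in \pvec r_{\sigma,\tau} F G\, \uwitness{f}{\sigma\to\tau}{G}{\pvec d}
\to
\uwitness{fx}{\tau}{t_{\sigma,\tau} F G}{}.
\]
Since negative-witness tuples are empty on all types, the two universal guards collapse to plain $x \in F$ and $f \in G$, and the target obligation is that $fx$ belongs to a closed-term-definable finite subset of $\tau$. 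We take $\pvec q_{\sigma,\tau}$ and $\pvec r_{\sigma,\tau}$ to be the empty tuple and define
\[
    t_{\sigma,\tau}\, F\, G \;:=\; \bigcup_{g \in G}\bigcup_{y\in F}\{g\, y\},
\]
which is a closed term built from the star constants $\mathfrak{s}$ and $\bigcup$. From $x \in F$ and $f \in G$ the reduction rules for $\bigcup$ listed in Subsection~\ref{witnessing-section} immediately give $fx \in t_{\sigma,\tau} F G$, as required.

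This is really the whole argument; there is no genuine obstacle. The only point that needs a moment of care is purely bookkeeping: making sure the types line up (the result type $t_{\sigma,\tau} F G$ lives in $\tau^*$, which is a witnessing type since $\sigma^*$ and $\tau^*$ are) and that the finite-union term is well-typed as an iterated $\bigcup_{\sigma,\tau}$. After that the verification reduces to the set-theoretic fact ``$x \in F$ and $f\in G$ imply $fx\in\{gy : g\in G, y \in F\}$,'' which is provable in $\TargetT$ from the postulated axioms for the star constants.
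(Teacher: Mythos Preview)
Your proof is correct and essentially identical to the paper's own argument: for (i) the paper also takes the singleton $\{c\}$, and for (ii) it likewise invokes Remark~\ref{criterium} with $\pvec q_{\sigma,\tau}$, $\pvec r_{\sigma,\tau}$ empty and $t_{\sigma,\tau} F G = \bigcup_{f\in G}\bigcup_{x\in F}\{fx\}$. The only difference is that your write-up is a bit more explicit about the typing bookkeeping.
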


\begin{proof} Condition $(i)$ of Definition \ref{def-type-informative-base} trivially holds because, for any constant $c^\sigma$ of the language, we have $c \in \{c\}$. For condition $(ii)$, we use Remark \ref{criterium}: we let $\pvec q_{\sigma,\tau}$ and $\pvec r_{\sigma,\tau}$ be the empty tuple; $t_{\sigma,\tau}$ is of type $\sigma^* \to (\sigma \to \tau)^* \to \tau^*$ and we let it be defined such that $$t_{\sigma,\tau} FG = \bigcup_{f\in G} \bigcup_{x \in F} \{fx\}.$$ 
This works since $\forall x^\sigma \, \forall f^{\sigma \to \tau} \forall F^{\sigma^*} \forall G^{(\sigma \to \tau)^*} (x \in F \wedge f \in G \to fx \in t_{\sigma, \tau}FG).$
\end{proof}

The notion of joining of bounds for the $\If$-interpretation is clear. Given $\sigma$ a type and $F$ and $G$ of type $\sigma^+$, i.e., of type $\sigma^*$, we let $F \vee_{\I_\sigma} G$ be the union $F \cup_\sigma G$. This is obviously a joining of bounds.

\begin{theorem}
    The $\If$-interpretation is a sound type-informative base interpretation of $\HAomega$.
\end{theorem}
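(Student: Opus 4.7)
The plan is to carry the proof of Theorem \ref{soundness of Imb} almost verbatim, replacing the ingredients of the $\Imb$-interpretation by those of the $\If$-interpretation. Since we have already checked that the $\If$-interpretation is type-informative, what remains is to show that each non-logical axiom of $\HAomega$ is $\If$-interpretable. The core tools are the fact that, for the $\If$-interpretation, the joining of bounds at every type is given (at the ground level) by union of finite sets, which is commutative and obviously monotone — if $x \in F$ then $x \in F \cup G$ — so the hypothesis of Lemma \ref{monotonicity-lemma} is met.

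First, I would dispatch the bulk of the axioms as universally quantified quantifier-free formulas. The Dedekind axioms $\neg(\Zero \equiv_\NN \Suc x)$ and injectivity of $\Suc$, the Boolean axiom $(D_\BB)$, the defining equations of the conditionals $\mathrm{Cond}_\sigma$, and the defining equations of the recursors $\Rec_\sigma$ are all $\I$-witness-free. By Lemma \ref{universal} they are $\If$-interpretable, since each is true in the target theory. The axioms of equality interpret themselves, as already shown inside the proof of Theorem \ref{thm-uniform-soundness}, and the $\If$-interpretability of $(\text{E}_\BB)$ falls out of the uniform-disjunction analysis in Subsection \ref{uniform-booleans-sec}: the type-informative predicate $\witness{i}{\BB}$ is here essentially uniform (every Boolean lies in $\{0,1\}$), and joining of bounds is union, so the realizer $\pvec t \pvec a_0 \pvec a_1 := \pvec a_0 \,{\pvec\vee_D}\, \pvec a_1$ works exactly as described there.

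The real work is the induction schema, which I would handle by mimicking the $\Imb$ construction. Given closed terms $\pvec t$, $\pvec s$, $\pvec r$ realizing $A(\Zero)$ and the induction step, define recursively in the target theory
\[
    \pvec R \Zero \pvec t \pvec r \equiv \pvec t,
    \qquad
    \pvec R (\Suc n) \pvec t \pvec r \equiv \pvec R n \pvec t \pvec r \,\vee_A\, \pvec r n (\pvec R n \pvec t \pvec r),
\]
where $\vee_A$ is the joining of bounds at the formula $A$ constructed by Definition \ref{joining-bounds} from the ground-level union. A routine induction on $n$, invoking monotonicity at the step, then yields $\forall x \leq n\,\forall \pvec b\, \iInter{A(x)}{\pvec R n \pvec t \pvec r}{\pvec b}$, which is precisely what the $\If$-interpretation of $\forall x^\NN A(x)$ requires.

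The step I expect to be delicate is confirming that $\pvec R$ is in fact definable in the target theory: the formula-level joining $\vee_A$ is built from star-type unions and recursors over star types, and the induction on $n$ producing the witness $\pvec R$ likewise needs recursors ranging over the witnessing types (tuples involving $\sigma^*$). This is fine because the target theory has recursors over the star-types introduced in Subsection \ref{witnessing-section}, and the reduction rules there ensure the required equational behaviour. Everything else is bookkeeping transplanted directly from the $\Imb$ proof, with the single replacement ``majorant'' $\rightsquigarrow$ ``finite set containing''.
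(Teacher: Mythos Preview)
Your treatment of the quantifier-free axioms and of $(\mathrm{E}_\BB)$ is fine, but the induction argument has a genuine gap: you have transplanted the $\Imb$ proof too literally and lost track of the types. In the $\If$-interpretation, $\uwitness{x}{\NN}{F}{}$ is $x\in F$ with $F:\NN^*$, not $x\leq n$. Consequently the realizer $\pvec r$ of the induction step $\forall x^\NN(A(x)\to A(\Suc x))$ has its first argument of type $\NN^*$, so the expression $\pvec r\, n$ in your recursion for $\pvec R$ is ill-typed. Likewise, the statement ``$\forall x\leq n\,\forall \pvec b\,\iInter{A(x)}{\pvec R n\pvec t\pvec r}{\pvec b}$ is precisely what the $\If$-interpretation of $\forall x^\NN A(x)$ requires'' is false: what is required is a term $\pvec p$ taking a finite set $F:\NN^*$ and satisfying $\forall x\in F\,\forall \pvec b\,\iInter{A(x)}{\pvec p F}{\pvec b}$.

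The paper fixes both ends by inserting explicit conversions between finite sets and numbers. First it defines $F_n=\{0,\ldots,n\}$ and sets $\pvec r'\,n := \pvec r\,F_n$, $\pvec s'\,n := \pvec s\,F_n$, so that the step hypothesis now reads $\forall n\,\forall x\leq n(\ldots)$ and your recursion on $n$ becomes well-typed (with $\pvec r'$ in place of $\pvec r$). Second, after obtaining $\forall n\,\forall x\leq n\,\forall\pvec b\,\iInter{A(x)}{\pvec R n\pvec t\pvec r'}{\pvec b}$, it converts back via the maximum functional $\mathrm m:\NN^*\to\NN$, taking $\pvec p\,F := \pvec R(\mathrm m F)\pvec t\pvec r'$; since $x\in F$ implies $x\leq \mathrm m F$, this yields the required finite-set-indexed conclusion. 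These two bridging steps are exactly the ``new features'' the paper flags, and they are what your sketch is missing.
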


\begin{proof} Now that we have seen that the $\If$-interpretation is type-informative, it remains to show that the non-logical axioms of $\HAomega$ are $\If$-interpretable. As usual, the only concern are the induction axioms. The verification follows the usual line of thought, as in the proof of Theorem \ref{soundness of Imb}. However, it has new features because we are dealing with finite sets instead of bounds (numbers). \\[1mm]
We analyze the rule of induction. Given a formula $A(x^\NN)$, we must see that if both $A(\Zero)$ and $\forall x^\NN (A(x) \to A(\Suc x))$ are $\If$-interpretable, then formula $\forall x^\NN A(x)$ is also $\If$-interpretable. By assumption, there are terms $\pvec t$, $\pvec s$ and $\pvec r$ such that the target theory proves $\forall \pvec b \iInter{A(\Zero)}{\pvec t}{\pvec b}$ and 
\begin{equation}\label{auxindifb}
\forall F^{\NN^*} \forall \pvec a \forall \pvec b \forall x \in F ( \forall \pvec b' \in \pvec s F \pvec a \pvec b \iInter{A(x)}{\pvec a}{\pvec b'} \to \iInter{A(\Suc x)}{\pvec r F \pvec a}{\pvec b}).
\end{equation} 
We can define by recursion a functional of type $\NN \to \NN^*$ which, to each $n^\NN$, associates the finite set $F_n = \{0,1,\ldots, n\}$. Particularizing in (\ref{auxindifb}), we get 
$$\forall n^\NN \forall \pvec a \forall \pvec b \forall x \leq n ( \forall \pvec b' \in \pvec s' n \pvec a \pvec b \iInter{A(x)}{\pvec a}{\pvec b'} \to \iInter{A(\Suc x)}{\pvec r' n \pvec a}{\pvec b}),$$ 
where $\pvec s' n \pvec a \pvec b = \pvec s F_n \pvec a \pvec b$ and $\pvec r' n \pvec a \pvec b = \pvec r F_n \pvec a \pvec b$. Therefore, $$\forall n^\NN \forall \pvec a \forall x \leq n ( \forall \pvec b \iInter{A(x)}{\pvec a}{\pvec b} \to \forall \pvec b \iInter{A(\Suc x)}{\pvec r' n \pvec a}{\pvec b}).$$
Define, by recursion, functionals $\pvec R$ such that:
\eqleft{
\begin{array}{lcl}
    \pvec R \Zero \pvec t \pvec r'
        & \equiv_{\pvec \tau^+_A} & \pvec t \\[1mm]
    \pvec R (\Suc n) \pvec t \pvec r'
        & \equiv_{\pvec \tau^+_A} &  \pvec R n \pvec t \pvec r' \vee_A \pvec r' n (\pvec R n \pvec t \pvec r').
\end{array}
}
Using the monotonicity property of Lemma \ref{monotonicity-lemma}, it is straightforward to show, by induction on $n$, that $$\forall n \forall x \leq n \forall \pvec b \, \iInter{A(x)}{\pvec R n \pvec t \pvec r'}{\pvec b}.$$ However, we rather want to conclude $$\forall F^{\NN^*} \forall \pvec b \, (\forall x \in F \, \iInter{A(x)}{\pvec p F}{\pvec b}),$$ for suitable terms $\pvec p$. Using the maximum function $\mathrm{m}$ of type $\NN^* \to \NN$, we can just take $\pvec p F$ as $\pvec R (\mathrm{m}F)\pvec t \pvec r'$. \end{proof}

The $\If$-interpretation is not really new in the literature. It appeared before, dressed in different clothes. It is a linguistic variant of the functional interpretation for nonstandard arithmetic introduced in \cite{BergBriseidSafarik(12)} (cf.\ alternative presentation \cite{Oliva(20)}, with the usual function application in the clauses for $A \to B$ and $\forall x^\tau A(x)$). Whereas the interpretation of nonstandard arithmetic uses the standard (unary) predicates $\mathsf{st}^\sigma(x)$, we use instead the type-informative predicates $\I_\sigma(x)$. The internal quantifications of \cite{BergBriseidSafarik(12)} correspond to our uniform quantifications. The external quantifications correspond to our relativized quantifications. The internal formulas of \cite{BergBriseidSafarik(12)} are the formulas of our source language $\Lang{\HAomega}$. The external formulas, on the other hand, correspond to the arbitrary formulas of the language $\LangI{\HAomega}$. Here it emerges a difference between the finiteness interpretation of $\HAomega$ and the functional interpretation of nonstandard arithmetic of \cite{BergBriseidSafarik(12),Oliva(20)}. The external induction scheme of \cite{BergBriseidSafarik(12)} corresponds to the application of induction to arbitrary formulas of $\LangI{\HAomega}$, whereas the induction scheme of $\HAomega$ only holds for relativized formulas. Moreover, the axioms of \cite{BergBriseidSafarik(12)} include the internal induction scheme whose corresponding principle is not present in our setting, but could be added (because it is stated in the language without the type-informative predicates; by the way, this is also the case with the extensionality axioms of \cite{BergBriseidSafarik(12)} for internal formulas). Another difference is that \cite{BergBriseidSafarik(12)} includes the star type in its language, whereas we opted not to have the star type in our source language. These differences, albeit not fundamental from a theoretical point of view, are very important for the applications of the paper \cite{BergBriseidSafarik(12)} because they allow its authors to take advantage of a more expressive language and of more general induction principles, thus enabling a robust connection of their system to familiar features of nonstandard arithmetic.

\subsection{Canonical upon the bounding interpretation \texorpdfstring{$\Icb$}{Icb}}\label{canonical-bounding-section}

Finally, we also consider a third alternative: combining the bounding interpretation at ground type $\NN$ with the canonical interpretation of function types:
\begin{definition}[$\Icb$-interpretation] The $\Icp$-interpretation is the base interpretation of arithmetic that defines the type-informative predicates as follows:  it is the bounding interpretation at the ground type $\NN$, the uniform interpretation at the ground type $\BB$, and it is extended canonically at the function types. 
\end{definition}
\begin{notation}
    The $\U$-interpretation fixed by the definition above is called the $\Ucb$-interpretation.
\end{notation}

It is easy to see that the $\Ucb$-interpretations of the type-informative predicates at types 0, 1 and 2 are as follows:
\eqleft{
\begin{array}{lcl}
    \uwitness{x}{\NN}{n}{} 
        & \mathrm{is} & x \leq n \\[1mm]
   \uwitness{f}{\NN \to \NN}{g}{k} 
        & \mathrm{is} & \forall n \leq k (fn \leq gk) \\[1mm]
   \uwitness{\Phi}{(\NN \to \NN) \to \NN}{\Psi, \mu}{h} 
        & \mathrm{is} & \forall f (\forall k \in \mu h \forall n \leq k (fn \leq hk) \to \Phi f \leq \Psi h)
\end{array}
}
where $n : \NN$, $g,h : \NN \to \NN$, $\mu: (\NN\to \NN) \to \NN^*$ and $\Psi: (\NN\to \NN) \to \NN$. The reader should compare these clauses with the corresponding clauses of Subsection \ref{canonical-precise-section}.

It is clear that one has $\uwitness{\Zero}{\NN}{0}{}$ and $\forall k \uwitness{\Suc}{\NN \to \NN}{S}{k}$, where $S$ is the successor function. With the following result, we show that the $\Icb$-interpretation is a type-informative base interpretation of $\HAomega$:

\begin{proposition}\label{rec_bounding} The sentence $\witness{\Rec_\sigma}{\NN \to \sigma \to (\NN \to \sigma \to \sigma) \to \sigma}$ is $\Ucb$-interpretable, for each type $\sigma$. \end{proposition}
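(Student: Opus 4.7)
The plan is to adapt the strategy of Proposition \ref{rec_precise} to the bounding setting, with the new ingredients being the canonical extension of joining of bounds (Proposition \ref{join-bound-function-prop}) and the monotonicity property (Lemma \ref{monotonicity-lemma}). By Proposition \ref{prop-canonical-type-informative}, it suffices to show that
\[
\forall n^\NN \forall x^\sigma \forall f^{\NN \to \sigma \to \sigma}(\witness{n}{\NN} \wedge \witness{x}{\sigma} \wedge \witness{f}{\NN \to \sigma \to \sigma} \to \witness{\Rec n x f}{\sigma})
\]
is $\Ucb$-interpretable, and via Remark \ref{criterium} to exhibit a single functional $\pvec \Psi$, together with auxiliary terms $\pvec B^{*}, K, \pvec A, \pvec B^\dagger$ supplying negative witnesses for the premises, such that $\uwitness{\Rec_\sigma n x f}{\sigma}{\pvec \Psi k \pvec a \pvec \phi \pvec B}{\pvec b}$ holds \emph{uniformly} for all $n \leq k$.

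Compared with the precise case, the key complication is that the positive witness $k$ for $\witness{n}{\NN}$ is now only a bound on $n$, so one output must serve for every $n \leq k$. The idea is to replace the precise chain $\pvec a_{i+1} = \pvec \phi i \pvec a_i$ by the join-accumulating chain
\[
\pvec a_0 = \pvec a, \qquad \pvec a_{i+1} = \pvec a_i \,{\pvec \vee}_{\I_\sigma}\, \pvec \phi i \pvec a_i,
\]
defined by recursion at type $\pvec \tau^+_\sigma$, and to set $\pvec \Psi k \pvec a \pvec \phi \pvec B := \pvec a_k$. Since each $\pvec a_{i+1}$ is a join of $\pvec a_i$ with something else, iterating Lemma \ref{monotonicity-lemma} (through Proposition \ref{join-bound-function-prop}, so that the joins and the monotonicity transfer are available at type $\sigma$) shows that any $\uwitness{\cdot}{\sigma}{\pvec a_i}{\pvec b'}$ automatically upgrades to $\uwitness{\cdot}{\sigma}{\pvec a_k}{\pvec b'}$ whenever $i \leq k$.

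For the negative-witness bookkeeping, we mimic the downward chain of the precise proof, but accumulate so that the target $\pvec b$ survives at every level:
\[
\pvec B_k = \{\pvec b\}, \qquad \pvec B_i = \pvec B_{i+1} \cup \bigcup_{\pvec b'' \in \pvec B_{i+1}} \pvec B i \pvec a_i \pvec b''.
\]
Because $\pvec B_{i+1} \subseteq \pvec B_i$ by construction, we have $\pvec b \in \pvec B_j$ for every $j \leq k$. We then take $\pvec B^* := \pvec B_0$, $K := \{0, 1, \ldots, k\}$, $\pvec A := \{\pvec a_0, \ldots, \pvec a_k\}$ and $\pvec B^\dagger := \bigcup_{j \leq k} \pvec B_j$, all of which are definable via the recursors and the star combinators of the target language.

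The verification is then a bounded iteration of the precise-case argument. Assuming the supplied premises, an induction on $j \leq k$ gives $\forall \pvec b' \in \pvec B_j \, \uwitness{\Rec_\sigma j x f}{\sigma}{\pvec a_j}{\pvec b'}$: the base $j=0$ uses the assumption on $x$ at $\pvec b' \in \pvec B_0 = \pvec B^*$; the inductive step invokes the assumption on $f$ at index $j$, bound $\pvec a_j$ and target $\pvec b'' \in \pvec B_{j+1}$ with $y := \Rec_\sigma j x f$, using $\pvec B j \pvec a_j \pvec b'' \subseteq \pvec B_j$ exactly as in Proposition \ref{rec_precise}. Setting $j = n$ for each $n \leq k$ and recalling $\pvec b \in \pvec B_n$ yields $\uwitness{\Rec_\sigma n x f}{\sigma}{\pvec a_n}{\pvec b}$, which monotonicity then lifts to $\uwitness{\Rec_\sigma n x f}{\sigma}{\pvec a_k}{\pvec b}$. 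The main technical hurdle is the joint design of the two chains so that $\pvec a_k$ is a uniform positive witness for all $n \leq k$ while $\pvec b$ is retained in every $\pvec B_n$; once the downward accumulation and the join-based positive chain are in place, monotonicity reduces everything to the precise-case bookkeeping.
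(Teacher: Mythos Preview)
Your proposal is correct and follows essentially the same approach as the paper: the two key constructions --- the join-accumulating positive chain $\pvec a_{i+1} = \pvec a_i \,{\pvec \vee}_{\I_\sigma}\, \pvec \phi i \pvec a_i$ and the union-accumulating negative chain $\pvec B_i = \pvec B_{i+1} \cup \bigcup_{\pvec b'' \in \pvec B_{i+1}} \pvec B i \pvec a_i \pvec b''$ --- are identical to the paper's, as are the choices of $\pvec \Psi, \pvec B^*, K, \pvec A, \pvec B^\dagger$. The only organisational difference is in the inductive claim: the paper proves directly $\forall i \leq k \,\forall \pvec b' \in \pvec B_k \,\uwitness{\Rec_\sigma i x f}{\sigma}{\pvec a_k}{\pvec b'}$ (carrying the ``for all $i \leq k$'' inside the induction and handling the $i=0$ case separately at each step), whereas you prove the weaker $\forall \pvec b' \in \pvec B_j \,\uwitness{\Rec_\sigma j x f}{\sigma}{\pvec a_j}{\pvec b'}$ at each level and then lift $\pvec a_n$ to $\pvec a_k$ by iterating the joining-of-bounds property at the end. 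Both are valid; your version keeps the induction step closer to the precise case at the cost of a small extra argument at the end.
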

\begin{proof} The proof is similar to that of Proposition \ref{rec_precise}, with some important differences. Our aim is to show that 
$$\forall n^\NN \forall x^\sigma \forall f^{\NN \to \sigma \to \sigma} (\witness{n}{\NN} \wedge \witness{x}{\sigma} \wedge \witness{f}{\NN \to \sigma \to \sigma} \to \witness{\Rec n x f}{\sigma})$$
is $\Ucb$-interpretable. By definition,
\[
\uwitness{f}{\NN \to \sigma \to \sigma}{\pvec \phi, \pvec B}{k^*, \pvec a, \pvec b} \mbox{\,\, is \,\,}\forall y^\sigma (\forall \pvec b' \in \pvec B k^* \pvec a \pvec b \uwitness{y}{\sigma}{\pvec a}{\pvec b'} \to  \forall i \leq k^* \uwitness{f i y}{\sigma}{\pvec \phi k^* \pvec a}{\pvec b}). 
\]
With this notation, it is not difficult to see that we must obtain closed terms $\pvec \Psi$, $\pvec B^*$, $K$, $\pvec A$ and $\pvec B^\dagger$ such that the target theory proves the following: for all appropriate $n^*$, $\pvec a$, $\pvec \phi$, $\pvec B$, $\pvec b$, and for all $x^\sigma$ and $f^{\NN \to \sigma \to \sigma}$, if 
\eqleft{
\forall \pvec b' \in \pvec B^* n^* \pvec a \pvec \phi \pvec B \pvec b \uwitness{x}{\sigma}{\pvec a}{\pvec b'} \mbox{\quad and}} 
\eqleft{
\forall k^* \in K n^* \pvec a \pvec \phi \pvec B \pvec b 
\forall \pvec a \in \pvec A n^* \pvec a \pvec \phi \pvec B \pvec b 
\forall \pvec b'' \in \pvec B^\dagger n^* \pvec a \pvec \phi \pvec B \pvec b \, 
\uwitness{f}{\NN \to \sigma \to \sigma}{\pvec \phi, \pvec B}{k^*, \pvec a, \pvec b''}}
then $\forall i \leq n^*\uwitness{\Rec_\sigma i x f}{\sigma}{\pvec \Psi n^* \pvec a \pvec \phi \pvec B}{\pvec b}$. \\[1mm]
Given $n^*$, $\pvec a$, $\pvec \phi$, $\pvec B$, $\pvec b$, we define, using recursors, two sequences $\pvec a_j$ and $\pvec B_j$ (with $0\leq j < n^*$) inductively as follows:
\[
\begin{array}{rclcrcl}
\pvec a_0 & = & \pvec a & \hspace{8mm} & \pvec B_{n^*} & = & \{\pvec b\} \\[1mm]
\pvec a_{j+1} & = & \pvec a_j {\pvec \vee}_{\I_\sigma} \, \pvec \phi j\pvec a_j & \hspace{8mm} & \pvec B_j & = & \pvec B_{j+1} \cup \,\bigcup_{\pvec b'\in \pvec B_{j+1}} \pvec B j \pvec a_j \pvec b'. 
\end{array}
\]
Now, we take $\pvec \Psi$, $\pvec B^*$, $K$, $\pvec A$ and $\pvec B^\dagger$ such that
\[
\begin{array}{rcl}
     \pvec \Psi n^* \pvec a \pvec \phi \pvec B & = & \pvec a_{n^*} \\[1mm]
     \pvec B^* n^* \pvec a \pvec \phi \pvec B \pvec b & = & \pvec B_0 \\[1mm]
     K n^* \pvec a \pvec \phi \pvec B \pvec b & = & \{ 0, 1, \ldots, n^* \} \\[1mm]
     \pvec A n^* \pvec a \pvec \phi \pvec B \pvec b & = & \{ \pvec a_0, \pvec a_1, \ldots, \pvec a_{n^*} \} \\[1mm]
     \pvec B^\dagger n^* \pvec a \pvec \phi \pvec B \pvec b & = & \bigcup_{j \leq n^*} \pvec B_j.
\end{array}
\]
Let us see that these terms do the job. Given $n^*, \pvec a, \pvec \phi, \pvec B$ and $\pvec b$, fix $x$ and $f$ and assume
\begin{equation} \label{one-hash}
    \forall \pvec b' \in \pvec B_0 \uwitness{x}{\sigma}{\pvec a}{\pvec b'}
\end{equation}
and, for each $k < n^*$ and for all $\pvec b'' \in \bigcup_{j \leq n^*} \pvec B_j$,
$$\forall y^\sigma (\forall \pvec b' \in \pvec B k \pvec a_k \pvec b'' \uwitness{y}{\sigma}{\pvec a_k}{\pvec b'} \to \forall i \leq k \uwitness{f i y}{\sigma}{\pvec \phi k \pvec a_k}{\pvec b''}). $$
By the joining of bounds property for $\I_\sigma$
\begin{equation} \label{two-hashes}
     \forall y^\sigma (\forall \pvec b' \in \pvec B k \pvec a_k \pvec b'' \uwitness{y}{\sigma}{\pvec a_k}{\pvec b'} \to \forall i \leq k \uwitness{f i y}{\sigma}{\pvec a_{k+1}}{\pvec b''}).
\end{equation}
We claim that, for all $0\leq k \leq n^*$,
\begin{equation} \label{three-hashes}
\forall i \leq k \forall \pvec b' \in \pvec B_k  \uwitness{\Rec_\sigma i x f}{\sigma}{\pvec a_k}{\pvec b'}.
\end{equation}
In particular, when $k=n^*$, we get $\forall i \leq n^* \uwitness{\Rec_\sigma i xf}{\sigma}{\pvec a_{n^*}}{\pvec b}$. This is what we want. We argue by induction on $0 \leq k \leq n^*$. The base case $k=0$ is given. Let be given $0\leq k < n^*$. We assume (\ref{three-hashes}) by induction hypothesis. Take $\pvec b'' \in \pvec B_{k+1}$. We must show $\forall i \leq k+1  \uwitness{\Rec_\sigma i x f}{\sigma}{\pvec a_{k+1}}{\pvec b''}$. There are two cases to consider. \\[1mm]
Firstly, the case $i=0$, that is $\uwitness{x}{\sigma}{\pvec a_{k+1}}{\pvec b''}$. Since $\pvec B_{k+1} \subseteq \pvec B_0$, by (\ref{one-hash}) we have $\uwitness{x}{\sigma}{\pvec a}{\pvec b''}$, i.e., $\uwitness{x}{\sigma}{\pvec a_0}{\pvec b''}$. Since $\I_\sigma$ satisfies the joining of bounds property, we can conclude that $\uwitness{x}{\sigma}{\pvec a_{k+1}}{\pvec b''}$. \\[1mm]
Secondly, the case $\forall j \leq k  \uwitness{fj(\Rec_\sigma j x f)}{\sigma}{\pvec a_{k+1}}{\pvec b''}$ (these are the cases $i=j+1$). Given that $\pvec B k \pvec a_k \pvec b'' \subseteq \pvec B_k$, (\ref{two-hashes}) entails 
$$\forall y^\sigma (\forall \pvec b' \in \pvec B_k \uwitness{y}{\sigma}{\pvec a_k}{\pvec b'} \to \forall i \leq k \uwitness{f i y}{\sigma}{\pvec a_{k+1}}{\pvec b''}).$$
Take $j\leq k$ and let $y$ be $\Rec_\sigma j x f$. By the above, we may conclude 
$$\forall \pvec b' \in \pvec B_k \uwitness{\Rec_\sigma j x f}{\sigma}{\pvec a_k}{\pvec b'} \to \uwitness{f j (\Rec_\sigma j x f)}{\sigma}{\pvec a_{k+1}}{\pvec b''}.$$ By the induction hypothesis (\ref{three-hashes}), we have $\forall \pvec b' \in \pvec B_k \uwitness{\Rec_\sigma j x f}{\sigma}{\pvec a_k}{\pvec b'}$. What we want follows by Modus Ponens.   \end{proof}

Recall that the notion of joining of bounds for the ground type $\NN$ is given by letting $n \vee_{\I_\NN} m$ be $\max(n,m)$, and for function types, it is as described in Definition \ref{join-bound-function-def}. Since the ground type has the joining of bounds property, by Proposition \ref{join-bound-function-prop}, we have the joining of bounds property for all types.

\begin{theorem}
    The $\Icb$-interpretation is a sound type-informative base interpretation of $\HAomega$.
\end{theorem}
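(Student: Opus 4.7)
The plan is to mirror the proofs of soundness for $\Imb$ (Theorem \ref{soundness of Imb}) and $\If$, since all three interpretations share the key structural features needed: bounds at the ground type $\NN$, a uniform treatment of the Booleans, a joining of bounds at every finite type, and the monotonicity lemma. I would first note that type-informativeness is already at hand: Proposition \ref{prop-canonical-type-informative} delivers clause (ii) of Definition \ref{def-type-informative-base} together with clause (i) for the combinators and the conditional constants; among the remaining constants, $\uwitness{\Zero}{\NN}{0}{}$ and $\forall k\, \uwitness{\Suc}{\NN \to \NN}{S}{k}$ are immediate from the base clauses, and $\witness{\Rec_\sigma}{}$ is $\Ucb$-interpretable by Proposition \ref{rec_bounding}. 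The joining of bounds at every type is supplied by $\max$ at type $\NN$ and the canonical lift of Proposition \ref{join-bound-function-prop}.

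Next, I would dispose of the easy non-logical axioms. The equality and combinator axioms are automatically $\I$-interpretable, as remarked after Definition \ref{def-I-sound-base}. The recursor defining equations, the axioms for $\mathrm{Cond}$, and ($\mathrm{D}_\BB$) are $\I$-witness-free universal statements and therefore interpret themselves by Lemma \ref{universal}. For ($\mathrm{E}_\BB$), I would invoke verbatim the argument of Subsection \ref{uniform-booleans-sec}: the required witnessing term for the conclusion is obtained as $\pvec a_0 \vee_D \pvec a_1$, with soundness guaranteed by the joining-of-bounds property and monotonicity (Lemma \ref{monotonicity-lemma}).

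The only substantive verification is the rule of induction, treated exactly as in Theorem \ref{soundness of Imb}. Given $\Icb$-interpretations $\pvec t$ of $A(\Zero)$ and $\pvec s, \pvec r$ of $\forall x^\NN (A(x) \to A(\Suc x))$, I would unwind the interpretation of the step axiom, using that $\uwitness{x}{\NN}{n}{}$ is $x \leq n$, to extract
\[
\forall n^\NN \forall \pvec a \, \forall x \leq n \, \bigl( \forall \pvec b \, \iInter{A(x)}{\pvec a}{\pvec b} \to \forall \pvec b \, \iInter{A(\Suc x)}{\pvec r n \pvec a}{\pvec b} \bigr),
\]
and define by recursion $\pvec R \Zero \pvec t \pvec r \equiv \pvec t$ and $\pvec R(\Suc n)\pvec t \pvec r \equiv \pvec R n \pvec t \pvec r \vee_A \pvec r n (\pvec R n \pvec t \pvec r)$. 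An induction on $n$ then yields
\[
\forall n \, \forall x \leq n \, \forall \pvec b \, \iInter{A(x)}{\pvec R n \pvec t \pvec r}{\pvec b},
\]
which unpacks to the $\Icb$-interpretation of $\forall x^\NN A(x)$ with witness $\pvec f n := \pvec R n \pvec t \pvec r$. The main delicate point is the successor step of this internal induction: one must chain the step implication with monotonicity to push the interpretation from the witness $\pvec R n \pvec t \pvec r$ up to $\pvec R(\Suc n)\pvec t \pvec r$, and this is exactly what the joining-of-bounds construction together with Lemma \ref{monotonicity-lemma} are engineered to make routine.
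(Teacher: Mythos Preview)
Your proposal is correct and follows the same approach as the paper: the paper's own proof is a terse two-sentence remark that the induction axioms are handled ``mimicking the argument'' of Theorem \ref{soundness of Imb} and that ``the other axioms pose no problem,'' and you have simply unpacked that remark with the appropriate cross-references (Propositions \ref{prop-canonical-type-informative}, \ref{rec_bounding}, \ref{join-bound-function-prop}, Lemma \ref{universal}, and the argument of Subsection \ref{uniform-booleans-sec}).
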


\begin{proof}
    As usual, the induction axioms can be interpreted using the recursors. In this case, the proof of this fact mimics the argument of Proposition \ref{soundness of Imb}. The other axioms pose no problem.
\end{proof}

In the interpretations of the last two subsections, being of a certain type carries no negative information. This is the reason why the interpretations of the quantifications are rather simple. Canonical interpretations are not like that. Already being an element of type $\NN \to \NN$ carries, as we saw, both positive and negative information. The end result is that the interpretation of the quantifications becomes very involved as the typing rises. Look at what happens in type $\NN \to \NN$:
\eqleft{
\begin{array}{lcl}
    \iInter{\forall f^{\NN \to \NN} A(f)}{\pvec \Phi, \Psi}{\phi, \pvec b} 
        & \mbox{\,\,\,is\,\,\,} & \forall f (\forall k \in \Psi \phi \pvec b \forall n \leq k (f n \leq \phi k) \to \iInter{A(f)}{\pvec \Phi \phi}{\pvec b}) \\[1mm]
    \iInter{\exists f^{\NN \to \NN} A(f)}{\phi, \pvec a}{k, \pvec B} 
        & \mbox{\,\,\,is\,\,\,} & \exists f (\forall n \leq k (f n \leq \phi k) \wedge \forall \pvec b \in \pvec B \, \iInter{A(f)}{\pvec a}{\pvec b}).
\end{array}
}
The complexity of the interpretation is, perhaps, a barrier for applications. This should be regarded as a preliminary conclusion. We need to study more and deepen our understanding before reaching a definitive conclusion, specially in view of the fact that canonical interpretations are very natural and enjoy interesting properties. As we saw, the $\Icp$-interpretation realizes extensionality for type 2 functionals (see Proposition \ref{prop-ext-2}). The $\Icb$-interpretation shows theoretical advantages with respect to collection principles, as we shall discuss in Subsection \ref{analytic-applications-section}. Also, the $\Icp$-interpretation gives rise to the model $\mathcal{C}^=$ of G\"odel's theory $\GodelT$. The same happens with the $\Icb$-interpretation.
\begin{definition}[The type-structure $\mathcal{C^\leq}$] The type-theoretic structure $\mathcal{C}^\leq$ is defined as $(\mathcal{C^\leq_\sigma})_\sigma$, with
\eqleft{
\begin{array}{lcl}
    \mathcal{C^\leq_\sigma} 
        & := & \{ x \in \mathcal{S}_\sigma : \Sws \models \exists \pvec a \forall \pvec b \, \uInter{\witness{x}{\sigma}}{\pvec a}{\pvec b} \} 
\end{array}
}
for each finite type $\sigma$, where $\langle \cdot \rangle$ is the $\Ucb$-interpretation. 
\end{definition}
By Theorem \ref{models_of_T}, we have:

\begin{theorem}
$\mathcal{C}^\leq$ is a model of G\"odel's theory $\GodelT$.
\end{theorem}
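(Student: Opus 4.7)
The proof is a direct application of Theorem \ref{models_of_T}. That general result tells us that, whenever we are in possession of a $\Sws$-sound type-informative base interpretation of $\HAomega$, the type structure $\mathcal{T}^\omega$ extracted from the positive $\U$-witnesses is a model of $\GodelT$. The plan is therefore to feed the $\Icb$-interpretation into that theorem and observe that the resulting type structure coincides, by construction, with $\mathcal{C}^\leq$.

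First, I would invoke the previous theorem of this subsection, which establishes that the $\Icb$-interpretation is a sound type-informative base interpretation of $\HAomega$ (soundness of the non-logical axioms, including induction via the recursors as argued in Proposition \ref{rec_bounding}, together with the type-informative conditions of Definition \ref{def-type-informative-base}). Next, comparing the definition of $\mathcal{C}^\leq_\sigma$ with that of $\mathcal{T}_\sigma$ in Section \ref{interlude}, one sees that they are literally the same family, since both are $\{x \in \mathcal{S}_\sigma : \Sws \models \exists \pvec a \forall \pvec b\, \uInter{\witness{x}{\sigma}}{\pvec a}{\pvec b}\}$, evaluated with the $\Ucb$-interpretation of $\witness{x}{\sigma}$. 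Theorem \ref{models_of_T} then gives immediately that $\mathcal{C}^\leq$ is closed under application of functionals to arguments of matching type, contains zero, the successor, all combinators, all conditional constants and all recursors, hence forms an applicative structure interpreting every closed term of $\GodelT$ and validating the defining equations.

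There is no substantive obstacle here, as all the work has been done in the preceding theorem and in Proposition \ref{rec_bounding}: the key computational fact behind the model is that the recursors are $\Ucb$-interpretable, with witnesses built from the star combinators and the joining of bounds $n \vee_{\I_\NN} m = \max(n,m)$, so that the associated bounding witness $\pvec a_n$ constructed in Proposition \ref{rec_bounding} belongs to $\mathcal{C}^\leq$ whenever its ingredients do. I would, for the reader's benefit, spell out a low-type sanity check: $\mathcal{C}^\leq_\NN = \NN$ (since every $n$ satisfies $n \leq n$), and $\mathcal{C}^\leq_{\NN \to \NN}$ consists of those $f : \NN \to \NN$ for which there exists a monotone majorant $g$ with $f n \leq g k$ for all $n \leq k$, that is, the pointwise bounded functions majorizable by a monotone one. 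A parallel remark about $\mathcal{C}^\leq_{(\NN \to \NN) \to \NN}$ indicates that this level consists of the type-2 functionals admitting both a modulus of ``bounded continuity'' $\mu$ and a bounding functional $\Psi$; this mirrors the discussion of $\mathcal{C}^=$ at the end of Subsection \ref{canonical-precise-section} and explains why $\mathcal{C}^\leq$ deserves to be recorded as a new, canonical model of $\GodelT$.
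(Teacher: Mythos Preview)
Your proposal is correct and follows exactly the paper's approach: the paper's proof consists of the single sentence ``By Theorem \ref{models_of_T}, we have'' the result, and you do the same, invoking the soundness of the $\Icb$-interpretation and identifying $\mathcal{C}^\leq$ with the structure $\mathcal{T}^\omega$ of Section \ref{interlude}. Your additional low-type remarks are fine (though the paper goes further and observes that $\mathcal{C}^\leq_{\NN\to\NN}$ is in fact all of $\mathcal{S}_{\NN\to\NN}$, not a proper subclass).
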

 
If $f$ is a set-theoretical function from $\NN$ to $\NN$, it is plain that $\forall k \, \iwitness{f}{\NN \to \NN}{f^M}{k}$, where $f^M (k) := \max_{n\leq k} f(n)$. So, $\mathcal{C}^\leq_{\NN \to \NN}$ coincides with $\mathcal{S}_{\NN \to \NN}$, the collection of all set-theoretic functions from $\NN$ to $\NN$. 

The case of $\mathcal{C}^\leq_{(\NN \to \NN) \to \NN}$ is more interesting. According to the explicit computation of the $\Icb$-interpretation of the predicate $\I_{(\NN \to \NN) \to \NN}(\Phi)$, a functional $\Phi$ of $\mathcal{S}_{(\NN \to \NN) \to \NN}$ is in $\mathcal{C}^\leq_{(\NN \to \NN) \to \NN}$ if, and only if, 
\begin{equation}\label{Cleq}
    \exists \Psi , \mu \forall h \forall f (\forall k \in \mu h \forall n\leq k (fn \leq hk) \to \Phi f \leq \Psi h).
\end{equation}

Obviously, for such $\Psi$, $$\forall h \forall f (\forall k \forall n\leq k (fn \leq hk) \to \Phi f \leq \Psi h).$$ The above says that $\Psi$ majorizes $\Phi$ in the sense of Bezem (and we may call the $\mu$ of the first formula, a modulus of pointwise majorizability). In short: if a functional is in $\mathcal{C}^\leq_{(\NN \to \NN) \to \NN}$, then it is majorizable in the sense of Bezem.

It is perhaps a bit surprising but, conversely, majorizable functionals (in the sense of Bezem) of type $(\NN \to \NN) \to \NN$ are in $\mathcal{C}^\leq_{(\NN \to \NN) \to \NN}$. This is an easy consequence of a result of Oliva \cite{Oliva(03a)} about the weak continuity of majorizable functionals. Applied to our situation, Oliva's result states the following: if $\Phi : (\NN \to \NN) \to \NN$ is a majorizable functional in the sense of Bezem (or Howard), then $\Phi$ is weakly continuous, i.e., 
$$\forall h \exists r \forall f (\forall k\leq r (fk = hk) \to \Phi(f) \leq r).$$ 
We need a slightly stronger statement. Oliva's proof readily adapts to the stronger situation:

\begin{lemma} If $\Phi : (\NN \to \NN) \to \NN$ is a majorizable functional (in the sense of Bezem or Howard), then $$\forall h \exists r \forall f (\forall k\leq r (fk \leq hk) \to \Phi(f) \leq r).$$ \end{lemma}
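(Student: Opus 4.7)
The plan is to reduce the lemma to Oliva's weaker weak continuity (stated just above), applied not to $\Phi$ but to a Bezem majorant $\Phi^*$ of $\Phi$, and with $h$ replaced by its monotone majorant. Fix $\Phi^*:(\NN\to\NN)\to\NN$ with $\Phi \Bmaj \Phi^*$; unpacking Bezem's definition, $\Phi \Bmaj \Phi^*$ contains as a conjunct that $\Phi^*$ is itself Bezem self-majorizing, i.e., $\Phi^* \Bmaj \Phi^*$, so Oliva's weak continuity applies to $\Phi^*$. Given $h$, put $\bar h(k) := \max_{n\leq k} h(n)$, which is monotone and dominates $h$ pointwise, and apply Oliva's result to $\Phi^*$ at $\bar h$ to obtain $r \in \NN$ such that, for every $g:\NN\to\NN$,
\[
  \forall k \leq r\,(g(k) = \bar h(k)) \;\to\; \Phi^*(g) \leq r.
\]

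I claim this same $r$ witnesses the strong version for $\Phi$ at $h$. Given any $f$ with $fk \leq hk$ for all $k \leq r$, define $g(k) := \max_{n \leq k}\max(f(n),\bar h(n))$. By construction $g$ is monotone, and $g \geq f$ pointwise; hence $f \Bmaj g$, and therefore $\Phi(f) \leq \Phi^*(g)$ by the defining property of $\Phi^*$. On the other hand, for $k \leq r$ and every $n \leq k$ we have $f(n) \leq h(n) \leq \bar h(n)$, so the inner max collapses and $g(k) = \max_{n \leq k}\bar h(n) = \bar h(k)$. The defining property of $r$ then gives $\Phi^*(g) \leq r$, whence $\Phi(f) \leq \Phi^*(g) \leq r$, as required.

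The main obstacle is the delicate balance in the construction of $g$: on $[0,r]$ it must reproduce $\bar h$ exactly so that Oliva's weak continuity can be invoked, while globally it must dominate $f$ and be monotone so that Bezem majorizability yields $\Phi(f) \leq \Phi^*(g)$. The $\max$-based definition handles both automatically, using the hypothesis $fn \leq hn$ on $[0,r]$ to force the collapse onto $\bar h$. A minor point worth flagging is that Oliva's weak continuity is applied to $\Phi^*$ rather than $\Phi$ itself, which is legitimate precisely because any Bezem-majorant is automatically self-majorizing.
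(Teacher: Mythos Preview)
Your proof is correct and takes a genuinely different route from the paper's. The paper does not invoke Oliva's weak continuity as a black box; instead it adapts Oliva's argument directly, arguing by contradiction: assuming a bad $h$, it picks counterexample functions $f_r$ for each $r$, builds a single function $\tilde h(k) := \max(h(k),\max_{r<k} f_r(k))$ that dominates every $f_r$, and derives a contradiction by evaluating the majorant at $\tilde h^M$.

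Your approach is more modular: you treat Oliva's weak continuity as given and reduce the stronger statement to it via the majorant $\Phi^*$ and the auxiliary function $g$. This has the pleasant consequence of showing that the ``slightly stronger'' lemma is actually a \emph{corollary} of the cited result, not merely a parallel argument. The paper's approach, by contrast, is self-contained and does not rely on having Oliva's result already in hand. Both constructions hinge on the same underlying idea---manufacturing a monotone function that simultaneously dominates the given $f$ and matches a prescribed target on an initial segment---but you do this pointwise for each $f$, whereas the paper does it once for all the hypothetical counterexamples.
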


\begin{proof} Let $\Phi : (\NN \to \NN) \to \NN$ be a majorizable functional. Take $\Lambda$ a functional of type $(\NN \to \NN) \to \NN$ such that $\Phi \Bmaj_{(\NN \to \NN) \to \NN} \Lambda$. We argue by contradiction. \\[1mm]
Suppose that there is $h:\NN \to \NN$ such that 
$$\forall r \exists f (\forall k\leq r (fk \leq hk) \wedge \Phi(f) > r).$$
So, given a natural number $r$, take $f_r: \NN \to \NN$ be such that $\forall k\leq r \,(f_r(k) \leq h(k))$ and $\Phi(f_r) > r$. \\[1mm]
Define $\tilde{h}: \NN \to \NN$ by letting $\tilde{h}(k) := \max(h(k), \max_{r < k} f_r(k))$. We claim that, for all natural numbers $r$ and $k$, $f_r(k) \leq \tilde{h}(k)$. To see this, fix $r \in \NN$. If $k \leq r$, then $f_r (k) \leq h(k) \leq \tilde{h}(k)$. If $k > r$, we have immediately $f_r (k) \leq \tilde{h}(k)$. \\[1mm]
By the above, we may conclude that, for each natural number $r$, $f_r \Bmaj_{\NN \to \NN} \tilde{h}^M$. Let $\ell : = \Lambda (\tilde{h}^M)$. We get, $\ell < \Phi (f_\ell) \leq \Lambda (\tilde{h}^M) = \ell$. This is a contradiction. \end{proof}

Recall, from Subsection \ref{maj_upon_bounding-section}, that $\mathcal{B}_{(\NN \to \NN) \to \NN}$ consists of the set-theoretic functionals of type $(\NN \to \NN) \to \NN$ which are majorizable in the sense of Bezem.

\begin{proposition} $\mathcal{C}^\leq_{(\NN \to \NN) \to \NN} = \mathcal{B}_{(\NN \to \NN) \to \NN}$. \end{proposition}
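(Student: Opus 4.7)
The plan is to prove both inclusions separately, with the backward one doing the real work by way of the preceding lemma.

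For $\mathcal{C}^\leq_{(\NN\to\NN)\to\NN} \subseteq \mathcal{B}_{(\NN\to\NN)\to\NN}$, I would start from the observation the text already makes: if $\Phi$ lies in $\mathcal{C}^\leq$ with witnesses $\Psi,\mu$ as in (\ref{Cleq}), then $f \Bmaj_{\NN\to\NN} h$ (which means $h$ monotone and $f(n)\leq h(n)$ for all $n$) trivially satisfies the hypothesis of (\ref{Cleq}), so $\Phi f \leq \Psi h$. This gives only the first conjunct of a Bezem-majorization $\Phi \Bmaj \Psi$, since $\Psi$ need not be self-majorizing. To repair this, I would define $\Lambda(h) := \max\{\Phi f : f(n) \leq h^M(n) \text{ for all } n\}$, where $h^M(k) := \max_{n\leq k} h(n)$. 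This maximum is a genuine natural number because (\ref{Cleq}) bounds the set by $\Psi(h^M)$. Then $\Phi \Bmaj_{(\NN\to\NN)\to\NN} \Lambda$: for $f \Bmaj h$ one has $h = h^M$ and $f \leq h^M$, so $f$ lies in the set defining $\Lambda(h)$ and $\Phi f \leq \Lambda(h)$; and since $f^M \leq h^M$ pointwise, $\{g : g \leq f^M\} \subseteq \{g : g \leq h^M\}$, whence $\Lambda(f) \leq \Lambda(h)$.

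For $\mathcal{B}_{(\NN\to\NN)\to\NN} \subseteq \mathcal{C}^\leq_{(\NN\to\NN)\to\NN}$, I would apply the preceding lemma: for each $h$ there is some $r \in \NN$ satisfying $\forall f (\forall k \leq r (fk \leq hk) \to \Phi f \leq r)$. Taking, for each $h$, the least such $r$ gives a canonical function $h \mapsto r(h)$ (no serious choice needed). Setting $\Psi(h) := r(h)$ and $\mu(h) := \{0,1,\ldots,r(h)\}$, for any $f$ with $\forall k \in \mu(h)\,\forall n \leq k (fn \leq hk)$, specializing to $n = k$ recovers the hypothesis $\forall k \leq r(h)(fk \leq hk)$ of the lemma, so $\Phi f \leq r(h) = \Psi(h)$. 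This is exactly (\ref{Cleq}), so $\Phi \in \mathcal{C}^\leq$.

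The heart of the argument lies in the backward direction, which really rests on the nontrivial adaptation of Oliva's weak-continuity theorem proved in the previous lemma. The forward direction, in contrast, is a short construction: the only subtlety is that (\ref{Cleq}) supplies merely a Howard-style majorant $\Psi$, and one has to do a little extra work — the sup construction above — to manufacture from it a properly self-majorizing Bezem-majorant.
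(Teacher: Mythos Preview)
Your proof is correct and follows the paper's approach. The backward direction is identical to the paper's: apply the preceding lemma, take $\Psi(h)$ to be the least witnessing $r$, set $\mu(h)=\{0,\ldots,\Psi(h)\}$, and specialize $n=k$ to recover the lemma's hypothesis.

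For the forward direction, the paper treats it as already done in the discussion preceding the proposition: from (\ref{Cleq}) one gets $\forall h\,\forall f\,(\forall k\,\forall n\leq k\,(fn\leq hk)\to\Phi f\leq\Psi h)$, and the paper simply asserts that this means $\Psi$ majorizes $\Phi$ ``in the sense of Bezem.'' You are right to notice that, taken literally, this only yields the first conjunct of Bezem majorization (a Howard-style majorant), since nothing forces $\Psi$ itself to be monotone. Your repair via $\Lambda(h)=\max\{\Phi f: f\leq h^M \text{ pointwise}\}$ is the standard way to upgrade a Howard majorant to a Bezem one at type~2, and your verification is correct. So your treatment is more careful than the paper's on this point, but the underlying strategy is the same.
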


\begin{proof} It remains to show that if $\Phi$ is a majorizable functional, then $\Phi$ is in $\mathcal{C}^\leq_{(\NN \to \NN) \to \NN}$. By the above lemma $$\forall h \exists r \forall f (\forall k\leq r (fk \leq hk) \to \Phi(f) \leq r).$$ Let $\Psi(h)$ be the least $r$ above and $\mu(h) = \{0,1,\ldots,\Psi(h)\}$. Clearly, we have $$\forall h \forall f (\forall k \in \mu h \forall n\leq k (fn \leq hk) \to \Phi f \leq \Psi h).$$ To see this, just consider $n=k$ above. So, we have (\ref{Cleq}). We are done. \end{proof}

We do not know much about the structure $\mathcal{C}^\leq$ in higher types, nor how does it relate to known models of G\"odel's $\GodelT$.

\section{On the uniformity of restricted quantifiers}
\label{uniform-boundedness-section}

Quantifiers are treated uniformly in the background $\U$-interpretation. In general, this is not the case for the $\I$-interpretation because of the computational meaning of the type-informative predicates. However, in certain situations, the $\I$-interpretation of the quantifiers can be treated uniformly (or semi-uniformly). When this happens, some surprising consequences ensue. We refer here to the $\I$-interpretability of certain principles which are not set-theoretically true (collection and contra-collection principles). We discuss these principles abstractly in Subsection \ref{semi-uniform-section}. Applications are discussed in Subsection \ref{analytic-applications-section}.

\begin{definition}[$B$-restricted quantifiers] For a formula $B(x^\sigma, \pvec y)$, with a distinguished free-variable $x^\sigma$, we can introduce in the source theory $\SourceT$ primitive \emph{restricted} (or \emph{bounded}) quantifiers $\forall x \Bbounded_\sigma B(\pvec y) \, A(x)$ and $\exists x \Bbounded_\sigma B(\pvec y) \, A(x)$, with axioms
\begin{align}
\forall x \Bbounded_\sigma B(\pvec y) \, A(x) & 
        \; \leftrightarrow \; \forall x^\sigma (B(x, \pvec y) \to A(x)) \label{B-bounded-forall} \\
    \exists x \Bbounded_\sigma B(\pvec y) \, A(x) & 
        \; \leftrightarrow \; \exists x^\sigma (B(x, \pvec y) \wedge A(x)), \label{B-bounded-exists}
\end{align}
where $A(x)$ can have parameters, including from $\pvec y$.
\end{definition}

Note that these $B$-restricted quantifiers are nothing more than relativized quantifiers with respect to the predicate $B(x^\sigma, \pvec y)$. We are interested in the cases where the relativization to the $B$-predicate is both ``computationally advantageous" and subsumes the corresponding $\I$-predicate. Intuitively, the latter should happen when $B(x^\sigma, \pvec y)$ is ``stronger" than $\witness{x^\sigma}{\sigma}$.

\begin{definition}[$B$ subsumes $\I$] Given a base interpretation of $\LangI{\SourceT}$, we say that a formula $B(x^\sigma, \pvec y^{\pvec \tau})$ \emph{subsumes $\witness{x}{\sigma}$} if the sentence
\begin{equation} \label{subsumes-fml}
    \forall \pvec y^{\pvec \tau} (\witness{\pvec y}{\pvec \tau} \to \forall x^\sigma (\rel{B(x, \pvec y)} \to \witness{x}{\sigma}))
\end{equation} 
is $\U$-interpretable in $\TargetT$.
\end{definition}

\begin{example} \label{ex-subsumes-1} In bounding interpretations of arithmetic, where $\uInter{\witness{n}{\NN}}{m}{}$ is $n \leq m$, the formula $B(n^\NN, y^\NN) :\equiv n \leq_\NN y$ subsumes $\witness{n}{\NN}$. In effect, the $\U$-interpretation of
\[
    \forall y^{\NN} (\witness{y}{\NN} \to \forall n^\NN (n \leq y \to \witness{n}{\NN}))
\]
asks for a term $t \colon \NN \to \NN$ such that
\[
    \forall y^{\NN}, k^\NN (y \leq k \to \forall n^\NN (n \leq y \to n \leq tk)).
\]  
Indeed, we can take $tk = k$.

\end{example}

\begin{example} \label{ex-subsumes-2} With the base interpretation of the $\Icb$-interpretation of arithmetic, we claim that the formula $B(f^{\NN \to \NN}) : \equiv \forall n^\NN (f n \leq_{\NN} 1)$ subsumes $\witness{f^{\NN \to \NN}}{\NN \to \NN}$. To see this, note that the $\U$-interpretation of (\ref{subsumes-fml}) asks, in this case, for closed terms $t^{\NN \to \NN}$ and $q^{\NN \to \NN^*}$, such that
\[
\forall k^\NN \forall f^{\NN \to \NN} (\forall n \in qk \forall i\leq n (f i \leq 1) \to \forall i \leq k (f i \leq t k)).
\]
We take $t$ and $q$ such that $tk = 1$ and $qk = \{k\}$.

\end{example}

\begin{remark}\label{obstruction} The above example works because we are using the canonical upon the bounding interpretation $\Icb$. If we were using instead the majorizability upon the bounding interpretation $\Imb$, then the $\U$-interpretation of (\ref{subsumes-fml}) would ask for closed terms $t^{\NN\to \NN}$ and $q^{\NN^*}$, such that
\[
\forall f^{\NN\to\NN} (\forall n \in q \forall i \leq n (fi \leq 1) \to f \Bmaj_{\NN \to \NN} t).
\]
Clearly, we cannot produce such witnesses (no finite set $q$ will do).
\end{remark}

\begin{example}\label{notable-case-1} An unrestricted quantification is, in fact, a special case of a $B$-restricted quantification, when $B(x^\sigma)$ is $\exists z^\sigma (z \equiv_\sigma x)$. Moreover, $B(x^\sigma)$ subsumes $\witness{x}{\sigma}$. To see this, just observe that the formula $\forall x^\sigma (\rel{B(x)} \to \witness{x}{\sigma})$ is $\U$-interpretable simply because, by definition, it is $$\forall x^\sigma (\exists z^\sigma (\witness{z}{\sigma} \wedge z \equiv_\sigma x) \to \witness{x}{\sigma}).$$ This formula is provable and, consequently, $\U$-interpretable. With this example, it is clear that restricted quantifiers are not always interesting. They are of interest only when the restriction to $B(x^\sigma)$ brings some computational advantages. \end{example}

\begin{proposition} \label{prop-rel-extended} Suppose that the source theory $\SourceT$ is extended to include primitive $B$-restricted quantifiers with axioms (\ref{B-bounded-exists}) and (\ref{B-bounded-forall}). Let $\SourceTI$ be the auxiliary theory of the proof of Theorem \ref{main-soundness} together with (\ref{subsumes-fml}) as a new axiom. Then, we can  extend the relativization procedure of Definition \ref{def-relativization} as follows:
\eqleft{
\begin{array}{lcl}
	\rel{(\forall x \Bbounded_\sigma B(\pvec y) \, A(x))} & :\equiv & \forall x^\sigma (\rel{B(x^\sigma, \pvec y)} \to \rel{A(x)}) \\[2mm]
	\rel{(\exists x \Bbounded_\sigma B(\pvec y) \, A(x))}& :\equiv & \exists x^\sigma (\rel{B(x^\sigma, \pvec y)} \wedge \rel{A(x)}).
\end{array}	
}
With this definition, we still have that if \,$\Gamma(\pvec x^{\pvec \sigma}) \proves_{\SourceT} A(\pvec x^{\pvec \sigma})$, then 
$$\witness{\pvec x}{\pvec \sigma}, \rel{\Gamma(\pvec x)} \proves \rel{A(\pvec x)},$$ 
where the proof takes place in the auxiliary theory with the new axiom (\ref{subsumes-fml}).
\end{proposition}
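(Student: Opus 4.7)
The plan is to argue by induction on proofs in the extended source theory $\SourceT$, mirroring the standard fact that relativization preserves derivability (the fact invoked in the proof of Theorem \ref{main-soundness}). Under the extended relativization, the logical rules of the Gentzen calculus treat $B$-restricted quantifiers as new primitive symbols with no additional inference rules, so the propositional rules, cut, contraction, weakening, and the unbounded quantifier rules all go through unchanged. The relativizations of the original non-logical axioms of $\SourceT$ remain exactly as before, and hence stay in $\SourceTI$. The only genuinely new task is to verify that the extended relativizations of the two governing axioms (\ref{B-bounded-forall}) and (\ref{B-bounded-exists}) are derivable from the axioms of $\SourceTI$ together with (\ref{subsumes-fml}), under the blanket assumption $\witness{\pvec y}{\pvec \tau}$ carried along in the relativization.

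For the universal axiom (\ref{B-bounded-forall}), the extended relativization produces
\[
    \forall x^\sigma (\rel{B(x, \pvec y)} \to \rel{A(x)}) \;\leftrightarrow\; \forall x^\sigma (\witness{x}{\sigma} \to (\rel{B(x, \pvec y)} \to \rel{A(x)})).
\]
The left-to-right direction is a trivial weakening. For the converse, fix $x^\sigma$ and assume $\rel{B(x, \pvec y)}$; the aim is to derive $\rel{A(x)}$. Instantiating (\ref{subsumes-fml}) with $\witness{\pvec y}{\pvec \tau}$ and this same $x^\sigma$ produces $\witness{x}{\sigma}$, and then the right-hand side of the biconditional delivers $\rel{A(x)}$. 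The existential axiom (\ref{B-bounded-exists}) is handled symmetrically: its extended relativization,
\[
    \exists x^\sigma (\rel{B(x, \pvec y)} \wedge \rel{A(x)}) \;\leftrightarrow\; \exists x^\sigma (\witness{x}{\sigma} \wedge \rel{B(x, \pvec y)} \wedge \rel{A(x)}),
\]
is trivial right-to-left, while left-to-right only requires upgrading a witness $x^\sigma$ of the left-hand side to one additionally satisfying $\witness{x}{\sigma}$ by another appeal to (\ref{subsumes-fml}).

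No serious obstacle is anticipated: once one notices that (\ref{subsumes-fml}) is precisely the bridge needed to translate the $B$-restriction on a bound variable into the $\I$-restriction demanded by the standard relativization, the rest of the argument reduces to routine bookkeeping. The one mild point of care is that the parameters $\pvec y$ of the $B$-restricted axioms must themselves carry type-informative assumptions in the relativized setting; this is automatic because $\pvec y$ occurs among the free variables $\pvec x$ that the proposition already tracks via the hypothesis $\witness{\pvec x}{\pvec \sigma}$.
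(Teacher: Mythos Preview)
Your proposal is correct and follows essentially the same approach as the paper: both reduce the claim to checking that the relativizations of the new axioms (\ref{B-bounded-forall}) and (\ref{B-bounded-exists}) become provable in the presence of (\ref{subsumes-fml}), which is exactly the bridge between $\rel{B(x,\pvec y)}$ and $\witness{x}{\sigma}$. Your write-up is in fact more detailed than the paper's, treating both quantifier cases explicitly and noting the role of $\witness{\pvec y}{\pvec \tau}$, whereas the paper only spells out the universal case.
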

\begin{remark}
    The language $\LangI{\SourceT}$ of the extended auxiliary theory $\SourceTI$ does not change, since only a new axiom is added to the theory. In particular, the language of the auxiliary theory does not include bounded quantifications. 
\end{remark}

\begin{proof} It is enough to see that, in the presence of (\ref{subsumes-fml}), the relativizations of (\ref{B-bounded-exists}) and (\ref{B-bounded-forall}) are logically trivial. Consider the case of $\forall x \Bbounded_\sigma B(\pvec y) \, A(x)$. By definition, its relativization is
\[ \forall x^\sigma (\rel{B(x^\sigma, \pvec y)} \to \rel{A(x)}). \]
On the other hand, the relativization of the right-hand side of (\ref{B-bounded-forall}) is
\[ \forall x^\sigma (\witness{x^\sigma}{\sigma} \wedge \rel{B(x, \pvec y)} \to \rel{A(x)}). \]
Of course, in the presence of the new axiom (\ref{subsumes-fml}), they are equivalent.
\end{proof}

As we will see, whenever we can $\U$-interpret that $B(x^\sigma, \pvec y)$ is ``stronger" than $\witness{x}{\sigma}$, we can simplify the $\I$-relativization of the $B$-restricted quantifiers. Hence, simpler $\I$-interpretations of $\exists x \Bbounded_\sigma B(\pvec y) \, A(x)$ and $\forall x \Bbounded_\sigma B(\pvec y) \, A(x)$ are obtained. Let us formalize this by extending Proposition \ref{I-interpretation-prop}.

\begin{proposition}[Inductive presentation of $\iInter{A}{\pvec a}{\pvec b}$ -- Extended] \label{I-interpretation-prop-ext} Assuming that the relativization of the $B$-restricted quantifiers is as in Proposition \ref{prop-rel-extended}, we have that
\[
\begin{array}{lcl}
	\iInter{\forall x \Bbounded_{\sigma} B(\pvec y) \, A(x)}{\pvec f, \pvec g}{\pvec u, \pvec b} & \mathrm{\,is\,} & \forall x^\sigma (\forall \pvec v \in \pvec g \pvec u \pvec b \, \iInter{B(x, \pvec y)}{\pvec u}{\pvec v} \, \to \iInter{A(x)}{\pvec f \pvec u}{\pvec b}) \\[2mm]
	\iInter{\exists x \Bbounded_{\sigma} B(\pvec y) A(x)}{\pvec u, \pvec a}{\pvec V, \pvec U} & \mathrm{\,is\,} & \exists x^\sigma (\forall \pvec v \in \pvec V \, \iInter{B(x, \pvec y)}{\pvec u}{\pvec v} \wedge \forall \pvec b \in \pvec U \, \iInter{A(x)}{\pvec a}{\pvec b}).
\end{array}	
\]
\end{proposition}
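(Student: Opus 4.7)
The plan is to compute both clauses directly from the identity $\iInter{\cdot}{}{} \equiv \uInter{\rel{\cdot}}{}{}$, using the extended relativization supplied by Proposition \ref{prop-rel-extended} and then unfolding the $\U$-interpretation clauses of Definition \ref{def-uniform-interpretation}. I will first do the universal case and then the existential case, flagging the one minor repackaging step that is needed for the latter.

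For the universal $B$-restricted quantifier, Proposition \ref{prop-rel-extended} tells us that $\rel{(\forall x \bnd_\sigma B(\pvec y) \, A(x))}$ is $\forall x^\sigma (\rel{B(x, \pvec y)} \to \rel{A(x)})$. Applying the $\U$-clause for $\forall x^\sigma$ passes the quantifier to the outside without introducing any new witnesses; then the $\U$-clause for implication yields $\forall x^\sigma (\forall \pvec v \in \pvec g \pvec u \pvec b \, \uInter{\rel{B(x, \pvec y)}}{\pvec u}{\pvec v} \to \uInter{\rel{A(x)}}{\pvec f \pvec u}{\pvec b})$. Rewriting $\uInter{\rel{B}}{}{}$ as $\iInter{B}{}{}$ and $\uInter{\rel{A}}{}{}$ as $\iInter{A}{}{}$ gives the stated formula. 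This step is essentially bookkeeping.

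For the existential $B$-restricted quantifier, Proposition \ref{prop-rel-extended} gives us $\rel{(\exists x \bnd_\sigma B(\pvec y) \, A(x))} \equiv \exists x^\sigma (\rel{B(x, \pvec y)} \wedge \rel{A(x)})$. The $\U$-clause for conjunction tells us that $\uInter{\rel{B} \wedge \rel{A}}{\pvec u, \pvec a}{\pvec v, \pvec b}$ is $\uInter{\rel{B}}{\pvec u}{\pvec v} \wedge \uInter{\rel{A}}{\pvec a}{\pvec b}$. Then the $\U$-clause for the existential introduces a single finite set $\pvec W$ of pairs $(\pvec v, \pvec b)$, so the direct computation yields $\exists x^\sigma \forall (\pvec v, \pvec b) \in \pvec W (\iInter{B(x, \pvec y)}{\pvec u}{\pvec v} \wedge \iInter{A(x)}{\pvec a}{\pvec b})$.

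The one small issue, and the only step that is not entirely automatic, is reconciling this single finite set of pairs $\pvec W$ with the two separate finite sets $\pvec V$ and $\pvec U$ displayed in the statement. This is handled by a routine equivalence: given $\pvec V$ and $\pvec U$, take $\pvec W := \pvec V \times \pvec U$ (definable from the star constants $\bigcup$ and $\{\cdot\}$), so that $\forall \pvec v \in \pvec V \, \iInter{B}{\pvec u}{\pvec v} \wedge \forall \pvec b \in \pvec U \, \iInter{A}{\pvec a}{\pvec b}$ is equivalent to $\forall (\pvec v, \pvec b) \in \pvec W (\iInter{B}{\pvec u}{\pvec v} \wedge \iInter{A}{\pvec a}{\pvec b})$; conversely, projecting $\pvec W$ onto its two coordinates recovers $\pvec V$ and $\pvec U$. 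So the two presentations are interderivable in the target theory, and the statement of the proposition uses the more informative split form. No further obstacles arise, and the proof reduces to these two direct unfoldings.
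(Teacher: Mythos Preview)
Your approach is correct and matches the paper's treatment: the paper offers no explicit proof for this proposition, regarding it (like Proposition~\ref{I-interpretation-prop}) as a direct unfolding of $\iInter{\cdot}{}{} \equiv \uInter{\rel{\cdot}}{}{}$ via the extended relativization and the $\U$-clauses.

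One small point on your existential case: your ``repackaging'' worry stems from a slight misreading of the paper's tuple conventions. When the negative witnesses of the conjunction form a concatenated tuple $\pvec v, \pvec b$, the existential $\U$-clause applies the star \emph{componentwise}, so one obtains the separate tuples of finite sets $\pvec V, \pvec U$ directly, not a single finite set of pairs (compare the Notation following Theorem~\ref{thm-uniform-soundness}). The only genuine step is the logical equivalence between $\forall \pvec v \in \pvec V \,\forall \pvec b \in \pvec U\,(\iInter{B}{\pvec u}{\pvec v} \wedge \iInter{A}{\pvec a}{\pvec b})$ and $\forall \pvec v \in \pvec V \,\iInter{B}{\pvec u}{\pvec v} \wedge \forall \pvec b \in \pvec U \,\iInter{A}{\pvec a}{\pvec b}$, which holds because the star types are non-empty by construction. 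Your Cartesian-product argument is valid but more elaborate than needed.
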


The point of the above clauses for the restricted quantifiers is the fact that the type-informative predicates do not show up in the quantifications $\forall x^\sigma$ and $\exists x^\sigma$ of the right-hand sides. We are interested in restricted quantifications which are computationally weaker than the corresponding unrestricted quantifications. The following are the interesting cases:

\begin{definition}[Uniform and semi-uniform restricted quantifiers] \label{def-bounded-quant} Let be given a base interpretation of $\LangI{\SourceT}$, and let $B(x^\sigma, \pvec y^{\pvec \tau})$ be a formula with a distinguished variable $x^\sigma$ which subsumes $\witness{x}{\sigma}$. We say that a $B$-restricted quantifier is \emph{semi-uniform} if the formula $B(x^\sigma, \pvec y^{\pvec \tau})$ has no positive $\I$-witnesses. We say that the $B$-restricted quantifier is \emph{uniform} if it the formula $B(x^\sigma, \pvec y^{\pvec \tau})$ is $\I$-witness-free. \end{definition}

See Notation \ref{positive-negative-I-witnesses} for the terminology. If the $B$-restricted quantifiers are semi-uniform or uniform, the clauses of Proposition \ref{I-interpretation-prop-ext} simplify. Since this has important consequences, we dedicate Subsection \ref{uniform-section} and Subsection \ref{semi-uniform-section} to this issue.

Let us conclude by showing that the simplification of the $\I$-relativization of the $B$-restricted quantifiers still leads to a sound interpretation of the source theory $\SourceT$ (with the primitive $B$-restricted quantifiers). 

\begin{theorem}[Main Soundness Theorem -- Extended] \label{main-soundness-ext} Let be given a $\TargetT$-sound type-informative base interpretation of $\SourceT$. Suppose that the language of $\SourceT$ is extended with primitive $B$-restricted quantifiers for formulas $B(x^\sigma, \pvec y^{\pvec \tau})$ which subsume $\witness{x^\sigma}{\sigma}$ and extend the source theory $\SourceT$ with the corresponding axioms (\ref{B-bounded-forall}) and (\ref{B-bounded-exists}). If 
\[
    \Gamma(\pvec z^{\pvec \rho}) 
    \proves_{\SourceT} 
    A(\pvec z^{\pvec \rho})
\]
then there are tuples of closed witnessing terms $\pvec q, \pvec s$ and $\pvec t$ such that the target theory $\TargetT$ proves the following: for all $\pvec a$, $\pvec c$ and $\pvec d$ and for all $\pvec z^{\pvec \rho}$, 
$$\mbox{if \,}\forall \pvec e \in \pvec q \pvec a \pvec c \pvec d \, \iInter{\witness{\pvec z}{\pvec \rho}}{\pvec c}{\pvec e} \mbox{\, and \,} \forall \pvec b \in \pvec s \pvec a \pvec c \pvec d \iInter{\Gamma(\pvec z)}{\pvec a}{\pvec b} \mbox{,\, then } \iInter{A(\pvec z)}{\pvec t \pvec a \pvec c}{\pvec d}.$$
\end{theorem}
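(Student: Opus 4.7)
The plan is to mimic the proof of the original Main Soundness Theorem \ref{main-soundness}, with the auxiliary theory $\SourceTI$ enlarged by the subsumption axioms and the relativization extended to the new restricted quantifiers as in Proposition \ref{prop-rel-extended}. First I would define the auxiliary theory $\SourceTI^+$ in the language $\LangI{\SourceT}$ as the theory axiomatized by (i) and (ii) of Definition \ref{def-type-informative-base}, (iii) the relativized non-logical axioms $\rel{A}$ for each non-logical axiom $A$ of $\SourceT$ (including the axioms \eqref{B-bounded-forall} and \eqref{B-bounded-exists} for the $B$-restricted quantifiers), and (iv) the subsumption axiom \eqref{subsumes-fml} for each formula $B(x^\sigma,\pvec y^{\pvec \tau})$ for which a primitive $B$-restricted quantifier has been added.

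Next I would invoke Proposition \ref{prop-rel-extended} to conclude that if $\Gamma(\pvec z^{\pvec \rho}) \proves_{\SourceT} A(\pvec z^{\pvec \rho})$, then
\[
\witness{\pvec z}{\pvec \rho}, \rel{\Gamma(\pvec z)} \proves \rel{A(\pvec z)},
\]
where the derivation takes place in $\SourceTI^+$ and the relativization of the $B$-restricted quantifiers is the simplified one given by Proposition \ref{prop-rel-extended}. Note that $\SourceTI^+$ is formulated in the ordinary (non-restricted) language $\LangI{\SourceT}$, so the Uniform Soundness Theorem \ref{thm-uniform-soundness} applies to it verbatim.

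The third step is to verify that all axioms of $\SourceTI^+$ are $\U$-interpretable in $\TargetT$. Axioms of groups (i) and (ii) are $\U$-interpretable because the given base interpretation is type-informative. The axioms of group (iii) are $\U$-interpretable because the base interpretation is $\TargetT$-sound, together with the observation (already used implicitly in Proposition \ref{prop-rel-extended}) that the relativizations of \eqref{B-bounded-forall} and \eqref{B-bounded-exists} become logical tautologies once \eqref{subsumes-fml} is available, hence are trivially $\U$-interpretable. Finally, the axioms in group (iv) are $\U$-interpretable precisely by the assumption that each formula $B(x^\sigma,\pvec y^{\pvec \tau})$ subsumes $\witness{x}{\sigma}$.

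With these ingredients in place, applying the Uniform Soundness Theorem \ref{thm-uniform-soundness} to $\SourceTI^+$ yields closed witnessing tuples $\pvec q$, $\pvec s$, $\pvec t$ such that, for all $\pvec a,\pvec c,\pvec d$ and all $\pvec z^{\pvec \rho}$,
\[
\forall \pvec e \in \pvec q \pvec a \pvec c \pvec d\, \uInter{\witness{\pvec z}{\pvec \rho}}{\pvec c}{\pvec e}
\;\wedge\;
\forall \pvec b \in \pvec s \pvec a \pvec c \pvec d\, \uInter{\rel{\Gamma(\pvec z)}}{\pvec a}{\pvec b}
\;\to\;
\uInter{\rel{A(\pvec z)}}{\pvec t \pvec a \pvec c}{\pvec d},
\]
which, by the very definition of the $\I$-interpretation, is the desired conclusion. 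The main conceptual point — and the only place the extension differs substantively from Theorem \ref{main-soundness} — is the need to ensure that the simplified relativization of the $B$-restricted quantifiers is consistent with the remaining axioms; this is exactly what Proposition \ref{prop-rel-extended} guarantees, so no real obstacle arises and the argument is essentially routine once that proposition is in hand.
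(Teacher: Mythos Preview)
Your proposal is correct and follows essentially the same route as the paper: enlarge the auxiliary theory by the subsumption axiom \eqref{subsumes-fml}, use Proposition~\ref{prop-rel-extended} to reduce to a derivation in $\LangI{\SourceT}$, and then apply the Uniform Soundness Theorem after checking that the only genuinely new axiom, \eqref{subsumes-fml}, is $\U$-interpretable by the very definition of ``$B$ subsumes $\witness{x}{\sigma}$''. Your write-up is simply more explicit than the paper's two-line proof, but the underlying argument is identical.
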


\begin{proof} We follow the proof of Theorem \ref{main-soundness}. By Proposition \ref{prop-rel-extended}, it only remains to show that the new axiom (\ref{subsumes-fml}) of $\SourceTI$ is $\U$-interpretable. But this follows from our assumption that $B(x^\tau, \pvec y)$ subsumes $\witness{x^\tau}{\tau}$.
\end{proof}

\subsection{Uniform restricted quantifiers}
\label{uniform-section}

If the $B$-restricted quantifiers are uniform, their inductive clauses turn out to be very simple:
\[
\begin{array}{lcl}
	\iInter{\forall x \Bbounded_{\sigma} B(\pvec y) A(x)}{\pvec a}{\pvec b} & \mathrm{\,is\,} & \forall x^\sigma (\iInter{B(x, \pvec y)}{}{} \, \to \iInter{A(x)}{\pvec a}{\pvec b}) \\[2mm]
	\iInter{\exists x \Bbounded_{\sigma} B(\pvec y) A(x)}{\pvec a}{\pvec W} & \mathrm{\,is\,} & \exists x^\sigma (\iInter{B(x, \pvec y)}{}{} \wedge \, \forall \pvec b \in \pvec W \iInter{A(x)}{\pvec a}{\pvec b}).
\end{array}	
\]

In the setting of the bounding interpretations of arithmetic (where one has the bounding interpretation at the ground type $\NN$), first-order bounded quantifications are a natural example of restricted $B$-quantifications which are uniform. Their analysis stems from Example \ref{ex-subsumes-1}, where $B(n,y)$ is the formula $n\leq_\NN y$. Bounded first-order quantifications of the form $\forall x \leq_\NN t(\pvec z)\, A(x, \pvec z)$ and $\exists x \leq_\NN t(\pvec z)\, A(x, \pvec z)$, where $t$ is a term of type $\NN$ in which the variable $x^\NN$ does not occur, are explained as the restricted quantifications $$\forall x \Bbounded_\NN B(t(\pvec z)) \, A(x, \pvec z) \mbox{\,\,\, and \,\,\,} \exists x \Bbounded_\NN B(t(\pvec z)) \, A(x, \pvec z),$$ respectively. Ignoring parameters, by (\ref{B-bounded-forall}) and (\ref{B-bounded-exists}), we have the familiar
\eqleft{
\begin{array}{lcl}
    \forall x \leq t \,A(x)
        & \leftrightarrow & \forall x \,(x\leq t \to A(x)) \\[1mm]
    \exists x \leq t \,A(x)
        & \leftrightarrow & \exists x \, (x\leq t \wedge A(x)).
\end{array}
}
Given that the $\I$-interpretation of the formula $n\leq_\NN y$ is witness-free (since, technically, it is stated as an equality of type $\NN$), the clauses for $\I$-interpretability are:
\eqleft{
\begin{array}{lcl}
		\iInter{\forall x\leq t\, A(x)}{\pvec a}{\pvec b} & :\equiv & \forall x \leq t\, \iInter{A(x)}{\pvec a}{\pvec b} \\[1mm]
		\iInter{\exists x \leq t \,A(x)}{\pvec a}{\pvec B} & :\equiv & \exists x \leq t \forall \pvec b \in \pvec B \, \iInter{A(x)}{\pvec a}{\pvec b},
	\end{array}	
}
where, on the right-hand side, we are using the bounded quantifiers informally -- as the usual abbreviations -- in the target language $\Lang{\TargetT}$.

Within the general framework of bounding interpretations of arithmetic, it is possible to discuss and prove the interpretability of a version of Markov's principle. This version naturally involves a first-order bounded quantification and this is one of the reasons why we chose to discuss it. The other reason for the inclusion of Markov's principle is the comment after Proposition \ref{ub_cb}. 

\begin{proposition}[Markov's principle]\label{Markov} Let be given an extension of $\HAomega$ and a base interpretation which uses the bounding interpretation at the ground type $\NN$. The principle 
\begin{equation}
    (\forall n^\NN A(n) \to B) \to \exists k^\NN (\forall n\leq k \, A(k) \to B)
\end{equation}
is $\I$-interpretable provided that the formula $A$ has no positive $\I$-witnesses and the formula $B$ has no negative $\I$-witnesses. \end{proposition}

\begin{proof} The $\I$-interpretation of the antecedent of the principle is $$\forall r\in R \forall \pvec b \in \pvec B \forall n \leq r \iInter{A(n)}{}{\pvec b} \to \iInter{B}{\pvec c}{}$$ where $R^{\NN^*}$, $\pvec B$ and $\pvec c$ are the positive $\I$-witnesses (there are no negative $\I$-witnesses). On the other hand, the $\I$-interpretation of the consequent is $$\exists k \leq m (\forall \pvec b \in \pvec B \forall n\leq k \iInter{A(n)}{}{\pvec b} \to \iInter{B}{\pvec c}{})$$ where $m^\NN$, $\pvec B$ and $\pvec c$ are the positive $\I$-witnesses (there are no negative $\I$-witnesses).
Therefore, we must exhibit terms $t$, $\pvec q$ and $\pvec s$ such that the target theory shows, for all $R$, $\pvec B$ and $\pvec c$, that we have the implication $$(\forall r\in R \forall \pvec b \in \pvec B \forall n \leq r \iInter{A(n)}{}{\pvec b} \to \iInter{B}{\pvec c}{}) \to $$ $$\exists k \leq t R\pvec B \pvec c \, (\forall \pvec b \in \pvec q R\pvec B \pvec c \, \forall n\leq k \,\iInter{A(n)}{}{\pvec b} \to \iInter{B}{\pvec s R\pvec B \pvec c}{}).$$ We can take $t$, $\pvec q$ and $\pvec s$ such that $t R\pvec B \pvec c =\mathrm{m}(R)$, $\pvec q R\pvec B \pvec c = \pvec B$ and $\pvec s R\pvec B \pvec c = \pvec c$. Recall that $\mathrm{m}(R)$ is the maximum of $R$. Therefore, it is clear that the consequent is true by choosing $k$ to be $\mathrm{m}(R)$.    \end{proof}

When $B$ is $\bot$, the above principle turns out to be $\neg \forall n  A(n) \to \exists k \neg \forall n\leq k A(n)$. This is, perhaps, more recognizable as a Markov principle.

A second example of restricted quantifiers which are uniform consists of (unrestricted) quantifications over witness-free types (see Notation \ref{type-cases}). A notable case, worth remarking explicitly, concerns quantifications over uniform types (see  Notation \ref{bounded-type} and Definition \ref{Imb-interpretation}) within the framework of the $\Imb$-interpretation. In this case, as we have discussed at the end of Subsection \ref{maj_upon_bounding-section}, we get the clauses of the uniform quantification.

Let us see that the situation described in the beginning of the above paragraph fits Definition \ref{def-bounded-quant}. We saw in Example \ref{notable-case-1} that an unrestricted quantification over a type $\sigma$ is a special case of a $B$-restricted quantification. Such is the case with $B(x^\sigma)$ as the formula $\exists z^\sigma (z \equiv_\sigma x)$. Within this framework, one has 
$$\iInter{B(x^\sigma)}{\pvec c}{\pvec D} :\equiv \exists z^\sigma (\forall \pvec d \in \pvec D \uInter{\witness{z}{\sigma}}{\pvec c}{\pvec d} \,\wedge z \equiv_\sigma x).$$ 
Therefore, an unrestricted quantification over a type $\sigma$ is uniform (in the sense of Definition \ref{def-bounded-quant}) just in case the type $\sigma$ is witness-free. In particular, this happens when $\witness{x}{\sigma}$ is interpreted uniformly (in the sense of Example \ref{uniform-def}). As an aside, this analysis also shows that an unrestricted quantification is semi-uniform just in case the type $\sigma$ carries no positive witnesses (see Notation \ref{type-cases}).

A last example of uniform restricted quantifiers concerns the intensional bounded quantifications of the bounded functional interpretation, as they were introduced by Ferreira and Oliva in \cite{FerreiraOliva(05)}. We dedicate Subsection \ref{bfi-section} to this example.

\subsection{Semi-uniform restricted quantifiers}
\label{semi-uniform-section}

In this subsection, we consider the case where the $B$-restricted quantifiers are semi-uniform. Looking at Proposition \ref{I-interpretation-prop-ext}, it is clear that the clauses for the inductive presentation of the restricted $B$-quantifiers are, in this (semi-uniform) case, the following: 
\[
\begin{array}{lcl}
	\iInter{\forall x \Bbounded_{\sigma} B(\pvec y) \, A(x)}{\pvec a, \pvec g}{\pvec b} & \mathrm{\,is\,} & \forall x^\sigma (\forall \pvec v \in \pvec g \pvec b \, \iInter{B(x, \pvec y)}{}{\pvec v} \, \to \iInter{A(x)}{\pvec a}{\pvec b}) \\[2mm]
	\iInter{\exists x \Bbounded_{\sigma} B(\pvec y) A(x)}{\pvec a}{\pvec V, \pvec W} & \mathrm{\,is\,} & \exists x^\sigma (\forall \pvec v \in \pvec V \, \iInter{B(x, \pvec y)}{}{\pvec v} \wedge \, \forall \pvec b \in \pvec W \iInter{A(x)}{\pvec a}{\pvec b}).
\end{array}	
\]

In the remainder of this subsection, we assume that $\SourceT$ is an extension of $\HAomega$ and that our base interpretation uses the bounding interpretation at the ground type $\NN$. 

\begin{theorem}[Collection] \label{collection-thm} Suppose that the $B$-restricted quantifiers are semi-uniform and that $C(x,n^\NN)$ is an arbitrary formula of the source language. In this case, the following principle
\begin{equation}\label{collection-fml}
    \forall x \Bbounded_\sigma B(\pvec y) \, \exists n^\NN \, C(x,n) \to \exists k^\NN \forall x \Bbounded_\sigma B(\pvec y) \, \exists n\leq k \, C(x,n)
\end{equation}
is $\I$-interpretable.
\end{theorem}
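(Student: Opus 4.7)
The plan is to mimic the short computation used for Markov's principle (Proposition \ref{Markov}) and simply unwind the $\I$-interpretation of both sides of (\ref{collection-fml}), then exhibit closed witnessing terms that make the implication realizable. The crucial feature making the argument work is semi-uniformity of the $B$-restricted quantifier: the positive witness $m$ bounding $\exists n^\NN$ in the antecedent does not depend on $x$, because $B(x,\pvec y)$ has no positive $\I$-witnesses, and this very $m$ can therefore be used as the realizer of the $\exists k^\NN$ in the consequent.

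First I would compute the $\I$-interpretations. Writing $\iInter{C(x,n,\pvec y)}{\pvec v}{\pvec u}$, the bounding clause at $\NN$ for the inner existential together with the semi-uniform clause for the restricted universal yields that the antecedent of (\ref{collection-fml}) has $\I$-interpretation
\[
\forall x^\sigma \bigl( \forall \pvec v' \in \pvec g \pvec U \, \iInter{B(x,\pvec y)}{}{\pvec v'} \to \exists n \leq m \, \forall \pvec u \in \pvec U \, \iInter{C(x,n,\pvec y)}{\pvec v}{\pvec u} \bigr),
\]
with positive witnesses $m^\NN, \pvec v, \pvec g$ and negative witness $\pvec U$. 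The bounded existential $\exists n \leq k \, C$ is a uniform restricted quantifier (Subsection \ref{uniform-section}), so applying the same semi-uniform clause and then the bounding clause for the outermost $\exists k^\NN$ gives that the consequent has $\I$-interpretation
\[
\exists k \leq k_0 \, \forall \pvec B \in \mathfrak{B} \, \forall x^\sigma \bigl( \forall \pvec v' \in \pvec g' \pvec B \, \iInter{B(x,\pvec y)}{}{\pvec v'} \to \exists n \leq k \, \forall \pvec b \in \pvec B \, \iInter{C(x,n,\pvec y)}{\pvec a'}{\pvec b} \bigr),
\]
with positive witnesses $k_0^\NN, \pvec a', \pvec g'$ and negative witness $\mathfrak{B}$.

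For the implicational $\I$-interpretation, I would take the realizers of the consequent as $k_0 := m$, $\pvec a' := \pvec v$, and $\pvec g' \pvec B := \pvec g \pvec B$, and instantiate the antecedent on the finite set of counter-witnesses $\mathfrak{B}$ itself. This choice is type-correct because the negative witnesses of $\exists n^\NN C$ and of $\exists n \leq k \, C$ have the same shape: in both cases a finite set of counter-witnesses to $C$. Under these choices the hypothesis coming from the antecedent and the goal of the consequent become literally the same statement after renaming $\pvec U \leadsto \pvec B$ and $\pvec u \leadsto \pvec b$, with the bound $k := m \leq k_0$ trivially satisfied.

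The only potential obstacle is bookkeeping of the witnessing types and verifying that the required closed terms (picking out $m$ from the tuple of positive witnesses of the antecedent, forwarding $\pvec g$ to $\pvec g'$, and returning $\mathfrak{B}$ itself as the finite set of $\pvec U$'s to be tested) can be written down using the available lambda-abstraction, combinators, and star constants; no further reasoning inside the target theory is needed. The contracollection direction, which I expect to appear in the subsequent paragraphs, should admit an analogous treatment in which a negative witness is pushed outward rather than a positive one.
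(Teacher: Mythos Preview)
Your proposal is correct and follows essentially the same route as the paper's proof: unwind the $\I$-interpretations of antecedent and consequent, observe that semi-uniformity makes the positive witnesses of the two sides coincide, and realize the implication by the identity assignments $k_0:=m$, $\pvec a':=\pvec v$, $\pvec g':=\pvec g$, together with feeding the set $\mathfrak{B}$ of counter-witnesses of the consequent straight back as the set of counter-witnesses tested in the antecedent. The paper uses the notation $r,\pvec c,\pvec g,\pvec D,\mathfrak{D}$ in place of your $m,\pvec v,\pvec g,\pvec U,\mathfrak{B}$ and records the realizers as $t\,r\pvec c\pvec g=r$, $\pvec q\,r\pvec c\pvec g=\pvec c$, $\pvec p\,r\pvec c\pvec g=\pvec g$, $\pvec s\,r\pvec c\pvec g\mathfrak{D}=\mathfrak{D}$, but this is purely a difference in bookkeeping.
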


\begin{proof} We must show that the relativization of (\ref{collection-fml}) is $\U$-interpretable, i.e., that the implication $$\hspace{-15mm}\forall x (\rel{B(x,\pvec y)} \to \exists n (\witness{n}{\NN} \wedge \rel{C(x,n)})) \to $$ $$\hspace{25mm}\exists k \, (\witness{k}{\NN} \wedge \forall x (\rel{B(x,\pvec y)} \to \exists n \leq k \, \rel{C(x,n)}))$$ is $\U$-interpretable. To simplify the discussion, we ignore the parameters $\pvec y$. The $\U$-interpretation of the antecedent above asks for witnesses $r^\NN$, $\pvec c$ and $\pvec g$ such that, for all $\pvec D$, $$\forall x \, (\forall \pvec v \in \pvec g \pvec D \iInter{B(x)}{}{\pvec v} \to \exists n\leq r \forall \pvec d \in \pvec D \iInter{C(x,n)}{\pvec c}{\pvec d}).$$ On the other hand, the $\U$-interpretation of the consequent above asks for witnesses $r^\NN$, $\pvec c$ and $\pvec g$ such that, for all $\mathfrak{D}$, $$\exists k \leq r \forall \pvec D \in \mathfrak{D} \forall x (\forall \pvec v \in \pvec g \pvec D \iInter{B(x)}{}{\pvec v} \to \exists n \leq k \forall \pvec d \in \pvec D \iInter{C(x,n)}{\pvec c}{\pvec d}).$$ For the desired $\U$-interpretation of the implication, we must find terms $t$, $\pvec{p}$, $\pvec{q}$ and $\pvec{s}$ such that the target theory proves the following: for all $r$, $\pvec c$, $\pvec g$ and $\mathfrak{D}$, $$\forall \pvec D \in \pvec s r \pvec c \pvec g \mathfrak{D} \, \forall x \, (\forall \pvec v \in \pvec g \pvec D \iInter{B(x)}{}{\pvec v} \to \exists n\leq r \forall \pvec d \in \pvec D \iInter{C(x,n)}{\pvec c}{\pvec d}) \to $$ $$\exists k\leq t r \pvec c \pvec g \forall {\pvec D} \in \mathfrak{D} \, \forall x \, (\forall \pvec v \in \pvec p r \pvec c \pvec g \pvec D \iInter{B(x)}{}{\pvec v} \to \exists n \leq k \forall \pvec d \in \pvec D \iInter{C(x,n)}{\pvec q r \pvec c \pvec g}{\pvec d}).$$ We can just put $t r \pvec c \pvec g = r$, $\pvec p r \pvec c \pvec g = \pvec g$, $\pvec q r \pvec c \pvec g = \pvec c$ and $\pvec s r \pvec c \pvec g \mathfrak{D} = \mathfrak{D}$. The consequent holds by taking $k$ to be $r$.  \end{proof}

The above collection principle has some affinities with the FAN theorem of Brouwerian intuitionistic mathematics. The following is a partial converse, a kind of idealization principle:

\begin{theorem}[Contra-collection]\label{contra-collection-thm} Suppose that the $B$-restricted quantifiers are semi-uniform and assume that $C(x,n^\NN)$ is a formula of the source language with no positive  $\I$-witnesses. In this case, the following principle
\begin{equation}\label{contra-collection-fml}
    \forall k^\NN \exists x \Bbounded_\sigma B(\pvec y) \, \forall n\leq k \, C(x,n) \to \exists x \Bbounded_\sigma B(\pvec y) \, \forall n^\NN C(x,n)
\end{equation}
is $\I$-interpretable. \end{theorem}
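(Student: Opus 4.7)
The plan is to follow the strategy used for Theorem \ref{collection-thm}: I would compute the $\I$-interpretation of both sides of (\ref{contra-collection-fml}) explicitly, and then exhibit closed witnessing terms realising the resulting implication. Suppressing the parameters $\pvec y$ and writing $\iInter{C(x,n)}{}{\pvec d}$ for the $\I$-interpretation of $C$ (which has no positive $\I$-witnesses by hypothesis), the uniform clause for the first-order bounded quantifier $\forall n \leq k$ from Subsection \ref{uniform-section} gives $\iInter{\forall n \leq k \, C(x,n)}{}{\pvec d}$ as $\forall n \leq k \, \iInter{C(x,n)}{}{\pvec d}$, still with no positive witnesses. Applying the semi-uniform clause of Proposition \ref{I-interpretation-prop-ext} for $\exists x \bnd_\sigma B$ followed by the bounding clause for the outer $\forall k^\NN$, the antecedent will be interpreted (no positive witnesses; negative witnesses $m$, $\pvec V$, $\pvec D$) as
\[
\forall k \leq m \, \exists x^\sigma \left(\forall \pvec v \in \pvec V \, \iInter{B(x)}{}{\pvec v} \wedge \forall \pvec d \in \pvec D \,\forall n \leq k \, \iInter{C(x,n)}{}{\pvec d}\right),
\]
while the consequent, in which the inner $\forall n^\NN$ packages its negative witness as a finite set $K' \subseteq \NN$ together with the tuple $\pvec D'$ of finite sets for the negative witnesses of $C$, will be interpreted (no positive witnesses; negative witnesses $\pvec V', K', \pvec D'$) as
\[
\exists x^\sigma \left(\forall \pvec v \in \pvec V' \, \iInter{B(x)}{}{\pvec v} \wedge \forall k' \in K' \, \forall \pvec d \in \pvec D' \,\forall n \leq k' \, \iInter{C(x,n)}{}{\pvec d}\right).
\]

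To realise the implication, I would take the terms $t$, $\pvec p$, $\pvec q$ defined by $t \pvec V' K' \pvec D' = \mathrm{m}(K')$ (where $\mathrm{m} \colon \NN^* \to \NN$ is the maximum functional from Section \ref{interlude}), $\pvec p \pvec V' K' \pvec D' = \pvec V'$, and $\pvec q \pvec V' K' \pvec D' = \pvec D'$. Instantiating the antecedent at $k = \mathrm{m}(K')$ then produces an $x^\sigma$ satisfying $\forall \pvec v \in \pvec V' \, \iInter{B(x)}{}{\pvec v}$ together with $\forall \pvec d \in \pvec D' \, \forall n \leq \mathrm{m}(K') \, \iInter{C(x,n)}{}{\pvec d}$. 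For any $k' \in K'$ one has $k' \leq \mathrm{m}(K')$, so $\forall n \leq k' \, \iInter{C(x,n)}{}{\pvec d}$ follows, yielding the consequent with this same $x$.

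The hard part will not be any subtle logical step but the bookkeeping: correctly reading off the shape of the negative witnesses of the consequent (in particular, separating out the finite set $K'$ of bounds for $n$ from the tuple $\pvec D'$ of negative witnesses for $C$) and recognising that taking the common bound $\mathrm{m}(K')$ allows a single existential witness to serve all $k' \in K'$ simultaneously. Two features of the hypotheses are essential: the semi-uniformity of the $B$-restricted quantifier prevents positive witnesses of $B$ from obstructing the argument, and the absence of positive $\I$-witnesses for $C$ means the existential $x$ need not be constructed as a function of $k$ and can therefore be chosen uniformly, at the maximal value $\mathrm{m}(K')$.
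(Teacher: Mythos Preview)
Your proposal is correct and follows essentially the same route as the paper: compute the $\I$-interpretations of antecedent and consequent, observe that neither has positive witnesses, and realise the implication by feeding the finite set of bounds from the consequent back to the antecedent and picking $x$ at the maximum. Two cosmetic points: (i) the starring of the negative witness $r$ into a finite set comes from the outer $\exists x \bnd_\sigma B$, not from $\forall n^\NN$ itself; (ii) the realising terms must have star type, so one should write $\{\mathrm m(K')\}$, $\{\pvec V'\}$, $\{\pvec D'\}$ rather than bare values (the paper equivalently takes $t R \pvec V \pvec D = R$ and then argues with $k = \mathrm m(R)$). Neither point affects the argument.
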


\begin{proof} We must show that the relativization of (\ref{contra-collection-fml}) is $\U$-interpretable, i.e., that the implication $$\hspace{-5mm} \forall k^\NN (\witness{k}{\NN} \to \exists x \, (\rel{B(x,\pvec y)} \wedge \forall n\leq k \, \rel{C(x,n)})) \to$$ $$\hspace{35mm} \exists x \, (\rel{B(x,\pvec y)} \wedge \forall n^\NN (\witness{n}{\NN} \to \rel{C(x,n)}))$$ is $\U$-interpretable. Let us ignore the parameters $\pvec y$. The $\U$-interpretation of the antecedent above does not ask for (positive) witnesses, but the following must hold for all $r^\NN$, $\pvec V$ and $\pvec D$: $$\forall k \leq r \exists x\, (\forall \pvec v \in \pvec V \iInter{B(x)}{}{\pvec v} \wedge \forall \pvec d \in \pvec D \forall n\leq k \, \iInter{C(x,n)}{}{\pvec d}).$$ The $\U$-interpretation of the consequent does not ask for (positive) witnesses neither. Only that the following must hold for all $R^{\NN^*}$, $\pvec V$ and $\pvec D$: $$\exists x \, (\forall \pvec v \in \pvec V \iInter{B(x)}{}{\pvec v} \wedge \forall r \in R \, \forall \pvec d \in \pvec D \, \forall n \leq r \, \iInter{C(x,n)}{}{\pvec d}).$$ 
For the $\U$-interpretation of the implication, we must find terms $t$, $\pvec p$ and $\pvec q$ such that the target theory proves the following: for all $R^{\NN^*}$, $\pvec V$ and $\pvec D$, if for all $r \in t R \pvec V \pvec D$, $\pvec V' \in \pvec p R \pvec V \pvec D$ and $\pvec D' \in \pvec q R \pvec V \pvec D$ one has $$\forall k \leq r \exists x \, (\forall \pvec v \in \pvec V' \iInter{B(x)}{}{\pvec v} \wedge \forall \pvec d \in \pvec D' \forall n\leq k \iInter{C(x,n)}{}{\pvec d}),$$ then $\exists x \, (\forall \pvec v \in \pvec V \iInter{B(x)}{}{\pvec v} \wedge \forall r \in R \, \forall d \in \pvec D \,\forall n\leq r \iInter{C(x,n)}{}{\pvec d})$. \\[1mm]
We can just put $t R \pvec V \pvec D = R$, $\pvec p R \pvec V \pvec D = \{\pvec V\}$ and $\pvec q R \pvec V \pvec D = \{\pvec D\}$. The element $x$ that works for the consequent is the one that is obtained from the antecedent by taking $k$ the maximum of the elements of $R$. \end{proof}

\subsection{Arithmetical and logical applications}
\label{arithmetical-applications-section}

As in the previous subsections, we assume that our source theory $\SourceT$ is an extension of $\HAomega$ and that our base interpretation uses the bounding interpretation at the ground type $\NN$. 

\begin{theorem}\label{numerical_collection} The following collection principle is $\I$-interpretable:
$$\forall n \leq_\NN y \, \exists m^\NN C(n,m) \to \exists k^\NN \, \forall n \leq_\NN y \, \exists m\leq k \, C(n,m),$$ for arbitrary formulas $C(n,m)$ of the source language. \end{theorem}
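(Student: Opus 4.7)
The plan is to recognize that the bounded quantifier $\forall n \leq_\NN y$ is a special case of a $B$-restricted quantifier, with $B(n,y)$ being the formula $n \leq_\NN y$, and then to apply Theorem \ref{collection-thm} directly. So the proof reduces to checking two conditions: that $B(n,y)$ subsumes $\witness{n}{\NN}$ and that the resulting $B$-restricted quantifier is semi-uniform in the sense of Definition \ref{def-bounded-quant}.

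First I would invoke Example \ref{ex-subsumes-1}, which establishes precisely that, under the bounding interpretation of the ground type $\NN$ (i.e.\ $\uInter{\witness{n}{\NN}}{m}{}$ being $n \leq m$), the formula $B(n,y) :\equiv n \leq_\NN y$ subsumes $\witness{n}{\NN}$: the required witnessing term is simply $tk = k$. Hence by Proposition \ref{prop-rel-extended}, the bounded quantifier $\forall n \leq_\NN y\, D(n)$ can be relativized as $\forall n (\rel{(n\leq_\NN y)} \to \rel{D(n)})$, without needing to add the hypothesis $\witness{n}{\NN}$.

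Next I would observe that the formula $n \leq_\NN y$ is $\I$-witness-free. In the setup of the paper the relation $n \leq_\NN y$ is either defined via a characteristic function (and logical equality is witness-free by stipulation in Subsection \ref{sec-uniform-interpretation}) or taken as a primitive relation symbol with trivial (witness-free) base interpretation. Either way, $\iInter{n \leq_\NN y}{}{}$ is just $n \leq y$, with both positive and negative witness tuples empty. Consequently the $B$-restricted quantifier $\forall n \leq_\NN y$ is uniform and, \emph{a fortiori}, semi-uniform in the sense of Definition \ref{def-bounded-quant}.

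With both conditions in hand, the hypotheses of Theorem \ref{collection-thm} are satisfied for the formula $B(n,y) :\equiv n \leq_\NN y$ and an arbitrary $C(n,m)$ of the source language. Applying that theorem yields the $\I$-interpretability of
\[
\forall n \bnd_\NN B(y) \, \exists m^\NN C(n,m) \to \exists k^\NN \, \forall n \bnd_\NN B(y) \, \exists m \leq k \, C(n,m),
\]
which, by definition of the bounded quantifier $\forall n \leq_\NN y$, is the desired principle. I do not expect any genuine obstacle here: all the work has been done in Theorem \ref{collection-thm}, and the only thing to verify is that this familiar form of bounded quantifier fits the abstract framework of semi-uniform $B$-restricted quantifiers.
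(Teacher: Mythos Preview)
Your proposal is correct and matches the paper's own proof, which simply says the result is an immediate consequence of Theorem \ref{collection-thm} together with Example \ref{ex-subsumes-1}. Your additional remark that $n \leq_\NN y$ is $\I$-witness-free (making the restricted quantifier uniform, hence semi-uniform) just spells out what the paper leaves implicit.
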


\begin{proof} This is an immediate consequence of Theorem \ref{collection-thm}, using Example \ref{ex-subsumes-1}. \end{proof}

The above collection principle is, of course, provable by induction on $y$. However, the argument given above shows that it has empty computational content. It can, in fact, be added to theories of weak induction (where the inductive argument fails) without perturbing its provably total functions. As far as we know, this was first observed by Wolfgang Burr in \cite{Burr(00B)}.

\begin{theorem}\label{numerical_contra_collection} The following contra-collection principle is $\I$-interpretable:
$$\forall k^\NN \exists n \leq y \, \forall m \leq k \, C(n,m) \to \exists n \leq_\NN y \, \forall m \, C(n,m),$$ 
provided that $C(n,m)$ has no positive $\I$-witnesses. \end{theorem}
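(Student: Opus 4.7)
The plan is to derive this theorem as a direct corollary of the general contra-collection principle already established in Theorem \ref{contra-collection-thm}. The strategy is to identify the bounded quantifier $\forall n \leq_\NN y$ and $\exists n \leq_\NN y$ appearing in the statement with $B$-restricted quantifiers in the sense of Subsection \ref{semi-uniform-section}, for a suitable choice of formula $B$.

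First, I would set $B(n^\NN, y^\NN) :\equiv n \leq_\NN y$, so that the bounded quantifications $\forall n \leq_\NN y\, D(n)$ and $\exists n \leq_\NN y\, D(n)$ are precisely the $B$-restricted quantifications $\forall n \bnd_\NN B(y) D(n)$ and $\exists n \bnd_\NN B(y) D(n)$ governed by axioms (\ref{B-bounded-forall}) and (\ref{B-bounded-exists}). By Example \ref{ex-subsumes-1}, in any base interpretation using the bounding interpretation at the ground type $\NN$, this formula $B(n,y)$ subsumes $\witness{n}{\NN}$. Moreover, since $\leq_\NN$ is a quantifier-free relation whose $\I$-interpretation is witness-free (its computational content is trivial at the ground type), the formula $B(n,y)$ has no positive $\I$-witnesses (in fact, it is $\I$-witness-free). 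This means that the $B$-restricted quantifiers in question are semi-uniform in the sense of Definition \ref{def-bounded-quant} (indeed, they are uniform).

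With these two conditions in place -- the subsumption of $\witness{n}{\NN}$ by $B(n,y)$ and the semi-uniformity of the associated $B$-restricted quantifiers -- the hypotheses of Theorem \ref{contra-collection-thm} are satisfied. The assumption on $C(n,m)$ in the statement of Theorem \ref{numerical_contra_collection} (no positive $\I$-witnesses) matches exactly the hypothesis required by Theorem \ref{contra-collection-thm}. Therefore, the principle
\[
    \forall k^\NN \exists n \bnd_\NN B(y) \, \forall m \leq k \, C(n,m) \to \exists n \bnd_\NN B(y) \, \forall m^\NN C(n,m)
\]
is $\I$-interpretable, and after unfolding the $B$-restricted quantifiers via (\ref{B-bounded-forall}) and (\ref{B-bounded-exists}), this is precisely the desired contra-collection principle.

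No step really presents an obstacle here, since the heavy lifting was done in the proof of Theorem \ref{contra-collection-thm}; the only point to verify carefully is that Example \ref{ex-subsumes-1} applies in the general setting of extensions of $\HAomega$ with a bounding interpretation at $\NN$, and that the $\I$-interpretation of $n \leq_\NN y$ is indeed witness-free (which follows from its status as a ground-type equality-like relation, as already noted in the discussion preceding Proposition \ref{Markov}).
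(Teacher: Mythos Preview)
Your proposal is correct and follows exactly the paper's approach: the paper's proof is the single sentence ``This is an immediate consequence of Theorem \ref{contra-collection-thm}, using Example \ref{ex-subsumes-1}.'' You have simply unpacked what that sentence means, identifying $B(n,y) :\equiv n \leq_\NN y$, invoking Example \ref{ex-subsumes-1} for subsumption, and noting that the witness-freeness of $n \leq_\NN y$ yields (semi-)uniformity so that Theorem \ref{contra-collection-thm} applies.
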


\begin{proof} This is an immediate consequence of Theorem \ref{contra-collection-thm}, using Example \ref{ex-subsumes-1}. \end{proof}

This kind of contra-collection was first explicitly discussed in \cite{FerreiraOliva(07)} (see, for more discussions, \cite{Ferreira(20)}). By the same token, its computational content is nil. It was observed in \cite{FerreiraOliva(07)} that the above principle entails a form of the lesser limited principle of omniscience $\mathsf{LLPO}$ of Errett Bishop \cite{Bishop(85)}. However, since we have the Boolean type in our language, we opt for a direct argument using this type:

\begin{theorem}\label{llpo} The following version of $\mathsf{LLPO}$ is $\I$-interpretable:
$$\forall k^\NN (\forall m \leq k A(m) \vee \forall m \leq k B(m)) \to \forall m A(m) \vee \forall m B(m),$$ 
provided that both formulas $A(m)$ and $B(m)$ have no positive $\I$-witnesses. \end{theorem}

\begin{proof} Assume that $\forall k^\NN (\forall m \leq k A(m) \vee \forall m \leq k B(m))$. Therefore, $$ \forall k^\NN \exists i^\BB \forall m \leq k ((i\equiv_\BB 0 \to A(m)) \wedge (i \equiv_\BB 1 \to B(m))).$$ Since the type $\BB$ is interpreted uniformly in bounding interpretations of arithmetic, it is clear that we can apply Theorem \ref{contra-collection-thm} and conclude that $$\exists j^\BB \forall m ((j\equiv_\BB 0 \to A(m)) \wedge (j\equiv_\BB 1 \to B(m))).$$ This is a reformulation of what we want. \end{proof}

\subsection{Analytic applications}
\label{analytic-applications-section}

We consider a concrete example of $B$-restricted quantifiers in $\HAomega$, namely when $B(f^{\NN \to \NN})$ is the formula $\forall n^\NN (f n \leq_{\NN} 1)$. As a matter of convenient notation, in this case we write $\forall f \leq_{\NN \to \NN} \!1 \, A(f)$ instead of the cumbersome $\forall f \Bbounded_{\NN \to \NN}\!B \, A(f)$. By axiom (\ref{B-bounded-forall}), this is equivalent to $\forall f^{\NN \to \NN} (\forall n^\NN (f n \leq_{\NN} 1) \to A(f))$. We use a similar notation for the existential bounded quantifier.

We begin our discussion with a uniform boundedness principle (in the terminology of \cite{Kohlenbach(08)}).

\begin{theorem}\label{ub_cb} The following collection principle is $\Icb$-interpretable:
$$\forall f \leq_{\NN \to \NN} 1 \, \exists n^\NN C(f,n) \to \exists k^\NN \, \forall f \leq_{\NN \to \NN} 1 \, \exists n\leq k \, C(f,n),$$ for arbitrary formulas $C(f,n)$ of the source language. \end{theorem}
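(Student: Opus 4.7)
My plan is to obtain Theorem \ref{ub_cb} as a direct instance of the general collection result, Theorem \ref{collection-thm}. For this to apply, I must verify that, under the $\Icb$-interpretation, the restricted quantifier $\forall f \leq_{\NN \to \NN}\! 1$ associated with the formula $B(f^{\NN \to \NN}) :\equiv \forall n^\NN (f n \leq_\NN 1)$ is \emph{semi-uniform} in the sense of Definition \ref{def-bounded-quant}. Recall that this requires two things: (i) that $B(f)$ subsumes $\witness{f}{\NN \to \NN}$, and (ii) that $B(f)$ has no positive $\I$-witnesses.

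For (i), I would just invoke Example \ref{ex-subsumes-2}, where this precise fact was already checked for the $\Icb$-interpretation: the witnesses were $tk = 1$ and $qk = \{k\}$. (It is worth noting explicitly, for the reader, that this is exactly the step which would break down under the $\Imb$-interpretation instead, as pointed out in Remark \ref{obstruction}; this is why the theorem is stated for $\Icb$.) For (ii), I would do a brief direct calculation of the $\Icb$-interpretation of the formula $\forall n^\NN(fn \leq_\NN 1)$. The atomic formula $fn \leq_\NN 1$ is $\I$-witness-free, since in our setting this relation is given via the witness-free equality at type $\NN$. Moreover, in the $\Icb$-interpretation the type $\NN$ carries a positive but no negative witness, so in the clause
\[
    \iInter{\forall n^\NN A(n)}{\pvec f, \pvec g}{\pvec u, \pvec b}
    \;\mathrm{is}\;
    \forall n^\NN (\forall \pvec v \in \pvec g \pvec u \pvec b \, \iInter{\witness{n}{\NN}}{\pvec u}{\pvec v} \to \iInter{A(n)}{\pvec f \pvec u}{\pvec b})
\]
both the tuple $\pvec g$ (which would yield negative $\I_\NN$-witnesses) and the tuple $\pvec f$ (which would yield positive witnesses of $A$) collapse to the empty tuple. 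Hence $B(f)$ has only the negative $\I$-witness $k^\NN$, namely $\iInter{\forall n^\NN(fn \leq 1)}{}{k} \equiv \forall n \leq k\,(fn \leq 1)$, and (ii) holds.

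With (i) and (ii) in place, the hypotheses of Theorem \ref{collection-thm} are satisfied for this particular $\sigma = \NN \to \NN$, $B(f)$, and arbitrary source-language $C(f,n)$. Applying that theorem yields exactly the $\Icb$-interpretability of
\[
    \forall f \leq_{\NN \to \NN} 1 \, \exists n^\NN C(f,n)
    \to
    \exists k^\NN \, \forall f \leq_{\NN \to \NN} 1 \, \exists n \leq k \, C(f,n),
\]
which is the desired statement.

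The substantive work is entirely concentrated in the two verifications above; neither is a serious obstacle. The one subtle point to highlight in the write-up is the contrast with the majorizability-based $\Imb$-interpretation: as Remark \ref{obstruction} already explained, there is no closed term witnessing the subsumption clause (\ref{subsumes-fml}) for $B(f) \equiv \forall n (fn \leq 1)$ when $\I_{\NN \to \NN}$ is interpreted by strong majorizability, because no single finite set of test inputs can produce a pointwise majorant for all Boolean-valued $f$. So while Theorem \ref{ub_cb} holds for $\Icb$, the analogous statement for $\Imb$ is blocked at precisely this point, and this is the conceptual reason the canonical-upon-bounding interpretation is the right vehicle for uniform-boundedness-style principles.
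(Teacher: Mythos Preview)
Your proposal is correct and follows exactly the paper's approach: the paper's proof is the one-liner ``This is an immediate consequence of Theorem \ref{collection-thm}, using Example \ref{ex-subsumes-2}.'' Your additional explicit verification that $B(f)$ has no positive $\Icb$-witnesses, and your remark contrasting with the $\Imb$-case via Remark \ref{obstruction}, are accurate elaborations of what the paper leaves implicit.
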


\begin{proof} This is an immediate consequence of Theorem \ref{collection-thm}, using Example \ref{ex-subsumes-2}. \end{proof}
 
Note that Proposition \ref{Markov} and Theorem \ref{ub_cb} imply that the $\Icb$-interpretation is able to interpret both a form of Markov's principle and the above form of collection. This only works because we are using the bounding interpretation at the base type $\NN$ together with the canonical interpretation of the function types. The result, however, does not seem to hold for the $\Imb$-interpretation: see Remark \ref{obstruction} for the obstruction. 

Similarly, we have a novel contra-collection principle. The proof, this time, uses Theorem \ref{contra-collection-thm}.

\begin{theorem} \label{ideal-canonical} The following contra-collection principle is $\Icb$-interpretable
$$\forall k^\NN \exists f \leq_{\NN \to \NN} 1 \forall n\leq k \, C(f,n) \to \exists f \leq_{\NN \to \NN} 1 \forall n^\NN C(f,n)$$ 
provided $C(f,n)$ has no positive $\I$-witnesses. \end{theorem}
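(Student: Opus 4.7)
The plan is to recognize Theorem \ref{ideal-canonical} as a direct instance of the general contra-collection principle of Theorem \ref{contra-collection-thm}, instantiated at the type $\sigma = \NN \to \NN$ with the restricting formula $B(f^{\NN \to \NN}) :\equiv \forall n^\NN (f n \leq_\NN 1)$. Under the notational convention introduced just before Theorem \ref{ub_cb}, the quantifier $\forall f \leq_{\NN \to \NN} 1$ is precisely $\forall f \bnd_{\NN \to \NN}\! B(f)$, so matching the two statements is a purely notational exercise. The only real work is to verify that, in the $\Icb$-interpretation, these $B$-restricted quantifiers are semi-uniform in the sense of Definition \ref{def-bounded-quant}.

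To establish semi-uniformity, I would check the two requirements in turn. First, that $B(f)$ subsumes $\witness{f}{\NN \to \NN}$ is exactly the content of Example \ref{ex-subsumes-2}, where the witnessing terms $t k = 1$ and $q k = \{k\}$ are explicitly exhibited and the $\U$-interpretation of the subsumption formula is verified. Second, I would compute the $\Icb$-interpretation of $B(f)$ to see that it has no positive $\I$-witnesses. Since the atomic formula $f n \leq_\NN 1$ is $\I$-witness-free, applying the bounding interpretation of $\forall n^\NN$ yields
\[
    \iInter{\forall n^\NN (f n \leq_\NN 1)}{}{m} \;\equiv\; \forall n \leq m \,(f n \leq_\NN 1),
\]
so $B(f)$ carries a single negative witness $m^\NN$ and empty positive witness tuple, as required.

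With both semi-uniformity conditions in hand, and with the hypothesis that $C(f, n)$ has no positive $\I$-witnesses, I would invoke Theorem \ref{contra-collection-thm} verbatim, inheriting the closed witnessing terms it produces. No further computation is required. The interesting point is a negative one rather than a difficulty: the proof is specific to $\Icb$ and does not transfer to the $\Imb$-interpretation, precisely because the subsumption step fails there, as recorded in Remark \ref{obstruction} — no finite set of numerical bounds can serve as a strong majorant for an arbitrary $\{0,1\}$-valued function. It is the canonical treatment of the function type, not any arithmetical ingenuity, that makes this contra-collection principle interpretable.
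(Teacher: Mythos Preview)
Your proposal is correct and follows exactly the paper's approach: the paper's proof is the single sentence ``The proof, this time, uses Theorem \ref{contra-collection-thm},'' relying on the semi-uniformity of the $B$-restricted quantifiers established via Example \ref{ex-subsumes-2} just as in the preceding Theorem \ref{ub_cb}. You have simply unpacked the details (the subsumption from Example \ref{ex-subsumes-2} and the explicit check that $B(f)$ has only a negative $\I$-witness $m^\NN$), and your closing remark about the failure for $\Imb$ matches Remark \ref{obstruction}.
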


Let us look at an alternative way of dealing with quantifications restricted to the Cantor space within the $\Imb$-interpretation. Instead of considering functions $f$ from $\NN$ to $\NN$ such that $\forall i^\NN (fi\leq 1)$, we consider functions $f$ from $\NN$ to the Booleans $\BB$. An ordinary mathematician would say that we are presenting the Cantor space in two different manners, and that this should not matter. However, it does matter. The reader of like-mind knows that there is an extensionality issue operating behind the scenes. We review this issue below.

\begin{proposition}\label{ub-majorizability} The following sentence is $\Imb$-interpretable 
$$\forall f^{\NN \to \BB} \, \exists n^\NN C(f,n) \to \exists k^\NN \, \forall f^{\NN \to \BB} \, \exists n\leq k \, C(f,n)$$
for arbitrary formulas $C(f, n)$ of the source language.
\end{proposition}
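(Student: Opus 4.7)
The plan is to reduce the statement to the abstract collection theorem (Theorem \ref{collection-thm}) by showing that the unrestricted quantifier $\forall f^{\NN \to \BB}$ behaves, in the $\Imb$-interpretation, as a semi-uniform $B$-restricted quantifier in the sense of Definition \ref{def-bounded-quant}.

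First I would verify that $\NN \to \BB$ is a bounded type in the sense of Notation \ref{bounded-type}. This is immediate from Definition \ref{def-dagger}: since $\BB^\dagger$ is the empty tuple, $(\NN \to \BB)^\dagger$ is empty, so $\NN \to \BB$ is bounded. By the $\Imb$-interpretation (Definition \ref{Imb-interpretation}), the type-informative predicate $\witness{f}{\NN \to \BB}$ is interpreted uniformly, i.e., $\uwitness{f}{\NN \to \BB}{}{}$ is always true and carries neither positive nor negative witnesses.

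Next, following Example \ref{notable-case-1}, I would recast the unrestricted quantification $\forall f^{\NN \to \BB}$ as the $B$-restricted quantifier with $B(f) :\equiv \exists g^{\NN \to \BB}(g \equiv_{\NN \to \BB} f)$. Example \ref{notable-case-1} already shows that $B(f)$ subsumes $\witness{f}{\NN \to \BB}$. Moreover, $B(f)$ is $\I$-witness-free: the equality $g \equiv_{\NN \to \BB} f$ is witness-free by definition of the base interpretation of logical equality, and since $\NN \to \BB$ carries no witnesses at all, the existential quantifier $\exists g^{\NN \to \BB}$ adds none either. Hence, by Definition \ref{def-bounded-quant}, this $B$-restricted quantifier is uniform, so in particular semi-uniform.

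With these facts in hand, Theorem \ref{collection-thm} applies directly (the $\Imb$-interpretation satisfies the standing hypothesis of using the bounding interpretation at the ground type $\NN$), and yields the $\Imb$-interpretability of
\[
\forall f \bnd_{\NN \to \BB} B(f)\, \exists n^\NN C(f,n) \to \exists k^\NN \forall f \bnd_{\NN \to \BB} B(f)\, \exists n \leq k\, C(f,n),
\]
which, by axiom (\ref{B-bounded-forall}) for restricted quantifiers and the triviality of $B(f)$, is equivalent to the stated principle. I do not anticipate any substantive obstacle here: the entire content of the proof is the observation that function types landing in $\BB$ are bounded, whence the $\Imb$-interpretation treats quantifiers over them uniformly; the collection scheme then follows from the general abstract machinery of Subsection \ref{semi-uniform-section}.
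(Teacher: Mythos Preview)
Your proposal is correct and follows essentially the same approach as the paper: observe that $\NN \to \BB$ is a bounded type (so its type-informative predicate is witness-free in the $\Imb$-interpretation), recast the unrestricted quantifier as a uniform $B$-restricted quantifier via Example~\ref{notable-case-1}, and apply Theorem~\ref{collection-thm}. The paper's own proof is terser, merely pointing to the ``notable case'' of bounded types discussed in Subsection~\ref{uniform-section}, but the content is the same.
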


\begin{proof} This is a consequence of Theorem \ref{collection-thm}, namely of the notable case described in the second example of Subsection \ref{uniform-interpretation-section}: we only have to observe that $\NN \to \BB$ is a bounded type (see Notation \ref{bounded-type} and Definition \ref{Imb-interpretation}). Therefore, we are in the uniform case. 
\end{proof}

Similarly,

\begin{proposition}\label{ideal-boolean} The following sentence is $\Imb$-interpretable  
$$\forall k^\NN \exists f^{\NN \to \BB} \forall n\leq k \, C(f,n) \to \exists f^{\NN \to \BB} \forall n^\NN C(f,n)$$ 
provided that $C(f,n)$ has no positive $\I$-witnesses. \end{proposition}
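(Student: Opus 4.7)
The plan is to mirror the strategy used in the proof of Proposition \ref{ub-majorizability} (the collection companion to this contra-collection principle), leveraging Theorem \ref{contra-collection-thm} together with the notable observation that unrestricted quantification over a witness-free type behaves like a uniform restricted quantification. The key structural fact to exploit is that, in the $\Imb$-interpretation, the type $\NN \to \BB$ is bounded in the sense of Notation \ref{bounded-type}: indeed, by Definition \ref{def-dagger}, $\BB^\dagger$ is the empty tuple, and therefore $(\NN \to \BB)^\dagger$ is also empty. By Definition \ref{Imb-interpretation}, this means the type-informative predicate $\I_{\NN \to \BB}$ is interpreted uniformly, so $\NN \to \BB$ is $\U$-witness-free.

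First I would rewrite the unrestricted quantifications over $\NN \to \BB$ as $B$-restricted quantifications in the sense of Example \ref{notable-case-1}, taking $B(f^{\NN \to \BB}) :\equiv \exists g^{\NN \to \BB}(g \equiv_{\NN \to \BB} f)$. As shown in that example, $B(f)$ automatically subsumes $\witness{f}{\NN \to \BB}$, and its $\Imb$-interpretation
\[
    \iInter{B(f)}{\pvec c}{\pvec D} \;\equiv\; \exists g^{\NN \to \BB}\bigl(\forall \pvec d \in \pvec D\, \uInter{\witness{g}{\NN \to \BB}}{\pvec c}{\pvec d} \,\wedge\, g \equiv_{\NN \to \BB} f\bigr)
\]
is witness-free, since $\NN \to \BB$ is bounded. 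Hence the restricted quantifier $\forall f \bnd_{\NN \to \BB} B(f)\,A(f)$, which is logically equivalent to $\forall f^{\NN \to \BB} A(f)$ via axiom (\ref{B-bounded-forall}), is uniform in the sense of Definition \ref{def-bounded-quant}, and in particular semi-uniform.

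Then I would invoke Theorem \ref{contra-collection-thm} directly: its hypotheses ask precisely that (i) the $B$-restricted quantifier in play be semi-uniform and (ii) the formula $C(f,n)$ carry no positive $\I$-witnesses. Both hold, the former by the observation of the previous paragraph and the latter by the statement of the proposition. Therefore the contra-collection principle
\[
    \forall k^\NN \exists f \bnd_{\NN \to \BB} B\, \forall n \leq k\, C(f,n) \,\to\, \exists f \bnd_{\NN \to \BB} B\, \forall n^\NN C(f,n)
\]
is $\Imb$-interpretable, and unfolding the equivalence with unrestricted quantification yields the statement of the proposition.

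I expect no serious obstacle here: the entire content is packaged in Theorem \ref{contra-collection-thm}, and the only matter requiring a moment's care is the type-theoretic bookkeeping that certifies $\NN \to \BB$ as bounded under $\Imb$—which is a direct consequence of Definition \ref{def-dagger}. The only minor subtlety is to confirm that presenting the Cantor space as $\NN \to \BB$ (rather than as the $\leq 1$ bounded subset of $\NN \to \NN$, as in Theorem \ref{ideal-canonical}) is what makes this work within $\Imb$: for the latter presentation, the obstruction noted in Remark \ref{obstruction} precludes the subsumption condition, whereas here it is trivially satisfied because the type itself is witness-free.
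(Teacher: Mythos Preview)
Your proposal is correct and follows essentially the same approach as the paper: the paper simply writes ``Similarly,'' referring back to the proof of Proposition \ref{ub-majorizability}, which amounts to observing that $\NN \to \BB$ is a bounded type under $\Imb$ (hence witness-free), so that unrestricted quantification over it is uniform in the sense of Definition \ref{def-bounded-quant}, and then invoking Theorem \ref{contra-collection-thm} in place of Theorem \ref{collection-thm}. Your write-up spells this out in full detail, including the explicit use of Example \ref{notable-case-1} to cast the unrestricted quantifier as a $B$-restricted one.
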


Let $\mathfrak{a}: \BB \to \NN$ be such that $\mathfrak{a} 0^\BB = 0^\NN$ and $\mathfrak{a} 1^\BB = 1^\NN$. Such $\mathfrak{a}$ can be defined as $\lambda i^\BB. \mathrm{Cond}_\NN (i,0^\NN, 1^\NN)$. Let $\mathfrak{b}: \NN \to \BB$ be such that $\mathfrak{b} 0^\NN = 0^\BB$ and $\mathfrak{b} 1^\NN = 1^\BB$. We can take $\mathfrak{b}$ as $\lambda n^\NN. \Rec_\BB n0^\BB (\lambda m^\NN,r^\BB. 1^\BB)$. 

Do look at the following tentative argument. Let $C(f^{\NN \to \NN},n^\NN)$ be a formula of the language of $\HAomega$, and suppose that $\forall f\leq_{\NN \to \NN} 1 \, \exists n \, C(f,n)$. Clearly, we obtain $$\forall g^{\NN \to \BB} \exists n \, C(\lambda r^\NN. \mathfrak{a}(gr),n).$$ By Proposition \ref{ub-majorizability}, there is $k^\NN$ such that $\forall g^{\NN \to \BB} \exists n \leq k\, C(\lambda r^\NN. \mathfrak{a}(gr),n)$. Therefore $$\forall f \leq_{\NN \to \NN} 1 \exists n \leq k \, C(\lambda r^\NN.\mathfrak{a}((\lambda m^\NN. \mathfrak{b}(fm))r),n).$$ Given $f^{\NN \to \NN}$ such that $f \leq_{\NN \to \NN} 1$, let $h^{\NN \to \NN}$ be the function $\lambda r^\NN.\mathfrak{a}((\lambda m^\NN. \mathfrak{b}(fm))r)$. For all $r^\NN$,  $hr = \mathfrak{a}((\lambda m^\NN. \mathfrak{b}(fm))r) = \mathfrak{a} (\mathfrak{b} (fr)) = fr$. So, the functions $h$ and $f$ are extensionally the same. However, we cannot conclude $$\forall f \leq_{\NN \to \NN} 1 \exists n \leq k \, C(f,n).$$ The reason is that the $\Imb$-interpretation does not realize the extensionality principle. This comes from an old observation of Howard in \cite{Howard(73)}. We are not able to $\Imb$-interpret $$\forall g^{\NN \to \NN} \forall h^{\NN \to \NN} (\forall r^\NN (gr = hr) \to g \equiv_{\NN \to \NN} h).$$ That notwithstanding, a formula $C(f)$ may be extensional with respect to $f$, in the sense that $$\forall g^{\NN \to \NN} \forall h^{\NN \to \NN} (\forall r^\NN (gr = hr) \wedge C(g) \to C(h))$$ is $\Imb$-interpretable. In this case, the argument above can indeed be concluded. This is the case, for instance, when the only way for the formula $C(f)$ to access $f$ is via its values. Let us briefly discuss the notable case of weak K\"onig's lemma.

Consider the formula
\begin{equation}\label{auxiliary_wkl}
    \forall k^\NN \exists f^{\NN \to \BB} \forall n\leq k (T(\overline{f}(n)) = 0) \to \exists f^{\NN \to \BB} \forall n^\NN (T(\overline{f}(n)) = 0)
\end{equation}
where $T$ is a parameter of type $\NN \to \NN$ and $\overline{f}(n)$ stands for the (natural number code of the) finite sequence $f0, f1, \ldots, f(n-1)$. (This code can be taken as a standard coding of the natural number binary sequence $\mathfrak{a}(f0), \mathfrak{a}(f1), \ldots, \mathfrak{a}(f(n-1))$.) This formula has the right form for the application of Proposition \ref{ideal-boolean} because, in this case, $C(f,n)$ is $T(\overline{f}(n)) = 0$. Therefore, (\ref{auxiliary_wkl}) is $\Imb$-interpretable. Obviously, $C(f,n)$ is extensional (with respect to $f$). Therefore, the formula $$\forall k^\NN \exists f \leq_{\NN \to \NN} 1 \forall n\leq k (T(\overline{f}(n)) = 0) \to \exists f \leq_{\NN \to \NN} 1 \forall n^\NN (T(\overline{f}(n)) = 0)$$ is also $\Imb$-interpretable. Suppose that $T$ is a binary tree, i.e., suppose that we are working under the assumptions 
\begin{enumerate}
\item[\mbox{\rm{(}}a\mbox{\rm{)}}] $\forall s^\NN (T(s) = 0 \to \mathrm{Seq}_2(s))$
\item[\mbox{\rm{(}}b\mbox{\rm{)}}] $\forall s^\NN \forall r^\NN (\mathrm{Seq}_2(s) \wedge \mathrm{Seq}_2(r) \wedge T(s) = 0 \wedge r\preccurlyeq s \to T(r) = 0)$
\end{enumerate}
where $\mathrm{Seq}_2 (s)$ says that $s$ codes a binary sequence (of natural numbers 0 and 1) and $r\preccurlyeq s$ says that $r$ is an initial subsequence of $s$. In this case, the above interpretable formula entails $$\forall k^\NN \exists s^\NN (\mathrm{Seq}_2 (s) \wedge T(s) = 0) \to \exists f \leq_{\NN \to \NN} 1 \forall n^\NN (T(\overline{f}(n)) = 0).$$ This is weak K\"onig's lemma. In sum, we have:

\begin{proposition} Weak K\"onig's lemma is $\Imb$-interpretable. \end{proposition}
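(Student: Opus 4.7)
The plan is to assemble the proposition directly from the ingredients already laid out in the preceding paragraphs, so the proof is essentially a bookkeeping exercise that glues together Proposition \ref{ideal-boolean}, the extensionality observation, and the assumption that $T$ codes a binary tree. First I would take $C(f^{\NN\to\BB},n^\NN)$ to be the formula $T(\overline{f}(n)) = 0$, which is quantifier-free and therefore has no positive $\I$-witnesses. Applying Proposition \ref{ideal-boolean} yields the $\Imb$-interpretability of the auxiliary formula (\ref{auxiliary_wkl}).

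Next I would transfer this from quantifications over $f^{\NN\to\BB}$ to quantifications over $f\leq_{\NN\to\NN}\!1$. This is the step where the extensionality of $C(f,n)$ with respect to $f$ is used: since $C$ accesses $f$ only through the values $f0,\dots,f(n-1)$ recorded by $\overline{f}(n)$, the composition $\lambda m.\mathfrak{a}(\mathfrak{b}(fm)) = f$ on those values, and the translation via the functionals $\mathfrak{a}$ and $\mathfrak{b}$ described in the paragraph before the statement passes between the two formulations without needing a general extensionality principle. This produces the $\Imb$-interpretability of
\[
\forall k^\NN \exists f\leq_{\NN\to\NN}\!1 \, \forall n\leq k\,(T(\overline{f}(n))=0) \to \exists f\leq_{\NN\to\NN}\!1\,\forall n^\NN (T(\overline{f}(n))=0).
\]

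Finally, under the hypotheses (a) and (b) that $T$ is a binary tree, I would derive weak K\"onig's lemma by showing that the antecedent is implied by the usual ``infinite tree'' premise $\forall k^\NN \exists s^\NN (\mathrm{Seq}_2(s) \wedge T(s)=0)$. Given such an $s$ of length at least $k$, the finite binary sequence it codes can be extended to any total $f\leq_{\NN\to\NN}\!1$ (e.g.\ padding with zeros beyond its length), and by the closure condition (b) every initial segment $\overline{f}(n)$ of length $\leq k$ also lies in the tree. Composing this implication with the interpretable implication above (soundness of the interpretation with respect to $\to$) yields the interpretability of weak K\"onig's lemma.

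The only genuinely delicate point is the extensionality step: one must verify carefully that the $\Imb$-interpretation of the $\mathfrak{a}$-$\mathfrak{b}$ translation really goes through without appealing to the full extensionality principle, which (as Howard observed and as the paper notes) is not $\Imb$-interpretable. This is precisely where the syntactic fact that $C(f,n)$ uses $f$ only through the bounded sequence $\overline{f}(n)$ is essential, and the rest of the argument reduces to applying the already-proved propositions.
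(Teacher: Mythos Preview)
Your proposal is correct and follows essentially the same route as the paper: the proof in the paper is simply the observation that the preceding paragraphs (applying Proposition \ref{ideal-boolean} to $C(f,n)\equiv T(\overline{f}(n))=0$, using the extensionality of $C$ via the $\mathfrak{a}$--$\mathfrak{b}$ translation to pass to $f\leq_{\NN\to\NN}1$, and then invoking the tree conditions (a) and (b)) already establish the result. Your identification of the extensionality step as the only delicate point matches the paper's own emphasis.
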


Of course, by Proposition \ref{ideal-canonical}, weak K\"onig's lemma is also $\Icb$-interpretable. In this case, however, the extensionality considerations that we have made are not needed to obtain the result.

\subsection{On the bounded functional interpretation}
\label{bfi-section}

The bounded functional interpretation was introduced by the present authors in \cite{FerreiraOliva(05)}. The interpretation is based upon Bezem's strong majorizability notion (cf.\ Subsection \ref{star-majorizability}) and admits very general forms of collection and contra-collection. The main features of these general forms can be discussed within the framework of this paper. Crucially, the principles of collection and contra-collection need intensional notions of majorizability in order to be formulated. Our framework nicely supports these notions. 

We base our account on the $\Imb$-interpretation of Subsection \ref{maj_upon_bounding-section}, where $\uwitness{x}{\sigma}{a}{}$ is $x \Bmaj_\sigma a$. Notice that the $\Imb$-interpretation relies on the Diller-Nahm interpretation of contraction (via finite sets), whereas the bounded functional interpretation relies on majorizability. Even though the Diller-Nahm variant introduces some differences, the two interpretations deal with quantifiers in a similar way and share many properties. With this proviso, we discuss the most conspicuous features of the bounded functional interpretation in the light of the $\Imb$-framework.

We present the $\Imb$-interpretation with the notation of \cite{FerreiraOliva(05)}. We write $A_{\Imb} (\pvec a, \pvec b)$ for $\iInter{A}{\pvec a}{\pvec b}$ and write $A^{\Imb}$ for $\existstilde \pvec a \foralltilde \pvec b A_{\Imb} (\pvec a, \pvec b)$, i.e, $\existstilde \pvec a \foralltilde \pvec b \iInter{A}{\pvec a}{\pvec b}$. The tilde notation in the quantifiers $\foralltilde$ and $\existstilde$ mean that the quantifications are over monotone elements (cf.\ Definition \ref{star-monotone}). We get:
$$
\begin{array}{lcl}
	(A \wedge B)^{\Imb} 
		& \mathrm{\,is\,} & \existstilde \pvec a, \pvec c \foralltilde \pvec b,\pvec d \, (A_{\Imb} (\pvec a, \pvec b) \wedge B_{\Imb} (\pvec c, \pvec d)) \\[1mm]
    (A \vee B)^{\Imb} 
		& \mathrm{\,is\,} & \existstilde \pvec a, \pvec c \,\foralltilde \pvec F,\pvec G \, (\foralltilde \pvec b \in \pvec F A_{\Imb} (\pvec a, \pvec b) \vee \forall \pvec d \in \pvec G B_{\Imb} (\pvec c, \pvec d)) \\[1mm]
	(A \to B)^{\Imb}
		& \mathrm{\,is\,} & \existstilde \pvec f, \pvec g \foralltilde \pvec a, \pvec d \,(\foralltilde \pvec b \in \pvec g \pvec a \pvec d A_{\Imb} (\pvec a, \pvec b) \,\to 
				B_{\Imb} (\pvec f \pvec a, \pvec d)) \\[1mm]
	(\forall x^\sigma A(x))^{\Imb} & \mathrm{\,is\,} & \existstilde \pvec f \foralltilde c, \pvec b \forall x \Bmaj_\sigma \!c \, A(x)_{\Imb} (\pvec f c, \pvec b) \\[1mm]
	(\exists x^\sigma A(x))^{\Imb} & \mathrm{\,is\,} & \existstilde c, \pvec a \foralltilde \pvec B \, \exists x \Bmaj_\sigma \!c \,\foralltilde \pvec b \in \pvec B \, A(x)_{\Imb} (\pvec a, \pvec b),
\end{array}	
$$
where $\sigma$ is an arithmetical type. The above clauses come from Proposition \ref{I-interpretation-prop}, the simplification of the quantifier clauses discussed at the end of Subsection \ref{maj_upon_bounding-section} and, for the disjunction clause, the discussion at the end of Subsection \ref{uniform-booleans-sec}. Notice that elements of a monotone finite set are necessarily monotone. The reader should compare these clauses with the corresponding clauses of the bounded functional interpretation (cf.\ section 3 of \cite{FerreiraOliva(05)}). 

A central feature of the bounded functional interpretation is the presence of intensional majorizability relations $\iBmaj_\sigma$. We expand the language of $\HAomega$ with primitive binary relation symbols $\iBmaj_\sigma$, one for each arithmetical type $\sigma$, infixing between elements of type $\sigma$. With new relation symbols, we need to expand our $\Imb$-interpretation. This is done by specifying the information relations associated to the binary relation symbols $\iBmaj_\sigma$. We let $\tau^+_{\iBmaj_\sigma}$ and $\tau^-_{\iBmaj_\sigma}$ be the empty tuples and let $\langle x \iBmaj_\sigma y \rangle$ be $x \Bmaj_\sigma y$.

We accept the universal axioms 
\eqleft{
\begin{array}{l}
		\forall x^\NN, y^\NN (x \iBmaj_{\NN} y  \leftrightarrow  x \leq_\NN y) \\[1mm]
		\forall f^{\sigma \to \tau} \forall g^{\sigma \to \tau} (f \iBmaj_{\sigma \to \tau} g \to \forall x^\sigma \forall y^\sigma (x \iBmaj_\sigma y \to fx \iBmaj_\tau gy \wedge gx \iBmaj_\tau gy))
	\end{array}	
}
as well as the rule
\[
\begin{prooftree}
    \Gamma , x \iBmaj_\sigma y \proves sx \iBmaj_\sigma \!ty \wedge tx \iBmaj_\sigma \!ty
    \justifies
    \Gamma \proves s \iBmaj_\sigma t
\end{prooftree}
\]
where $s$ and $t$ are terms of the source language, $\Gamma$ is a tuple of formulas whose interpretations have no negative $\Imb$-witnesses, and $x$ and $y$ are variables that do not appear free in the conclusion. The reader familiar with our paper \cite{FerreiraOliva(05)} will recognize these axioms and the rule. Note that, by Lemma \ref{universal} on universal statements and the refinement (2) of Subsection \ref{refinements}, the above (universal) axioms are $\Imb$-interpretable and the above rule preserves $\Imb$-interpretability.

Given an arithmetical type $\sigma$, let $B_\sigma(x,y)$ be the formula $x \iBmaj_\sigma y$, with distinguished variable $x$. We claim that $B_\sigma(x,y)$ subsumes $\witness{x}{\sigma}$. To see this, we must show that $$\forall y^\sigma (\witness{y}{\sigma} \to \forall x^\sigma (x \iBmaj_\sigma y \to \witness{x}{\sigma}))$$ is $\U$-interpretable. This amounts to find a closed term $t^{\sigma \to \sigma}$ such that the target theory proves that, for all $a^\sigma$, $$\forall y^\sigma (y \Bmaj_\sigma a \to \forall x^\sigma (x \Bmaj_\sigma y \to x \Bmaj_\sigma ta)).$$  Since Bezem's strong majorizability relation is transitive, we can take $ta = a$. We denote these $B$-restricted quantifications by $\forall x \iBmaj_\sigma y \; (\ldots)$ and $\exists x \iBmaj_\sigma y \; (\ldots)$, to be close to the notation of \cite{FerreiraOliva(05)}. Clearly, these $B$-restricted quantifications are uniform (in the sense of Definition \ref{def-bounded-quant}). 

The two clauses at the beginning of Subsection \ref{uniform-section} give the following interpretations of the bounded quantifications:
\eqleft{
\begin{array}{lcl}
	(\forall x \iBmaj_\sigma \!c \, A(x))^{\Imb} & \mathrm{\,is\,} & \existstilde \pvec a \foralltilde \pvec b \forall x \Bmaj_\sigma \!c \, A(x)_{\Imb} (\pvec a, \pvec b) \\[1mm]
	(\exists x \iBmaj_\sigma \!c \,A(x))^{\Imb} & \mathrm{\,is\,} & \existstilde \pvec a \foralltilde \pvec W \, \exists x \Bmaj_\sigma \!c \,\foralltilde \pvec b \in \pvec W \, A(x)_{\Imb} (\pvec a, \pvec b).
\end{array}	
}
The reader is invited to make a comparison between these clauses with the corresponding clauses of the bounded functional interpretation.

It follows, by Theorem \ref{collection-thm}, that we are able to interpret the following form of collection: 
$$\forall x \iBmaj_\sigma w \, \exists n^\NN C(x,n) \to \exists k^\NN \, \forall x \iBmaj_\sigma w \, \exists n\leq k \, C(x,n),$$ 
where $C(x,n)$ can be taken arbitrarily and $\sigma$ is an arithmetical type. By the same token (see Theorem \ref{contra-collection-thm}), the following form of contra-collection is interpretable: 
$$\forall k^\NN \, \exists x \iBmaj_\sigma \!w \, \forall n \leq k \, C(x,n) \to \exists x \iBmaj_\sigma \!w \, \forall n^\NN C(x,n),$$
where $C(x,n)$ has no positive witnesses (and $\sigma$ is an arithmetical type).

As a matter of fact, the above forms of collection and contra-collection can be vastly generalized. They can take the form:
$$\forall x \iBmaj_\sigma \!w \,\exists y^\tau C(x,y) \to \exists z^\tau \forall x \iBmaj_\sigma w \,\exists y\iBmaj_\tau z \, C(x,y),$$ 
where $C(x,n)$ is arbitrary and, if we allow a new construct (to be described below),
$$\forall z^\tau \exists x \iBmaj_\sigma \!w \, \forall y \iBmaj_\tau \!z \, C(x,y) \to \exists x \iBmaj_\sigma \!w \, \forall y^\tau C(x,y),$$
where $C(x,n)$ has no positive witnesses (and $\sigma$ and $\tau$ are  arithmetical types). These very general forms of collection and contra-collection are (essentially) the forms that appear in \cite{FerreiraOliva(05)}.

Let us first look at the collection principle. A simple calculation shows that we need to obtain monotone terms $t$, $\pvec q$ and $\pvec s$ such that, for every monotone $a^\tau$, and all monotone $\pvec c$ and $\mathfrak{D}$, we have the following: if
$$\foralltilde \pvec D \in \pvec q a \pvec c \mathfrak{D} \forall x \Bmaj_\sigma \!w \, \exists y \Bmaj_\tau \!a \, \foralltilde \pvec d \in \pvec D \,\iInter{C(x,y)}{\pvec c}{\pvec d}$$
then
$$\exists z \Bmaj_\tau t a\pvec c \, \foralltilde \pvec D \in \mathfrak{D} \forall x \Bmaj \!w \, \exists y \Bmaj_\tau \!z \, \foralltilde \pvec d \in \pvec D \iInter{C(x,y)}{\pvec s a \pvec c}{\pvec d}.$$
We let $ta\pvec c = a$, $\pvec q a \pvec c \mathfrak{D} = \mathfrak{D}$ and $\pvec s a \pvec c = \pvec c$. This clearly works: in the consequent, just take $z$ to be $a$.

Let us now consider the interpretation of the contra-collection principle. Another simple calculation shows that we need to obtain terms $t$ and $\pvec q$ such that, for every monotone $F^{\tau^*}$ and $\pvec D$, we have the following: if 
$$\foralltilde a \in t \pvec D F \,\foralltilde \pvec D'\!\in \pvec q \pvec D F \, \forall z \Bmaj_\tau \!a \,\exists x \Bmaj_\sigma \!w \,\foralltilde \pvec d \in \pvec D' \forall y \Bmaj_\tau \!z \, \iInter{C(x,y)}{}{\pvec d}$$
then
$$\exists x \Bmaj_\sigma \! w \, \foralltilde a \in F \,\foralltilde \pvec d \in \pvec D \forall y \Bmaj_\tau a \,\iInter{C(x,y)}{}{\pvec d}.$$

At this point, we need to have in the target theory a construct $\mathrm{m}_\tau$, of type $\tau^* \to \tau$, such that $\mathrm{m}_\tau (F)$ gives the maximum of the elements of $F$. We are referring here to the maximum, as defined in (\ref{max-functional}) of Subsection \ref{maj_upon_bounding-section} between two elements of a given arithmetical type. Obviously, it makes sense to take the maximum of $F$ because $F$ is a non-empty and finite set (of monotone elements). Note, also, that $\mathrm{m}_\tau (F)$ is monotone. 

We let $t \pvec D F = \{\mathrm{m}_\tau(F)\}$ and $\pvec q \pvec D F = \{ \pvec D\}$. With these definitions, the antecedent above is
$$\forall z \Bmaj_\tau \!\mathrm{m}_\tau(F) \,\exists x \Bmaj_\sigma \!w \,\foralltilde \pvec d \in \pvec D \forall y \Bmaj_\tau \!z \, \iInter{C(x,y)}{}{\pvec d}.$$
The consequent is true by taking $x^\sigma$ to be the element that works in the antecedent when $z^\tau$ is $\mathrm{m}_\tau(F)$. To see this, note that if $y \Bmaj_\tau \!a$ and $a \in F$, then $a \Bmaj_\tau \mathrm{m}_\tau(F)$ and, by transitivity, $y \Bmaj_\tau \mathrm{m}_\tau(F)$.

\section{Conclusion}
\label{conclusion-section}

We presented a flexible approach to functional interpretations. Against the background interpretation (the uniform interpretation), several choices can be made, leading to different functional interpretations. We discussed, in particular, so-called canonical interpretations of function types over two possible base arithmetical interpretations of the ground type $\NN$. We are referring to the canonical upon the precise interpretation $\Icp$ and to the canonical upon the bounding interpretation $\Icb$. Natural questions can be raised. What kind of choice principles are realized by these interpretations? Or, more ambitiously, can we state and prove characterization theorems for $\Icp$ and for $\Icb$ (as done by Yasugi in \cite{Yasugi(63)} concerning G\"odel's {\em dialectica} interpretation, or by Ferreira and Oliva concerning the bounded functional interpretation \cite{FerreiraOliva(05)}, or by van den Berg, Briseid and Safarik concerning their interpretation of nonstandard arithmetic in \cite{BergBriseidSafarik(12)})? We also saw that these interpretations give rise to new models of G\"odel's theory $\GodelT$, namely the type-theoretic structures $\mathcal{C^=}$ and $\mathcal{C^\leq}$. Do these structures and their hereditary versions have interesting properties? How are the hereditary versions of $\mathcal{C^=}$ and $\mathcal{C^\leq}$ related to type structures based on the notions of continuous or strongly majorizable functionals, due to Kleene \cite{Kleene(59)} and Kreisel \cite{Kreisel(59)} (see also \cite{Troelstra(73)}), and Bezem \cite{Bezem(85)}, respectively? Are these structures models of bar-recursion (see \cite{Spector(62)} and \cite{Troelstra(73)})?

There are other directions of development. The framework presented in this paper can be applied to classical logic (see \cite{Oliva(25)}). It can also be extended to the real numbers (in the manner of \cite{Ferreira(21)}) and to abstract spaces, as used in proof mining studies (see \cite{Kohlenbach(08)}). Principles of collection and contra-collection also hold in the setting of abstract spaces. We want to emphasize these principles because they play an important role, explanatory for sure, but also in the discovery of new results in mathematics. It was observed in \cite{FerreiraLeusteanPinto(19)} that collection principles entail forms of Heine-Borel compactness which can be applied to good effect and used to give abstract proofs of known convergence results. This was discussed and analyzed in detail in \cite{FerreiraLeusteanPinto(19)} for the proof of a strong convergence result of Browder \cite{Browder(67)}. Even though Heine-Borel compactness is false in the infinite dimensional setting of Hilbert spaces in which Browder works, the (false) result can indeed be used to obtain the strong convergence result. The quantitative analysis of Browder's result, as done firstly by Kohlenbach in \cite{Kohlenbach(11)}, can be luminously explained as the proof mining analysis of the false (abstract) proof. Other analyses of this sort have recently been made, and we want to draw attention to the new situation of the so-called Dykstra's algorithm (see \cite{Pinto(25)} and the careful proof-theoretic analysis in \cite{Pinto(24)}).

Browder's original proof in \cite{Browder(67)} is abstract (as opposed to quantitative) and used a weak compactness argument in Hilbert spaces. Hilbert spaces are naturally generalized to the non-linear setting by so-called Hadamard spaces (complete $\mathrm{CAT(0)}$ spaces). Apparently, in the setting of Hadamard spaces, there is no analogue of weak compactness. In most interesting recent developments, Pedro Pinto and others have been able to prove new mathematical results by, in fact, replacing weak compactness arguments by (in general, false) Heine-Borel compactness arguments. This was done in \cite{DinisPinto(21)} and \cite{PintoPischke(24)}, using quantitative arguments, or more recently in \cite{Pinto(25a)}. The latter paper includes a section where the tame use of false principles is discussed at length.

We gave a particular emphasis to a uniform interpretation that operates in the background of computationally meaningful functional interpretations. We believe that the non set-theoretical principles embodied by the uniform interpretation are worth probing because they furnish not only guidance in some proof mining analyses but also a method for extending results of the mathematical literature to new settings. Furthermore, the existence and role of these set-theoretically false principles are possibly a source of interest for the philosophy and foundations of mathematics. \\[3mm]
{\bf Acknowledgments}. We thank Gilda Ferreira for several interesting comments and remarks on an earlier draft of the paper.

\bibliographystyle{plain}

\bibliography{master_database_1_25}

\end{document}